\documentclass[a4paper,10pt]{article}

\usepackage[utf8]{inputenc}
\usepackage{amsmath,amsthm,amssymb}
\usepackage{hyperref}
\usepackage{multirow}
\usepackage{dsfont}
\usepackage{mathrsfs}
\usepackage{geometry}

\newtheorem{theorem}{Theorem}
\newtheorem{lemma}{Lemma}
\newtheorem{corollary}{Corollary}
\newtheorem{definition}{Definition}
\newtheorem{remark}{Remark}
\newtheorem{proposition}{Proposition}

\geometry{lmargin=19mm,rmargin=19mm}

\title{A Stochastic Maximum Principle for Control Problems Constrained by the Stochastic Navier-Stokes Equations}
\author{Peter Benner \thanks{Max Planck Institute for Dynamics of Complex Technical Systems, Sandtorstraße 1, 39106 Magdeburg, Germany (E-mail: benner@mpi-magdeburg.mpg.de).} \qquad Christoph Trautwein \thanks{Max Planck Institute for Dynamics of Complex Technical Systems, Sandtorstraße 1, 39106 Magdeburg, Germany (E-mail: trautwein@mpi-magdeburg.mpg.de).}}

\begin{document}

\maketitle

\begin{abstract}
 We consider the control problem of the stochastic Navier-Stokes equations in multidimensional domains introduced in \cite{ocpc} restricted to noise terms defined by Q-Wiener processes.
 Using a stochastic maximum principle, we derive a necessary optimality condition to design the optimal control based on an adjoint equation, which is given by a backward SPDE.
 Moreover, we show that the optimal control satisfies a sufficient optimality condition.
 As a consequence, we can solve uniquely control problems constrained by the stochastic Navier-Stokes equations especially for two-dimensional as well as for three-dimensional domains.
\end{abstract}
\textbf{Keywords.} Stochastic Navier-Stokes equations, Q-Wiener process, Stochastic control, Maximum principle

\section{Introduction}

In this paper, we discuss an optimal control problem for the unsteady Navier-Stokes equations influenced by noise terms.
Concerning fluid dynamics, noise enters the system due to structural vibration and other environmental effects.
The aim is to control flow fields affected by noise, where we incorporate physical requirements, such as drag minimization, lift enhancement, mixing enhancement, turbulence minimization and stabilization, see \cite{aitd} and the references therein.

In the last decades, existence and uniqueness results of solutions to the stochastic Navier-Stokes equations has been studied extensively.
Unique weak solutions of the stochastic Navier-Stokes equations exist only for two-dimensional domains.
In \cite{s2dn,ldft}, weak solutions are considered with noise terms given by Wiener processes.
Weak solutions with L\'evy noise are considered in \cite{2dns,nfos}.
For three-dimensional domains, uniqueness is still an open problem and weak solutions are introduced as martingale solutions, see \cite{gaat,seih,ait3,mass,snse}.
Another approach uses the semigroup theory leading to the definition of mild solutions.
The existence and uniqueness of a mild solution over an arbitrary time interval can be obtained under certain additional assumptions, see \cite{efid,wpot}.
In general, a unique mild solution of the stochastic Navier-Stokes equations does not exist.
Thus, stopping times are required to obtain a well defined local mild solutions.
For a local mild solution with additive noise given by Wiener processes, we refer to \cite{lsfs}.
In \cite{msos,lpso}, the stochastic Navier-Stokes equations with additive L\'evy noise are considered.
A generalization to multiplicative L\'evy noise can be found in \cite{ocpc}.
In \cite{spso}, an existence and uniqueness result for strong pathwise solutions is given.
For further definitions of solutions to the fractional stochastic Navier-Stokes equations, see \cite{wpot}.

The cost functional considered in this paper is motivated by common control strategies.
In \cite{assn,tvtp,dofc,coco}, the problem is formulated as a tracking type problem arising in data assimilation.
Approaches that minimize the enstrophy can be found in \cite{dpft,soci,beat,aitd}.
In \cite{raso}, the cost functional combines both strategies by introducing weights.
The shortcoming of these papers is the restriction to two-dimensional domains.
In \cite{ocf3,otdp}, optimal control problems for the stochastic Navier-Stokes equations in three-dimensional domains are considered, where the state equation is defined as a martingale solution.
Recall that the martingale solution for three-dimensional domains is not unique and thus, only existence results of the optimal control can be obtained.

To overcome these problems, we consider the control problem introduced in \cite{ocpc}, which is a generalization of the control problems mentioned above.
The solution of the stochastic Navier-Stokes equations is given by a local mild solution, which covers especially two as well as three-dimensional domains.
Hence, a unique solution exists up to a certain stopping time.
Since the solution as well as the stopping time depend on the control, the cost functional related to the control problem has to incorporate the stopping time resulting in a nonconvex optimization problem.
This represents the main difficulty here.
However, the existence and uniqueness result of the optimal control is provided in \cite{ocpc}.
In this paper, we derive a stochastic maximum principle to obtain an explicit formula the optimal control has to satisfy.
For the deterministic case in a two-dimensional domain, we refer to \cite{owpde}.
We calculate the Gâteaux derivative of the local mild solution to the stochastic Navier-Stokes equations, which is given by the local mild solution to the linearized stochastic Navier-Stokes equations.
Therefore, we can determine the Gâteaux derivative of the cost functional and hence, the necessary optimality condition is stated as a variational inequality.
This result is well known for general optimization problems of functionals, see \cite{owpde,nfaa}.
To derive an explicit formula for the optimal control based on the variational inequality, a duality principle is required, which gives a relation between the linearized stochastic Navier-Stokes equations and the corresponding adjoint equation.
Since the control problem is constrained by a SPDE with multiplicative noise, the adjoint equation becomes a backward SPDE.
Existence and uniqueness results of mild solutions to backward SPDEs are mainly based on a martingale representation theorem, see \cite{asoa}.
These martingale representation theorems are only available for infinite dimensional Wiener processes and real valued L\'evy processes, see \cite{sdei,capr,mcac}.
Thus, we have to restrict the problem to noise terms defined by Q-Wiener processes.
In general, a duality principle for SPDEs is based on an Itô product formula, which is not applicable to mild solutions of SPDEs.
Here, we approximate the local mild solution of the linearized stochastic Navier-Stokes equations and the mild solution of the adjoint equation by strong formulations.
As a consequence, we obtain a duality principle for the strong formulations and due to suitable convergence results, we prove that this duality principle holds for the mild solutions as well.
By the variational inequality and the duality principle, we design the optimal control based on the adjoint equation.
Moreover, we show that the Gâteaux derivatives and the Fr\'echet derivatives of the cost functional coincides up to order two.
Hence, we obtain that the optimal control also satisfies a sufficient optimality condition provided in \cite{ocop}.

The main contribution of this paper is the derivation of a solution to the control problem introduced in \cite{ocpc} using a stochastic maximum principle.
Thus, we are able to control the stochastic Navier-Stokes equations in multidimensional domains uniquely.
As a consequence, it remains to solve a system of coupled forward and backward SPDEs. 

The paper is organized as follows.
In Section 2, we discuss the functional analytic background, which is standard in the literature on mild solutions to the deterministic unsteady Navier-Stokes equations.
Moreover, a brief introduction to stochastic integrals subject to Q-Wiener processes is given.
The existence of a unique local mild solution to the stochastic Navier-Stokes equations and some useful properties are stated in Section 3.
Section 4 addresses the control problem considered in this paper, which is given by a nonconvex optimization problem.
We calculate the Gâteaux derivatives as well as the Fr\'echet derivatives of the cost functional related to the control problem up to order two such that we can state necessary and sufficient optimality conditions.
In Section 5, we introduce the adjoint equation as the mild solution of a backward SPDE.
The approximation of the local mild solution to the linearized stochastic Navier-Stokes equations and the mild solution of the adjoint equation is shown in Section 6.
In Section 7, we utilize a necessary optimality condition stated as a variational inequality to deduce a formula for the optimal control, which also satisfies a sufficient optimality condition.

\section{Preliminaries}

\subsection{Functional Background}\label{sec:functionalbackground}

Throughout this paper, let $\mathcal D \subset \mathbb{R}^n$, $n \geq 2$, be a bounded and connected domain with $C^\infty$ boundary $\partial \mathcal D$.
For $s \geq 0$, let $H^s(\mathcal D)$ denote the usual Sobolev space and for $s > \frac{1}{2}$ let $H^s_0(\mathcal D) = \{ y \in H^s(\mathcal D) \colon y = 0 \text{ on } \partial \mathcal D\}$.
We introduce the following common spaces:
\begin{align*}
 H &= \text{Completion of } \{y \in (C^\infty_0(\mathcal D))^n \colon \text{div }y = 0 \text{ in } \mathcal D\} \text{ in } (L^2(\mathcal D))^n \\
 &= \left\{ y \in (L^2(\mathcal D))^n \colon \text{div }y = 0 \text{ in } \mathcal D, y \cdot \eta = 0 \text{ on } \partial \mathcal D\right\}, \\
 V &= \text{Completion of } \{y \in (C^\infty_0(\mathcal D))^n \colon \text{div }y = 0 \text{ in } \mathcal D\} \text{ in } \left(H^1(\mathcal D)\right)^n \\
 &= \left\{ y \in \left(H^1_0(\mathcal D)\right)^n \colon \text{div }y = 0 \text{ in } \mathcal D\right\},
\end{align*}
where $\eta$ denotes the unit outward normal to $\partial \mathcal D$.
The space $H$ equipped with the inner product 
\begin{equation*}
 \langle y,z \rangle_H = \langle y,z \rangle_{(L^2(\mathcal D))^n} = \int\limits_{\mathcal D} \sum_{i =1}^n y_i(x) z_i(x) \, dx
\end{equation*}
for every $y = (y_1,...,y_n), z = (z_1,...,z_n) \in H$ becomes a Hilbert space.
For all $x = (x_1,...,x_n) \in \mathcal D$, we denote $D^j = \frac{\partial^{|j|}}{\partial x_1^{j_1} \cdot \cdot \cdot \partial x_n^{j_n}}$ with $|j| = \sum_{i=1}^n j_i$.
We set $D^j y = (D^j y_1,...,D^j y_n)$ for every $y= (y_1,...,y_n) \in V$ and $|j| \leq 1$.
Then the space $V$ equipped with the inner product
\begin{equation*}
 \langle y,z \rangle_V = \sum_{|j| \leq 1} \langle D^j y , D^j z\rangle_{(L^2(\mathcal D))^n}
\end{equation*}
for every $y,z \in V$ becomes a Hilbert space.
The norms on $H$ and $V$ are denoted by $\|\cdot\|_H$ and $\|\cdot\|_V$, respectively.
We get the orthogonal Helmholtz decomposition
\begin{equation*}
 (L^2(\mathcal D))^n = H \oplus \{\nabla y : y \in H^1(\mathcal D)\},
\end{equation*}
where $\oplus$ denotes the direct sum.
Then there exists an orthogonal projection $\Pi \colon (L^2(\mathcal D))^n \rightarrow H$, see \cite{alto}.
Next, we define the Stokes Operator $A \colon D(A) \subset H \rightarrow H$ by $A y = - \Pi \Delta y$ for every $y \in D(A)$, where $D(A) = \left(H^2(\mathcal D)\right)^n \cap V$.
The Stokes operator $A$ is positive, self adjoint and has a bounded inverse.
Moreover, the operator $-A$ is the infinitesimal generator of an analytic semigroup $(e^{-At})_{t \geq 0}$ such that $\left\| e^{-At} \right\|_{\mathcal L(H)} \leq 1$ for all $t \geq 0$. 
For more details, see \cite{ofpo,aots,silo,c0sa}.
Hence, we can introduce fractional powers of the Stokes operator, see \cite{solo,teon,c0sa}.
For $\alpha > 0$, we define
\begin{equation*}
 A^{-\alpha} = \frac{1}{\Gamma(\alpha)} \int\limits_0^\infty t^{\alpha-1} e^{-A t} dt,
\end{equation*}
where $\Gamma (\cdot)$ denotes the gamma function.
The operator $A^{-\alpha}$ is linear, bounded and invertible on $H$.
Hence, we define for all $\alpha > 0$
\begin{equation*}
 A^\alpha = \left(A^{-\alpha}\right)^{-1}.
\end{equation*}
Moreover, we set $A^0 = I$, where $I$ is the identity operator on $H$.
For $\alpha > 0$, the operator $A^\alpha$ is linear and closed on $H$ with dense domain $D(A^\alpha) = R(A^{-\alpha})$, where $R(A^{-\alpha})$ denotes the range of $A^{-\alpha}$.
Next, we provide some useful properties of fractional powers to the Stokes operator.

\begin{lemma}[cf. Section 2.6,{\cite{solo}}]\label{fractional}
 Let $A \colon D(A) \subset H \rightarrow H$ be the Stokes operator.
 Then
 \begin{itemize}
  \item[(i)] for $\alpha, \beta \in \mathbb R$, we have $A^{\alpha+\beta} y = A^\alpha A^\beta y$ for every $y \in D(A^\gamma)$, where $\gamma = \max \{\alpha,\beta,\alpha+\beta\}$,
  \item[(ii)] $e^{-At} \colon H \rightarrow D(A^\alpha)$ for all $t>0$ and $\alpha \geq 0$,
  \item[(iii)] we have $A^\alpha e^{-At} y = e^{-At} A^\alpha y$ for every $y \in D(A^\alpha)$ with $\alpha \in \mathbb R$,
  \item[(iv)] the operator $A^\alpha e^{-At}$ is bounded for all $t>0$ and there exist constants $M_\alpha,\theta > 0$ such that
  \begin{equation*}
   \left\| A^{\alpha}e^{-At} \right\|_{\mathcal L(H)} \leq M_\alpha t^{-\alpha}e^{-\theta t},
  \end{equation*}
  \item[(v)] $0 \leq \beta \leq \alpha \leq 1$ implies $D(A^\alpha) \subset D(A^\beta)$ and there exists a constant $C > 0$ such that for every $y \in D(A^\alpha)$
  \begin{equation*}
   \left\| A^\beta y \right\|_H \leq C \left\| A^{\alpha} y\right\|_H.
  \end{equation*}
 \end{itemize}
\end{lemma}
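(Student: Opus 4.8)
The statement collects standard facts about fractional powers of a \emph{positive self-adjoint operator with bounded inverse}, so the natural plan is to reduce everything to elementary scalar estimates via the spectral theorem. Since $\mathcal D$ is bounded, the embedding $D(A) = (H^2(\mathcal D))^n \cap V \hookrightarrow H$ is compact, so $A$ has compact resolvent and hence a discrete spectrum $0 < \lambda_1 \le \lambda_2 \le \dots \to \infty$ with an orthonormal eigenbasis $(e_k)_{k \ge 1}$ of $H$. Writing $y = \sum_k y_k e_k$ one then expects $A^\alpha y = \sum_k \lambda_k^\alpha y_k e_k$ on $D(A^\alpha) = \{ y : \sum_k \lambda_k^{2\alpha}|y_k|^2 < \infty\}$ and $e^{-At}y = \sum_k e^{-\lambda_k t} y_k e_k$. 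The first step is to check that this spectral representation is consistent with the Bochner-integral definition of $A^{-\alpha}$ stated above; applied to $e_k$ this is exactly the identity $\frac{1}{\Gamma(\alpha)}\int_0^\infty t^{\alpha-1} e^{-\lambda_k t}\,dt = \lambda_k^{-\alpha}$, and density of the span of the $e_k$ then pins down $A^{-\alpha}$, hence $A^\alpha$ and $e^{-At}$, on all of $H$.

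Granting the spectral representation, parts (i)--(iii) become termwise identities: (i) follows from $\lambda_k^{\alpha+\beta} = \lambda_k^\alpha \lambda_k^\beta$, the hypothesis $y \in D(A^\gamma)$ with $\gamma = \max\{\alpha,\beta,\alpha+\beta\}$ being precisely what guarantees that all the relevant series converge and that $A^\beta y \in D(A^\alpha)$ so the composition makes sense; (iii) is $\lambda_k^\alpha e^{-\lambda_k t} = e^{-\lambda_k t}\lambda_k^\alpha$ plus the same domain bookkeeping; and (ii) holds because $\sum_k \lambda_k^{2\alpha} e^{-2\lambda_k t}|y_k|^2 \le \big(\sup_{\lambda \ge \lambda_1}\lambda^{2\alpha}e^{-2\lambda t}\big)\,\|y\|_H^2 < \infty$ for $t>0$. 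For (iv), self-adjointness gives $\|A^\alpha e^{-At}\|_{\mathcal L(H)} = \sup_{k} \lambda_k^\alpha e^{-\lambda_k t} \le \sup_{\lambda \ge \lambda_1}\lambda^\alpha e^{-\lambda t}$; splitting $e^{-\lambda t} = e^{-\lambda t/2}e^{-\lambda t/2}$ and maximising $\lambda \mapsto \lambda^\alpha e^{-\lambda t/2}$ over $\lambda > 0$ (the maximum is attained at $\lambda = 2\alpha/t$ with value $(2\alpha/e)^\alpha t^{-\alpha}$) yields the bound with $M_\alpha = (2\alpha/e)^\alpha$, $M_0 = 1$, and $\theta = \lambda_1/2 > 0$. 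For (v), since $\beta - \alpha \le 0$ and $\lambda_k \ge \lambda_1 > 0$ we have $\lambda_k^{2\beta} \le \lambda_1^{2(\beta-\alpha)}\lambda_k^{2\alpha}$, so $\sum_k \lambda_k^{2\beta}|y_k|^2 \le \lambda_1^{2(\beta-\alpha)}\sum_k \lambda_k^{2\alpha}|y_k|^2$; this gives both the inclusion $D(A^\alpha) \subset D(A^\beta)$ and the estimate with $C = \lambda_1^{\beta-\alpha}$.

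If one wishes to stay within the sectorial-operator framework without invoking self-adjointness, the same conclusions follow from the Dunford--Balakrishnan representations, e.g. $A^\alpha e^{-At} = \frac{1}{2\pi i}\int_\Gamma \mu^\alpha e^{-\mu t}(\mu I - A)^{-1}\,d\mu$ over a contour $\Gamma$ enclosing the spectrum, combined with the sectorial resolvent bound $\|(\mu I - A)^{-1}\|_{\mathcal L(H)} \le C/|\mu|$ and the spectral gap; the scaling $\mu \mapsto \mu/t$ extracts the factor $t^{-\alpha}$ in (iv), and (i)--(iii) come from the resolvent identity and Cauchy's theorem. In either route I expect the only genuinely delicate point to be the domain bookkeeping in (i): when $\alpha$ and $\beta$ have opposite signs one of $A^\alpha, A^\beta$ is unbounded, so one must verify $A^\beta y \in D(A^\alpha)$ before writing $A^\alpha A^\beta y$ — which is exactly why the hypothesis is phrased with $\gamma = \max\{\alpha,\beta,\alpha+\beta\}$. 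Everything else is routine, and for the purposes of this paper it is reasonable simply to refer to \cite{solo} for the complete details.
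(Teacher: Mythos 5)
Your argument is correct, but it follows a genuinely different route from the paper, which offers no proof at all and simply refers to Section 2.6 of the cited monograph, where these facts are established for a general sectorial operator (the generator of an analytic semigroup with $0$ in the resolvent set) via the integral representation $A^{-\alpha} = \frac{1}{\Gamma(\alpha)}\int_0^\infty t^{\alpha-1}e^{-At}\,dt$ together with semigroup estimates and, for (iv), essentially the contour-integral argument you sketch in your last paragraph. Your main line of attack instead exploits what is special about the Stokes operator on a bounded smooth domain: self-adjointness, positivity, bounded inverse, and compactness of the resolvent (via Rellich), which yield a discrete spectrum $0<\lambda_1\le\lambda_2\le\cdots\to\infty$ and reduce (i)--(v) to elementary scalar inequalities such as $\lambda_k^{2\beta}\le\lambda_1^{2(\beta-\alpha)}\lambda_k^{2\alpha}$ and $\sup_{\lambda\ge\lambda_1}\lambda^\alpha e^{-\lambda t}\le(2\alpha/e)^\alpha t^{-\alpha}e^{-\lambda_1 t/2}$. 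This buys a short, self-contained and checkable proof with explicit constants ($C=\lambda_1^{\beta-\alpha}$, $\theta=\lambda_1/2$), at the price of not covering the general non-self-adjoint setting in which the reference states these results; your consistency check that the Bochner-integral definition of $A^{-\alpha}$ agrees with the spectral one on eigenvectors, and your attention to the domain bookkeeping in (i) (verifying $A^\beta y\in D(A^\alpha)$ from $y\in D(A^{\max\{\alpha,\beta,\alpha+\beta\}})$ using the monotonicity of the scale $D(A^s)$), close the only points where such an argument could go wrong. Both routes are sound; for the purposes of this paper the citation suffices, but your spectral proof is a legitimate and more elementary substitute in this concrete setting.
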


As a consequence of the previous lemma, we obtain that the space $D(A^\alpha)$ with $\alpha \geq 0$ equipped with the inner product 
\begin{equation*}
 \langle y,z\rangle_{D(A^\alpha)} = \langle A^\alpha y,A^\alpha z\rangle_H
\end{equation*}
for every $y,z \in D(A^\alpha)$ becomes a Hilbert space.
In this paper, the space $D(A^\alpha)$ with $\alpha \in (0,1)$ is used frequently.
A concrete characterization in terms of Sobolev spaces can be found in \cite{wpot,ofpo,teon}.
As a direct consequence of the fact that the Stokes operator $A$ is self adjoint, we get the following result.

\begin{lemma}\label{fractionalselfadjoint}
 Let $A \colon D(A) \subset H \rightarrow H$ be the Stokes operator.
 Then, the operator $A^\alpha$ is self adjoint for all $\alpha \in \mathbb R$.
\end{lemma}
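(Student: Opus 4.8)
The plan is to reduce everything to the self-adjointness of the bounded operators $A^{-\alpha}$ for $\alpha>0$, which is immediate from their integral representation, and then to transfer this to $A^\alpha$ by a general duality fact for inverses of closed operators. As a preliminary step, I would record that for each $t\geq 0$ the operator $e^{-At}$ is self-adjoint: since $-A$ generates the analytic semigroup $(e^{-At})_{t\geq 0}$ and $A=A^*$, the adjoint semigroup $(e^{-At})^*$ is generated by $-A^*=-A$, so by uniqueness of the semigroup associated with a given generator we get $(e^{-At})^* = e^{-A^*t} = e^{-At}$ for all $t\geq 0$. (Equivalently, this is immediate from the spectral theorem for the positive self-adjoint operator $A$.)

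Next, fix $\alpha>0$. The operator $A^{-\alpha}$ is defined by the Bochner integral $A^{-\alpha} = \frac{1}{\Gamma(\alpha)}\int_0^\infty t^{\alpha-1}e^{-At}\,dt$, which converges in $\mathcal L(H)$ because $t^{\alpha-1}$ is integrable near $0$ and $\bigl\|e^{-At}\bigr\|_{\mathcal L(H)}$ decays exponentially as $t\to\infty$ (Lemma \ref{fractional}(iv) with $\alpha=0$, or directly from the bound $\|e^{-At}\|_{\mathcal L(H)}\leq 1$ together with $\|e^{-At}\|_{\mathcal L(H)}\leq M_\beta t^{-\beta}e^{-\theta t}$ for a suitable $\beta>0$). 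Since $T\mapsto T^*$ is an isometric linear operation on $\mathcal L(H)$, it commutes with the Bochner integral; as each integrand $t^{\alpha-1}e^{-At}$ is self-adjoint, we conclude that $A^{-\alpha}$ is self-adjoint. Thus $A^{-\alpha}$ is a bounded, self-adjoint, injective operator with dense range $R(A^{-\alpha}) = D(A^\alpha)$.

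Finally, I would invoke the standard fact that if $T$ is a densely defined, closed, injective operator with dense range, then so is $T^{-1}$, and $(T^{-1})^* = (T^*)^{-1}$. Applying this with $T = A^{-\alpha}$ (bounded, hence closed, injective, with dense range) yields
\[
 (A^\alpha)^* = \bigl((A^{-\alpha})^{-1}\bigr)^* = \bigl((A^{-\alpha})^*\bigr)^{-1} = (A^{-\alpha})^{-1} = A^\alpha,
\]
so $A^\alpha$ is self-adjoint for every $\alpha>0$. For $\alpha=0$ the claim is trivial since $A^0=I$, and for $\alpha<0$ it is precisely the statement already established for $A^{-\alpha}$ with $-\alpha>0$; this exhausts all $\alpha\in\mathbb R$.

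I do not expect a genuine obstacle here: the two points requiring care are the justification that the adjoint passes inside the Bochner integral (routine, using isometry of $T\mapsto T^*$ on $\mathcal L(H)$ and approximation by simple functions) and the precise statement and applicability of the identity $(T^{-1})^* = (T^*)^{-1}$ for unbounded operators (standard closed-operator theory). It is worth emphasizing that this identity delivers self-adjointness, not merely symmetry, of $A^\alpha$, so no separate computation of $D((A^\alpha)^*)$ versus $D(A^\alpha)$ is needed.
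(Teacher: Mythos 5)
Your proof is correct. The paper offers no proof of this lemma at all --- it is stated as ``a direct consequence of the fact that the Stokes operator $A$ is self adjoint'' --- and your argument (self-adjointness of $e^{-At}$, hence of the Bochner integral defining $A^{-\alpha}$, then transfer to $A^\alpha=(A^{-\alpha})^{-1}$ via the identity $(T^{-1})^*=(T^*)^{-1}$ for injective operators with dense range) is precisely the standard way to substantiate that assertion, with the right care taken to obtain genuine self-adjointness rather than mere symmetry.
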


Next, we define the bilinear operator $B(y,z) = \Pi (y \cdot \nabla) z$ for some $y,z \in H$.
If $y = z$, we write $B(y) = B(y,y)$.
Then we have the following properties.
\begin{lemma}[cf. Lemma 2.2,{\cite{silo}}]\label{ineqnonlinear}
 Let $0 \leq \delta < \frac{1}{2} + \frac{n}{4}$. If $y \in D(A^{\alpha_1})$ and $z \in D(A^{\alpha_2})$, then we have
 \begin{equation*}
  \left\| A^{-\delta}B(y,z)\right\|_H \leq \widetilde M \left\| A^{\alpha_1}y\right\|_H \left\| A^{\alpha_2}z\right\|_H,
 \end{equation*}
 with some constant $\widetilde M = \widetilde M_{\delta, \alpha_1,\alpha_2}$, provided that $\alpha_1,\alpha_2 > 0$, $\delta + \alpha_2 > \frac{1}{2}$ and $\delta + \alpha_1 +\alpha_2 \geq \frac{n}{4} +\frac{1}{2}$.
\end{lemma}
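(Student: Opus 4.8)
The plan is to pass to the dual formulation and reduce the assertion to the trilinear estimate
\begin{equation*}
 \left| \langle B(y,z),\psi \rangle_H \right| \leq \widetilde M \left\| A^{\alpha_1}y \right\|_H \left\| A^{\alpha_2}z \right\|_H \left\| A^\delta \psi \right\|_H \qquad \text{for all } \psi \in D(A^\delta).
\end{equation*}
Assume first that $y$ and $z$ are smooth and divergence free; the general case then follows by density (such fields are dense in $D(A^{\alpha_1})$ and $D(A^{\alpha_2})$) together with the bilinearity of $B$, since the estimate is stable under passage to the limit. For smooth $y,z$ we have $B(y,z) \in H$, and writing the $H$-norm as a supremum over the unit ball, using that $A^{-\delta}$ is bounded and self adjoint by Lemma \ref{fractionalselfadjoint}, and substituting $\psi = A^{-\delta}\phi$ (so that $\|A^\delta\psi\|_H = \|\phi\|_H$), we obtain
\begin{align*}
 \left\| A^{-\delta}B(y,z) \right\|_H
 &= \sup_{\phi \in H,\, \|\phi\|_H \leq 1} \langle A^{-\delta}B(y,z), \phi \rangle_H \\
 &= \sup_{\phi \in H,\, \|\phi\|_H \leq 1} \langle B(y,z), A^{-\delta}\phi \rangle_H \\
 &= \sup_{\psi \in D(A^\delta),\, \|A^\delta\psi\|_H \leq 1} \langle B(y,z),\psi \rangle_H .
\end{align*}
Finally, since $\psi \in H$ and $\Pi$ is the orthogonal projection onto $H$, we have $\langle B(y,z),\psi \rangle_H = \langle (y \cdot \nabla) z, \psi \rangle_{(L^2(\mathcal D))^n}$, so the claim is equivalent to the trilinear estimate above, with the same constant.

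To prove the trilinear estimate, I would use that $(y \cdot \nabla)z = \operatorname{div}(y \otimes z)$ since $\operatorname{div} y = 0$. When $2\alpha_2 \geq 1$ one leaves the term as $\langle (y\cdot\nabla)z,\psi \rangle_{(L^2(\mathcal D))^n}$ and estimates it via the generalized Hölder inequality with exponents $p_1,p_2,p_3 \in [1,\infty]$ such that $\tfrac1{p_1}+\tfrac1{p_2}+\tfrac1{p_3}=1$, together with the embeddings
\begin{gather*}
 \left\| y \right\|_{(L^{p_1}(\mathcal D))^n} \leq C \left\| A^{\alpha_1} y \right\|_H , \qquad
 \left\| \nabla z \right\|_{(L^{p_2}(\mathcal D))^{n \times n}} \leq C \left\| A^{\alpha_2} z \right\|_H , \\
 \left\| \psi \right\|_{(L^{p_3}(\mathcal D))^n} \leq C \left\| A^{\delta} \psi \right\|_H ,
\end{gather*}
which follow from $D(A^\gamma) \hookrightarrow (H^{2\gamma}(\mathcal D))^n$ (the characterization of $D(A^\gamma)$ in terms of Sobolev spaces mentioned after Lemma \ref{fractional}) and the Sobolev embedding theorem, and which hold provided $\tfrac1{p_1} \geq \tfrac12 - \tfrac{2\alpha_1}{n}$, $\tfrac1{p_2} \geq \tfrac12 + \tfrac1n - \tfrac{2\alpha_2}{n}$ and $\tfrac1{p_3} \geq \tfrac12 - \tfrac{2\delta}{n}$. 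When $2\alpha_2 < 1$, one first integrates by parts — the boundary term vanishing because $y$ vanishes on $\partial\mathcal D$ — in order to shift a fraction $1-2\alpha_2$ of the derivative off $z$, and then argues in the same spirit, with a fractional Leibniz (product) estimate in $H^{1-2\alpha_2}(\mathcal D)$ replacing ordinary Hölder. The constant $\widetilde M$ collects the norms of these embeddings, of $\Pi$, and, in the second case, of the product estimate.

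What remains is purely the bookkeeping of exponents: one checks that admissible $p_1,p_2,p_3$ exist exactly under the stated hypotheses. In the range $2\alpha_2 \geq 1$ this is immediate: adding the three Sobolev conditions, the constraint $\sum_i \tfrac1{p_i} = 1$ can be met if and only if $\tfrac32 + \tfrac1n - \tfrac2n(\alpha_1+\alpha_2+\delta) \leq 1$, that is $\delta + \alpha_1 + \alpha_2 \geq \tfrac n4 + \tfrac12$, the remaining requirements $\tfrac1{p_i} \in [0,1]$ being then accommodated using $\alpha_1,\alpha_2 > 0$. For $2\alpha_2 < 1$ the homogeneity of the trilinear form yields the same inequality $\delta + \alpha_1 + \alpha_2 \geq \tfrac n4 + \tfrac12$, while the requirement that $\psi$ be able to absorb the $1-2\alpha_2$ transferred derivatives — equivalently $D(A^\delta) \hookrightarrow H^{1-2\alpha_2}(\mathcal D)$ — amounts to $2\delta \geq 1-2\alpha_2$, i.e. $\delta + \alpha_2 \geq \tfrac12$, the strict form $\delta + \alpha_2 > \tfrac12$ supplying the room needed at the borderline Sobolev exponents; and $\alpha_1,\alpha_2 > 0$ are precisely what allow the homogeneity condition to appear in the stated (weaker) form rather than as $\delta + \alpha_2 \geq \tfrac n4 + \tfrac12$. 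The restriction $\delta < \tfrac12 + \tfrac n4$ keeps us in the regime where these counting conditions are the operative ones rather than being superseded by a cruder bound. I expect the only genuinely delicate point to be this low regularity case $2\alpha_2 < 1$, where $\nabla z$ lies in no Lebesgue space and one must route the derivative carefully through the fractional product estimate while keeping track of which factor carries the vanishing boundary trace.
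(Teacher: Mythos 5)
The paper itself contains no proof of Lemma~\ref{ineqnonlinear}: it is imported, with the tag ``cf.\ Lemma 2.2'', directly from \cite{silo}, so there is no in-paper argument to measure your proposal against. Your outline does reconstruct the standard proof from that reference: dualize using the boundedness and self-adjointness of $A^{-\delta}$ (Lemma~\ref{fractionalselfadjoint}), drop the Leray projection against the solenoidal test function, exploit $\text{div}\, y = 0$ to redistribute the derivative, and close with the Sobolev characterization of $D(A^\gamma)$ plus H\"older. The exponent count you give in the case $2\alpha_2 \geq 1$ is correct and reproduces $\delta+\alpha_1+\alpha_2 \geq \frac{n}{4}+\frac{1}{2}$. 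One small inaccuracy: the boundary term produced by the integration by parts is $\int_{\partial \mathcal D} (y \cdot \eta)(z \cdot \psi)$, which vanishes because the normal trace $y \cdot \eta$ is zero for every element of $H$, not because $y$ itself vanishes on $\partial\mathcal D$; the distinction matters if your smooth divergence-free approximants are, say, Stokes eigenfunctions rather than compactly supported fields.

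The genuine gap is the case $2\alpha_2 < 1$, which is the case the paper actually needs: the lemma is applied with $\alpha_1=\alpha_2=\alpha$, Theorem~\ref{existencemild} permits $\alpha < \frac{1}{2}$, and Theorem~\ref{existencebackward} requires it. There you defer the entire estimate to an unspecified ``fractional Leibniz estimate in $H^{1-2\alpha_2}(\mathcal D)$'' and justify the hypotheses only by a homogeneity count. But this is precisely where the content of the lemma lies: a single derivative must be split between the factor controlled by $A^{\alpha_2}$ and the factor controlled by $A^{\delta}$, and the strict inequality $\delta+\alpha_2 > \frac{1}{2}$ is exactly what makes that splitting possible with some slack at the borderline Sobolev exponents. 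A complete argument would invoke a precise Sobolev multiplication theorem (a bound of the type $\|fg\|_{H^{-s_3}} \leq C \|f\|_{H^{s_1}}\|g\|_{H^{s_2}}$ under the usual index conditions, applied after a full integration by parts to $y \otimes z$ paired against $\nabla \psi$, or to $y$ and $\nabla z$ paired against $\psi$) and then verify its hypotheses from $\alpha_1,\alpha_2>0$, $\delta+\alpha_2>\frac{1}{2}$ and $\delta+\alpha_1+\alpha_2 \geq \frac{n}{4}+\frac{1}{2}$. As written, that verification is asserted rather than performed, so the proposal is a correct plan for the proof rather than a proof.
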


\begin{corollary}\label{ineqnonlinear2}
 Let $\alpha_1, \alpha_2$ and $\delta$ be as in Lemma \ref{ineqnonlinear}. 
 If $y,z \in D(A^{\beta})$, $\beta = \max \{\alpha_1, \alpha_2\}$, then we have
 \begin{equation*}
  \left\| A^{-\delta}(B(y) - B(z))\right\|_H \leq \widetilde M(\left\| A^{\alpha_1}y\right\|_H \left\| A^{\alpha_2}(y-z)\right\|_H+\left\| A^{\alpha_1}(y-z)\right\|_H \left\| A^{\alpha_2}z\right\|_H).
 \end{equation*}
\end{corollary}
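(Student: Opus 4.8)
The plan is to reduce the estimate to two applications of Lemma~\ref{ineqnonlinear} via the bilinearity of $B$. First I would exploit the algebraic identity coming from the fact that $B(y,z) = \Pi(y\cdot\nabla)z$ is bilinear: writing $B(y) = B(y,y)$ and $B(z) = B(z,z)$, adding and subtracting $B(y,z)$ gives
\begin{equation*}
 B(y) - B(z) = B(y,y) - B(y,z) + B(y,z) - B(z,z) = B(y,y-z) + B(y-z,z).
\end{equation*}
Applying $A^{-\delta}$ and the triangle inequality in $H$ then yields
\begin{equation*}
 \left\| A^{-\delta}(B(y)-B(z)) \right\|_H \leq \left\| A^{-\delta} B(y,y-z) \right\|_H + \left\| A^{-\delta} B(y-z,z) \right\|_H.
\end{equation*}

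Next I would check that each term on the right is of the form covered by Lemma~\ref{ineqnonlinear}. Since $\beta = \max\{\alpha_1,\alpha_2\}$, the monotonicity of the domains of fractional powers (cf. Lemma~\ref{fractional}(v)) gives $D(A^\beta) \subset D(A^{\alpha_1}) \cap D(A^{\alpha_2})$, so that $y$, $z$ and $y-z$ all belong to both $D(A^{\alpha_1})$ and $D(A^{\alpha_2})$. Hence Lemma~\ref{ineqnonlinear} applies to the pair $(y, y-z)$ with $y \in D(A^{\alpha_1})$, $y-z \in D(A^{\alpha_2})$, yielding $\left\| A^{-\delta}B(y,y-z) \right\|_H \leq \widetilde M \left\| A^{\alpha_1}y \right\|_H \left\| A^{\alpha_2}(y-z) \right\|_H$, and likewise to the pair $(y-z, z)$, yielding $\left\| A^{-\delta}B(y-z,z) \right\|_H \leq \widetilde M \left\| A^{\alpha_1}(y-z) \right\|_H \left\| A^{\alpha_2}z \right\|_H$, both with the same constant $\widetilde M = \widetilde M_{\delta,\alpha_1,\alpha_2}$, since the exponents $\delta, \alpha_1, \alpha_2$ are unchanged. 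Summing the two estimates produces exactly the claimed inequality.

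There is essentially no hard step here; the only points requiring a line of justification are the bilinearity identity, which is immediate from the definition of $B$, and the domain inclusion $D(A^\beta)\subset D(A^{\alpha_1})\cap D(A^{\alpha_2})$, which ensures all three vectors lie in the fractional-power spaces for which Lemma~\ref{ineqnonlinear} is stated. I would make sure to record that the constant is literally inherited from Lemma~\ref{ineqnonlinear} and, in particular, does not depend on $\beta$.
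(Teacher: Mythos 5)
Your proof is correct and is precisely the standard argument the paper leaves implicit (the corollary is stated without proof): the bilinear decomposition $B(y)-B(z)=B(y,y-z)+B(y-z,z)$ followed by two applications of Lemma \ref{ineqnonlinear}, with the domain inclusion $D(A^\beta)\subset D(A^{\alpha_1})\cap D(A^{\alpha_2})$ justifying their applicability. Nothing further is needed.
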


Finally, we introduce the resolvent operator and we state some basic properties.
For more details, see \cite{solo}.
Let $\lambda \in \mathbb R$ such that $\lambda I + A$ is invertible, i.e. $(\lambda I + A)^{-1}$ is a linear and bounded operator.
Then the operator $R(\lambda;-A) = (\lambda I + A)^{-1}$ is called the resolvent operator mapping $H$ into $D(A)$.
Using the closed graph theorem, we can conclude that the operator $A R(\lambda;-A) \colon H \rightarrow H$ is linear and bounded.
Moreover, we have the following representation:
\begin{equation} \label{resolventoperator}
 R(\lambda;-A) = \int\limits_0^\infty e^{-\lambda r} e^{-A r} dr.
\end{equation}
For all $\lambda > 0$, we get $\|R(\lambda;-A)\|_{\mathcal L (H)} \leq \frac{1}{\lambda}$ and since the semigroup $(e^{-At})_{t \geq 0}$ is self adjoint, the operator $R(\lambda;-A)$ is self adjoint as well.
Let the operator $R(\lambda) \colon H \rightarrow D(A)$ be defined by $R(\lambda) = \lambda R(\lambda;-A)$.
Hence, we get for all $\lambda > 0$
\begin{equation}\label{resolventinequality}
 \|R(\lambda)\|_{\mathcal L (H)} \leq 1.
\end{equation}
By Lemma \ref{fractional} (iii) and equation (\ref{resolventoperator}), we obtain for every $y \in D(A^\alpha)$ with $\alpha \in \mathbb R$
\begin{equation}\label{resolventcommutes}
 A^\alpha R(\lambda) y = R(\lambda) A^\alpha y.
\end{equation}
Moreover, we have for every $y \in H$
\begin{equation}\label{resolventconvergence}
 \lim\limits_{\lambda \rightarrow \infty} \| R(\lambda) y - y\|_H = 0.
\end{equation}

\subsection{Stochastic Processes and the Stochastic Integral}\label{sec:stochintegral}

In this section, we give a brief introduction to stochastic integrals, where the noise term is defined as a Hilbert space valued Wiener process.
For more details, see \cite{seiid,sdei}.

Throughout this paper, let $(\Omega,\mathcal{F}, \mathbb{P})$ be a complete probability space endowed with a filtration $(\mathcal{F}_t)_{t \in [0,T]}$ satisfying $\mathcal F_t = \bigcap_{s > t} \mathcal F_s$ for all $t \in [0,T]$ and $\mathcal F_0$ contains all sets of $\mathcal F$ with $\mathbb P$-measure 0. 
Let $E$ be a separable Hilbert space.
We denote by $\mathcal L(E)$ the space of linear and bounded operators defined on $E$.
Let $Q \in \mathcal L(E)$ be a symmetric and nonnegative operator such that $\text{Tr } Q < \infty$. 
Then we have the following definition.

\begin{definition}[Definition 4.2,\cite{seiid}]
An $E$-valued stochastic process $(W(t))_{t \in [0,T]}$ is called a Q-Wiener process if
\begin{itemize}
 \item $W(0) = 0$;
 \item $(W(t))_{t \in [0,T]}$ has continuous trajectories;
 \item $(W(t))_{t \in [0,T]}$ has independent increments;
 \item the distribution of $W(t) - W(s)$ is a Gaussian measure with mean 0 and covariance $(t-s)Q$ for $0 \leq s \leq t \leq T$.
\end{itemize}
\end{definition}

Next, we give a definition of $\mathcal F_t$-adapted processes and predictable processes, which are important to construct the stochastic integral.
Let $\mathcal P$ denote the smallest $\sigma$-field of subsets of $[0,T] \times \Omega$.

\begin{definition}[\cite{seiid}]
 A stochastic process $(X(t))_{t \in [0,T]}$ taking values in a measurable space $(\mathcal X, \mathscr B(\mathcal X))$ is called $\mathcal F_t$-adapted if for arbitrary $t \in [0,T]$ the random variable $X(t)$ is $\mathcal F_t$-measurable.
 We call $(X(t))_{t \in [0,T]}$ predictable if it is a measurable mapping from $([0,T] \times \Omega,\mathcal P)$ to $(\mathcal X, \mathscr B(\mathcal X))$.
\end{definition}

A predictable process is $\mathcal F_t$-adapted.
The converse is in general not true. 
However, the following result is useful to conclude that a stochastic process has a predictable version.

\begin{lemma}[Proposition 3.7,\cite{seiid}] \label{predictable}
 Assume that the stochastic process $(X(t))_{t \in [0,T]}$ is $\mathcal F_t$-adapted and stochastically continuous.
 Then the process $(X(t))_{t \in [0,T]}$ has a predictable version.
\end{lemma}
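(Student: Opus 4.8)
The plan is to approximate $X$ by piecewise-constant, left-continuous, $\mathcal F_t$-adapted processes, which are automatically predictable, and then pass to the limit. For $n \in \mathbb N$ set $t_k^n = kT/n$, $k = 0,\dots,n$, and define
\[
 X_n(t) = X(0)\,\mathds 1_{\{0\}}(t) + \sum_{k=1}^n X(t_{k-1}^n)\,\mathds 1_{(t_{k-1}^n,t_k^n]}(t), \qquad t \in [0,T].
\]
Each $X_n$ is left-continuous in $t$, and on $(t_{k-1}^n,t_k^n]$ it coincides with the $\mathcal F_{t_{k-1}^n}$-measurable random variable $X(t_{k-1}^n)$; recalling that $\mathcal P$ is generated by the sets $(s,u] \times F$ with $F \in \mathcal F_s$ together with $\{0\} \times \mathcal F_0$, it follows that the map $(t,\omega) \mapsto X_n(t,\omega)$ is $\mathcal P$-measurable, i.e. $X_n$ is predictable.

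Next I would use that stochastic continuity on the compact interval $[0,T]$ upgrades to uniform stochastic continuity, which yields, for every $\varepsilon > 0$,
\[
 \sup_{t \in [0,T]} \mathbb P\big( \|X_n(t) - X(t)\|_E > \varepsilon \big) \longrightarrow 0 \qquad (n \to \infty),
\]
since for $t \in (t_{k-1}^n,t_k^n]$ the probability on the left equals $\mathbb P(\|X(t_{k-1}^n) - X(t)\|_E > \varepsilon)$ with $|t - t_{k-1}^n| \le T/n$. Passing to a subsequence (not relabelled) I can arrange $\sup_{t \in [0,T]} \mathbb P(\|X_n(t) - X(t)\|_E > 2^{-n}) \le 2^{-n}$, so that for every fixed $t \in [0,T]$ the Borel--Cantelli lemma gives $X_n(t) \to X(t)$ $\mathbb P$-a.s.

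Finally, I would define $\widetilde X(t,\omega) = \lim_{n \to \infty} X_n(t,\omega)$ whenever $(X_n(t,\omega))_n$ is a Cauchy sequence in $E$, and $\widetilde X(t,\omega) = 0$ otherwise. The set $G \subset [0,T] \times \Omega$ on which this sequence is Cauchy is predictable, being obtained from the predictable processes $X_n$ by countably many measurable operations; on $G$ the process $\widetilde X$ is a pointwise limit of predictable maps and hence predictable, so $\widetilde X$ is predictable. By the previous step, for each $t \in [0,T]$ we have $(t,\omega) \in G$ and $\widetilde X(t,\omega) = X(t,\omega)$ for $\mathbb P$-a.e.\ $\omega$, so that $\widetilde X$ is the desired predictable version of $X$.

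I expect the main obstacle to be exactly this last point. A direct application of Fubini only produces a process agreeing with $X$ at Lebesgue-almost every $t$ (through an a.e.-convergent subsequence on $[0,T] \times \Omega$), whereas a version must agree with $X$ at \emph{every} $t$. The remedy is the uniform-in-$t$ estimate coming from compactness of $[0,T]$, which lets Borel--Cantelli produce almost-sure convergence simultaneously for all $t$; everything else (measurability of $G$, predictability of the pointwise limit, predictability of $X_n$) is routine.
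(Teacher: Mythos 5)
Your proof is correct, and it is essentially the standard argument behind the cited result: the paper itself gives no proof, quoting Proposition 3.7 of Da Prato--Zabczyk, whose proof proceeds exactly as you do (left-continuous adapted step-function approximants, uniform stochastic continuity on the compact interval, a Borel--Cantelli subsequence, and a predictable pointwise limit on the set where the approximants converge). Your closing remark correctly identifies the one point that needs care -- obtaining agreement with $X$ at every fixed $t$ rather than at Lebesgue-a.e.\ $t$ -- and your uniform-in-$t$ estimate handles it.
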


For the remaining part of this section, let $(W(t))_{t \in [0,T]}$ be a Q-Wiener process with values in $E$ and covariance operator $Q \in \mathcal L(E)$.
Then there exists a unique operator $Q^{1/2} \in \mathcal L(E)$ such that $Q^{1/2} \circ Q^{1/2} = Q$.
We denote by $\mathcal{L}_{(HS)}(Q^{1/2}(E);\mathcal H)$ the space of Hilbert-Schmidt operators mapping from $Q^{1/2}(E)$ into another separable Hilbert space $\mathcal H$.
Let $(\Phi(t))_{t\in [0,T]}$ be a predictable process with values in $\mathcal{L}_{(HS)}(Q^{1/2}(E);\mathcal H)$ such that $ \mathbb E \int_0^T \left\| \Phi(t) \right\|_{\mathcal{L}_{(HS)}(Q^{1/2}(E);\mathcal H)}^2 dt < \infty$.
Then one can define the stochastic integral 
\begin{equation*}
 \psi(t) = \int\limits_0^t \Phi(s) d W(s)
\end{equation*}
for all $t \in [0,T]$ and $\mathbb P$-almost surely.
Moreover, we have
\begin{equation*}
 \mathbb E \left\| \psi(t)\right\|_{\mathcal H}^2 = \mathbb E \int\limits_0^t \left\| \Phi(s) \right\|_{\mathcal{L}_{(HS)}(Q^{1/2}(E);\mathcal H)}^2 ds.
\end{equation*}
The following proposition is useful when dealing with a closed operator $\mathcal A \colon D(\mathcal A) \subset \mathcal H \rightarrow \mathcal H$.

\begin{proposition}[cf. Proposition 4.15,{\cite{seiid}}]\label{closedopstochint}
 If $\Phi(t)y \in D(\mathcal A)$ for every $y \in E$, all $t \in [0,T]$ and $\mathbb P$-almost surely,
 \begin{equation*}
  \mathbb E \int\limits_0^T \left\| \Phi(t) \right\|_{\mathcal{L}_{(HS)}(Q^{1/2}(E);\mathcal H)}^2 dt < \infty \quad \text{and} \quad 
  \mathbb E \int\limits_0^T \left\| \mathcal A \Phi(t) \right\|_{\mathcal{L}_{(HS)}(Q^{1/2}(E);\mathcal H)}^2 dt < \infty,
 \end{equation*}
 then we have $\mathbb P$-a.s. $\int_0^T \Phi(t) d W(t) \in D(\mathcal A)$ and
 \begin{equation*}
  \mathcal A\int\limits_0^T \Phi(t) d W(t) = \int\limits_0^T \mathcal A\Phi(t) d W(t).
 \end{equation*}
\end{proposition}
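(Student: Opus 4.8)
The plan is to follow the classical route used for such statements: establish the commutation identity first for elementary (step) integrands, where it reduces to linearity of $\mathcal A$, and then pass to the limit using the Itô isometry together with the closedness of $\mathcal A$. The only non-trivial ingredient is arranging that a \emph{single} approximating sequence works for both $\Phi$ and $\mathcal A\Phi$ simultaneously; this is achieved by re-reading $\Phi$ as a process taking values in the Hilbert--Schmidt operators into the graph-norm domain of $\mathcal A$.

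\emph{Step 1: reformulation in the graph norm.} Since $\mathcal A$ is closed, $D(\mathcal A)$ equipped with the graph inner product $\langle x,y\rangle_{D(\mathcal A)} = \langle x,y\rangle_{\mathcal H} + \langle \mathcal A x,\mathcal A y\rangle_{\mathcal H}$ is again a separable Hilbert space, and the inclusion $D(\mathcal A)\hookrightarrow \mathcal H$ is linear and bounded. The hypotheses say precisely that the range of $\Phi(t)$ is contained in $D(\mathcal A)$ for all $t$ and $\mathbb P$-a.s., and that, viewed as a process with values in $\mathcal L_{(HS)}(Q^{1/2}(E);D(\mathcal A))$, $\Phi$ satisfies $\mathbb E\int_0^T \|\Phi(t)\|_{\mathcal L_{(HS)}(Q^{1/2}(E);D(\mathcal A))}^2\,dt = \mathbb E\int_0^T ( \|\Phi(t)\|_{\mathcal L_{(HS)}(Q^{1/2}(E);\mathcal H)}^2 + \|\mathcal A\Phi(t)\|_{\mathcal L_{(HS)}(Q^{1/2}(E);\mathcal H)}^2 )\,dt < \infty$. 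Using that $\mathcal A$ is closed one checks that this $D(\mathcal A)$-valued version of $\Phi$ is still predictable, so the stochastic integral $\int_0^T \Phi(t)\,dW(t)$ is well defined as a $D(\mathcal A)$-valued random variable; by boundedness of the embedding $D(\mathcal A)\hookrightarrow\mathcal H$ it coincides $\mathbb P$-a.s. with the $\mathcal H$-valued integral, and in particular $\int_0^T \Phi(t)\,dW(t)\in D(\mathcal A)$ $\mathbb P$-a.s. Likewise $\mathcal A\Phi$ is predictable (it is the image of $\Phi$ under the bounded map $S\mapsto \mathcal A\circ S$ from $\mathcal L_{(HS)}(Q^{1/2}(E);D(\mathcal A))$ into $\mathcal L_{(HS)}(Q^{1/2}(E);\mathcal H)$) and square-integrable, so $\int_0^T \mathcal A\Phi(t)\,dW(t)$ is well defined in $\mathcal H$.

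\emph{Step 2: elementary integrands and passage to the limit.} By the construction of the stochastic integral, elementary processes are dense in $L^2([0,T]\times\Omega,\mathcal P;\mathcal L_{(HS)}(Q^{1/2}(E);D(\mathcal A)))$, so choose elementary processes $\Phi_n = \sum_j \mathds 1_{(t^n_j,t^n_{j+1}]}\,\Phi^n_j$, with $\Phi^n_j$ being $\mathcal F_{t^n_j}$-measurable and $\mathcal L_{(HS)}(Q^{1/2}(E);D(\mathcal A))$-valued, converging to $\Phi$ in that space. For such processes $\int_0^T \Phi_n(t)\,dW(t) = \sum_j \Phi^n_j (W(t^n_{j+1})-W(t^n_j))$ is a finite sum of elements of $D(\mathcal A)$, and linearity of $\mathcal A$ on $D(\mathcal A)$ gives $\mathcal A\int_0^T \Phi_n(t)\,dW(t) = \sum_j \mathcal A\Phi^n_j (W(t^n_{j+1})-W(t^n_j)) = \int_0^T \mathcal A\Phi_n(t)\,dW(t)$. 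By the Itô isometry in $\mathcal H$, $\int_0^T \Phi_n\,dW \to \int_0^T \Phi\,dW$ in $L^2(\Omega;\mathcal H)$; applying the same isometry to the integrands $\mathcal A\Phi_n$, which converge to $\mathcal A\Phi$ in $L^2([0,T]\times\Omega;\mathcal L_{(HS)}(Q^{1/2}(E);\mathcal H))$ by the choice above, yields $\int_0^T \mathcal A\Phi_n\,dW \to \int_0^T \mathcal A\Phi\,dW$ in $L^2(\Omega;\mathcal H)$. Passing to a subsequence we may assume both convergences hold $\mathbb P$-almost surely. Then for $\mathbb P$-a.e.\ $\omega$ the elements $x_n := \int_0^T \Phi_n\,dW \in D(\mathcal A)$ satisfy $x_n \to \int_0^T \Phi\,dW$ and $\mathcal A x_n = \int_0^T \mathcal A\Phi_n\,dW \to \int_0^T \mathcal A\Phi\,dW$ in $\mathcal H$, so closedness of $\mathcal A$ forces $\int_0^T \Phi\,dW \in D(\mathcal A)$ and $\mathcal A\int_0^T \Phi(t)\,dW(t) = \int_0^T \mathcal A\Phi(t)\,dW(t)$ $\mathbb P$-a.s., which is the claim.

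\emph{Main obstacle.} The genuinely delicate point is Step 1: obtaining one approximating sequence of elementary processes that converges to $\Phi$ \emph{both} in the $\mathcal H$-Hilbert--Schmidt norm and with $\mathcal A\Phi_n\to\mathcal A\Phi$. The graph-norm reformulation is exactly what makes this possible, but it requires one to verify that $\Phi$, read as an $\mathcal L_{(HS)}(Q^{1/2}(E);D(\mathcal A))$-valued process, is predictable and square-integrable, and that the general density-of-elementary-processes statement underlying the definition of the stochastic integral applies with this (separable Hilbert) target. Once that is in place, the remainder is the routine Itô-isometry plus closed-graph limiting argument carried out in Step 2.
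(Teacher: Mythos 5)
The paper does not prove this proposition at all: it is quoted verbatim from the cited reference (Proposition 4.15 in \cite{seiid}, Da Prato--Zabczyk), and the proof there is precisely the graph-norm reformulation followed by approximation with elementary processes, the It\^o isometry, and the closedness of $\mathcal A$. Your argument is correct and essentially reproduces that standard proof, so there is nothing to add.
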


Next, we state a product formula for infinite dimensional stochastic processes, which we use to obtain a duality principle.
The formula is an immediate consequence of the Itô formula, see \cite[Theorem 2.9]{sdei}.

\begin{lemma}\label{productformula}
 For $i=1,2$, assume that $X_i^0$ are $\mathcal F_0$-measurable $\mathcal H$-valued random variables, $(f_i(t))_{t \in [0,T]}$ are $\mathcal H$-valued $\mathcal F_t$-adapted processes such that $\mathbb E \int_0^T \| f_i(t) \|_{\mathcal H} dt < \infty$ and $(\Phi_i(t))_{t \in [0,T]}$ are $\mathcal{L}_{(HS)}(Q^{1/2}(E);\mathcal H)$-valued predictable processes such that $\mathbb E \int_0^T \| \Phi_i(t) \|_{\mathcal{L}_{(HS)}(Q^{1/2}(E);\mathcal H)}^2 dt < \infty$. 
 For $i=1,2$, assume that the processes $(X_i(t))_{t \in [0,T]}$ satisfy for all $t \in [0,T]$ and $\mathbb P$-a.s.
 \begin{equation*}
  X_i(t) = X_i^0 + \int\limits_0^t f_i(s) ds + \int\limits_0^t \Phi_i(s) dW(s).
 \end{equation*}
 Then we have for all $t \in [0,T]$ and $\mathbb P$-a.s.
 \begin{align*}
  \left \langle X_1(t),X_2(t) \right\rangle_{\mathcal H} 
  &= \left \langle X_1^0,X_2^0 \right\rangle_{\mathcal H} + \int\limits_0^t \left[ \left \langle X_1(s),f_2(s) \right\rangle_{\mathcal H} + \left\langle X_2(s),f_1(s) \right\rangle_{\mathcal H} + \left\langle \Phi_1(s),\Phi_2(s) \right\rangle_{\mathcal{L}_{(HS)}(Q^{1/2}(E);\mathcal H)} \right] ds \\
  &\quad + \int\limits_0^t \left \langle X_1(s),\Phi_2(s) d W(s) \right\rangle_{\mathcal H} + \int\limits_0^t \left \langle X_2(s),\Phi_1(s) d W(s) \right\rangle_{\mathcal H}.
 \end{align*}
\end{lemma}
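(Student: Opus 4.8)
The assertion is the Itô product rule in a Hilbert space, so the plan is to derive it from the abstract Itô formula of \cite[Theorem 2.9]{sdei}, applied on a product space. Introduce the separable Hilbert space $\mathcal G := \mathcal H \times \mathcal H$ with $\langle (a_1,a_2),(b_1,b_2)\rangle_{\mathcal G} := \langle a_1,b_1\rangle_{\mathcal H} + \langle a_2,b_2\rangle_{\mathcal H}$, and set $X(t) := (X_1(t),X_2(t))$, $X^0 := (X_1^0,X_2^0)$, $f(t) := (f_1(t),f_2(t))$ and $\Phi(t) := (\Phi_1(t),\Phi_2(t))$. By the hypotheses, $X^0$ is $\mathcal F_0$-measurable, $f$ is $\mathcal G$-valued, $\mathcal F_t$-adapted with $\mathbb E \int_0^T \|f(t)\|_{\mathcal G}\,dt < \infty$, and $\Phi$ is an $\mathcal{L}_{(HS)}(Q^{1/2}(E);\mathcal G)$-valued predictable process with $\mathbb E \int_0^T \|\Phi(t)\|_{\mathcal{L}_{(HS)}(Q^{1/2}(E);\mathcal G)}^2\,dt = \sum_{i=1}^2 \mathbb E \int_0^T \|\Phi_i(t)\|_{\mathcal{L}_{(HS)}(Q^{1/2}(E);\mathcal H)}^2\,dt < \infty$. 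Hence $X(t) = X^0 + \int_0^t f(s)\,ds + \int_0^t \Phi(s)\,dW(s)$ is a continuous, $\mathcal F_t$-adapted, $\mathcal G$-valued Itô process driven by the same $Q$-Wiener process $W$.

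Next, apply the Itô formula to $F\colon \mathcal G \to \mathbb R$, $F(x_1,x_2) := \langle x_1,x_2\rangle_{\mathcal H}$. As a bounded bilinear form, $F$ is of class $C^\infty$, with $F'(x)[h] = \langle h_1,x_2\rangle_{\mathcal H} + \langle x_1,h_2\rangle_{\mathcal H}$ and $F''(x)[h,k] = \langle h_1,k_2\rangle_{\mathcal H} + \langle h_2,k_1\rangle_{\mathcal H}$, the latter being independent of $x$; in particular the regularity and growth conditions required by \cite[Theorem 2.9]{sdei} are met. Fixing an orthonormal basis $(g_k)_k$ of $Q^{1/2}(E)$, the formula yields, $\mathbb P$-a.s. and for all $t \in [0,T]$,
\begin{equation*}
 F(X(t)) = F(X^0) + \int_0^t F'(X(s))[f(s)]\,ds + \int_0^t F'(X(s))[\Phi(s)\,dW(s)] + \frac12 \int_0^t \sum_{k} F''(X(s))\big[\Phi(s)g_k,\Phi(s)g_k\big]\,ds .
\end{equation*}

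It remains to identify the three integrals. The first equals $\int_0^t \big[\langle X_2(s),f_1(s)\rangle_{\mathcal H} + \langle X_1(s),f_2(s)\rangle_{\mathcal H}\big]\,ds$, and, $\mathcal H$ being a real Hilbert space, the stochastic integral equals $\int_0^t \langle X_2(s),\Phi_1(s)\,dW(s)\rangle_{\mathcal H} + \int_0^t \langle X_1(s),\Phi_2(s)\,dW(s)\rangle_{\mathcal H}$, which is well defined because $X_1$ and $X_2$ have continuous paths and are therefore $\mathbb P$-a.s. bounded on $[0,T]$. For the last integral, write $\Phi(s)g_k = (\Phi_1(s)g_k,\Phi_2(s)g_k)$ and use the explicit form of $F''$ to obtain $\sum_{k} F''(X(s))[\Phi(s)g_k,\Phi(s)g_k] = 2\sum_{k}\langle \Phi_1(s)g_k,\Phi_2(s)g_k\rangle_{\mathcal H} = 2\langle \Phi_1(s),\Phi_2(s)\rangle_{\mathcal{L}_{(HS)}(Q^{1/2}(E);\mathcal H)}$, by the definition of the Hilbert-Schmidt inner product. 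Substituting these identities into the display gives precisely the asserted product formula.

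The only genuinely delicate point is to confirm that the version of the Itô formula in \cite[Theorem 2.9]{sdei} applies verbatim to $F(x_1,x_2) = \langle x_1,x_2\rangle_{\mathcal H}$ — that is, that its hypotheses on $F$ and the precise shape of its second-order correction term match the present setting — and to reconcile that correction term with the definition of the Hilbert-Schmidt inner product on $\mathcal{L}_{(HS)}(Q^{1/2}(E);\mathcal H)$; everything else is bookkeeping. An equivalent route that avoids the product space is to apply the scalar energy identity for $\|Y(t)\|_{\mathcal H}^2$ to $Y = X_1 + X_2$ and to $Y = X_1 - X_2$, subtract, and divide by four, using $\langle a,b\rangle_{\mathcal H} = \tfrac14\big(\|a+b\|_{\mathcal H}^2 - \|a-b\|_{\mathcal H}^2\big)$ together with the analogous polarization identity for the Hilbert-Schmidt inner product.
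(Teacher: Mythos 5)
Your derivation is correct and follows exactly the route the paper intends: the paper gives no proof of this lemma beyond the remark that it is an immediate consequence of the It\^o formula of \cite[Theorem 2.9]{sdei}, and your product-space argument with $F(x_1,x_2)=\langle x_1,x_2\rangle_{\mathcal H}$ (together with the identification of the trace term with the Hilbert--Schmidt inner product) is the standard way of making that remark precise. The computations of $F'$, $F''$, and the correction term are all accurate, so nothing further is needed.
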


Next, we introduce stochastic convolutions.
Let $(S(t))_{t \geq 0}$ be a $C_0$-semigroup on $\mathcal H$.
Then the stochastic convolution $(\mathcal I(t))_{t\in [0,T]}$ given by
\begin{equation}\label{stochconv1}
 \mathcal I(t) = \int\limits_0^t S(t-s) \Phi(s) d W(s)
\end{equation}
is well defined for all $t \in [0,T]$ and $\mathbb P$-almost surely.
Under additional assumptions, we get the following maximal inequality.

\begin{proposition}[cf. Proposition 1.3 (ii), {\cite{scdb}}]\label{ineqstochconv}
 Let the $C_0$-semigroup $(S(t))_{t \geq 0}$ satisfy $\| S(t)\|_{\mathcal{L}(\mathcal H)}\leq 1$ for all $t \geq 0$. 
 If $k \in (0,\infty)$, then
 \begin{equation*}
  \mathbb E \sup\limits_{t \in [0,T]} \left\| \int\limits_0^t S(t-s) \Phi(s) d W(s) \right\|_{\mathcal H}^k \leq c_k^k \, \mathbb E \left(\int\limits_0^T \left\| \Phi(t) \right\|_{\mathcal{L}_{(HS)}(Q^{1/2}(E);\mathcal H)}^2 dt \right)^{k/2},
 \end{equation*}
 where $c_k > 0$ is a constant.
\end{proposition}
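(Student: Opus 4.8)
The plan is to reduce the contraction-semigroup case to the case of a $C_0$-group of unitaries by a dilation argument, after which the stochastic convolution becomes a genuine martingale and the claimed bound follows from the Burkholder--Davis--Gundy inequality. First I would invoke the Sz.-Nagy unitary dilation theorem for contraction semigroups: since $\|S(t)\|_{\mathcal L(\mathcal H)}\le 1$ for all $t\ge 0$, there exist a separable Hilbert space $\widetilde{\mathcal H}\supseteq\mathcal H$, a $C_0$-group $(U(t))_{t\in\mathbb R}$ of unitary operators on $\widetilde{\mathcal H}$, and the orthogonal projection $P\colon\widetilde{\mathcal H}\to\mathcal H$ such that $S(t)=P\,U(t)|_{\mathcal H}$ for all $t\ge 0$.

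Using the group property $U(t-s)=U(t)U(-s)$ together with the fact that $P\,U(t)$ is, for each fixed $t$, a single bounded operator, I would rewrite the stochastic convolution as
\begin{equation*}
 \int_0^t S(t-s)\Phi(s)\,dW(s)=P\,U(t)\int_0^t U(-s)\Phi(s)\,dW(s)=P\,U(t)M(t),
\end{equation*}
where $M(t)=\int_0^t U(-s)\Phi(s)\,dW(s)$ is a $\widetilde{\mathcal H}$-valued, $\mathcal F_t$-adapted, square-integrable martingale; its integrand $U(-\cdot)\Phi(\cdot)$ is again predictable and lies in $\mathcal L_{(HS)}(Q^{1/2}(E);\widetilde{\mathcal H})$ with $\mathbb E\int_0^T\|U(-s)\Phi(s)\|^2_{\mathcal L_{(HS)}(Q^{1/2}(E);\widetilde{\mathcal H})}\,ds<\infty$ because each $U(-s)$ is an isometry. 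The step of pulling the deterministic bounded operator $P\,U(t)$ out of the stochastic integral is the single-operator analogue of Proposition \ref{closedopstochint}.

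Since $\|P\|_{\mathcal L(\widetilde{\mathcal H};\mathcal H)}\le 1$ and $\|U(t)\|_{\mathcal L(\widetilde{\mathcal H})}=1$, I obtain the pathwise domination $\sup_{t\in[0,T]}\|\int_0^t S(t-s)\Phi(s)\,dW(s)\|_{\mathcal H}\le\sup_{t\in[0,T]}\|M(t)\|_{\widetilde{\mathcal H}}$. Applying the Burkholder--Davis--Gundy inequality for Hilbert-space-valued stochastic integrals against the Q-Wiener process $W$ gives, for every $k\in(0,\infty)$,
\begin{equation*}
 \mathbb E\sup_{t\in[0,T]}\|M(t)\|_{\widetilde{\mathcal H}}^k\le c_k^k\,\mathbb E\left(\int_0^T\|U(-s)\Phi(s)\|_{\mathcal L_{(HS)}(Q^{1/2}(E);\widetilde{\mathcal H})}^2\,ds\right)^{k/2}.
\end{equation*}
Finally I would remove the dilation: since $U(-s)$ is unitary on $\widetilde{\mathcal H}$ and $\Phi(s)$ maps into $\mathcal H\subseteq\widetilde{\mathcal H}$, composition with $U(-s)$ leaves the Hilbert--Schmidt norm unchanged, so that $\|U(-s)\Phi(s)\|_{\mathcal L_{(HS)}(Q^{1/2}(E);\widetilde{\mathcal H})}=\|\Phi(s)\|_{\mathcal L_{(HS)}(Q^{1/2}(E);\mathcal H)}$, and chaining the three displays yields exactly the asserted estimate with the same constant $c_k$.

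The main obstacle is making the first two steps rigorous: one must check that the dilated integrand is a legitimate predictable, square-integrable $\mathcal L_{(HS)}(Q^{1/2}(E);\widetilde{\mathcal H})$-valued process, and carefully justify commuting $P\,U(t)$ with the stochastic integral (and, if one wants an explicit dilation, verify the Sz.-Nagy construction is available in the separable setting). An alternative that avoids dilation is the factorization method of Da Prato--Kwapie\'n--Zabczyk: writing, for suitable $\alpha\in(0,1/2)$, $\int_0^t S(t-s)\Phi(s)\,dW(s)=\frac{\sin\pi\alpha}{\pi}\int_0^t(t-r)^{\alpha-1}S(t-r)Y_\alpha(r)\,dr$ with $Y_\alpha(r)=\int_0^r(r-s)^{-\alpha}S(r-s)\Phi(s)\,dW(s)$, bounding the supremum by H\"older's inequality using the integrability of $(t-r)^{\alpha-1}$, and controlling $\mathbb E\int_0^T\|Y_\alpha(r)\|_{\mathcal H}^p\,dr$ by the It\^o isometry; that route, however, naturally produces the estimate only for large $k$ and then needs an extra interpolation-type argument for small $k$, whereas the dilation argument treats all $k\in(0,\infty)$ uniformly.
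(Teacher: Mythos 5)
The paper does not prove this proposition; it is quoted verbatim from the cited reference \cite{scdb} (Hausenblas--Seidler), whose proof is exactly the Sz.-Nagy unitary dilation argument you describe. Your proposal is correct and coincides with that source: the factorization $S(t-s)=PU(t)U(-s)$, the reduction to the martingale $M(t)=\int_0^t U(-s)\Phi(s)\,dW(s)$, the Burkholder--Davis--Gundy inequality for all $k\in(0,\infty)$, and the invariance of the Hilbert--Schmidt norm under the unitaries are precisely the steps used there.
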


In order to define local mild solutions to SPDEs, we need to introduce a stopped stochastic convolution.
Here, we can adopt the results shown in \cite[Appendix]{snbe}.
Let $\tau$ be a stopping time with values in $[0,T]$.
We consider the stopped process $(\mathcal I(t \wedge \tau))_{t\in [0,T]}$, where $t \wedge \tau = \min\{t,\tau\}$.
Unfortunately, the formula
\begin{equation*}
 \mathcal I(t \wedge \tau) = \int\limits_0^{t \wedge \tau} S(t \wedge \tau-s) \Phi(s) d W(s)
\end{equation*}
is not well defined due to the fact that we integrate a process, which is not even $(\mathcal{F}_t)_{t \in [0,T]}$ adapted.
To overcome this problem, we introduce a process $(\mathcal I_\tau(t))_{t\in [0,T]}$ given by
\begin{equation}\label{stochconv2}
 \mathcal I_\tau(t) = \int\limits_0^t \mathds 1_{[0,\tau)}(s) S(t-s) \Phi(s \wedge \tau) d W(s)
\end{equation}
for all $t \in [0,T]$ and $\mathbb P$-almost surely.
We get the following result.

\begin{lemma}[Lemma A.1, \cite{snbe}]\label{stoppedconvolution}
 Let $(S(t))_{t \geq 0}$ be a $C_0$-semigroup on $\mathcal H$ and let $\tau$ be a stopping time with values in $[0,T]$.
 Moreover, let the processes $(\mathcal I(t))_{t\in [0,T]}$ and $(\mathcal I_\tau(t))_{t\in [0,T]}$ be given by (\ref{stochconv1}) and (\ref{stochconv2}), respectively.
 Then we have for all $t \in [0,T]$ and $\mathbb P$-almost surely
 \begin{equation*}
  S(t-t \wedge \tau) \mathcal I(t \wedge \tau) = \mathcal I_\tau(t)
 \end{equation*}
 and in particular $\mathcal I(t \wedge \tau) = \mathcal I_\tau(t \wedge \tau)$.
\end{lemma}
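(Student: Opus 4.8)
The plan is to fix $t \in [0,T]$, realise $\mathcal I_\tau(t)$ as a \emph{stopped} stochastic integral against an integrand that does not depend on the upper limit, and then transfer the stopping back onto the convolution kernel by the semigroup law and the commutation of a bounded operator with the stochastic integral. The only genuinely delicate point will be the passage from finitely valued stopping times to a general one.

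First I would fix $t$ and introduce the $\mathcal H$-valued process $M(r) = \int_0^r S(t-s)\Phi(s)\,dW(s)$ for $r \in [0,t]$. Since the integrand $s\mapsto S(t-s)\Phi(s)$ is predictable and does not depend on $r$, the process $(M(r))_{r\in[0,t]}$ is a genuine square-integrable $\mathcal H$-valued martingale, so the standard localisation property of the stochastic integral (see \cite{seiid,sdei}) applied to the stopping time $t\wedge\tau$ with values in $[0,t]$ gives
\begin{equation*}
 M(t\wedge\tau) = \int\limits_0^t \mathds 1_{[0,t\wedge\tau)}(s)\, S(t-s)\Phi(s)\,dW(s) = \int\limits_0^t \mathds 1_{[0,\tau)}(s)\, S(t-s)\Phi(s\wedge\tau)\,dW(s) = \mathcal I_\tau(t),
\end{equation*}
where I used that $\mathds 1_{[0,t\wedge\tau)}(s) = \mathds 1_{[0,\tau)}(s)$ for Lebesgue-a.e.\ $s\in[0,t]$ and that $\mathds 1_{[0,\tau)}(s)\Phi(s\wedge\tau) = \mathds 1_{[0,\tau)}(s)\Phi(s)$.

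Next I would relate $M(r)$ and $\mathcal I(r)$. For a deterministic time $0\le r\le t$ the semigroup law gives $S(t-s) = S(t-r)S(r-s)$ for $s\in[0,r]$, and since $S(t-r) \in \mathcal L(\mathcal H)$ is bounded (hence closed and everywhere defined), Proposition \ref{closedopstochint} lets us pull it out of the integral:
\begin{equation*}
 M(r) = \int\limits_0^r S(t-r)\,S(r-s)\Phi(s)\,dW(s) = S(t-r)\int\limits_0^r S(r-s)\Phi(s)\,dW(s) = S(t-r)\,\mathcal I(r).
\end{equation*}
To evaluate this at the random time $r = t\wedge\tau$ I would first treat stopping times $\tau$ taking finitely many values $a_1 < \dots < a_m$: on each event $\{\tau = a_k\}$ one has $M(t\wedge\tau) = M(t\wedge a_k) = S(t-t\wedge a_k)\mathcal I(t\wedge a_k) = S(t-t\wedge\tau)\mathcal I(t\wedge\tau)$ by the previous display with the deterministic time $r = t\wedge a_k$, and summing over $k$ yields $M(t\wedge\tau) = S(t-t\wedge\tau)\mathcal I(t\wedge\tau)$ $\mathbb P$-a.s. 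For a general stopping time $\tau$ with values in $[0,T]$ I would approximate from above by the dyadic stopping times $\tau_N = \bigl(2^{-N}\lceil 2^N\tau\rceil\bigr)\wedge T \downarrow \tau$, each with finitely many values, and pass to the limit: $\mathcal I_{\tau_N}(t) \to \mathcal I_\tau(t)$ in $L^2(\Omega;\mathcal H)$ by the Itô isometry and dominated convergence, while $t\wedge\tau_N \downarrow t\wedge\tau$ together with the strong continuity of $(S(t))_{t\ge 0}$ and the path regularity of $\mathcal I$ give $S(t-t\wedge\tau_N)\mathcal I(t\wedge\tau_N) \to S(t-t\wedge\tau)\mathcal I(t\wedge\tau)$. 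Combining with the first display yields $\mathcal I_\tau(t) = S(t-t\wedge\tau)\mathcal I(t\wedge\tau)$ for every $t\in[0,T]$, $\mathbb P$-a.s.; applying this with $t$ replaced by $t\wedge\tau$ and using $(t\wedge\tau)\wedge\tau = t\wedge\tau$ together with $S(0)=I$ gives the final assertion $\mathcal I(t\wedge\tau) = \mathcal I_\tau(t\wedge\tau)$.

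I expect the main obstacle to be this last limiting step: to let the random evaluation times $t\wedge\tau_N$ converge one needs that $\mathcal I$ (and hence $\mathcal I_\tau$, which is again a stochastic convolution against a predictable $L^2$-integrand) admits a sufficiently regular — in particular continuous — modification, which rests on the maximal estimate of Proposition \ref{ineqstochconv} applicable here since the relevant Stokes semigroup is a contraction, together with the uniform bound $\sup_{t\in[0,T]}\|S(t)\|_{\mathcal L(\mathcal H)} < \infty$ valid for any $C_0$-semigroup on a compact time interval. The remaining ingredients — the localisation property of the stochastic integral and the commutation of a bounded operator with it — are routine.
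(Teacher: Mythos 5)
The paper does not prove this lemma at all: it is imported verbatim as Lemma A.1 of \cite{snbe}, so there is no in-paper argument to compare yours against. Your proof is correct and is, in essence, the standard argument behind that cited result: (i) the identification $\mathcal I_\tau(t)=M(t\wedge\tau)$ via the localisation property of the stochastic integral together with the pointwise identities $\mathds 1_{[0,\tau)}(s)\Phi(s\wedge\tau)=\mathds 1_{[0,\tau)}(s)\Phi(s)$ and $\mathds 1_{[0,t\wedge\tau)}=\mathds 1_{[0,\tau)}$ a.e.\ on $[0,t]$; (ii) $M(r)=S(t-r)\mathcal I(r)$ for deterministic $r$ by the semigroup law and Proposition \ref{closedopstochint}; (iii) evaluation at $r=t\wedge\tau$ first for finitely valued $\tau$ and then by dyadic approximation from above. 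Two points deserve to be made explicit. First, the continuous modification of $\mathcal I$ is needed not only for the limit $N\to\infty$ but also for your very last step: substituting $t\mapsto t\wedge\tau$ into an identity proved for each \emph{fixed} $t$ outside a $t$-dependent null set is again a random-time evaluation, so you should upgrade the identity to ``for all $t\in[0,T]$ simultaneously, $\mathbb P$-a.s.'' using path continuity of both sides (note $\mathcal I_\tau$ is itself a stochastic convolution of a predictable $L^2$-integrand, hence has a continuous version under the same hypotheses) before plugging in $t\wedge\tau$. Second, Proposition \ref{ineqstochconv} as stated requires $\|S(t)\|_{\mathcal L(\mathcal H)}\le 1$, whereas the lemma is formulated for an arbitrary $C_0$-semigroup; this is harmless for every application in the paper, where $S(t)=e^{-At}$ is an analytic contraction semigroup and the relevant integrands satisfy $\mathbb E\sup_t\|\cdot\|^2<\infty$ (cf.\ Remark \ref{stochnseremark}(ii)), but for the general statement one has to invoke a continuity result for stochastic convolutions of non-contractive $C_0$-semigroups (e.g.\ via an equivalent renorming making the semigroup quasi-contractive). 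With these two caveats addressed, the argument is complete.
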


Finally, we state a martingale representation theorem for Q-Wiener processes, which we use to construct solutions of backward SPDEs.
We note that the covariance operator $Q \in \mathcal L(E)$ is symmetric and nonnegative such that $\text{Tr } Q < \infty$.
Hence, there exists a complete orthonormal system $(e_k)_{k \in \mathbb N}$ in $E$ and a bounded sequence of nonnegative real numbers $(\mu_k)_{k \in \mathbb N}$ such that $Q e_k = \mu_k e_k$ for each $k \in \mathbb N$.
Then for arbitrary $t \in [0,T]$ and $\mathbb P$-almost surely, a Q-Wiener process has the expansion
\begin{equation*}
 W(t) = \sum\limits_{k=1}^\infty \sqrt{\mu_k} w_k(t) e_k,
\end{equation*}
where $(w_k(t))_{t \in [0,T]}$, $k \in \mathbb N$, are real valued mutually independent Brownian motions.
The convergence is in $L^2(\Omega)$.
Furthermore, we assume that the complete probability space $(\Omega,\mathcal F, \mathbb P)$ is endowed with the filtration $\mathcal F_t = \sigma \{ \bigcup_{k=1}^\infty \mathcal F_t^k\}$, where $\mathcal F_t^k = \sigma \{w_k(s): 0 \leq s \leq t\}$ for $t \in [0,T]$ and we require that the $\sigma$-algebra $\mathcal F$ satisfies $\mathcal F = \mathcal F_T$.
Then we have the following martingale representation theorem.

\begin{proposition}[Theorem 2.5,{\cite{sdei}}]\label{martingalerepresentation}
 Let the process $(M(t))_{t \in [0,T]}$ be a continuous $\mathcal F_t$-martingale with values in $\mathcal H$ such that $\mathbb E \| M(t)\|_{\mathcal H}^2 < \infty$ for all $t \in [0,T]$.
 Then there exists a unique predictable process $(\Phi(t))_{t \in [0,T]}$ with values in $\mathcal{L}_{(HS)}(Q^{1/2}(E);\mathcal H)$ such that $\mathbb E \int_0^T \| \Phi(t)\|_{\mathcal{L}_{(HS)}(Q^{1/2}(E);\mathcal H)}^2 dt < \infty$ and we have for all $t \in [0,T]$ and $\mathbb P$-a.s.
 \begin{equation*}
  M(t) = \mathbb E M(0) + \int\limits_0^t \Phi(s) d W(s).
 \end{equation*}
\end{proposition}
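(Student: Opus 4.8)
The plan is to reduce the assertion to a Hilbert-space projection and then to show that the orthogonal complement of the space of stochastic integrals against $W$ is trivial; the latter is where the standing hypotheses $\mathcal F = \mathcal F_T$ and $\mathcal F_t = \sigma\big(\bigcup_k\mathcal F_t^k\big)$ enter decisively. First I would dispose of uniqueness and of the initial value: since $\mathcal F_0$ contains only $\mathbb P$-null sets, $M(0) = \mathbb E M(0)$ $\mathbb P$-a.s., so replacing $M$ by $M - \mathbb E M(0)$ we may assume $M(0) = 0$; and if $\Phi_1,\Phi_2$ were two admissible integrands representing $M$, then $\int_0^T(\Phi_1(s)-\Phi_2(s))\,dW(s)=0$, so the It\^o isometry recalled earlier gives $\mathbb E\int_0^T\|\Phi_1(t)-\Phi_2(t)\|_{\mathcal{L}_{(HS)}(Q^{1/2}(E);\mathcal H)}^2\,dt = 0$ and hence $\Phi_1=\Phi_2$ in $L^2([0,T]\times\Omega)$.

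For existence, consider the linear map $\Phi\mapsto\int_0^T\Phi(s)\,dW(s)$ from the Hilbert space of predictable $\mathcal{L}_{(HS)}(Q^{1/2}(E);\mathcal H)$-valued processes with $\mathbb E\int_0^T\|\Phi(t)\|_{\mathcal{L}_{(HS)}(Q^{1/2}(E);\mathcal H)}^2\,dt<\infty$ into $L^2(\Omega,\mathcal F_T;\mathcal H)$. By the It\^o isometry this is an isometry, so its range $\mathcal J$ is a closed subspace, every element of which has mean zero. Thus $M(T)$ admits the orthogonal decomposition $M(T)=\int_0^T\Phi(s)\,dW(s)+\zeta$ with $\mathbb E\zeta=0$ and $\zeta\perp\mathcal J$, and it remains to prove $\zeta=0$. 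Granting this, put $\widetilde M(t)=\int_0^t\Phi(s)\,dW(s)$: this is a continuous square-integrable $\mathcal F_t$-martingale with $\widetilde M(T)=M(T)$, and because $\mathcal F_t\uparrow\mathcal F_T$ one obtains $M(t)=\mathbb E[M(T)\mid\mathcal F_t]=\mathbb E[\widetilde M(T)\mid\mathcal F_t]=\widetilde M(t)$ $\mathbb P$-a.s. for every $t$; continuity of both processes then yields indistinguishability and the stated formula.

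To show $\zeta=0$ it suffices, testing against $h\in\mathcal H$, to prove that constants together with $\mathcal J$ are dense in $L^2(\Omega,\mathcal F_T;\mathbb C)$. Given a bounded piecewise-constant $g=(g_1,\dots,g_N,0,\dots)$ with finitely many predictable scalar components, set $X(t)=\sum_{k=1}^N\int_0^t g_k\,dw_k$; It\^o's formula shows that $Y(t)=\exp\big(\tfrac12\sum_{k=1}^N\int_0^t g_k^2\,ds\big)\exp(\mathrm{i}X(t))$ solves $dY(t)=\mathrm{i}\,Y(t)\sum_{k=1}^N g_k(t)\,dw_k(t)$, and since each $\int_0^\cdot g_k\,dw_k$ coincides with $\int_0^\cdot\Psi_k\,dW$ for the finite-rank, hence Hilbert--Schmidt, operators $\Psi_k(s)x=\mu_k^{-1/2}\langle x,e_k\rangle_E\,g_k(s)$, it follows that $Y(T)-Y(0)\in\mathcal J$. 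With $g$ piecewise constant the exponential prefactor is deterministic, so every random variable $\exp\big(\mathrm{i}\sum_{k,j}\theta_{kj}(w_k(t_j)-w_k(t_{j-1}))\big)$ lies in the linear span of constants and $\mathcal J$; such exponentials of finitely many Brownian increments are total in $L^2(\Omega,\mathcal F_T;\mathbb C)$ because $\mathcal F_T$ is generated by these increments (characteristic functions determine a finite measure, together with the martingale convergence theorem). Combined with the closedness of $\mathcal J$ this forces $\zeta=0$.

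The delicate point is the argument of the previous paragraph: verifying the totality of the exponential martingales in $L^2(\Omega,\mathcal F_T)$ while keeping precise track of the identification between scalar integration against the Brownian motions $w_k$ and operator-valued integration against $W$, so that the integrands produced in the approximation genuinely belong to $\mathcal{L}_{(HS)}(Q^{1/2}(E);\mathcal H)$ with finite second moment. The remaining ingredients --- the It\^o isometry, the orthogonal projection, and the passage from the terminal variable to the whole path via conditioning --- are routine.
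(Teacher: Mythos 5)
The paper does not prove this proposition at all: it is imported verbatim as Theorem 2.5 of the cited reference, so there is no in-paper argument to compare against. Your sketch is the standard and correct proof of the Q-Wiener martingale representation theorem: reduction to $M(0)=0$, uniqueness via the It\^o isometry, identification of the range $\mathcal J$ of the stochastic integral as a closed subspace of $L^2(\Omega,\mathcal F_T;\mathcal H)$, and the density of $\mathrm{span}(\mathcal J,\text{constants})$ via exponential martingales of finitely many Brownian increments, which is indeed the only delicate step. Two small points to make explicit if you write this out in full: (i) the identification $\int_0^\cdot g_k\,dw_k=\int_0^\cdot\Psi_k\,dW$ with $\Psi_k(s)x=\mu_k^{-1/2}\langle x,e_k\rangle_E\,g_k(s)$ requires $\mu_k>0$, and since the paper defines $\mathcal F_t=\sigma\{\bigcup_k\mathcal F_t^k\}$ over \emph{all} $k$, the theorem is only true if every $\mu_k$ is strictly positive (otherwise some $w_k$ is invisible to $W$ but visible to the filtration); your proof correctly localizes where this hypothesis is used. (ii) The complex exponentials produce complex-valued integrands, so one should split into real and imaginary parts to land in the real Hilbert space $\mathcal J$, and the reduction from $\mathcal H$-valued $\zeta$ to scalar $\langle\zeta,h\rangle_{\mathcal H}$ should note that $h\!\int_0^T\psi\,dW\in\mathcal J$ for every admissible scalar integrand $\psi$. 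Neither point is a gap in the idea.
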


\section{The Stochastic Navier-Stokes Equations}\label{sec:snse}

In this section, we recall briefly the motivation of the stochastic Navier-Stokes equations and we state the existence and uniqueness result for the local mild solution, see \cite{ocpc}. 
Moreover, we state some useful properties.

We consider the following Navier-Stokes equations with Dirichlet boundary condition:
\begin{equation*}
 \left\{
 \begin{aligned}
  \frac{\partial}{\partial t} y(t,x,\omega) + (y(t,x,\omega) \cdot \nabla)y(t,x,\omega) + \nabla p(t,x,\omega) - \nu \Delta y(t,x,\omega) &= f(t,x,\omega,y) & &\text{in } (0,T) \times \mathcal D \times \Omega, \\
  \text{div } y(t,x,\omega) &= 0 & &\text{in } (0,T) \times \mathcal D \times \Omega, \\
  y(t,x) &= 0 & &\text{on } (0,T) \times \partial \mathcal D, \\
  y(0,x,\omega) &= \xi(x,\omega) & &\text{in } \mathcal D \times \Omega,
 \end{aligned}
 \right.
\end{equation*}
where $y(t,x,\omega) \in \mathbb R^n$ denotes the velocity field with $\mathcal F_0$-measurable initial value $\xi(x,\omega) \in \mathbb R^n$ and $p(t,x,\omega) \in \mathbb R$ describes the pressure of the fluid.
The parameter $\nu > 0$ is the viscosity parameter (for the sake of simplicity, we assume $\nu = 1$) and $f(t,x,\omega,y) \in \mathbb R^n$ is the external random force dependent on the velocity field.
Here, we assume that the external random force can be decomposed as the sum of a control term and a noise term.
Using the spaces and operators introduced in Section \ref{sec:functionalbackground}, we obtain the stochastic Navier-Stokes equations in $D(A^\alpha)$:
\begin{equation}\label{stochnse}
 \left\{
 \begin{aligned}
  d y(t) &= - [A y(t) + B(y(t)) - F u(t)] dt + G(y(t)) d W(t), \\
  y(0) &= \xi,
 \end{aligned}
 \right.
\end{equation}
where $(W(t))_{t \in [0,T]}$ is a Q-Wiener process with values in $H$ and covariance operator $Q \in \mathcal L(H)$.
We introduce the space $L^k_{\mathcal F}(\Omega;L^r([0,T];D(A^\beta)))$ containing all $\mathcal F_t$-adapted processes $(u(t))_{t \in [0,T]}$ with values in $D(A^\beta)$ such that $\mathbb E (\int_0^T \left\| u(t) \right\|_{D(A^\beta)}^r dt )^{k/r} < \infty$ with $k,r \in [0,\infty)$ and $\beta \in [0,\alpha]$.
The space $L^k_{\mathcal F}(\Omega;L^r([0,T];D(A^\beta)))$ equipped with the norm
\begin{equation*}
 \|u\|_{L^k_{\mathcal F}(\Omega;L^r([0,T];D(A^\beta)))}^k = \mathbb E \left(\int\limits_0^T \left\| u(t) \right\|_{D(A^\beta)}^r dt \right)^{k/r}
\end{equation*}
for every $u \in L^k_{\mathcal F}(\Omega;L^r([0,T];D(A^\beta)))$ becomes a Banach space.
The set of admissible controls $U$ is a nonempty, closed, bounded and convex subset of the Hilbert space $L^2_{\mathcal F}(\Omega;L^2([0,T];D(A^\beta)))$ such that $0 \in U$.
Moreover, we assume that the operators $F \colon D(A^\beta) \rightarrow D(A^\beta)$ and $G \colon H \rightarrow \mathcal{L}_{(HS)}(Q^{1/2}(H);D(A^\alpha))$ are linear and bounded.
In general, we can not ensure the existence and uniqueness of a mild solution over an arbitrary time interval $[0,T]$ since the nonlinear operator $B$ is only locally Lipschitz continuous.
Thus, we need the following definition of a local mild solution.

\begin{definition}[cf. Definition 3.2, {\cite{wpot}}]\label{deflocalmildnse}
 Let $\tau$ be a stopping time taking values in $(0,T]$ and $(\tau_m)_{m \in \mathbb N}$ be an increasing sequence of stopping times taking values in $[0,T]$ satisfying $\lim_{m \rightarrow \infty} \tau_m = \tau$.
 A predictable process $(y(t))_{t \in [0,\tau)}$ with values in $D(A^\alpha)$ is called a local mild solution of system (\ref{stochnse}) if for fixed $m \in \mathbb N$
 \begin{equation*}
  \mathbb E \sup\limits_{t \in [0,\tau_m)} \| y(t)\|_{D(A^\alpha)}^2 < \infty
 \end{equation*}
 and we have for each $m \in \mathbb N$, all $t \in [0,T]$ and $\mathbb P$-a.s.
 \begin{align*}
  y(t \wedge \tau_m) =&\; e^{-A (t \wedge \tau_m)} \xi - \int\limits_0^{t \wedge \tau_m} A^\delta e^{-A (t \wedge \tau_m -s)} A^{-\delta} B(y(s)) ds + \int\limits_0^{t \wedge \tau_m} e^{-A (t \wedge \tau_m -s)} F u(s) ds + \mathcal I_{\tau_m} (G(y))(t \wedge \tau_m),
 \end{align*}
 where $\mathcal I_{\tau_m} (G(y))(t) = \int_0^t \mathds 1_{[0,\tau_m)}(s) e^{-A (t-s)} G(y(s \wedge \tau_m)) d W(s)$.
\end{definition}

\begin{remark}
 In the previous definition, note that the stopped stochastic convolution is well defined according to Section \ref{sec:stochintegral}.
\end{remark}

The proof of the existence and uniqueness of a local mild solution to system (\ref{stochnse}) can be shown in two steps.
First, we consider a modified system to get a mild solution well defined over the whole time interval $[0,T]$.
Then we introduce suitable stopping times such that the mild solution of the modified system and the local mild solution of system (\ref{stochnse}) coincides.
Let us introduce the following system in $D(A^\alpha)$:
\begin{equation}\label{truncatedstochnse}
 \left\{
 \begin{aligned}
  d y_m(t) &= - [A y_m(t) + B(\pi_m(y_m(t))) - F u(t)] dt + G(y_m(t)) d W(t), \\
  y_m(0) &= \xi,
 \end{aligned}
 \right.
\end{equation}
where $m \in \mathbb N$ and $\pi_m \colon D(A^\alpha) \rightarrow D(A^\alpha)$ is defined by
\begin{equation}\label{truncation}
 \pi_m(y) =
 \begin{cases}
  y & \| y\|_{D(A^\alpha)} \leq m,\\
  m \|y\|_{D(A^\alpha)}^{-1}y & \| y\|_{D(A^\alpha)} > m.
 \end{cases}
\end{equation}
Then we get for every $y,z \in D(A^\alpha)$
\begin{gather}
 \|\pi_m(y)\|_{D(A^\alpha)} \leq \min \{m, \| y\|_{D(A^\alpha)}\}, \label{truncationineq1} \\
 \|\pi_m(y) -\pi_m(z)\|_{D(A^\alpha)} \leq 2\|y -z\|_{D(A^\alpha)}. \label{truncationineq2}
\end{gather}

\begin{definition}
 A predictable process $(y_m(t))_{t \in [0,T]}$ with values in $D(A^\alpha)$ is called a mild solution of system (\ref{truncatedstochnse}) if
 \begin{equation*}
  \mathbb E \sup\limits_{t \in [0,T]} \| y_m(t)\|_{D(A^\alpha)}^2 < \infty
 \end{equation*}
 and we have for all $t \in [0,T]$ and $\mathbb P$-a.s.
 \begin{equation*}
  y_m(t) = e^{-A t} \xi - \int\limits_0^t A^\delta e^{-A (t-s)} A^{-\delta} B(\pi_m(y_m(s))) ds+ \int\limits_0^t e^{-A (t-s)} F u(s) ds + \int\limits_0^t e^{-A (t-s)}G(y_m(s)) d W(s).
 \end{equation*}
\end{definition}

\begin{theorem}[cf. Theorem 4.6, {\cite{ocpc}}]\label{existencemild}
 Let the parameters $\alpha \in (0,1)$ and $\delta \in [0,1)$ satisfy $1 > \delta + \alpha > \frac{1}{2}$ and $\delta + 2\alpha \geq \frac{n}{4} +\frac{1}{2}$.
 Furthermore, let $u \in L^2_{\mathcal F}(\Omega;L^2([0,T];D(A^\beta)))$ be fixed for $\beta \in [0,\alpha]$ such that $\alpha - \beta < \frac{1}{2}$. 
 Then for any $\xi \in L^2(\Omega;D(A^\alpha))$, there exists a unique mild solution $(y_m(t))_{t \in [0,T]}$ of system (\ref{truncatedstochnse}) for fixed $m \in \mathbb N$.
 Moreover, the process $(y_m(t))_{t \in [0,T]}$ has a continuous modification.
\end{theorem}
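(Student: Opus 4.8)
The plan is to construct the mild solution of (\ref{truncatedstochnse}) by a Banach fixed-point argument in the space $\mathcal Z_T$ of $D(A^\alpha)$-valued predictable processes $(y(t))_{t\in[0,T]}$ with $\mathbb E\sup_{t\in[0,T]}\|y(t)\|_{D(A^\alpha)}^2<\infty$, endowed with this norm. On $\mathcal Z_T$ define the operator $\Lambda$ by
\[
(\Lambda y)(t)=e^{-At}\xi-\int_0^t A^\delta e^{-A(t-s)}A^{-\delta}B(\pi_m(y(s)))\,ds+\int_0^t e^{-A(t-s)}Fu(s)\,ds+\int_0^t e^{-A(t-s)}G(y(s))\,dW(s),
\]
so that a fixed point of $\Lambda$ is exactly a mild solution of (\ref{truncatedstochnse}); $\mathcal F_t$-adaptedness is clear and, after checking stochastic continuity, Lemma \ref{predictable} yields a predictable version. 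First I would show $\Lambda$ maps $\mathcal Z_T$ into itself. Since $A^\alpha$ commutes with the semigroup (Lemma \ref{fractional}(iii)) and $\|e^{-At}\|_{\mathcal L(H)}\le 1$, the initial term is bounded by $\|A^\alpha\xi\|_H$. For the control term I write $A^\alpha e^{-A(t-s)}Fu(s)=A^{\alpha-\beta}e^{-A(t-s)}A^\beta Fu(s)$ and use Lemma \ref{fractional}(iv) to bound $\|A^{\alpha-\beta}e^{-A(t-s)}\|_{\mathcal L(H)}\le M_{\alpha-\beta}(t-s)^{-(\alpha-\beta)}e^{-\theta(t-s)}$; since $\alpha-\beta<\tfrac12$ this kernel lies in $L^2(0,T)$, so Cauchy--Schwarz in time together with boundedness of $F$ and $u\in L^2_{\mathcal F}(\Omega;L^2([0,T];D(A^\beta)))$ gives the bound. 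For the bilinear term, $A^\alpha$ of the integral equals $\int_0^t A^{\alpha+\delta}e^{-A(t-s)}A^{-\delta}B(\pi_m(y(s)))\,ds$, and $\|A^{\alpha+\delta}e^{-A(t-s)}\|_{\mathcal L(H)}\le M_{\alpha+\delta}(t-s)^{-(\alpha+\delta)}e^{-\theta(t-s)}$ is integrable on $(0,T)$ because $\alpha+\delta<1$; Lemma \ref{ineqnonlinear} with $\alpha_1=\alpha_2=\alpha$ (valid since $\delta+\alpha>\tfrac12$ and $\delta+2\alpha\ge\tfrac n4+\tfrac12$) combined with (\ref{truncationineq1}) gives $\|A^{-\delta}B(\pi_m(y(s)))\|_H\le \widetilde M\|A^\alpha\pi_m(y(s))\|_H^2\le\widetilde M m^2$, a deterministic bound. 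Finally, viewing $(e^{-At})_{t\ge0}$ as a contraction $C_0$-semigroup on the Hilbert space $D(A^\alpha)$ (again by Lemma \ref{fractional}(iii)), Proposition \ref{ineqstochconv} with $k=2$ and $\mathcal H=D(A^\alpha)$ reduces the supremum bound of the stochastic term to $c_2^2\,\mathbb E\int_0^T\|G(y(t))\|_{\mathcal L_{(HS)}(Q^{1/2}(H);D(A^\alpha))}^2\,dt$, which is finite by boundedness of $G$ and $\|y(t)\|_H\le C\|y(t)\|_{D(A^\alpha)}$.

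Next I would establish the contraction property. For $y,z\in\mathcal Z_T$ the initial and control terms cancel in $\Lambda y-\Lambda z$. The bilinear difference is handled by Corollary \ref{ineqnonlinear2} with (\ref{truncationineq1}) and (\ref{truncationineq2}), which give $\|A^{-\delta}(B(\pi_m(y(s)))-B(\pi_m(z(s))))\|_H\le 4m\widetilde M\|y(s)-z(s)\|_{D(A^\alpha)}$, so this contribution is dominated by $4m\widetilde M M_{\alpha+\delta}\bigl(\int_0^t(t-s)^{-(\alpha+\delta)}e^{-\theta(t-s)}\,ds\bigr)\sup_{[0,T]}\|y-z\|_{D(A^\alpha)}$. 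The stochastic difference $\int_0^t e^{-A(t-s)}(G(y(s))-G(z(s)))\,dW(s)$ is estimated by Proposition \ref{ineqstochconv} and boundedness of $G$. Both resulting constants carry a factor that tends to $0$ as $T\to 0$, so $\Lambda$ is a contraction on $\mathcal Z_{T_0}$ for $T_0$ small; partitioning $[0,T]$ into finitely many subintervals of length $\le T_0$ and solving successively (on each subinterval taking as initial datum the terminal value of the solution on the previous one, which is $\mathcal F$-measurable with the required integrability) yields a unique mild solution on all of $[0,T]$, the square-integrability of the supremum propagating along the concatenation. Uniqueness on $[0,T]$ follows from uniqueness on each subinterval.

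For the continuous modification, the three deterministic terms are continuous in $D(A^\alpha)$ by strong continuity of $(e^{-At})_{t\ge0}$ on $D(A^\alpha)$ together with the integrability estimates above, and a continuous modification of the stochastic convolution $\int_0^\cdot e^{-A(\cdot-s)}G(y_m(s))\,dW(s)$ in $D(A^\alpha)$ is obtained by the factorization method, using that $t\mapsto\mathbb E\|G(y_m(t))\|_{\mathcal L_{(HS)}(Q^{1/2}(H);D(A^\alpha))}^{2}$ is integrable (which follows from $\mathbb E\sup_{t\in[0,T]}\|y_m(t)\|_{D(A^\alpha)}^2<\infty$).

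I expect the main obstacle to be the contraction step for the stochastic convolution: the maximal inequality of Proposition \ref{ineqstochconv} does not localize in time, so one cannot extract a small constant directly on $[0,T]$. The resolution is the small-time fixed point followed by concatenation (equivalently, one may first carry out the argument in the weaker norm $\sup_t\mathbb E\|\cdot\|_{D(A^\alpha)}^2$ using the It\^o isometry and then upgrade to the supremum norm via Proposition \ref{ineqstochconv}), while simultaneously keeping the singular kernel $(t-s)^{-(\alpha+\delta)}$ of the bilinear term integrable; it is precisely here that all of the hypotheses $\alpha+\delta<1$, $\delta+\alpha>\tfrac12$, $\delta+2\alpha\ge\tfrac n4+\tfrac12$ and $\alpha-\beta<\tfrac12$ enter.
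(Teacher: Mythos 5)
Your proposal is correct and is essentially the argument the paper relies on: the theorem itself is imported from \cite{ocpc}, but the paper's own proofs of the directly analogous statements (Theorem \ref{existencelinearmild}, Lemmas \ref{linearmildbound}--\ref{linearmildcontinuity}) use exactly your scheme — a contraction estimate on a subinterval $[0,T_{1,m}]$ chosen so that $C_1T_{1,m}^{2-2\alpha-2\delta}+C_2T_{1,m}<1$, using Lemma \ref{fractional}(iv), Lemma \ref{ineqnonlinear}/Corollary \ref{ineqnonlinear2} with the truncation bounds (\ref{truncationineq1})--(\ref{truncationineq2}), the singular-kernel integrability from $\alpha+\delta<1$ and $\alpha-\beta<\tfrac12$, and Proposition \ref{ineqstochconv}, followed by concatenation over finitely many subintervals. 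Your treatment of the continuous modification via the factorization method also matches the paper's Remark \ref{stochnseremark}(ii), which invokes the corresponding result of \cite{seiid}.
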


Next, we define a sequence of stopping times $(\tau_m)_{m \in \mathbb N}$ given by
\begin{equation}\label{stoppingtime}
 \tau_m = \inf \{t \in (0,T): \| y_m(t)\|_{D(A^\alpha)} > m \} \land T,
\end{equation}
where we declare $\inf\{\emptyset\} = + \infty$.
Since the sequence $(\tau_m)_{m \in \mathbb N}$ is increasing and bounded, there exists a stopping time $\tau$ with values in $(0,T]$ such that $ \lim_{m \rightarrow \infty} \tau_m = \tau$.
We get the following result.

\begin{theorem}[cf. Theorem 4.7, {\cite{ocpc}}]\label{existencelocalmild}
 Let the parameters $\alpha \in (0,1)$ and $\delta \in [0,1)$ satisfy $1 > \delta + \alpha > \frac{1}{2}$ and $\delta + 2\alpha \geq \frac{n}{4} +\frac{1}{2}$.
 Furthermore, let $u \in L^2_{\mathcal F}(\Omega;L^2([0,T];D(A^\beta)))$ be fixed for $\beta \in [0,\alpha]$ such that $\alpha - \beta < \frac{1}{2}$. 
 Then for any $\xi \in L^2(\Omega;D(A^\alpha))$, there exists a unique local mild solution $(y(t))_{t \in [0,\tau)}$ of system (\ref{stochnse}).
 Moreover, the process $(y(t))_{t \in [0,\tau)}$ has a continuous modification.
\end{theorem}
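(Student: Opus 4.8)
The plan is to construct the local mild solution by patching together the truncated mild solutions $y_m$ furnished by Theorem~\ref{existencemild} along the stopping times $\tau_m$ defined in~(\ref{stoppingtime}), and to obtain uniqueness by the same localization device. Since each $(y_m(t))_{t\in[0,T]}$ admits a continuous modification and the filtration $(\mathcal F_t)_{t\in[0,T]}$ is right-continuous, the first exit time $\tau_m$ of the closed ball of radius $m$ in $D(A^\alpha)$ is a genuine stopping time with values in $(0,T]$; that $\tau_m>0$ $\mathbb P$-a.s.\ uses the continuity of $y_m$ together with $y_m(0)=\xi\in D(A^\alpha)$.

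The key step is the consistency relation $y_m=y_k$ on $[0,\tau_m)$ for $k\ge m$. By continuity, $\|y_m(t)\|_{D(A^\alpha)}\le m$ for $t\in[0,\tau_m)$, so $\pi_m(y_m(t))=y_m(t)$ there by~(\ref{truncation}), and, since $m\le k$, also $\pi_k(y_m(t))=y_m(t)$; setting $\sigma=\inf\{t:\|y_k(t)\|_{D(A^\alpha)}>m\}\wedge T$, both $\pi_m$ and $\pi_k$ act as the identity on $y_k$ on $[0,\sigma)$. Hence on $[0,\tau_m\wedge\sigma)$ the processes $y_m$ and $y_k$ satisfy the \emph{same} stopped version of~(\ref{truncatedstochnse}), the stopped stochastic convolution being interpreted through Lemma~\ref{stoppedconvolution} with $S(t)=e^{-At}$. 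Subtracting, estimating the nonlinear difference with Corollary~\ref{ineqnonlinear2} (whose factors $\|A^{\alpha_i}\cdot\|_H$ are bounded on this interval because both solutions stay in the ball of radius $m$), controlling the singular kernel with Lemma~\ref{fractional}(iv) and the convolution with the maximal inequality of Proposition~\ref{ineqstochconv}, a singular Gronwall argument yields $y_m=y_k$ on $[0,\tau_m\wedge\sigma)$; comparing with the definitions of $\tau_m$ and $\sigma$ then forces $\sigma\ge\tau_m$, so $y_m=y_k$ on $[0,\tau_m)$ and in particular $\tau_m\le\tau_{m+1}$, which also justifies the existence of $\tau=\lim_m\tau_m$ asserted before the statement. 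I expect this consistency/Gronwall step to be the main obstacle: $B$ is only locally Lipschitz (Corollary~\ref{ineqnonlinear2}) and the kernel $A^\delta e^{-A(t-s)}$ behaves like $(t-s)^{-\delta}$, so one must use a generalized Gronwall inequality and keep careful track of the stopped convolution and of the null sets on which the various identities hold.

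Granting consistency, define $y(t):=y_m(t)$ for $t\in[0,\tau_m)$; this is unambiguous, the process $(y(t))_{t\in[0,\tau)}$ is predictable because each $y_m$ is predictable and $[0,\tau_m)\times\Omega$ is a predictable set, and it has a continuous modification on each $[0,\tau_m)$, hence on $[0,\tau)=\bigcup_m[0,\tau_m)$. The required bound is immediate, since $\mathbb E\sup_{t\in[0,\tau_m)}\|y(t)\|_{D(A^\alpha)}^2=\mathbb E\sup_{t\in[0,\tau_m)}\|y_m(t)\|_{D(A^\alpha)}^2\le m^2<\infty$ for each fixed $m$. To verify the identity in Definition~\ref{deflocalmildnse}, stop the mild equation for $y_m$ at $\tau_m$: the two deterministic integrals transform by the semigroup property together with $\pi_m(y_m(s))=y_m(s)=y(s)$ for $s<\tau_m$, while for the stochastic term Lemma~\ref{stoppedconvolution} gives $\mathcal I_{\tau_m}(G(y))(t\wedge\tau_m)=\int_0^{t\wedge\tau_m}e^{-A(t\wedge\tau_m-s)}G(y_m(s))\,dW(s)$, which is precisely the term appearing in the stopped equation for $y_m$; collecting terms gives the claimed representation for every $m$, all $t\in[0,T]$ and $\mathbb P$-a.s.

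For uniqueness, let $(\widetilde y(t))_{t\in[0,\widetilde\tau)}$ with exhausting sequence $(\widetilde\tau_m)$ be another local mild solution and set $\widetilde\sigma_m=\inf\{t:\|\widetilde y(t)\|_{D(A^\alpha)}>m\}\wedge T$. Running the comparison argument of the consistency step on $[0,\tau_m\wedge\widetilde\sigma_m)$ shows $y=\widetilde y$ there; letting $m\to\infty$, the definitions of the exit times identify $\widetilde\sigma_m$ with $\tau_m$ in the limit, which gives $\widetilde\tau=\tau$ $\mathbb P$-a.s.\ together with $y=\widetilde y$ on $[0,\tau)$, and this finishes the proof.
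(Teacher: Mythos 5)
Your proposal follows the same two-step strategy that the paper outlines and defers to \cite{ocpc} for: solve the truncated system (\ref{truncatedstochnse}) globally via Theorem \ref{existencemild}, introduce the exit times (\ref{stoppingtime}), prove consistency of the $y_m$ on $[0,\tau_m)$ by a localized Gronwall argument using Corollary \ref{ineqnonlinear2}, Lemma \ref{fractional}(iv), Proposition \ref{ineqstochconv} and Lemma \ref{stoppedconvolution}, and patch. The only slip is cosmetic: for fixed $m$ the stopping time $\tau_m$ takes values in $[0,T]$ (it vanishes on the event $\{\|\xi\|_{D(A^\alpha)} \geq m\}$) and only the limit $\tau$ is valued in $(0,T]$, which does not affect the argument.
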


\begin{remark} \label{stochnseremark}
 (i) It suffices to assume that the operator $G$ satisfies a growth condition and a Lipschitz condition, see \cite{seiid}.
 In this paper, the additional assumptions are necessary to derive the Gâteaux derivative of the local mild solution to system (\ref{stochnse}).
 Moreover, the adjoint operator of $G$ is required for the adjoint equation.
 \newline
 (ii) In \cite{ocpc}, it is shown that the processes $(y_m(t))_{t \in [0,T]}$ and $(y(t))_{t \in [0,\tau)}$ are mean square continuous.
Due to the fact that $\mathbb E \sup_{t \in [0,T]} \| y_m(t)\|_{D(A^\alpha)}^2 < \infty$ and the operator $G$ is linear and bounded, we can conclude that the stochastic convolution has a continuous modification, see \cite[Theorem 6.10]{seiid}.
Hence, the processes $(y_m(t))_{t \in [0,T]}$ and $(y(t))_{t \in [0,\tau)}$ have continuous modifications as well.
\end{remark}

Next, we state some useful results.
In what follows, we always assume that the parameters $\alpha \in (0,1)$, $\delta \in [0,1)$ and $\beta \in [0,\alpha]$ satisfy the assumptions of Theorem \ref{existencemild} and the stopping times $(\tau_m)_{m \in \mathbb N}$ are given by equation (\ref{stoppingtime}).
Moreover, let the initial value $\xi \in L^2(\Omega;D(A^\alpha))$ of system (\ref{truncatedstochnse}) and system (\ref{stochnse}) be fixed.
To illustrate the dependence on the control $u \in L^2_{\mathcal F}(\Omega;L^2([0,T];D(A^\beta)))$, let us denote by $(y_m(t;u))_{t \in [0,T]}$ and $(y(t;u))_{t \in [0,\tau^u)}$ the mild solution of system (\ref{truncatedstochnse}) and the local mild solution of system (\ref{stochnse}), respectively.
Note that the stopping times $(\tau_m^u)_{m \in \mathbb N}$ and $\tau^u$ depend on the control as well.
Whenever these processes and the stopping times are considered for fixed control, we use the notation introduced above.
We have the following continuity property. For $k=2$, a proof can be found in \cite[Lemma 5.3]{ocpc}.

\begin{lemma}\label{mildcontinuity}
 For fixed $m \in \mathbb N$, let $(y_m(t;u))_{t \in [0,T]}$ be the mild solution of system (\ref{truncatedstochnse}) corresponding to the control $u \in L^2_{\mathcal F}(\Omega;L^2([0,T];D(A^\beta)))$.
 If $u_1,u_2 \in L^k_{\mathcal F}(\Omega;L^2([0,T];D(A^\beta)))$ for $k \geq 2$, then there exists a constant $c>0$ such that
 \begin{equation*}
  \mathbb E \sup_{t \in [0,T]} \|y_m(t;u_1)-y_m(t;u_2)\|_{D(A^\alpha)}^k \leq c \, \|u_1-u_2\|_{L^k_{\mathcal F}(\Omega;L^2([0,T];D(A^\beta)))}^k.
 \end{equation*}
\end{lemma}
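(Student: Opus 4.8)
The plan is to write the difference $v_m(t) := y_m(t;u_1) - y_m(t;u_2)$ using the mild formulation, estimate each of the three resulting terms in $D(A^\alpha)$ (equivalently, apply $A^\alpha$ and work in $H$), take suprema over $t\in[0,T]$, raise to the $k$-th power, take expectations, and close the argument with a Gronwall-type inequality. From the mild solution formula for system (\ref{truncatedstochnse}) one has for all $t\in[0,T]$ and $\mathbb P$-a.s.
\begin{align*}
 v_m(t) &= -\int_0^t A^\delta e^{-A(t-s)} A^{-\delta}\bigl(B(\pi_m(y_m(s;u_1))) - B(\pi_m(y_m(s;u_2)))\bigr)\,ds \\
 &\quad + \int_0^t e^{-A(t-s)} F(u_1(s)-u_2(s))\,ds + \int_0^t e^{-A(t-s)}\,G(v_m(s))\,dW(s).
\end{align*}
First I would bound the control term: applying $A^\alpha$, using $\|A^\alpha e^{-A(t-s)}F(u_1(s)-u_2(s))\|_H \leq M_{\alpha-\beta}(t-s)^{-(\alpha-\beta)} e^{-\theta(t-s)}\|F\|\,\|u_1(s)-u_2(s)\|_{D(A^\beta)}$ from Lemma \ref{fractional}(iv) (valid since $\alpha-\beta<\tfrac12$), then Hölder in $s$ with the fact that $s\mapsto s^{-(\alpha-\beta)}$ is integrable, gives a pointwise-in-$t$ bound by a constant times $\bigl(\int_0^T\|u_1(s)-u_2(s)\|_{D(A^\beta)}^2\,ds\bigr)^{1/2}$; taking $\sup_t$, the $k$-th power and $\mathbb E$ yields exactly $c\,\|u_1-u_2\|_{L^k_{\mathcal F}(\Omega;L^2([0,T];D(A^\beta)))}^k$.

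Next I would handle the nonlinear term. Applying $A^\alpha = A^{\alpha+\delta}A^{-\delta}$ (Lemma \ref{fractional}(i)), using Lemma \ref{fractional}(iv) to bound $\|A^{\alpha+\delta}e^{-A(t-s)}\|_{\mathcal L(H)} \leq M_{\alpha+\delta}(t-s)^{-(\alpha+\delta)}e^{-\theta(t-s)}$ (integrable in $s$ since $\alpha+\delta<1$), and Corollary \ref{ineqnonlinear2} together with the Lipschitz bound (\ref{truncationineq2}) and the truncation bound (\ref{truncationineq1}) to estimate
\[
 \bigl\|A^{-\delta}\bigl(B(\pi_m(y_m(s;u_1)))-B(\pi_m(y_m(s;u_2)))\bigr)\bigr\|_H \leq \widetilde M\,2\,(\|A^{\alpha_1}\pi_m(y_m(s;u_1))\|_H + \|A^{\alpha_2}\pi_m(y_m(s;u_2))\|_H)\,\|v_m(s)\|_{D(A^\alpha)},
\]
where one takes $\alpha_1=\alpha_2=\alpha$ so $\beta=\alpha$ in the corollary and the hypotheses $\delta+\alpha>\tfrac12$, $\delta+2\alpha\geq\tfrac n4+\tfrac12$ are precisely those of Theorem \ref{existencemild}. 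Because $\|\pi_m(\cdot)\|_{D(A^\alpha)}\leq m$ by (\ref{truncationineq1}), the prefactor is bounded by the deterministic constant $4\widetilde M m$, so the nonlinear term is dominated by $C_m\int_0^t (t-s)^{-(\alpha+\delta)}e^{-\theta(t-s)}\|v_m(s)\|_{D(A^\alpha)}\,ds$. The stochastic term is controlled by the maximal inequality Proposition \ref{ineqstochconv} (applicable since $\|e^{-At}\|_{\mathcal L(H)}\leq 1$, and after applying $A^\alpha$ and using that $G$ maps into $\mathcal L_{(HS)}(Q^{1/2}(H);D(A^\alpha))$ with $A^\alpha e^{-A(t-s)}G(v_m(s)) = e^{-A(t-s)}A^\alpha G(v_m(s))$ by Lemma \ref{fractional}(iii), combined with Proposition \ref{closedopstochint}): it is bounded by $c_k^k\,\mathbb E\bigl(\int_0^T \|G\|^2\|v_m(s)\|_{D(A^\alpha)}^2\,ds\bigr)^{k/2}$.

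Collecting the three estimates, setting $\varphi(t) := \mathbb E\sup_{r\in[0,t]}\|v_m(r)\|_{D(A^\alpha)}^k$ (finite for each $t$ by Theorem \ref{existencemild} since both solutions lie in the relevant space), and using Hölder's inequality on the time integrals to pull the $k$-th power inside, one arrives at
\[
 \varphi(t) \leq c\,\|u_1-u_2\|_{L^k_{\mathcal F}(\Omega;L^2([0,T];D(A^\beta)))}^k + C\int_0^t h(t-s)\,\varphi(s)\,ds
\]
for a suitable integrable kernel $h$ (a sum of $(t-s)^{-(\alpha+\delta)q}$-type terms after Hölder, still integrable for appropriate exponent since $\alpha+\delta<1$, plus a bounded term from the stochastic part). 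A singular Gronwall lemma then gives $\varphi(T)\leq c\,\|u_1-u_2\|_{L^k_{\mathcal F}(\Omega;L^2([0,T];D(A^\beta)))}^k$, which is the claim. The main obstacle is bookkeeping: making sure the exponents in the Hölder steps keep the kernels integrable for general $k\geq 2$ (not just $k=2$), and carefully invoking Proposition \ref{closedopstochint} so that $A^\alpha$ may be moved inside the stochastic integral — the essential structural input, namely that the truncation $\pi_m$ makes the nonlinear prefactor a fixed constant $C_m$, is already encoded in (\ref{truncationineq1})–(\ref{truncationineq2}) and is what makes the estimate linear in $v_m$.
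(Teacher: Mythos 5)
Your decomposition of $v_m(t)=y_m(t;u_1)-y_m(t;u_2)$ and the three term-by-term estimates (factoring the control term through $A^{\alpha-\beta}e^{-A(t-s)}A^\beta F$, using Corollary \ref{ineqnonlinear2} with $\alpha_1=\alpha_2=\alpha$ together with (\ref{truncationineq1})--(\ref{truncationineq2}) to make the nonlinear prefactor the deterministic constant $4\widetilde M m$, and Proposition \ref{ineqstochconv} for the convolution) are exactly the structural inputs the paper relies on; note that the paper itself does not prove this lemma here but cites \cite[Lemma 5.3]{ocpc} for $k=2$, and its analogous in-text proofs (Lemmas \ref{linearmildbound} and \ref{linearmildcontinuity}) use the same decomposition. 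Where you genuinely diverge is the closing step: the paper never invokes a Gronwall lemma. Instead it chooses $T_{1,m}$ so small that the coefficient $C_1T_{1,m}^{k(1-\alpha-\delta)}+C_2T_{1,m}^{k/2}$ multiplying $\mathbb E\sup_{t\in[0,T_{1,m}]}\|v_m(t)\|_{D(A^\alpha)}^k$ is strictly less than $1$, absorbs that term into the left-hand side, and iterates over finitely many subintervals. This sidesteps both difficulties your route must confront: (a) the interaction of the supremum inside the expectation with a Gronwall kernel, and (b) the singularity of the kernel.

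On point (b), the one step of your plan that would fail as literally written is the Hölder split producing ``$(t-s)^{-(\alpha+\delta)q}$-type terms'': with the natural conjugate exponent $q=k/(k-1)$ this requires $(\alpha+\delta)k/(k-1)<1$, which already for $k=2$ demands $\alpha+\delta<\tfrac12$ and contradicts the standing assumption $\alpha+\delta>\tfrac12$. The standard repair is to split the kernel as $g^{(k-1)/k}\cdot g^{1/k}$ with $g(r)=r^{-(\alpha+\delta)}e^{-\theta r}$, which only needs $g\in L^1$ (true since $\alpha+\delta<1$) and yields a Volterra inequality with the integrable kernel $g$ itself, to which a singular Gronwall lemma applies; alternatively, adopt the paper's absorption-and-iteration argument, which requires no Gronwall at all. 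You flagged this bookkeeping issue yourself, so I regard the plan as sound once that exponent is handled correctly, but as stated the Hölder step is the one place the argument breaks.
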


By definition, we have for all $t \in [0,\tau_m^u)$ and $\mathbb P$-a.s. $y(t;u) = y_m(t;u)$.
Hence, a similar result of the previous lemma holds for the local mild solution of system (\ref{stochnse}).
In the following lemmas, we show some useful properties of the stopping times.

\begin{lemma}[cf. Lemma 5.3, {\cite{ocpc}}]\label{stoppingtimelemma1}
 For fixed $m \in \mathbb N$, let $(y_m(t;u))_{t \in [0,T]}$ be the mild solution of system (\ref{truncatedstochnse}) corresponding to the control $u \in L^2_{\mathcal F}(\Omega;L^2([0,T];D(A^\beta)))$ and let the stopping time $\tau_m^{u}$ be given by (\ref{stoppingtime}).
 Then we have
 \begin{equation*}
  \lim\limits_{u_1 \rightarrow u_2} \mathbb P \left(\tau_m^{u_1} \neq \tau_m^{u_2}\right) = 0.
 \end{equation*}
\end{lemma}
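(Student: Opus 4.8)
The plan is to exploit the fact that the event $\{\tau_m^{u_1} \neq \tau_m^{u_2}\}$ can only occur when one of the two solutions has crossed the threshold $m$ in the $D(A^\alpha)$-norm while the other has not, and that this forces at least one of the two solutions to come within $\|y_{m}(\cdot;u_1)-y_m(\cdot;u_2)\|_{D(A^\alpha)}$ of the level $m$ somewhere on $[0,T]$. First I would fix a small $\varepsilon > 0$ and write
\begin{align*}
 \mathbb P\left(\tau_m^{u_1} \neq \tau_m^{u_2}\right)
 &\leq \mathbb P\left(\tau_m^{u_1} \neq \tau_m^{u_2}, \; \sup_{t \in [0,T]}\|y_m(t;u_1)-y_m(t;u_2)\|_{D(A^\alpha)} \leq \varepsilon\right) \\
 &\quad + \mathbb P\left(\sup_{t \in [0,T]}\|y_m(t;u_1)-y_m(t;u_2)\|_{D(A^\alpha)} > \varepsilon\right).
\end{align*}
The second term is controlled by Lemma \ref{mildcontinuity} together with Markov's inequality: it is bounded by $\varepsilon^{-2} c\,\|u_1-u_2\|_{L^2_{\mathcal F}(\Omega;L^2([0,T];D(A^\beta)))}^2$, which tends to $0$ as $u_1 \to u_2$ for each fixed $\varepsilon$.

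For the first term, I would argue that on the event $\{\tau_m^{u_1} \neq \tau_m^{u_2}\}$ with, say, $\tau_m^{u_1} < \tau_m^{u_2}$, the continuity of $t \mapsto \|y_m(t;u_1)\|_{D(A^\alpha)}$ (from Theorem \ref{existencemild} and Remark \ref{stochnseremark}(ii)) forces $\|y_m(\tau_m^{u_1};u_1)\|_{D(A^\alpha)} = m$, while by definition of $\tau_m^{u_2}$ we have $\|y_m(\tau_m^{u_1};u_2)\|_{D(A^\alpha)} \leq m$ at that same time; combined with the sup-bound $\leq \varepsilon$ on the difference this does not yet give a contradiction, so instead the relevant observation is that $\tau_m^{u_1} < T$ there (the infimum is actually attained strictly before $T$, else the two stopping times would agree at $T$), hence $y_m(\cdot;u_1)$ actually exits, and since the difference is uniformly $\leq \varepsilon$ the process $y_m(\cdot;u_2)$ enters the shell $\{m-\varepsilon \leq \|\cdot\|_{D(A^\alpha)} \leq m\}$ at time $\tau_m^{u_1}$. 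Symmetrically in the other case. Therefore
\begin{equation*}
 \mathbb P\left(\tau_m^{u_1} \neq \tau_m^{u_2}, \; \sup_{t \in [0,T]}\|y_m(t;u_1)-y_m(t;u_2)\|_{D(A^\alpha)} \leq \varepsilon\right)
 \leq \mathbb P\left( \exists\, t \in [0,T] : \big| \|y_m(t;u_2)\|_{D(A^\alpha)} - m \big| \leq \varepsilon \right),
\end{equation*}
and the same bound holds with $u_2$ replaced by $u_1$ (taking the max of the two right-hand sides). Passing to the limit $u_1 \to u_2$ kills the second term above, so
\begin{equation*}
 \limsup_{u_1 \to u_2} \mathbb P\left(\tau_m^{u_1} \neq \tau_m^{u_2}\right)
 \leq \mathbb P\left( \exists\, t \in [0,T] : \big| \|y_m(t;u_2)\|_{D(A^\alpha)} - m \big| \leq \varepsilon \right).
\end{equation*}
Finally I would let $\varepsilon \downarrow 0$: the events on the right decrease to $\{ \exists\, t \in [0,T] : \|y_m(t;u_2)\|_{D(A^\alpha)} = m \}$, so by continuity of measure the limit equals $\mathbb P(\sup_{t\in[0,T]}\|y_m(t;u_2)\|_{D(A^\alpha)} = m)$ intersected with the level being hit. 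The remaining point — and I expect this to be the main obstacle — is to show this last probability is zero, i.e. that the continuous process $t \mapsto \|y_m(t;u_2)\|_{D(A^\alpha)}$ almost surely does not have $m$ as the exact value of its running maximum on a set of positive probability; this should follow from an argument that the crossing of level $m$ is, with probability one, a genuine up-crossing into $\{\|\cdot\|_{D(A^\alpha)} > m\}$ because of the non-degeneracy of the noise $G(y_m)$ in system (\ref{truncatedstochnse}), or alternatively it is precisely the content invoked from \cite[Lemma 5.3]{ocpc} and can be cited. Once that is in hand, combining the two displays gives $\lim_{u_1 \to u_2}\mathbb P(\tau_m^{u_1} \neq \tau_m^{u_2}) = 0$, as claimed.
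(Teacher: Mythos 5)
The paper itself does not prove this lemma --- it is quoted verbatim from \cite{ocpc} (Lemma 5.3 there) --- so there is no in-house argument to compare your route against; I can only judge the proposal on its own terms. On those terms it has a genuine gap at the final step, and the gap is structural rather than a missing technicality.

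Your opening decomposition is fine: the complementary event is handled correctly by Lemma \ref{mildcontinuity} with Markov's inequality, and the containment
$\{\tau_m^{u_1}\neq\tau_m^{u_2}\}\cap\{\sup_{t}\|y_m(t;u_1)-y_m(t;u_2)\|_{D(A^\alpha)}\le\varepsilon\}\subseteq\{\exists\, t: \bigl|\,\|y_m(t;u_2)\|_{D(A^\alpha)}-m\,\bigr|\le\varepsilon\}$
is true. The problem is that this majorant is useless: for \emph{every} $\varepsilon>0$ the event on the right contains $\{\tau_m^{u_2}<T\}$, because on that event the continuous path $t\mapsto\|y_m(t;u_2)\|_{D(A^\alpha)}$ equals $m$ exactly at $t=\tau_m^{u_2}$. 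Hence your limiting bound $\mathbb P(\exists\, t:\|y_m(t;u_2)\|_{D(A^\alpha)}=m)$ is at least $\mathbb P(\tau_m^{u_2}<T)$, which is in general strictly positive --- the entire construction of local solutions via the truncation $\pi_m$ and the stopping times (\ref{stoppingtime}) presupposes that the solution may exceed level $m$ with positive probability. You have conflated two different events: ``the norm process touches the level $m$ at some time'' (which, by the intermediate value theorem, happens on every path that crosses $m$) and ``the running maximum equals exactly $m$'' (which is the only one that could plausibly be null). Your proposed fix via non-degeneracy of the noise also points the wrong way: non-degenerate noise makes level-hitting \emph{more} likely, not less, and cannot make this probability vanish.

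Consequently the chain of estimates cannot close: any argument routed through the event ``the path comes within $\varepsilon$ of the threshold $m$'' inherits the lower bound $\mathbb P(\tau_m^{u_2}<T)$ and therefore cannot produce a limit equal to $0$. What the lemma actually requires is control of the exact coincidence of two first-passage times of nearby paths, not merely of the proximity of those paths to the threshold; that is precisely the content delegated to \cite{ocpc}, and the proposal does not supply a substitute for it.
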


Similarly, we obtain the following convergence result.

\begin{lemma}\label{stoppingtimelemma2}
 For fixed $m \in \mathbb N$, let $(y_m(t;u))_{t \in [0,T]}$ be the mild solution of system (\ref{truncatedstochnse}) corresponding to the control $u \in L^2_{\mathcal F}(\Omega;L^2([0,T];D(A^\beta)))$ and let the stopping time $\tau_m^{u}$ be given by (\ref{stoppingtime}).
 If $u_1,u_2 \in L^{k+1}_{\mathcal F}(\Omega;L^2([0,T];D(A^\beta)))$ for $k \geq 1$, then
 \begin{equation*}
  \lim\limits_{\theta \rightarrow 0} \frac{\mathbb P \left(\tau_m^{u_1} \neq \tau_m^{u_1 + \theta u_2}\right)}{\theta^k} = 0.
 \end{equation*}
\end{lemma}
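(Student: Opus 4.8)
The plan is to rerun the proof of Lemma~\ref{stoppingtimelemma1} (that is, \cite[Lemma~5.3]{ocpc}), but keeping track of the dependence on the amplitude $\theta$ quantitatively. Write $\Delta_\theta(t) := y_m(t;u_1+\theta u_2) - y_m(t;u_1)$ for the difference of the two mild solutions of system~(\ref{truncatedstochnse}), which exist by Theorem~\ref{existencemild}; discarding the event $\{\|\xi\|_{D(A^\alpha)}>m\}$, on which every $\tau_m^{u}$ equals $0$, we may assume $0<\tau_m^{u}\leq T$. For a parameter $\varepsilon=\varepsilon(\theta)>0$ to be fixed at the end, I would start from
\begin{equation*}
 \mathbb P\bigl(\tau_m^{u_1}\neq\tau_m^{u_1+\theta u_2}\bigr)\leq\mathbb P\Bigl(\sup_{t\in[0,T]}\|\Delta_\theta(t)\|_{D(A^\alpha)}\geq\varepsilon\Bigr)+\mathbb P\Bigl(\bigl\{\tau_m^{u_1}\neq\tau_m^{u_1+\theta u_2}\bigr\}\cap\bigl\{\sup_{t\in[0,T]}\|\Delta_\theta(t)\|_{D(A^\alpha)}<\varepsilon\bigr\}\Bigr).
\end{equation*}

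For the first term I would combine the Markov inequality with Lemma~\ref{mildcontinuity} applied at exponent $k+1$ — this is exactly where the hypothesis $u_1,u_2\in L^{k+1}_{\mathcal F}(\Omega;L^2([0,T];D(A^\beta)))$, $k+1\geq2$, is used — which gives
\begin{equation*}
 \mathbb P\Bigl(\sup_{t\in[0,T]}\|\Delta_\theta(t)\|_{D(A^\alpha)}\geq\varepsilon\Bigr)\leq\varepsilon^{-(k+1)}\,\mathbb E\sup_{t\in[0,T]}\|\Delta_\theta(t)\|_{D(A^\alpha)}^{k+1}\leq c\,\theta^{k+1}\,\varepsilon^{-(k+1)}\,\|u_2\|_{L^{k+1}_{\mathcal F}(\Omega;L^2([0,T];D(A^\beta)))}^{k+1}.
\end{equation*}
For the second term I would argue pathwise. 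By Theorem~\ref{existencemild} the map $t\mapsto\|y_m(t;u_1)\|_{D(A^\alpha)}$ is continuous, hence on $\{\tau_m^{u_1}<T\}$ it takes the value $m$ at $\tau_m^{u_1}$ and exceeds $m$ at times arbitrarily close to $\tau_m^{u_1}$ from the right. If, say, $\tau_m^{u_1}<\tau_m^{u_1+\theta u_2}$ (the reverse case being symmetric), then $\|y_m(\cdot;u_1+\theta u_2)\|_{D(A^\alpha)}\leq m$ on $[\tau_m^{u_1},\tau_m^{u_1+\theta u_2})$, so on the event $\{\sup_t\|\Delta_\theta(t)\|_{D(A^\alpha)}<\varepsilon\}$ the trajectory $\|y_m(\cdot;u_1)\|_{D(A^\alpha)}$ must remain strictly below $m+\varepsilon$ on $[\tau_m^{u_1},\tau_m^{u_1+\theta u_2})$ although it exceeds $m$ there; in other words, immediately after first reaching the level $m$ the trajectory has to linger within $\varepsilon$ above it for a positive length of time. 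This is precisely the configuration handled in \cite[Lemma~5.3]{ocpc}, where the corresponding probability — which involves neither $\theta$ nor $u_2$ — is shown to tend to $0$ as $\varepsilon\to0$.

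The new ingredient is a \emph{rate} in that last step: one needs an estimate of the form $\mathbb P(\text{this lingering occurs at scale }\varepsilon)\leq C\,\varepsilon^{q}$ with $q>k(k+1)$, reflecting quantitatively how long the solution of the truncated system~(\ref{truncatedstochnse}) can stay within distance $\varepsilon$ of the truncation level $m$ right after hitting it. Granting such a bound, I would finish by balancing the two terms: taking $\varepsilon(\theta)=\theta^{\gamma}$ with $k/q<\gamma<1/(k+1)$ — a nonempty range precisely when $q>k(k+1)$ — makes the first term $O(\theta^{(k+1)(1-\gamma)})=o(\theta^{k})$ and the second $O(\theta^{q\gamma})=o(\theta^{k})$, and letting $\theta\to0$ yields the claim. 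I expect the genuine difficulty to be exactly this quantitative estimate on short-time lingering near the level $m$: in \cite{ocpc} only qualitative convergence is required, whereas here one must exploit the smoothing of the analytic semigroup $(e^{-At})_{t\geq0}$ in the stochastic convolution and the pathwise regularity of $(y_m(t;u_1))_{t\in[0,T]}$ to upgrade that convergence to polynomial decay in $\varepsilon$ with a sufficiently high power.
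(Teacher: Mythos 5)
The paper itself gives no written proof of this lemma --- it is dispatched with ``Similarly, we obtain the following convergence result'', deferring to the argument of Lemma~\ref{stoppingtimelemma1}, i.e.\ to \cite[Lemma~5.3]{ocpc} --- so your proposal can only be judged on whether it is complete on its own terms, and it is not. The first half is fine and is almost certainly the intended use of the integrability hypothesis: Markov's inequality at exponent $k+1$ together with Lemma~\ref{mildcontinuity} gives $\mathbb P(\sup_{t}\|\Delta_\theta(t)\|_{D(A^\alpha)}\geq\varepsilon)\leq c\,\theta^{k+1}\varepsilon^{-(k+1)}\|u_2\|^{k+1}_{L^{k+1}_{\mathcal F}(\Omega;L^2([0,T];D(A^\beta)))}$, and your balancing computation ($\varepsilon=\theta^{\gamma}$ with $k/q<\gamma<1/(k+1)$, a nonempty range exactly when $q>k(k+1)$) is arithmetically correct. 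The problem is the second half, which you explicitly assume rather than prove: the estimate $\mathbb P(\text{lingering at scale }\varepsilon)\leq C\varepsilon^{q}$ with $q>k(k+1)$. This is not a technical detail that can be ``granted''; it is the entire content of the lemma, it is a polynomial rate of arbitrarily high prescribed order, and nothing you invoke (smoothing of $e^{-At}$, pathwise regularity of $y_m$) plausibly delivers it. As it stands the proposal is a reduction of the lemma to an unproven, and in fact stronger-looking, claim.

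Worse, the event you propose to estimate cannot satisfy any such bound, because it is not small. You describe it as: immediately after first reaching the level $m$, the trajectory $\|y_m(\cdot\,;u_1)\|_{D(A^\alpha)}$ lingers within $\varepsilon$ above $m$ for a positive length of time. But on $\{\tau_m^{u_1}<T\}$ the continuity of the paths (Theorem~\ref{existencemild}) forces $\|y_m(\tau_m^{u_1};u_1)\|_{D(A^\alpha)}=m$, and hence the trajectory stays strictly below $m+\varepsilon$ on some right-neighbourhood of $\tau_m^{u_1}$ almost surely, while by the definition of $\tau_m^{u_1}$ as an infimum it also exceeds $m$ in every such neighbourhood; since the length $\tau_m^{u_1+\theta u_2}-\tau_m^{u_1}$ is not bounded below, ``for a positive length of time'' imposes no further constraint. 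Your bad event therefore has probability at least $\mathbb P(\tau_m^{u_1}<T)$, which does not tend to zero with $\varepsilon$, let alone like $\varepsilon^{q}$. A correct argument has to isolate a genuinely small event --- one quantifying tangential behaviour of the norm process at the level $m$, formulated so that it does not automatically occur at the crossing time itself --- and then establish its polynomial decay in $\varepsilon$; neither the correct formulation of that event nor its estimate appears in your proposal, so there is a genuine gap.
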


\section{A Generalized Control Problem}\label{sec:costfunctional}

In this section, we introduce the cost functional and the related control problem.
First, we calculate the Gâteaux derivative of the local mild solution of system (\ref{stochnse}), which is given by the local mild solution of the linearized stochastic Navier-Stokes equations.
Hence, we are able to derive the Gâteaux derivatives of the cost functional and using a mean value theorem, we show that the Gâteaux derivatives and the Fr\'echet derivatives coincides.

We introduce the cost functional $J_m \colon L^2_{\mathcal F}(\Omega;L^2([0,T];D(A^\beta))) \rightarrow \mathbb R$ given by
\begin{equation}\label{costfunctional}
 J_m(u) = \frac{1}{2} \, \mathbb E \int\limits_0^{\tau_m^u} \left\|A^{\gamma}(y(t;u) -y_d(t))\right\|_H^2 dt + \frac{1}{2}\, \mathbb E \int\limits_0^T \|A^\beta u(t)\|_H^2 dt,
\end{equation}
where $m \in \mathbb N$ is fixed and $\gamma \in [0,\alpha]$.
Moreover, the process $(y(t;u))_{t \in [0,\tau^u)}$ is the local mild solution of system (\ref{stochnse}) corresponding to the control $u \in L^2_{\mathcal F}(\Omega;L^2([0,T];D(A^\beta)))$ and $y_d \in L^2([0,T];D(A^{\gamma}))$ is a given desired velocity field.
The task is to find a control $\overline u_m \in U$ such that 
\begin{equation*}
 J_m(\overline u_m) = \inf_{u \in U} J_m(u).
\end{equation*}
The control $\overline u_m \in U$ is called an optimal control.
Note that for $\gamma = 0$, the formulation coincides with a tracking problem, see \cite{assn,tvtp,dofc,coco}.
For $\gamma = \frac{1}{2}$ and $y_d = 0$, we minimize the enstrophy, see \cite{dpft, beat, aitd}.
Hence, we are dealing with a generalized cost functional, which incorporates common control problems in fluid dynamics.

\begin{theorem}[Theorem 5.2, {\cite{ocpc}}]\label{existenceoptimalcontrol}
 Let the functional $J_m$ be given by (\ref{costfunctional}).
 Then there exists a unique optimal control $\overline u_m \in U$.
\end{theorem}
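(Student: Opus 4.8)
The plan is to establish existence and uniqueness by the direct method of the calculus of variations, but the nonconvexity caused by the control-dependent stopping time $\tau_m^u$ forces us to be careful. First I would record the structural properties of $J_m$. It is clearly bounded below by $0$, so $\inf_{u \in U} J_m(u) =: j_m \geq 0$ is finite. Take a minimizing sequence $(u^k)_{k \in \mathbb N} \subset U$ with $J_m(u^k) \to j_m$. Since $U$ is bounded, closed and convex in the Hilbert space $L^2_{\mathcal F}(\Omega;L^2([0,T];D(A^\beta)))$, it is weakly sequentially compact, so after passing to a subsequence $u^k \rightharpoonup \overline u_m$ with $\overline u_m \in U$. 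The crux is then lower semicontinuity of $J_m$ along this weakly convergent sequence.

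The second term, $\tfrac12 \mathbb E \int_0^T \|A^\beta u(t)\|_H^2 \, dt$, is (a constant multiple of) the squared norm on the ambient Hilbert space and hence is weakly lower semicontinuous — standard. The delicate term is $\tfrac12 \mathbb E \int_0^{\tau_m^u} \|A^\gamma(y(t;u)-y_d(t))\|_H^2 \, dt$, which couples $u$ through both the state $y(\cdot;u)$ and the stopping time $\tau_m^u$. Here I would exploit the continuity results already available: by Lemma \ref{mildcontinuity}, strong convergence $u^k \to u$ in $L^2_{\mathcal F}$ implies $\mathbb E \sup_t \|y_m(t;u^k) - y_m(t;u)\|_{D(A^\alpha)}^2 \to 0$, and by Lemma \ref{stoppingtimelemma1}, $\mathbb P(\tau_m^{u^k} \neq \tau_m^u) \to 0$. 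Since $\gamma \leq \alpha$, Lemma \ref{fractional}(v) gives $\|A^\gamma(\cdot)\|_H \leq C\|A^\alpha(\cdot)\|_H$, so the integrand is controlled in $D(A^\alpha)$, and together with the uniform bound $\|y_m(t;u)\|_{D(A^\alpha)} \leq m$ on $[0,\tau_m^u)$ coming from the definition (\ref{stoppingtime}) of the stopping time, one gets that $u \mapsto \mathbb E \int_0^{\tau_m^u}\|A^\gamma(y(t;u)-y_d(t))\|_H^2\,dt$ is in fact \emph{continuous} with respect to strong $L^2_{\mathcal F}$-convergence. To upgrade from strong to weak convergence of the minimizing sequence, I would invoke the Mazur lemma: for each $k$ form a finite convex combination $v^k = \sum_{i \geq k} \lambda_i^k u^i$ with $v^k \to \overline u_m$ strongly in $L^2_{\mathcal F}$; then $v^k \in U$ by convexity, and by strong continuity of the tracking term plus weak lower semicontinuity of the quadratic control term, $J_m(\overline u_m) \leq \liminf_k J_m(v^k) \leq \liminf_k \sum_{i\geq k}\lambda_i^k J_m(u^i) \leq j_m$, using that the tracking part is bounded and the control part is convex. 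Hence $J_m(\overline u_m) = j_m$ and existence follows.

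For uniqueness, the second term is strictly convex in $u$ (on the subspace where $A^\beta u \neq 0$), while the first term, although not convex in $u$, does not destroy strict convexity of the sum once one shows the cost functional is Gâteaux differentiable with a strictly monotone derivative at the optimum — which is exactly what the subsequent sections of the paper develop (the Gâteaux derivative of $y(\cdot;u)$ via the linearized equation, leading to a variational inequality). The cleanest route, and presumably the one in \cite{ocpc}, is: suppose $\overline u_m^{(1)}, \overline u_m^{(2)}$ are both optimal; by convexity of $U$, $\overline u := \tfrac12(\overline u_m^{(1)} + \overline u_m^{(2)}) \in U$; estimate $J_m(\overline u)$ using that the control term satisfies the parallelogram-type strict inequality $\tfrac12\|A^\beta \overline u_m^{(1)}\|^2 + \tfrac12\|A^\beta\overline u_m^{(2)}\|^2 - \|A^\beta\overline u\|^2 = \tfrac14\|A^\beta(\overline u_m^{(1)} - \overline u_m^{(2)})\|^2$, while the tracking term satisfies $J_m^{\mathrm{track}}(\overline u) \leq \tfrac12 J_m^{\mathrm{track}}(\overline u_m^{(1)}) + \tfrac12 J_m^{\mathrm{track}}(\overline u_m^{(2)}) + o$, where the defect $o$ must be controlled by the smallness of $\|\overline u_m^{(1)} - \overline u_m^{(2)}\|$; if $\overline u_m^{(1)} \neq \overline u_m^{(2)}$ this contradicts optimality once the defect is shown to be of higher order. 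The main obstacle is precisely this last point — the tracking term is genuinely nonconvex, so one cannot simply add inequalities; one needs a quantitative version of Lemma \ref{stoppingtimelemma2} (the higher-order smallness of $\mathbb P(\tau_m^{u_1} \neq \tau_m^{u_1 + \theta u_2})$) together with the Gâteaux-derivative computations of Section \ref{sec:costfunctional} to argue that, near the optimum, the nonconvex part contributes only a quadratically small perturbation dominated by the strictly convex control cost. Since this is stated as a cited result (\cite[Theorem 5.2]{ocpc}), I would state it and refer to that reference for the full uniqueness argument, having given the existence proof in detail as above.
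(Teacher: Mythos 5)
The paper itself does not prove this statement: it is imported verbatim as \cite[Theorem 5.2]{ocpc}, so there is no internal proof to compare against and your argument has to stand on its own. Judged that way, the existence half contains a genuine gap at the Mazur step. The inequality $J_m(v^k) \leq \sum_{i\geq k}\lambda_i^k J_m(u^i)$ for the convex combinations $v^k$ requires convexity of $J_m$, and this is exactly what fails: the control term $\Phi_2$ is convex, but the tracking term $\Phi_1(u) = \tfrac12\,\mathbb E\int_0^{\tau_m^u}\|A^\gamma(y(t;u)-y_d(t))\|_H^2\,dt$ is not convex in $u$, both because $u\mapsto y(\cdot;u)$ is nonlinear and because the upper limit $\tau_m^u$ itself depends on $u$ --- the introduction of the paper singles out precisely this nonconvexity as ``the main difficulty.'' Your fallback, ``the tracking part is bounded and the control part is convex,'' does not rescue the step: a strongly continuous but nonconvex functional need not be weakly sequentially lower semicontinuous (consider $u\mapsto -\|u\|^2$), so strong continuity of $\Phi_1$ (which you do correctly extract from Lemma \ref{mildcontinuity} and Lemma \ref{stoppingtimelemma1}) plus weak lower semicontinuity of $\Phi_2$ does not give weak lower semicontinuity of the sum. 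To close the argument one needs an additional ingredient you do not supply --- e.g.\ weak-to-strong continuity (compactness) of the control-to-state map $u\mapsto y(\cdot;u)$ together with a corresponding statement for $\tau_m^u$ along weakly convergent sequences, or an entirely different device such as Ekeland's variational principle exploiting the strict convexity and coercivity of the control cost to force a minimizing sequence to converge strongly.

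The uniqueness half is not a proof at all: you correctly identify that the parallelogram identity for $\Phi_2$ must dominate a nonconvex defect coming from $\Phi_1$, but you leave that defect uncontrolled and defer to \cite{ocpc}. Deferring is legitimate --- the theorem is stated here as a cited result and the present paper relies on it as a black box --- but then the existence half should be presented with the same honesty rather than as a complete argument, since as written it proves neither part.
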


\subsection{The Linearized Stochastic Navier-Stokes Equations}

We introduce the following system in $D(A^\alpha)$:
\begin{equation}\label{linearstochnse}
 \left\{
 \begin{aligned}
  d z(t) &= - [A z(t) + B(z(t),y(t)) + B(y(t),z(t))- F v(t)] dt + G(z(t)) d W(t), \\
  z(0) &= 0,
 \end{aligned}
 \right.
\end{equation}
where $v \in L^2_{\mathcal F}(\Omega;L^2([0,T];D(A^\beta)))$, the process $(y(t))_{t \in [0,\tau)}$ is the local mild solution of system (\ref{stochnse}) and the process $(W(t))_{t \in [0,T]}$ is a Q-Wiener process with values in $H$ and covariance operator $Q \in \mathcal L(H)$. 
The operators $A,B,F,G$ are introduced in Section \ref{sec:functionalbackground} and Section \ref{sec:snse}, respectively.

\begin{definition}\label{linearlocalmildsol}
 Let $\tau$ be a predictable stopping time taking values in $(0,T]$ and $(\tau_m)_{m \in \mathbb N}$ be an increasing sequence of stopping times taking values in $[0,T]$ satisfying $\lim_{m \rightarrow \infty} \tau_m = \tau$.
 A predictable process $(z(t))_{t \in [0,\tau)}$ with values in $D(A^\alpha)$ is called a local mild solution of system (\ref{linearstochnse}) if for fixed $m \in \mathbb N$
 \begin{equation*}
  \mathbb E \sup\limits_{t \in [0,\tau_m)} \| z(t)\|_{D(A^\alpha)}^2 < \infty
 \end{equation*}
 and we have for each $m \in \mathbb N$, all $t \in [0,T]$ and $\mathbb P$-a.s.
 \begin{align*}
  z(t \wedge \tau_m) =& - \int\limits_0^{t \wedge \tau_m} A^\delta e^{-A (t \wedge \tau_m -s)} A^{-\delta} \left[ B(z(s),y(s)) + B(y(s),z(s)) \right] ds + \int\limits_0^{t \wedge \tau_m} e^{-A (t \wedge \tau_m -s)} F v(s) ds \\
  & + \mathcal I_{\tau_m} (G(z))(t \wedge \tau_m),
 \end{align*}
 where $\mathcal I_{\tau_m} (G(z))(t) = \int_0^t \mathds 1_{[0,\tau_m)}(s) e^{-A (t-s)} G(z(s \wedge \tau_m)) d W(s)$.
\end{definition}

\begin{remark}\label{remarklinearstochnse}
 The following existence and uniqueness result holds also for a general $\mathcal F_0$-measurable initial value $z(0) = z_0 \in L^2(\Omega;D(A^\alpha))$. 
 Since we prove that the local mild solution of system (\ref{linearstochnse}) is the Gâteaux derivative of the local mild solution of system (\ref{stochnse}), it suffices to show the result for $z_0 = 0$.
\end{remark}

Similarly to Section \ref{sec:snse}, we first consider the following system in $D(A^\alpha)$:
\begin{equation}\label{truncatedlinearstochnse}
 \left\{
 \begin{aligned}
  d z_m(t) &= - [A z_m(t) + B(z_m(t),\pi_m(y_m(t))) + B(\pi_m(y_m(t)),z_m(t))- F v(t)] dt + G(z_m(t)) d W(t), \\
  z_m(0) &= 0,
 \end{aligned}
 \right.
\end{equation}
where the process $(y_m(t))_{t \in [0,T]}$ is the mild solution of system (\ref{truncatedstochnse}) and $\pi_m \colon D(A^\alpha) \rightarrow D(A^\alpha)$ is given by (\ref{truncation}).

\begin{definition}
 A predictable process $(z_m(t))_{t \in [0,T]}$ with values in $D(A^\alpha)$ is called a mild solution of system (\ref{truncatedlinearstochnse}) if
 \begin{equation*}
  \mathbb E \sup\limits_{t \in [0,T]} \| z_m(t)\|_{D(A^\alpha)}^2 < \infty
 \end{equation*}
 and we have for all $t \in [0,T]$ and $\mathbb P$-a.s.
 \begin{align*}
  z_m(t) =& - \int\limits_0^t A^\delta e^{-A (t-s)} A^{-\delta} \left[ B(z_m(s),\pi_m(y_m(s))) + B(\pi_m(y_m(s)),z_m(s)) \right] ds + \int\limits_0^t e^{-A (t-s)} F v(s) ds \\
  &+ \int\limits_0^t e^{-A (t-s)}G(z_m(s)) d W(s).
 \end{align*}
\end{definition}

By Theorem \ref{existencemild}, we get the existence and uniqueness of the mild solution $(y_m(t))_{t \in [0,T]}$ to system (\ref{truncatedstochnse}) for fixed $m \in \mathbb N$ and fixed control $u \in L^2_{\mathcal F}(\Omega;L^2([0,T];D(A^\beta)))$.
Recall that the initial value $\xi \in L^2(\Omega;D(A^\alpha))$ is fixed as well.
Thus, we have the following existence and uniqueness result.
The proof can be obtained similarly to Theorem \ref{existencemild}. 

\begin{theorem}\label{existencelinearmild}
 Let the parameters $\alpha \in (0,1)$ and $\delta \in [0,1)$ satisfy $1 > \delta + \alpha > \frac{1}{2}$ and $\delta + 2\alpha \geq \frac{n}{4} +\frac{1}{2}$.
 Furthermore, let $u,v \in L^2_{\mathcal F}(\Omega;L^2([0,T];D(A^\beta)))$ be fixed for $\beta \in [0,\alpha]$ such that $\alpha - \beta < \frac{1}{2}$. 
 Then for fixed $m \in \mathbb N$, there exists a unique mild solution $(z_m(t))_{t \in [0,T]}$ of system (\ref{truncatedlinearstochnse}).
 Moreover, the process $(z_m(t))_{t \in [0,T]}$ has a continuous modification.
\end{theorem}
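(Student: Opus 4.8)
The plan is to mimic the fixed-point argument used to prove Theorem \ref{existencemild}, the key simplification being that system (\ref{truncatedlinearstochnse}) is \emph{linear} in the unknown $z_m$ once the mild solution $y_m$ of system (\ref{truncatedstochnse}) is fixed. First I would fix $m \in \mathbb N$ and the data $u,v$, invoke Theorem \ref{existencemild} to obtain the mild solution $(y_m(t))_{t \in [0,T]}$ with $\mathbb E \sup_{t \in [0,T]} \|y_m(t)\|_{D(A^\alpha)}^2 < \infty$, and note that by (\ref{truncationineq1}) the truncated coefficient satisfies $\|\pi_m(y_m(t))\|_{D(A^\alpha)} \leq m$ uniformly in $t$ and $\omega$. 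I would then work on the Banach space
\begin{equation*}
 \mathcal Z = \left\{ z \colon (z(t))_{t\in[0,T]} \text{ predictable, } D(A^\alpha)\text{-valued, } \|z\|_{\mathcal Z}^2 := \mathbb E \sup_{t\in[0,T]} \|z(t)\|_{D(A^\alpha)}^2 < \infty \right\}
\end{equation*}
and define the map $\Lambda \colon \mathcal Z \to \mathcal Z$ by letting $\Lambda z$ be the right-hand side of the mild formulation in the definition above, i.e.
\begin{equation*}
 (\Lambda z)(t) = - \int_0^t A^\delta e^{-A(t-s)} A^{-\delta}\big[B(z(s),\pi_m(y_m(s))) + B(\pi_m(y_m(s)),z(s))\big]\,ds + \int_0^t e^{-A(t-s)} F v(s)\,ds + \int_0^t e^{-A(t-s)} G(z(s))\,dW(s).
\end{equation*}

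Next I would check that $\Lambda$ maps $\mathcal Z$ into itself and is a contraction on a suitably chosen equivalent norm. For the drift term containing $B$, I would apply Lemma \ref{ineqnonlinear} with exponents $\alpha_1 = \alpha_2 = \alpha$ (the hypotheses $\delta + \alpha > \frac12$ and $\delta + 2\alpha \geq \frac n4 + \frac12$ are exactly those assumed) to estimate, for $z_1,z_2 \in \mathcal Z$,
\begin{equation*}
 \big\|A^{-\delta}\big(B(z_1-z_2,\pi_m(y_m)) + B(\pi_m(y_m),z_1-z_2)\big)\big\|_H \leq 2\widetilde M\, m\, \|A^\alpha(z_1(s)-z_2(s))\|_H,
\end{equation*}
then use Lemma \ref{fractional}(iv) to bound $\|A^{\delta+\alpha} e^{-A(t-s)}\|_{\mathcal L(H)} \leq M_{\delta+\alpha}(t-s)^{-(\delta+\alpha)} e^{-\theta(t-s)}$, where $\delta+\alpha < 1$ guarantees the singularity is integrable. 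The stochastic convolution term is handled by Proposition \ref{ineqstochconv} (with $\mathcal H = D(A^\alpha)$ and the contraction semigroup $e^{-At}$, using that $A^\alpha$ commutes with $e^{-At}$ by Lemma \ref{fractional}(iii)) together with boundedness of $G \colon H \to \mathcal L_{(HS)}(Q^{1/2}(H);D(A^\alpha))$; the inhomogeneous term $\int_0^t e^{-A(t-s)} F v(s)\,ds$ is bounded using $\alpha-\beta < \frac12$ and Lemma \ref{fractional}(iv) to control $\|A^{\alpha-\beta}e^{-A(t-s)}\|_{\mathcal L(H)}$ against $v \in L^2_{\mathcal F}(\Omega;L^2([0,T];D(A^\beta)))$, which shows $\Lambda 0 \in \mathcal Z$. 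Combining, $\|\Lambda z_1 - \Lambda z_2\|_{\mathcal Z} \leq \big(C_1 T^{1-(\delta+\alpha)} + C_2 T^{1/2}\big)\|z_1-z_2\|_{\mathcal Z}$ for constants depending on $m$; as usual, either one restricts to a small interval and iterates over $[0,T]$, or introduces a weighted norm $\|z\|^2 = \mathbb E \sup_t e^{-\lambda t}\|z(t)\|_{D(A^\alpha)}^2$ with $\lambda$ large to make $\Lambda$ a global contraction. Banach's fixed point theorem then yields the unique mild solution $z_m \in \mathcal Z$.

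Finally, the continuous modification follows exactly as in Remark \ref{stochnseremark}(ii): once $\mathbb E \sup_{t\in[0,T]}\|z_m(t)\|_{D(A^\alpha)}^2 < \infty$ is known, the two deterministic convolution terms are continuous in $t$ by standard analytic-semigroup estimates (Lemma \ref{fractional}(iv) and dominated convergence), and the stochastic convolution $\int_0^t e^{-A(t-s)}G(z_m(s))\,dW(s)$ admits a continuous modification since $G$ is linear and bounded and $z_m$ has finite second moment, by \cite[Theorem 6.10]{seiid}. The main obstacle is purely bookkeeping: verifying that the pair of conditions on $(\alpha,\delta)$ simultaneously makes the $B$-term estimate valid (via Lemma \ref{ineqnonlinear}, which needs $\delta+\alpha>\frac12$ and $\delta+2\alpha \geq \frac n4 + \frac12$) \emph{and} makes the kernel $(t-s)^{-(\delta+\alpha)}$ integrable (which needs $\delta+\alpha<1$) — both are guaranteed by the hypothesis $1 > \delta+\alpha > \frac12$, so no genuine difficulty arises beyond reproducing the structure of the proof of Theorem \ref{existencemild} with the nonlinearity $B(y)$ replaced by the bilinear-in-$z$, coefficient-bounded term $B(z,\pi_m(y_m)) + B(\pi_m(y_m),z)$.
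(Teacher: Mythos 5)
Your proposal is correct and follows essentially the same route the paper intends: the paper omits the proof with the remark that it ``can be obtained similarly to Theorem \ref{existencemild}'', and the Banach fixed-point argument you describe --- contraction on a short interval $[0,T_{1,m}]$ via Lemma \ref{ineqnonlinear} with $\alpha_1=\alpha_2=\alpha$, Lemma \ref{fractional}(iv), Proposition \ref{ineqstochconv}, and the bound $\|\pi_m(y_m(t))\|_{D(A^\alpha)}\leq m$, followed by iteration over $[0,T]$ --- is exactly the technique the paper displays in the proofs of Lemmas \ref{linearmildbound}--\ref{linearmildcontinuity}, and the continuity claim is handled as in Remark \ref{stochnseremark}(ii). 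No gaps.
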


Due to Theorem \ref{existencelocalmild}, we get the existence and uniqueness of the local mild solution $(y(t))_{t \in [0,\tau)}$ to system (\ref{stochnse}) for fixed control $u \in L^2_{\mathcal F}(\Omega;L^2([0,T];D(A^\beta)))$.
Thus, we have the following existence and uniqueness result, where the stopping times $(\tau_m)_{m \in \mathbb N}$ are given by equation (\ref{stoppingtime}).
The proof can be obtained similarly to Theorem \ref{existencelocalmild}. 

\begin{theorem}\label{existencelinearlocalmild}
 Let the parameters $\alpha \in (0,1)$ and $\delta \in [0,1)$ satisfy $1 > \delta + \alpha > \frac{1}{2}$ and $\delta + 2\alpha \geq \frac{n}{4} +\frac{1}{2}$.
 Furthermore, let $u,v \in L^2_{\mathcal F}(\Omega;L^2([0,T];D(A^\beta)))$ be fixed for $\beta \in [0,\alpha]$ such that $\alpha - \beta < \frac{1}{2}$. 
 Then there exists a unique local mild solution $(z(t))_{t \in [0,\tau)}$ of system (\ref{linearstochnse}).
 Moreover, the process $(z(t))_{t \in [0,\tau)}$ has a continuous modification.
\end{theorem}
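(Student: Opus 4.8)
The plan is to mimic the two-step strategy used for Theorem \ref{existencelocalmild}: first establish a well-defined mild solution $(z_m(t))_{t\in[0,T]}$ of the truncated system \eqref{truncatedlinearstochnse} (which is exactly Theorem \ref{existencelinearmild}, already available), and then use the stopping times $(\tau_m)_{m\in\mathbb N}$ from \eqref{stoppingtime} to patch the truncated solutions into a local mild solution of \eqref{linearstochnse}. First I would observe that, on the stochastic interval $[0,\tau_m)$, one has $\|y_m(t)\|_{D(A^\alpha)}\le m$ $\mathbb P$-a.s., hence $\pi_m(y_m(t))=y_m(t)=y(t)$ there; consequently the coefficients of \eqref{truncatedlinearstochnse} and \eqref{linearstochnse} agree on $[0,\tau_m)$. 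The candidate local mild solution is then defined by setting $z(t):=z_m(t)$ for $t\in[0,\tau_m)$; one must check this is consistent, i.e.\ that $z_{m+1}(t)=z_m(t)$ $\mathbb P$-a.s.\ on $[0,\tau_m)$. This follows from the uniqueness part of Theorem \ref{existencelinearmild} together with the fact that $\pi_{m+1}(y_{m+1}(t))=\pi_m(y_m(t))$ on $[0,\tau_m)$ (because $y_{m+1}=y_m$ there by the analogous consistency for the forward equation established in the proof of Theorem \ref{existencelocalmild}), applied to the stopped equations via Lemma \ref{stoppedconvolution} for the stochastic convolution terms.

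Next I would verify the two defining properties in Definition \ref{linearlocalmildsol}. The integrability bound $\mathbb E\sup_{t\in[0,\tau_m)}\|z(t)\|_{D(A^\alpha)}^2<\infty$ is immediate from $z(t)=z_m(t)$ on $[0,\tau_m)$ and the bound $\mathbb E\sup_{t\in[0,T]}\|z_m(t)\|_{D(A^\alpha)}^2<\infty$ in Theorem \ref{existencelinearmild}. For the mild formulation, I would take the integral equation satisfied by $z_m$, evaluate it at $t\wedge\tau_m$, and rewrite each term: for the deterministic integrals the integrand on $[0,t\wedge\tau_m)$ involves $\pi_m(y_m(s))=y(s)$ and $z_m(s)=z(s)$, so they become the integrals in Definition \ref{linearlocalmildsol}; for the stochastic term, Lemma \ref{stoppedconvolution} (with $S(t)=e^{-At}$, which satisfies $\|e^{-At}\|_{\mathcal L(H)}\le1$) gives $e^{-A(t-t\wedge\tau_m)}\mathcal I(G(z_m))(t\wedge\tau_m)=\mathcal I_{\tau_m}(G(z))(t\wedge\tau_m)$ after noting $\mathds 1_{[0,\tau_m)}(s)G(z_m(s\wedge\tau_m))=\mathds 1_{[0,\tau_m)}(s)G(z(s\wedge\tau_m))$. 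Predictability of $(z(t))_{t\in[0,\tau)}$ is inherited from the predictability of each $z_m$ on the predictable set $[0,\tau_m)$. Finally, uniqueness of the local mild solution follows by a localization argument: given two local mild solutions, stopping at $\tau_m$ and invoking uniqueness in Theorem \ref{existencelinearmild} (since any local mild solution stopped at $\tau_m$ solves the truncated equation, the truncation being inactive below $\tau_m$) forces them to coincide on each $[0,\tau_m)$, hence on $[0,\tau)=\bigcup_m[0,\tau_m)$.

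The continuous modification is obtained exactly as in Remark \ref{stochnseremark}(ii): $(z_m(t))_{t\in[0,T]}$ has a continuous modification by Theorem \ref{existencelinearmild}, and since $z(t)=z_m(t)$ on $[0,\tau_m)$ with $\tau_m\uparrow\tau$, the process $(z(t))_{t\in[0,\tau)}$ inherits a continuous modification on each $[0,\tau_m)$ and hence on $[0,\tau)$. I expect the only genuinely delicate point to be the careful handling of the stopped stochastic convolution — matching $e^{-A(t-t\wedge\tau_m)}\mathcal I(G(z_m))(t\wedge\tau_m)$ with $\mathcal I_{\tau_m}(G(z))(t\wedge\tau_m)$ and confirming the $\mathbb P$-a.s.\ identities hold simultaneously for all $t\in[0,T]$ after passing to continuous modifications; everything else is bookkeeping that runs parallel to the already-cited proofs of Theorems \ref{existencemild} and \ref{existencelocalmild}.
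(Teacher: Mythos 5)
Your proposal is correct and follows exactly the route the paper intends: the paper gives no separate proof of Theorem \ref{existencelinearlocalmild} beyond the remark that it is obtained ``similarly to Theorem \ref{existencelocalmild}'', i.e.\ by combining the truncated solution from Theorem \ref{existencelinearmild} with the stopping times (\ref{stoppingtime}), the consistency $z_{m+1}=z_m$ on $[0,\tau_m)$, and Lemma \ref{stoppedconvolution} for the stopped stochastic convolution. Your write-up fills in precisely those steps (including the identification $\pi_m(y_m(t))=y(t)$ below $\tau_m$, uniqueness by localization, and the continuous modification as in Remark \ref{stochnseremark}), so it matches the paper's approach.
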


Next, we show some properties, which we use to calculate the Gâteaux derivative of the cost functional (\ref{costfunctional}).
Note that the mild solution of system (\ref{truncatedstochnse}) depends on the control $u \in L^2_{\mathcal F}(\Omega;L^2([0,T];D(A^\beta)))$.
Hence, the mild solution of system (\ref{truncatedlinearstochnse}) depends on the control $u \in L^2_{\mathcal F}(\Omega;L^2([0,T];D(A^\beta)))$ as well as on the control $v \in L^2_{\mathcal F}(\Omega;L^2([0,T];D(A^\beta)))$.
Let us denote by $(z_m(t;u,v))_{t \in [0,T]}$ the mild solution of system (\ref{truncatedlinearstochnse}).
Similarly, we indicate by $(z(t;u,v))_{t \in [0,\tau^u)}$ the local mild solution of system (\ref{linearstochnse}) corresponding to the controls $u,v \in L^2_{\mathcal F}(\Omega;L^2([0,T];D(A^\beta)))$.
Whenever these processes are considered for fixed controls, we use the notation introduced above.

\begin{lemma}\label{linearmildbound}
 For fixed $m \in \mathbb N$, let $(z_m(t;u,v))_{t \in [0,T]}$ be the mild solution of system (\ref{truncatedlinearstochnse}) corresponding to the controls $u,v \in L^2_{\mathcal F}(\Omega;L^2([0,T];D(A^\beta)))$.
 If $v \in L^k_{\mathcal F}(\Omega;L^2([0,T];D(A^\beta)))$ for $k \geq 2$, then there exists a constant $\tilde c > 0$ such that
 \begin{equation}\label{linearmildboundineq}
  \mathbb E \sup_{t \in [0,T]} \|z_m(t;u,v)\|_{D(A^\alpha)}^k \leq \tilde c\, \|v\|_{L^k_{\mathcal F}(\Omega;L^2([0,T];D(A^\beta)))}^k.
 \end{equation}
\end{lemma}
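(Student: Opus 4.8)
The plan is to estimate the mild solution formula for $z_m(t;u,v)$ directly, treating it as a fixed point and deriving an a priori bound via Gronwall's inequality. Write $z_m(t) = z_m(t;u,v)$ and split the mild solution into three contributions: the bilinear term $N(t) = -\int_0^t A^\delta e^{-A(t-s)} A^{-\delta}[B(z_m(s),\pi_m(y_m(s))) + B(\pi_m(y_m(s)),z_m(s))]\,ds$, the control term $C(t) = \int_0^t e^{-A(t-s)} Fv(s)\,ds$, and the stochastic convolution $\mathcal I(t) = \int_0^t e^{-A(t-s)} G(z_m(s))\,dW(s)$. Raising to the $k$-th power, taking the supremum over $[0,t]$ and then expectation, it suffices to bound each of the three pieces.

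First I would handle the control term: by Lemma \ref{fractional}(iv) with exponent $\alpha - \beta < \tfrac12$, boundedness of $F$ on $D(A^\beta)$, and Young's/H\"older inequality applied to the convolution $\int_0^t (t-s)^{-(\alpha-\beta)} e^{-\theta(t-s)} \|A^\beta v(s)\|_H\,ds$, one obtains $\sup_{t\in[0,T]} \|C(t)\|_{D(A^\alpha)}^k \lesssim (\int_0^T \|A^\beta v(s)\|_H^2\,ds)^{k/2}$, and taking expectation gives the desired right-hand side. For the stochastic convolution, Proposition \ref{ineqstochconv} applied with $\mathcal H = D(A^\alpha)$ and the $C_0$-semigroup generated by $-A$ (contractive on $D(A^\alpha)$ since $A^\alpha$ commutes with the semigroup, Lemma \ref{fractional}(iii)), together with boundedness of $G \colon H \to \mathcal L_{(HS)}(Q^{1/2}(H);D(A^\alpha))$, yields $\mathbb E \sup_{t\in[0,T]} \|\mathcal I(t)\|_{D(A^\alpha)}^k \lesssim \mathbb E (\int_0^T \|z_m(s)\|_{D(A^\alpha)}^2\,ds)^{k/2} \lesssim \int_0^T \mathbb E \sup_{r\in[0,s]}\|z_m(r)\|_{D(A^\alpha)}^k\,ds$ after another H\"older step in time. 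The bilinear term is handled with Lemma \ref{ineqnonlinear} (choosing $\alpha_1 = \alpha_2 = \alpha$, which satisfies the hypotheses $\delta + \alpha > \tfrac12$ and $\delta + 2\alpha \geq \tfrac n4 + \tfrac12$) and the truncation bound \eqref{truncationineq1} giving $\|A^\alpha \pi_m(y_m(s))\|_H \leq m$; combined with Lemma \ref{fractional}(iv) this produces $\|N(t)\|_{D(A^\alpha)} \lesssim m \int_0^t (t-s)^{-\delta} e^{-\theta(t-s)} \|z_m(s)\|_{D(A^\alpha)}\,ds$, whose $k$-th power and supremum/expectation again reduces to $C_m \int_0^T \mathbb E \sup_{r\in[0,s]}\|z_m(r)\|_{D(A^\alpha)}^k\,ds$ using $\delta < 1$.

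Collecting the three estimates gives
\begin{equation*}
 \mathbb E \sup_{r\in[0,t]} \|z_m(r)\|_{D(A^\alpha)}^k \leq c_1 \|v\|_{L^k_{\mathcal F}(\Omega;L^2([0,T];D(A^\beta)))}^k + c_2 \int_0^t \mathbb E \sup_{r\in[0,s]} \|z_m(r)\|_{D(A^\alpha)}^k\,ds,
\end{equation*}
where $c_2$ depends on $m$. Since $\mathbb E \sup_{t\in[0,T]} \|z_m(t)\|_{D(A^\alpha)}^2 < \infty$ by Theorem \ref{existencelinearmild} and, for $k > 2$, one first argues finiteness of the $k$-th moment by the same splitting (or by a localization/stopping argument making the Gronwall application rigorous), Gronwall's lemma yields $\mathbb E \sup_{t\in[0,T]} \|z_m(t)\|_{D(A^\alpha)}^k \leq c_1 e^{c_2 T} \|v\|_{L^k_{\mathcal F}(\Omega;L^2([0,T];D(A^\beta)))}^k$, which is \eqref{linearmildboundineq} with $\tilde c = c_1 e^{c_2 T}$. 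The main obstacle is the bilinear term: one must verify that $\alpha_1 = \alpha_2 = \alpha$ is an admissible choice in Lemma \ref{ineqnonlinear} under the standing parameter assumptions, and exploit the truncation $\pi_m$ to keep the $y_m$-factor bounded by $m$ so that the resulting singular kernel $(t-s)^{-\delta}$ remains integrable; a secondary technical point is justifying the Gronwall step when $k > 2$, which is most cleanly done by truncating with the stopping times $\tau_m$ and passing to the limit, exactly as in the proof of Theorem \ref{existencelinearmild}.
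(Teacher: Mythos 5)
Your decomposition of the mild solution and the term-by-term estimates (fractional powers for the control term, Proposition \ref{ineqstochconv} for the stochastic convolution, Lemma \ref{ineqnonlinear} plus the truncation bound for the bilinear term) match the paper's. Where you diverge is the closing step: the paper does not use Gronwall at all. It keeps the self-referencing terms in the form $\left(C_1 T_{1,m}^{k(1-\alpha-\delta)}+C_2 T_{1,m}^{k/2}\right)\mathbb E\sup_{t\in[0,T_{1,m}]}\|z_m(t)\|_{D(A^\alpha)}^k$, chooses $T_{1,m}$ so small that this coefficient is below $1$, absorbs it into the left-hand side, and then iterates over finitely many subintervals $[T_{1,m},T_{2,m}],\dots$ to cover $[0,T]$.

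There is a genuine gap in your Gronwall reduction of the bilinear term. First, the kernel is not $(t-s)^{-\delta}$: measuring $N(t)$ in $D(A^\alpha)$ forces $A^{\alpha+\delta}e^{-A(t-s)}$, so Lemma \ref{fractional}(iv) gives $(t-s)^{-(\alpha+\delta)}$. With the correct exponent, your claim that
\begin{equation*}
 \mathbb E\sup_{r\in[0,t]}\|N(r)\|_{D(A^\alpha)}^k \lesssim \int_0^t \mathbb E\sup_{\rho\in[0,s]}\|z_m(\rho)\|_{D(A^\alpha)}^k\,ds
\end{equation*}
cannot be obtained by H\"older: pulling the $k$-th power inside requires $\int_0^r (r-s)^{-(\alpha+\delta)k/(k-1)}\,ds<\infty$, i.e.\ $\alpha+\delta<1-\tfrac1k$, which for $k=2$ contradicts the standing assumption $\alpha+\delta>\tfrac12$. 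To salvage your route you must either keep the singular kernel and invoke the weakly singular (Henry-type) Gronwall lemma for $\phi(t)\le c_1+c_2\int_0^t(t-s)^{-(\alpha+\delta)}\phi(s)\,ds$ — a tool the paper neither states nor cites — or bound $\|z_m(s)\|_{D(A^\alpha)}^k$ by its running supremum and integrate the kernel, which yields a factor $t^{k(1-\alpha-\delta)}$ multiplying $\mathbb E\sup_{r\in[0,t]}\|z_m(r)\|_{D(A^\alpha)}^k$; but that is exactly the paper's absorption argument, and at that point Gronwall is no longer doing any work. Your secondary caveat about justifying Gronwall for $k>2$ via a priori finiteness is well taken, but it is moot until the reduction above is repaired.
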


\begin{proof}
 Let the stochastic process $(y_m(t;u))_{t \in [0,T]}$ be the mild solution of system (\ref{truncatedstochnse}) corresponding to the control $u \in L^2_{\mathcal F}(\Omega;L^2([0,T];D(A^\beta)))$.
 Recall that $F \colon D(A^\beta) \rightarrow D(A^\beta)$ and $G \colon  H \rightarrow \mathcal{L}_{(HS)}(Q^{1/2}(H);D(A^\alpha))$ are bounded.
 Let $T_{1,m} \in (0,T]$.
 By Lemma \ref{fractional}, Lemma \ref{ineqnonlinear}, Proposition \ref{ineqstochconv}, inequality (\ref{truncationineq1}) and the Cauchy-Schwarz inequality, there exist constants $C_1,C_2,C_3 > 0$ such that
 \begin{align*}
  &\mathbb E \sup_{t \in [0,T_{1,m}]} \|z_m(t;u,v)\|_{D(A^\alpha)}^k \\
  &\leq 3^{k-1} \mathbb E \sup_{t \in [0,T_{1,m}]} \left(\int\limits_0^t \left\| A^{\alpha+\delta} e^{-A (t-s)} A^{-\delta} \left[ B(z_m(s;u,v),\pi_m(y_m(s;u))) + B(\pi_m(y_m(s;u)),z_m(s;u,v)) \right] \right\|_H ds \right)^k \\
  &\quad + 3^{k-1} \mathbb E \sup_{t \in [0,T_{1,m}]} \left(\int\limits_0^t \left\| A^{\alpha-\beta} e^{-A (t-s)} A^\beta F v(s) \right\|_H ds\right)^k \\
  &\quad + 3^{k-1} \mathbb E \sup_{t \in [0,T_{1,m}]} \left\|\int\limits_0^t e^{-A (t-s)}A^\alpha G(z_m(s;u,v)) d W(s) \right\|_H^k \\
  &\leq \left(C_1 T_{1,m}^{k(1-\alpha-\delta)} + C_2 T_{1,m}^{k/2}\right) \mathbb E \sup_{t \in [0,T_{1,m}]} \left\|z_m(t;u,v)\right\|_{D(A^\alpha)}^k + C_3\, \mathbb E \left( \int\limits_0^T \left\| v(t) \right\|_{D(A^\beta)}^2 dt \right)^{k/2}.
 \end{align*}
 We chose $T_{1,m} \in (0,T]$ such that $C_1 T_{1,m}^{k(1-\alpha-\delta)} + C_2 T_{1,m}^{k/2}<1$.
 Then we have
 \begin{equation*}
  \mathbb E \sup_{t \in [0,T_{1,m}]} \|z_m(t;u,v)\|_{D(A^\alpha)}^k \leq c_{1,m}\, \mathbb E \left( \int\limits_0^T \left\| v(t) \right\|_{D(A^\beta)}^2 dt \right)^{k/2},
 \end{equation*}
 where $c_{1,m} = \frac{C_3}{1-C_1 T_{1,m}^{k(1-\alpha-\delta)} - C_2 T_{1,m}^{k/2}}$.
 By definition, we have for all $t \in [T_{1,m},T]$ and $\mathbb P$-a.s.
 \begin{align*}
  z_m(t;u,v) =& \, e^{-A (t-T_{1,m})} z_m(T_{1,m};u,v) \\
  &- \int\limits_{T_{1,m}}^t A^\delta e^{-A (t-s)} A^{-\delta} \left[ B(z_m(s;u,v),\pi_m(y_m(s;u))) + B(\pi_m(y_m(s;u)),z_m(s;u,v)) \right] ds\\
  &+ \int\limits_{T_{1,m}}^t e^{-A (t-s)} F v(s) ds + \int\limits_{T_{1,m}}^t e^{-A (t-s)}G(z_m(s;u,v)) d W(s).
 \end{align*}
 Again, we find $T_{2,m} \in [T_{1,m},T]$ such that
 \begin{equation*}
  \mathbb E \sup_{t \in [T_{1,m},T_{2,m}]} \|z_m(t;u,v)\|_{D(A^\alpha)}^k \leq c_{2,m} \mathbb E \left( \int\limits_0^T \left\| v(t) \right\|_{D(A^\beta)}^2 dt \right)^{k/2},
 \end{equation*}
 where $c_{2,m}>0$ is a constant.
 By continuing the method, we obtain inequality (\ref{linearmildboundineq}).
\end{proof}

\begin{lemma}\label{linearmildlinearity}
 For fixed $m \in \mathbb N$, let $(z_m(t;u,v))_{t \in [0,T]}$ be the mild solution of system (\ref{linearstochnse}) corresponding to the controls $u,v \in L^2_{\mathcal F}(\Omega;L^2([0,T];D(A^\beta)))$.
 Then we have for every $u,v_1,v_2 \in L^2_{\mathcal F}(\Omega;L^2([0,T];D(A^\beta)))$, all $a,b \in \mathbb R$, all $t \in [0,T]$ and $\mathbb P$-a.s.
 \begin{equation*}
  z_m(t;u,a \, v_1 + b\, v_2) = a \, z_m(t;u,v_1) + b \, z_m(t;u,v_2).
 \end{equation*}
\end{lemma}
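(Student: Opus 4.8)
The plan is to deduce the asserted linearity from the uniqueness part of Theorem~\ref{existencelinearmild}. The crucial structural observation is that, for fixed $u$, the mild solution $(y_m(t;u))_{t\in[0,T]}$ of system~(\ref{truncatedstochnse}) does not depend on $v$ at all, so that the process $(\pi_m(y_m(t;u)))_{t\in[0,T]}$ appearing in system~(\ref{truncatedlinearstochnse}) is a fixed, $v$-independent coefficient. Since $B$ is bilinear and the operators $A^\delta e^{-A(t-s)}A^{-\delta}$, $e^{-A(t-s)}F$ and $e^{-A(t-s)}G(\cdot)$ are linear, the integral equation defining $(z_m(t;u,v))_{t\in[0,T]}$ is linear in the pair $(z_m,v)$ with zero initial datum; hence one expects the solution operator $v\mapsto z_m(\cdot\,;u,v)$ to be linear, and it only remains to verify this by a standard ``ansatz plus uniqueness'' argument.

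Concretely, I would fix $u,v_1,v_2\in L^2_{\mathcal F}(\Omega;L^2([0,T];D(A^\beta)))$ and $a,b\in\mathbb R$, set $\zeta(t):=a\,z_m(t;u,v_1)+b\,z_m(t;u,v_2)$, and show that $(\zeta(t))_{t\in[0,T]}$ is a mild solution of system~(\ref{truncatedlinearstochnse}) with control $a\,v_1+b\,v_2$. First I would note that $(\zeta(t))_{t\in[0,T]}$ is predictable with values in $D(A^\alpha)$ and that $\mathbb E\sup_{t\in[0,T]}\|\zeta(t)\|_{D(A^\alpha)}^2<\infty$, the latter by the triangle inequality together with Lemma~\ref{linearmildbound} applied with $k=2$. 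Then I would write down the mild equation satisfied by $(z_m(t;u,v_i))_{t\in[0,T]}$ for $i=1,2$, multiply the first by $a$ and the second by $b$, and add. Using the linearity of the deterministic convolution operators, the linearity of the stochastic integral in its integrand, and the bilinearity of $B$ together with the fact that the second (respectively first) argument $\pi_m(y_m(s;u))$ is \emph{the same} in both equations, the nonlinear terms combine as $a\,B(z_m(s;u,v_1),\pi_m(y_m(s;u)))+b\,B(z_m(s;u,v_2),\pi_m(y_m(s;u)))=B(\zeta(s),\pi_m(y_m(s;u)))$, and likewise for the term $B(\pi_m(y_m(s;u)),\cdot)$; similarly the forcing term becomes $e^{-A(t-s)}F(a\,v_1(s)+b\,v_2(s))$. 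Thus $(\zeta(t))_{t\in[0,T]}$ satisfies the mild formulation of system~(\ref{truncatedlinearstochnse}) with control $a\,v_1+b\,v_2\in L^2_{\mathcal F}(\Omega;L^2([0,T];D(A^\beta)))$, and invoking the uniqueness assertion of Theorem~\ref{existencelinearmild} yields $\zeta(t)=z_m(t;u,a\,v_1+b\,v_2)$ for all $t\in[0,T]$ and $\mathbb P$-a.s., which is the claim.

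I do not expect any serious obstacle; the only points that require a little care are bookkeeping ones. One must check that the stochastic integral transforms correctly, i.e.\ that $a\int_0^t e^{-A(t-s)}G(z_m(s;u,v_1))\,dW(s)+b\int_0^t e^{-A(t-s)}G(z_m(s;u,v_2))\,dW(s)=\int_0^t e^{-A(t-s)}G(\zeta(s))\,dW(s)$ for all $t\in[0,T]$ and $\mathbb P$-a.s., which is immediate from the linearity of the Itô integral in its integrand once one knows, via Lemma~\ref{linearmildbound}, that each integrand lies in the admissible class. The other delicate point is that the bilinear term stays linear \emph{precisely because} the truncation argument $\pi_m(y_m(\cdot\,;u))$ is frozen and independent of $v$; were the truncation argument to depend on $v$ this combination would fail, which is exactly why the linearity is established at the level of the linearized (rather than the original) equation.
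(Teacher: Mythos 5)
Your proof is correct and rests on the same two ingredients as the paper's: the bilinearity of $B$ with the frozen, $v$-independent coefficient $\pi_m(y_m(\cdot;u))$, together with a uniqueness/contraction argument for the truncated linearized system. The only cosmetic difference is that the paper runs the contraction estimate directly on the difference $\tilde z_m(t)=z_m(t;u,a v_1+b v_2)-a\,z_m(t;u,v_1)-b\,z_m(t;u,v_2)$ over successive subintervals (for which the $Fv$ terms cancel), whereas you verify that $a\,z_m(\cdot;u,v_1)+b\,z_m(\cdot;u,v_2)$ is itself a mild solution for the control $a v_1+b v_2$ and then cite the uniqueness assertion of Theorem~\ref{existencelinearmild}; these are the same argument packaged differently.
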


\begin{proof}
 Let the process $(y_m(t;u))_{t \in [0,T]}$ be the mild solution of system (\ref{truncatedstochnse}) corresponding to the control $u \in L^2_{\mathcal F}(\Omega;L^2([0,T];D(A^\beta)))$.
 To simplify the notation, we set for all $t \in [0,T]$ and $\mathbb P$-a.s.
 \begin{equation*}
  \tilde z_m(t) = z_m(t;u,a \, v_1 + b\, v_2) - a \, z_m(t;u,v_1) - b \, z_m(t;u,v_2).
 \end{equation*}
 Recall that the operators $F \colon D(A^\beta) \rightarrow D(A^\beta)$ and $G \colon  H \rightarrow \mathcal{L}_{(HS)}(Q^{1/2}(H);D(A^\alpha))$ are linear and bounded.
 Let $T_{1,m} \in (0,T]$.
 By Lemma \ref{fractional}, Lemma \ref{ineqnonlinear}, Proposition \ref{ineqstochconv} with $k=2$ and inequality (\ref{truncationineq1}), there exist constants $C_1,C_2 > 0$ such that
 \begin{align*}
  \mathbb E \sup_{t \in [0,T_{1,m}]} \|\tilde z_m(t)\|_{D(A^\alpha)}^2
  &\leq 3 \, \mathbb E \sup_{t \in [0,T_{1,m}]} \left( \int\limits_0^t \left \| A^{\alpha+\delta} e^{-A (t-s)} A^{-\delta} B(\tilde z_m(s),\pi_m(y_m(s;u))) \right \|_H ds \right)^2 \\
  &\quad + 3 \, \mathbb E \sup_{t \in [0,T_{1,m}]} \left( \int\limits_0^t \left \| A^{\alpha+\delta} e^{-A (t-s)} A^{-\delta} B(\pi_m(y_m(s;u)),\tilde z_m(s)) \right \|_H ds \right)^2 \\
  &\quad + 3 \, \mathbb E \sup_{t \in [0,T_{1,m}]} \left\| \int\limits_0^t e^{-A (t-s)} A^\alpha G(\tilde z_m(s)) d W(s) \right\|_H^2 \\
  &\leq \left(C_1 T_{1,m}^{2-2\alpha-2\delta} + C_2 T_{1,m}\right) \mathbb E \sup_{t \in [0,T_{1,m}]} \|\tilde z_m(t)\|_{D(A^\alpha)}^2.
 \end{align*}
 We chose $T_{1,m} \in (0,T]$ such that $C_1 T_{1,m}^{2-2\alpha-2\delta} + C_2 T_{1,m}<1$.
 Then we have
 \begin{equation*}
  \mathbb E \sup_{t \in [0,T_{1,m}]} \|\tilde z_m(t)\|_{D(A^\alpha)}^2 = \mathbb E \sup_{t \in [0,T_{1,m}]} \|z_m(t;u,a \, v_1 + b\, v_2) - a \, z_m(t;u,v_1) - b \, z_m(t;u,v_2)\|_{D(A^\alpha)}^2 = 0.
 \end{equation*}
 Similarly to Lemma \ref{linearmildbound}, we can conclude that the result holds for the whole time interval $[0,T]$. 
\end{proof}

\begin{lemma}\label{linearmildcontinuity}
 For fixed $m \in \mathbb N$, let $(z_m(t;u,v))_{t \in [0,T]}$ be the mild solution of system (\ref{truncatedlinearstochnse}) corresponding to the controls $u,v \in L^2_{\mathcal F}(\Omega;L^2([0,T];D(A^\beta)))$.
 Then there exists a constant $\overline c > 0$ such that for every $u_1,u_2 \in L^2_{\mathcal F}(\Omega;L^2([0,T];D(A^\beta)))$ and every $v \in L^4_{\mathcal F}(\Omega;L^2([0,T];D(A^\beta)))$
 \begin{equation*}
  \mathbb E \sup\limits_{t \in [0,T]} \| z_m(t;u_1,v) - z_m(t;u_2,v) \|_{D(A^\alpha)}^2 \leq \overline c \, \|v\|_{L^4_{\mathcal F}(\Omega;L^2([0,T];D(A^\beta)))}^2 \| u_1 - u_2\|_{L^2_{\mathcal F}(\Omega;L^2([0,T];D(A^\beta)))}.
 \end{equation*}
\end{lemma}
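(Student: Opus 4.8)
The plan is to exploit that the difference $\tilde z_m(t) := z_m(t;u_1,v) - z_m(t;u_2,v)$ solves a linear mild equation whose inhomogeneity vanishes when $u_1=u_2$, and to interpolate the two bounds \eqref{truncationineq1} and \eqref{truncationineq2} for $\pi_m$ so that the $y$-difference enters only to the power $\tfrac12$. Writing $y_m^i := y_m(\cdot;u_i)$ and $z_m^i := z_m(\cdot;u_i,v)$, subtracting the two mild formulas of \eqref{truncatedlinearstochnse} cancels the control terms $\int_0^t e^{-A(t-s)}Fv(s)\,ds$, and the bilinear contributions split as
\begin{align*}
 B(z_m^1(s),\pi_m(y_m^1(s))) - B(z_m^2(s),\pi_m(y_m^2(s))) &= B(\tilde z_m(s),\pi_m(y_m^1(s))) + B\big(z_m^2(s),\pi_m(y_m^1(s))-\pi_m(y_m^2(s))\big), \\
 B(\pi_m(y_m^1(s)),z_m^1(s)) - B(\pi_m(y_m^2(s)),z_m^2(s)) &= B(\pi_m(y_m^1(s)),\tilde z_m(s)) + B\big(\pi_m(y_m^1(s))-\pi_m(y_m^2(s)),z_m^2(s)\big).
\end{align*}
Thus $\tilde z_m$ satisfies a mild equation whose right-hand side consists of the \emph{homogeneous} terms $B(\tilde z_m,\pi_m(y_m^1))$ and $B(\pi_m(y_m^1),\tilde z_m)$, the stochastic term $\int_0^t e^{-A(t-s)}G(\tilde z_m(s))\,dW(s)$, and the \emph{source} terms $B(z_m^2,\pi_m(y_m^1)-\pi_m(y_m^2))$ and $B(\pi_m(y_m^1)-\pi_m(y_m^2),z_m^2)$.

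I would then estimate in $D(A^\alpha)$ on a short interval $[0,T_{1,m}]$ exactly as in Lemmas \ref{linearmildbound} and \ref{linearmildlinearity}. For the homogeneous and stochastic terms, Lemma \ref{fractional}, Lemma \ref{ineqnonlinear} (with $\alpha_1=\alpha_2=\alpha$), the bound $\|\pi_m(y_m^1(s))\|_{D(A^\alpha)}\le m$ from \eqref{truncationineq1}, Proposition \ref{ineqstochconv} with $k=2$ and the boundedness of $G$ yield a contribution of the form $(C_1 T_{1,m}^{2-2\alpha-2\delta} + C_2 T_{1,m})\,\mathbb E\sup_{t\in[0,T_{1,m}]}\|\tilde z_m(t)\|_{D(A^\alpha)}^2$, which is absorbed once $T_{1,m}$ is chosen small enough that the prefactor is $<1$. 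The crucial step is the source term: combining \eqref{truncationineq1} and \eqref{truncationineq2} gives
\begin{equation*}
 \big\|A^\alpha\big(\pi_m(y_m^1(s))-\pi_m(y_m^2(s))\big)\big\|_H \le 2\min\big\{m,\|y_m^1(s)-y_m^2(s)\|_{D(A^\alpha)}\big\} \le 2\sqrt{m}\,\|y_m^1(s)-y_m^2(s)\|_{D(A^\alpha)}^{1/2},
\end{equation*}
so that Lemma \ref{ineqnonlinear} with $\alpha_1=\alpha_2=\alpha$ bounds the $A^{-\delta}$-norm of the sum of the two source terms at time $s$ by $4\widetilde M\sqrt{m}\,\|z_m^2(s)\|_{D(A^\alpha)}\|y_m^1(s)-y_m^2(s)\|_{D(A^\alpha)}^{1/2}$. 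Using Lemma \ref{fractional}(iv) together with $\int_0^T r^{-\alpha-\delta}\,dr<\infty$ (valid since $\alpha+\delta<1$), the source contribution to $\|\tilde z_m(t)\|_{D(A^\alpha)}$ is therefore bounded, uniformly in $t\in[0,T]$, by $\hat c_m\big(\sup_{s\in[0,T]}\|z_m^2(s)\|_{D(A^\alpha)}\big)\big(\sup_{s\in[0,T]}\|y_m^1(s)-y_m^2(s)\|_{D(A^\alpha)}\big)^{1/2}$. Squaring, rearranging and taking expectations gives
\begin{equation*}
 \mathbb E\sup_{t\in[0,T_{1,m}]}\|\tilde z_m(t)\|_{D(A^\alpha)}^2 \le \tilde c_m\,\mathbb E\Big[\big(\sup_{s\in[0,T]}\|z_m^2(s)\|_{D(A^\alpha)}\big)^2\sup_{s\in[0,T]}\|y_m^1(s)-y_m^2(s)\|_{D(A^\alpha)}\Big].
\end{equation*}

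To close the argument I would apply the Cauchy--Schwarz inequality on $\Omega$ to bound the right-hand side by $\tilde c_m\big(\mathbb E\sup_{[0,T]}\|z_m^2\|_{D(A^\alpha)}^4\big)^{1/2}\big(\mathbb E\sup_{[0,T]}\|y_m^1-y_m^2\|_{D(A^\alpha)}^2\big)^{1/2}$. The first factor is estimated by Lemma \ref{linearmildbound} with $k=4$ — this is exactly where the hypothesis $v\in L^4_{\mathcal F}(\Omega;L^2([0,T];D(A^\beta)))$ is used — and yields $\|v\|_{L^4_{\mathcal F}(\Omega;L^2([0,T];D(A^\beta)))}^2$; the second factor is estimated by the continuity estimate of Lemma \ref{mildcontinuity} applied with $k=2$ to the mild solutions $y_m^1,y_m^2$ of \eqref{truncatedstochnse}, and yields $\|u_1-u_2\|_{L^2_{\mathcal F}(\Omega;L^2([0,T];D(A^\beta)))}$. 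Finally one extends the estimate from $[0,T_{1,m}]$ to all of $[0,T]$ by decomposing $[0,T]$ into finitely many subintervals and iterating, as at the end of the proof of Lemma \ref{linearmildbound}; since $z_m^2$ and $y_m^1-y_m^2$ are unchanged between steps this only multiplies the constant by a fixed (finite, $m$-dependent) factor, so the constant $\overline c$ can be taken independent of $u_1,u_2,v$.

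The main obstacle is the source-term estimate. Using only \eqref{truncationineq2} would force the $y$-difference to appear squared, hence would require a fourth moment of $y_m^1-y_m^2$ and therefore $u_1,u_2\in L^4$; the interpolation $\min\{m,\cdot\}\le\sqrt{m}\,(\cdot)^{1/2}$ between \eqref{truncationineq1} and \eqref{truncationineq2} is precisely what reduces this to the second moment, while still leaving a second power of $z_m^2$ that is paired with the fourth moment controlled by $v\in L^4$. This is what produces the asymmetric exponents $\|v\|_{L^4}^2\,\|u_1-u_2\|_{L^2}$ in the statement; besides this, the only real bookkeeping subtlety is keeping the $\sup_t$ inside the Gronwall-type absorption over the fixed time step $T_{1,m}$, which is handled exactly as in Lemmas \ref{linearmildbound}--\ref{linearmildlinearity}.
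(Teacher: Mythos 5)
Your proposal is correct and follows essentially the same route as the paper: the same splitting of the bilinear terms into a homogeneous part in $\tilde z_m$ (absorbed on a short interval) plus a source term $\widetilde B(\pi_m(y_m^1)-\pi_m(y_m^2),z_m^2)$, the same interpolation of (\ref{truncationineq1}) and (\ref{truncationineq2}) to make the $y$-difference enter only linearly after squaring, and the same Cauchy--Schwarz step feeding Lemma \ref{linearmildbound} with $k=4$ and Lemma \ref{mildcontinuity} with $k=2$. Your closing remark correctly identifies why the exponents in the statement are asymmetric.
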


\begin{proof}
 We define the operator $\widetilde B(y,z) = B(z,y) + B(y,z)$ for every $y,z \in D(A^\alpha)$.
 Since the operator $B$ is bilinear on $D(A^\alpha) \times D(A^\alpha)$, the operator $\widetilde B$ is bilinear as well and using Lemma \ref{ineqnonlinear}, we get for every $y,z \in D(A^\alpha)$
 \begin{equation}\label{ineqnonlinear3}
  \left\| A^{-\delta}\widetilde B(y,z) \right\|_{H} \leq 2 \widetilde M \|y\|_{D(A^\alpha)} \|z\|_{D(A^\alpha)}.
 \end{equation}
 Let $(y_m(t;u_i))_{t \in [0,T]}$ be the mild solution of system (\ref{truncatedstochnse}) corresponding to the control $u_i \in L^2_{\mathcal F}(\Omega;L^2([0,T];D(A^\beta)))$ for $i=1,2$.
 Recall that the operator $G \colon  H \rightarrow \mathcal{L}_{(HS)}(Q^{1/2}(H);D(A^\alpha))$ is linear and bounded.
 Let $T_{1,m} \in (0,T]$.
 By Lemma \ref{fractional}, the inequalities (\ref{truncationineq1}), (\ref{truncationineq2}) and (\ref{ineqnonlinear3}), Proposition \ref{ineqstochconv} with $k=2$ and the Cauchy-Schwarz inequality, there exist constants $C_1,C_2,C_3 > 0$ such that
 \begin{align*}
  &\mathbb E \sup\limits_{t \in [0,T_{1,m}]} \| z_m(t;u_1,v) - z_m(t;u_2,v) \|_{D(A^\alpha)}^2 \\
  &\leq 3\; \mathbb E \sup\limits_{t \in [0,T_{1,m}]} \left( \int\limits_0^t \left\| A^{\alpha+\delta} e^{-A (t-s)} A^{-\delta} \widetilde B(\pi_m(y_m(s;u_1)),z_m(s;u_1,v)-z_m(s;u_2,v)) \right\|_H ds \right)^2 \\
  &\quad + 3\; \mathbb E \sup\limits_{t \in [0,T_{1,m}]} \left( \int\limits_0^t \left\| A^{\alpha+\delta} e^{-A (t-s)} A^{-\delta} \widetilde B(\pi_m(y_m(s;u_1))-\pi_m(y_m(s;u_2)),z_m(s;u_2,v)) \right\|_H ds \right)^2 \\
  &\quad + 3\; \mathbb E \sup\limits_{t \in [0,T_{1,m}]} \left\| \int\limits_0^t e^{-A (t-s)} A^\alpha G(z_m(s;u_1,v) - z_m(s;u_2,v)) d W(s) \right\|_H^2 \\
  &\leq \left( C_1 T_{1,m}^{2-2\alpha-2\delta} + C_2 T_{1,m}\right) \mathbb E \sup\limits_{t \in [0,T_{1,m}]} \| z_m(t;u_1,v) - z_m(t;u_2,v) \|_{D(A^\alpha)}^2 \\
  &\quad + C_3 \left( \mathbb E \sup\limits_{t \in [0,T_{1,m}]} \left\| z_m(t;u_2,v)\right\|_{D(A^\alpha)}^4\right)^{1/2} \left( \mathbb E \sup\limits_{t \in [0,T_{1,m}]}\left\| y_m(t;u_1)-y_m(t;u_2) \right\|_{D(A^\alpha)}^2\right)^{1/2}.
 \end{align*}
 Using Lemma \ref{mildcontinuity} with $k=2$ and Lemma \ref{linearmildbound} with $k=4$, we can conclude that there exists a constant $C_3^*>0$ such that 
 \begin{align*}
  &\mathbb E \sup\limits_{t \in [0,T_{1,m}]} \| z_m(t;u_1,v) - z_m(t;u_2,v) \|_{D(A^\alpha)}^2 \\
  &\leq \left( C_1 T_{1,m}^{2-2\alpha-2\delta} + C_2 T_{1,m}\right) \mathbb E \sup\limits_{t \in [0,T_{1,m}]} \| z_m(t;u_1,v) - z_m(t;u_2,v) \|_{D(A^\alpha)}^2 \\
  &\quad + C_3^* \left( \mathbb E \left[\int\limits_0^T \|v(t)\|_{D(A^\beta)}^2 dt \right]^2 \right)^{1/2} \left( \mathbb E \int\limits_0^T \| u_1(t) - u_2(t)\|_{D(A^\beta)}^2 dt\right)^{1/2}. 
 \end{align*}
 We chose $T_{1,m} \in (0,T]$ such that $C_1 T_{1,m}^{2-2\alpha-2\delta} + C_2 T_{1,m}<1$.
 Then we infer
 \begin{align*}
  &\mathbb E \sup\limits_{t \in [0,T_{1,m}]} \| z_m(t;u_1,v) - z_m(t;u_2,v) \|_{D(A^\alpha)}^2 \\
  &\leq c_{1,m} \left( \mathbb E \left[\int\limits_0^T \|v(t)\|_{D(A^\beta)}^2 dt \right]^2 \right)^{1/2} \left( \mathbb E \int\limits_0^T \| u_1(t) - u_2(t)\|_{D(A^\beta)}^2 dt\right)^{1/2},
 \end{align*}
 where $c_{1,m} = \frac{C_3^*}{1-C_1 T_{1,m}^{2-2\alpha-2\delta} - C_2 T_{1,m}}$.
 Similarly to Lemma \ref{linearmildbound}, we can conclude that the result holds for the whole time interval $[0,T]$. 
\end{proof}

\begin{remark}
 By definition, we have for all $t \in [0,\tau_m^u)$ and $\mathbb P$-a.s. $z(t;u,v) = z_m(t;u,v)$.
 Hence, one can easily obtain similar results for the local mild solution of system (\ref{linearstochnse}).
\end{remark}

\subsection{The Derivatives of the Cost Functional}

Let $X,Y$ and $Z$ be arbitrary Banach spaces.
For a mapping $f \colon M \subset X \rightarrow Y$ with $M$ nonempty and open, we denote the Gâteaux derivative and the Fr\'echet derivative at $x \in M$ in direction $h \in X$ by $d^G f(x) [h]$ and $d^F f(x) [h]$, respectively.
Derivatives of order $k \in \mathbb N$ at $x \in M$ in directions $h_1,...,h_k \in X$ are represented by $d^G (f(x))^k[h_1,...,h_k]$ and $d^F (f(x))^k[h_1,...,h_k]$.
For a mapping $f \colon M_X \times M_Y \rightarrow Z$ with $M_X \subset X$, $M_Y \subset Y$ nonempty and open, we denote the partial Gâteaux derivative and the partial Fr\'echet derivative at $x \in M_X$ in direction $h \in X$ for fixed $y \in M_Y$ by $d_x^G f(x,y) [h]$ and $d_x^F f(x,y) [h]$, respectively.

First, we show that the local mild solution of system (\ref{linearstochnse}) is the partial Gâteaux derivative of the local mild solution to system (\ref{stochnse}) with respect to control variable.

\begin{theorem}\label{gateauxstate}
 Let $(y(t;u))_{t \in [0,\tau^u)}$ and $(z(t;u,v))_{t \in [0,\tau^u)}$ be the local mild solution of system (\ref{stochnse}) and system (\ref{linearstochnse}) corresponding to the controls $u,v \in L^2_{\mathcal F}(\Omega;L^2([0,T];D(A^\beta)))$, respectively.
 Then for fixed $m \in \mathbb N$, the Gâteaux derivative of $y(t;u)$ at $u \in L^2_{\mathcal F}(\Omega;L^2([0,T];D(A^\beta)))$ in direction $v \in L^2_{\mathcal F}(\Omega;L^2([0,T];D(A^\beta)))$ satisfies for all $t \in [0,\tau_m^u)$ and $\mathbb P$-a.s.
 \begin{equation*}
  d^G_u y(t;u) [v] = z(t;u,v).
 \end{equation*}
\end{theorem}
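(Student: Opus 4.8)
The goal is to show that the directional difference quotient of $y(\cdot;u)$ converges to $z(\cdot;u,v)$ in the appropriate sense. Fix $m \in \mathbb N$ and $u, v$. For $\theta \neq 0$ set $y^\theta = y_m(\cdot; u + \theta v)$, $y = y_m(\cdot;u)$, $z = z_m(\cdot;u,v)$, and define the remainder
\begin{equation*}
 r^\theta(t) = \frac{y^\theta(t) - y(t)}{\theta} - z(t).
\end{equation*}
The plan is to show $\mathbb E \sup_{t \in [0,T]} \| r^\theta(t)\|_{D(A^\alpha)}^2 \to 0$ as $\theta \to 0$, and then transfer this to the local mild solutions on $[0,\tau_m^u)$ using the fact that $y(t;u) = y_m(t;u)$, $z(t;u,v) = z_m(t;u,v)$ there (together with Lemma \ref{stoppingtimelemma1} / Lemma \ref{stoppingtimelemma2} to control the event $\{\tau_m^{u+\theta v} \neq \tau_m^u\}$, whose probability vanishes fast enough). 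Since the truncated systems have global mild solutions, working with $y_m, z_m$ first avoids all stopping-time bookkeeping inside the fixed-point estimate.

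\textbf{Key steps.} First I would write the mild equation satisfied by $r^\theta$. Subtracting the equations for $y^\theta$ and $y$, dividing by $\theta$, and subtracting the equation for $z$, the linear term $-\int_0^t A^\delta e^{-A(t-s)}A^{-\delta}(\cdot)\,ds$ and the noise term $\int_0^t e^{-A(t-s)} G(\cdot)\,dW(s)$ are linear, so they reproduce $r^\theta$ plus an error coming only from the nonlinearity. For the nonlinear part, the key algebraic identity is
\begin{equation*}
 \frac{B(\pi_m(y^\theta)) - B(\pi_m(y))}{\theta} - \widetilde B(\pi_m(y), z) = \widetilde B\!\left(\pi_m(y), \tfrac{\pi_m(y^\theta)-\pi_m(y)}{\theta} - z\right) + \text{(quadratic remainder in } \pi_m(y^\theta)-\pi_m(y)),
\end{equation*}
where $\widetilde B(a,b) = B(a,b) + B(b,a)$ and I use bilinearity of $B$. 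The first term on the right is handled by Lemma \ref{ineqnonlinear} and contributes a factor $T_{1,m}^{1-\alpha-\delta}$ times $\mathbb E\sup\|r^\theta\|_{D(A^\alpha)}^2$ after also accounting for the gap between $\frac{\pi_m(y^\theta)-\pi_m(y)}{\theta}$ and $\frac{y^\theta - y}{\theta}$, which is controlled via the Lipschitz bound (\ref{truncationineq2}) on $\pi_m$ and Lemma \ref{mildcontinuity}; the second (quadratic) term is $o(1)$ because $\mathbb E \sup \|y^\theta - y\|_{D(A^\alpha)}^4 \leq c\,\theta^4 \|v\|_{L^4}^4 \to 0$ by Lemma \ref{mildcontinuity} with $k=4$ (which is why one needs $v \in L^4_{\mathcal F}$; a density argument in $L^2_{\mathcal F}$ then removes this, or one simply notes the Gâteaux derivative only needs to be identified). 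Then, exactly as in the proofs of Lemma \ref{linearmildbound} and Lemma \ref{linearmildcontinuity}, I would choose $T_{1,m} \in (0,T]$ so small that $C_1 T_{1,m}^{2-2\alpha-2\delta} + C_2 T_{1,m} < 1$, absorb the $\mathbb E\sup\|r^\theta\|^2$ term on the left, obtain $\mathbb E\sup_{[0,T_{1,m}]}\|r^\theta\|_{D(A^\alpha)}^2 \to 0$, and iterate over successive subintervals $[T_{1,m},T_{2,m}],\dots$ covering $[0,T]$ since $T_{1,m}$ does not depend on $\theta$.

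\textbf{Main obstacle.} The delicate point is the composition with the truncation $\pi_m$: $\pi_m$ is only Lipschitz, not differentiable, so $\frac{\pi_m(y^\theta)-\pi_m(y)}{\theta}$ need not converge to anything. This is resolved by observing that on the stochastic interval $[0,\tau_m^u)$ one has $\|y_m(s;u)\|_{D(A^\alpha)} \leq m$, hence $\pi_m(y_m(s;u)) = y_m(s;u)$ there, and by Lemma \ref{stoppingtimelemma1} the control perturbation keeps us on that set with probability tending to $1$; so the $\pi_m$'s can be dropped on the relevant event and the truncation only ever contributes through its global Lipschitz constant on the complementary (small-probability) event. Thus the final identification $d^G_u y(t;u)[v] = z(t;u,v)$ for $t \in [0,\tau_m^u)$ follows from the global convergence for the truncated systems combined with the a.s. agreement $y = y_m$, $z = z_m$ on $[0,\tau_m^u)$ and the vanishing of $\mathbb P(\tau_m^{u+\theta v} \neq \tau_m^u)$. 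Existence of the limit (Gâteaux differentiability) and its linearity in $v$ are already guaranteed by Lemma \ref{linearmildlinearity}, so only the identification of the limit with $z$ remains, which is what the above estimate delivers.
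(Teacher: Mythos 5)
Your overall strategy is the paper's: the linear terms reproduce the remainder $r^\theta$, the genuinely nonlinear error is quadratic in $y^\theta-y$ and is killed by Lemma \ref{mildcontinuity} with $k=4$ together with Lemma \ref{linearmildbound} with $k=4$ (hence the preliminary assumption $u,v\in L^4_{\mathcal F}$ and the closing density argument), the contraction is absorbed on a small subinterval $[0,T_{1,m}]$ and iterated, and the stopping times are handled via Lemma \ref{stoppingtimelemma1}. However, there is a genuine gap in the execution: your central claim, $\mathbb E\sup_{t\in[0,T]}\|r^\theta(t)\|_{D(A^\alpha)}^2\to 0$ for the \emph{truncated} systems, is false in general. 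The linearized truncated system (\ref{truncatedlinearstochnse}) has drift $-[B(z_m,\pi_m(y_m))+B(\pi_m(y_m),z_m)]$, i.e.\ it omits the derivative of $\pi_m$; consequently $z_m$ is \emph{not} the Gâteaux derivative of $y_m$ on the set where the truncation is active ($\|y_m\|_{D(A^\alpha)}>m$), and no choice of $T_{1,m}$ can make the global estimate close. Concretely, in your key algebraic identity the "gap" term $\frac{1}{\theta}\bigl[(\pi_m(y^\theta)-\pi_m(y))-(y^\theta-y)\bigr]$ is only \emph{bounded} (uniformly in $\theta$) by the Lipschitz estimate (\ref{truncationineq2}) combined with Lemma \ref{mildcontinuity}; it does not tend to zero, so it enters the contraction inequality as an $O(1)$ source term rather than $o(1)$, and you can conclude at best $\mathbb E\sup\|r^\theta\|^2=O(1)$. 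Your "main obstacle" paragraph correctly identifies the only available fix — on $[0,\tau_m^u\wedge\tau_m^{u+\theta v})$ all truncations are the identity and the complementary event has vanishing probability — but this fix is incompatible with taking the supremum over all of $[0,T]$: even on the event $\{\tau_m^u=\tau_m^{u+\theta v}\}$ the times $t>\tau_m^u$ still see an active truncation.

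What is actually provable, and what the paper proves, is
\begin{equation*}
 \lim_{\theta\to 0}\;\mathbb E\sup_{t\in[0,\tau_m^u\wedge\tau_m^{u+\theta v})}\Bigl\|\tfrac{1}{\theta}\bigl[y(t;u+\theta v)-y(t;u)\bigr]-z(t;u,v)\Bigr\|_{D(A^\alpha)}^2=0,
\end{equation*}
working directly with the local mild solutions on the stochastic interval so that no $\pi_m$ ever appears in the remainder equation. This reintroduces exactly the stopping-time bookkeeping you hoped to avoid: the contraction constant must be beaten on deterministic subintervals $[T_{j,m},T_{j+1,m}]$ while the supremum is only taken up to the random time. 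The paper resolves this with the indicator decomposition (\ref{indicatoreq}) of $\Omega$ according to which subinterval contains $\tau_m^u\wedge\tau_m^{u+\theta v}$, estimating $\mathbb E[\mathds 1_j\sup_{t\in[0,T_{1,m}]}\|\cdot\|^2]$ for each $j$ and then patching the pieces together; this device is missing from your proposal and is needed to make the iteration over subintervals legitimate. A smaller inaccuracy: Lemma \ref{linearmildlinearity} gives linearity and (with Lemma \ref{linearmildbound}) boundedness of $v\mapsto z_m(\cdot;u,v)$, but it does not "already guarantee existence of the limit" — existence of the limit of the difference quotients is precisely the convergence statement above and must be proved, not assumed.
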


\begin{proof}
 First, we assume that $u,v \in L^4_{\mathcal F}(\Omega;L^2([0,T];D(A^\beta)))$.
 Since the operator $B$ is bilinear on $D(A^\alpha) \times D(A^\alpha)$ and the operators $F \colon D(A^\beta) \rightarrow D(A^\beta)$ and $G \colon  H \rightarrow \mathcal{L}_{(HS)}(Q^{1/2}(H);D(A^\alpha))$ are linear, we find for all $\theta \in \mathbb R \backslash \{0\}$, all $t \in [0,\tau_m^u \land \tau_m^{u+\theta v})$ and $\mathbb P$-a.s.
 \begin{align} \label{eq6}
  &\frac{1}{\theta}[y(t;u+\theta v)-y(t;u)]- z(t;u,v) \nonumber \\
  &= - \int\limits_0^t A^\delta e^{-A (t-s)}  A^{-\delta} B\left(y(s;u+\theta v),\frac{1}{\theta}[y(s;u+\theta v)-y(s;u)]- z(s;u,v)\right) ds \nonumber \\
  &\quad - \int\limits_0^t A^\delta e^{-A (t-s)} A^{-\delta} B\left(\frac{1}{\theta}[y(s;u+\theta v)-y(s;u)]- z(s;u,v),y(s;u)\right) ds \nonumber \\
  &\quad - \int\limits_0^t A^\delta e^{-A (t-s)} A^{-\delta} B(y(s;u+\theta v) - y(s;u), z(s;u,v)) ds \nonumber \\
  &\quad + \int\limits_0^t e^{-A (t-s)} G\left( \frac{1}{\theta}[y(s;u+\theta v)-y(s;u)]- z(s;u,v) \right) d W(s).
 \end{align}
 Next, let $0 = T_{0,m} < T_{1,m} < ... < T_{l,m} = T$ be a partition of the time interval $[0,T]$, which we specify below.
 Since the stopping time $\tau_m^u \land \tau_m^{u+\theta v}$ takes values in $[0,T]$, we have for almost all $\omega \in \Omega$ and all $\theta \in \mathbb R \backslash \{0\}$
 \begin{equation}\label{indicatoreq}
  \mathds 1_{\tau_m^u \land \tau_m^{u+\theta v} \in [0,T_{1,m}]}(\omega) + \sum\limits_{j=1}^{l-1} \mathds 1_{\tau_m^u \land \tau_m^{u+\theta v} \in (T_{j,m},T_{j+1,m}]}(\omega) = 1, 
 \end{equation}
 where $\mathds 1$ denotes the indicator function.
 For the sake of simplicity, we set 
 \begin{equation*}
  \mathds 1_0 = \mathds 1_{\tau_m^u \land \tau_m^{u+\theta v} \in [0,T_{1,m}]} 
 \end{equation*}
 and
 \begin{equation*}
  \mathds 1_j = \mathds 1_{\tau_m^u \land \tau_m^{u+\theta v} \in (T_{j,m},T_{j+1,m}]}
 \end{equation*}
 for $j = 1,...,l-1$.
 Furthermore, let $(y_m(t;u^*))_{t \in [0,T]}$ and $(z_m(t;u^*,v^*))_{t \in [0,T]}$ be the mild solutions of system (\ref{truncatedstochnse}) and  system (\ref{truncatedlinearstochnse}) corresponding to the controls $u^*,v^* \in L^2_{\mathcal F}(\Omega;L^2([0,T];D(A^\beta)))$, respectively.
 By definition, we have for every $u^*,v^* \in L^2_{\mathcal F}(\Omega;L^2([0,T];D(A^\beta)))$, all $t \in [0,\tau_m^{u^*})$ and $\mathbb P$-a.s. $y(t;u^*) = y_m(t;u^*)$ and $z(t;u^*,v^*) = z_m(t;u^*,v^*)$.
 Recall that $G \colon  H \rightarrow \mathcal{L}_{(HS)}(Q^{1/2}(H);D(A^\alpha))$ is bounded.
 By equation (\ref{eq6}), Lemma \ref{fractional}, Lemma \ref{ineqnonlinear}, Proposition \ref{ineqstochconv} with $k=2$ and the Cauchy-Schwarz inequality, there exist constants $C_1,C_2,C_3 > 0$ such that for all $\theta \in \mathbb R \backslash \{0\}$ and for $j=1,...,l-1$
 \begin{align*}
  &\mathbb E \left[ \mathds 1_j \sup_{t \in [0,T_{1,m}]} \left \| \frac{1}{\theta}[y(t;u+\theta v)-y(t;u)]- z(t;u,v) \right \|_{D(A^\alpha)}^2 \right] \nonumber \\
  &\leq \left(C_1 T_{1,m}^{2-2\alpha-2\delta} + C_2 T_{1,m} \right) \mathbb E \left[ \mathds 1_j \sup_{t \in [0,T_{1,m}]} \left\| \frac{1}{\theta}[y(t;u+\theta v)-y(t;u)]- z(t;u,v) \right\|_{D(A^\alpha)}^2 \right] \\
  &\quad + C_3 \left(\mathbb E \sup_{t \in [0,T_{1,m}]} \left\| z_m(t;u,v)\right\|_{D(A^\alpha)}^4 \right)^{1/2} \left(\mathbb E  \sup_{t \in [0,T_{1,m}]} \left\| y_m(t;u+\theta v) - y_m(t;u)\right\|_{D(A^\alpha)}^4 \right)^{1/2}.
 \end{align*}
 We chose $T_{1,m} \in (0,T]$ such that $C_1 T_{1,m}^{2-2\alpha-2\delta} + C_2 T_{1,m} < 1$.
 Then we find for all $\theta \in \mathbb R \backslash \{0\}$ and for $j=1,...,l-1$
 \begin{align*}
  &\mathbb E \left[ \mathds 1_j \sup_{t \in [0,T_{1,m}]} \left \| \frac{1}{\theta}[y(t;u+\theta v)-y(t;u)]- z(t;u,v) \right \|_{D(A^\alpha)}^2 \right] \\
  &\leq c_{1,m} \left(\mathbb E \sup_{t \in [0,T_{1,m}]} \left\| z_m(t;u,v)\right\|_{D(A^\alpha)}^4 \right)^{1/2} \left(\mathbb E  \sup_{t \in [0,T_{1,m}]} \left\| y_m(t;u+\theta v) - y_m(t;u)\right\|_{D(A^\alpha)}^4 \right)^{1/2},
 \end{align*}
 where $c_{1,m} = \frac{C_3}{1- C_1 T_{1,m}^{2-2\alpha-2\delta} - C_2 T_{1,m}}$.
 Using Lemma \ref{mildcontinuity} with $k=4$ and Lemma \ref{linearmildbound} with $k=4$, we can conclude for $j=1,...,l-1$
 \begin{equation}\label{eq7}
  \lim_{\theta \rightarrow 0} \mathbb E \left[ \mathds 1_j \sup_{t \in [0,T_{1,m}]} \left \| \frac{1}{\theta}[y(t;u+\theta v)-y(t;u)]- z(t;u,v) \right \|_{D(A^\alpha)}^2 \right] = 0.
 \end{equation}
 Similarly, we get 
 \begin{equation*}
  \lim_{\theta \rightarrow 0} \mathbb E \left[ \mathds 1_0 \sup_{t \in [0,\tau_m^u \land \tau_m^{u+\theta v})} \left \| \frac{1}{\theta}[y(t;u+\theta v)-y(t;u)]- z(t;u,v) \right \|_{D(A^\alpha)}^2 \right] = 0.
 \end{equation*}
 By definition, we have for all $t \in [T_{1,m},T]$, $\mathbb P$-a.s. and for $i=1,2$
 \begin{align*}
  y(t \wedge \tau_m^{u_i};u_i) =&\; e^{-A (t \wedge \tau_m^{u_i}-T_{1,m} \wedge \tau_m^{u_i})} \left[y(T_{1,m} \wedge \tau_m^{u_i};u_i) - I_{\tau_m^{u_i}} (G(y))(T_{1,m} \wedge \tau_m^{u_i})\right] \\
  &- \int\limits_{T_{1,m} \wedge \tau_m^{u_i}}^{t \wedge \tau_m^{u_i}} A^\delta e^{-A (t \wedge \tau_m^{u_i} -s)} A^{-\delta} B(y(s;u_i)) ds \\
  &+ \int\limits_{T_{1,m} \wedge \tau_m^{u_i}}^{t \wedge \tau_m^{u_i}} e^{-A (t \wedge \tau_m^{u_i} -s)} F u_i(s) ds + I_{\tau_m^{u_i}} (G(y))(t \wedge \tau_m^{u_i}),
 \end{align*}
 where $u_1 = u + \theta v$ and $u_2 = u$ and 
 \begin{align*}
  z(t \wedge \tau_m^u;u,v) =&\; e^{-A (t \wedge \tau_m^u-T_{1,m} \wedge \tau_m^u)} \left[z(T_{1,m} \wedge \tau_m^u;u,v) - I_{\tau_m^u} (G(z))(T_{1,m} \wedge \tau_m^u)\right] \\
  &- \int\limits_{T_{1,m} \wedge \tau_m^u}^{t \wedge \tau_m^u} A^\delta e^{-A (t \wedge \tau_m^u -s)} A^{-\delta} \left[ B(z(s;u,v),y(s;u)) + B(y(s;u),z(s;u,v)) \right] ds \\
  &+ \int\limits_{T_{1,m} \wedge \tau_m^u}^{t \wedge \tau_m^u} e^{-A (t \wedge \tau_m^u -s)} F v(s) ds + I_{\tau_m^u} (G(z))(t \wedge \tau_m^u).
 \end{align*}
 Again, we find $T_{2,m} \in [T_{1,m},T]$ such that for $j=2...,l-1$
 \begin{equation*}
  \lim_{\theta \rightarrow 0} \mathbb E \left[ \mathds 1_j \sup_{t \in [T_{1,m},T_{2,m}]} \left \| \frac{1}{\theta}[y(t;u+\theta v)-y(t;u)]- z(t;u,v) \right \|_{D(A^\alpha)}^2 \right] = 0
 \end{equation*}
 and
 \begin{equation*}
  \lim_{\theta \rightarrow 0} \mathbb E \left[ \mathds 1_1 \sup_{t \in [T_{1,m},\tau_m^u \land \tau_m^{u+\theta v})} \left \| \frac{1}{\theta}[y(t;u+\theta v)-y(t;u)]- z(t;u,v) \right \|_{D(A^\alpha)}^2 \right] = 0.
 \end{equation*}
 Using equality (\ref{eq7}) for $j=1$, we obtain
 \begin{equation*}
  \lim_{\theta \rightarrow 0} \mathbb E \left[ \mathds 1_1 \sup_{t \in [0,\tau_m^u \land \tau_m^{u+\theta v})} \left \| \frac{1}{\theta}[y(t;u+\theta v)-y(t;u)]- z(t;u,v) \right \|_{D(A^\alpha)}^2 \right] = 0.
 \end{equation*}
 By continuing, we obtain for $j=0,1,...,l-1$
 \begin{equation*}
  \lim_{\theta \rightarrow 0} \mathbb E \left[ \mathds 1_j \sup_{t \in [0,\tau_m^u \land \tau_m^{u+\theta v})} \left \| \frac{1}{\theta}[y(t;u+\theta v)-y(t;u)]- z(t;u,v) \right \|_{D(A^\alpha)}^2 \right] = 0.
 \end{equation*}
 Due to equation (\ref{indicatoreq}), we have
 \begin{align*}
  &\lim_{\theta \rightarrow 0} \mathbb E \sup_{t \in [0,\tau_m^u \land \tau_m^{u+\theta v})} \left \| \frac{1}{\theta}[y(t;u+\theta v)-y(t;u)]- z(t;u,v) \right \|_{D(A^\alpha)}^2 \\
  &= \sum\limits_{j=0}^{l-1} \lim_{\theta \rightarrow 0} \mathbb E \left[ \mathds 1_j \sup_{t \in [0,\tau_m^u \land \tau_m^{u+\theta v})} \left \| \frac{1}{\theta}[y(t;u+\theta v)-y(t;u)]- z(t;u,v) \right \|_{D(A^\alpha)}^2 \right] = 0.
 \end{align*}
 Therefore, the Gâteaux derivative of the velocity field $(y(t;u))_{t \in [0,\tau^u)}$ at $u \in L^4_{\mathcal F}(\Omega;L^2([0,T];D(A^\beta)))$ in direction $v \in L^4_{\mathcal F}(\Omega;L^2([0,T];D(A^\beta)))$ satisfies for all $t \in [0,\tau_m^u \land \tau_m^{u+\theta v})$ and $\mathbb P$-a.s.
 \begin{equation} \label{gateauxstateeq}
  d^G_u y(t;u) [v] = z(t;u,v).
 \end{equation}
 Note that by Lemma \ref{stoppingtimelemma1}, we have $\lim_{\theta \rightarrow 0} \mathbb P(\tau_m^u \neq \tau_m^{u+\theta v})= 0$.
 Moreover, the operator $d^G_u y(t;u)$ is linear and bounded due to Lemma \ref{linearmildbound} with $k=4$ and Lemma \ref{linearmildlinearity}.
 Since the space $L^4_{\mathcal F}(\Omega;L^2([0,T];D(A^\beta)))$ is dense in $L^2_{\mathcal F}(\Omega;L^2([0,T];D(A^\beta)))$, the equation (\ref{gateauxstateeq}) holds for $u,v \in L^2_{\mathcal F}(\Omega;L^2([0,T];D(A^\beta)))$, which is a consequence of Lemma \ref{linearmildbound} with $k=2$, Lemma \ref{linearmildlinearity} and Lemma \ref{linearmildcontinuity}.
\end{proof}

This enables us to derive the Gâteaux derivative of the cost functional.

\begin{theorem}\label{costfunctionalgateaux1}
 Let the functional $J_m \colon L^2_{\mathcal F}(\Omega;L^2([0,T];D(A^\beta))) \rightarrow \mathbb R$ be defined by (\ref{costfunctional}).
 Then the Gâteaux derivative at $u \in L^2_{\mathcal F}(\Omega;L^2([0,T];D(A^\beta)))$ in direction $v \in L^2_{\mathcal F}(\Omega;L^2([0,T];D(A^\beta)))$ satisfies
 \begin{equation*}
  d^G J_m(u)[v] = \mathbb E \int\limits_0^{\tau_m^u} \left\langle A^{\gamma}(y(t;u) -y_d(t)), A^{\gamma} z(t;u,v) \right\rangle_H dt + \mathbb E \int\limits_0^T \left\langle A^\beta u(t), A^\beta v(t) \right\rangle_H dt,
 \end{equation*}
 where the process $(z(t;u,v))_{t \in [0,\tau^u)}$ is the local mild solution of system (\ref{linearstochnse}) corresponding to the controls $u,v \in L^2_{\mathcal F}(\Omega;L^2([0,T];D(A^\beta)))$.
\end{theorem}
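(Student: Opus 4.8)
The plan is to pass to the limit in the difference quotient $\tfrac1\theta\,[J_m(u+\theta v)-J_m(u)]$ as $\theta\to0$ directly, and to read off $d^G J_m(u)[v]$ from the result. Throughout, abbreviate $y^\theta=y(\cdot;u+\theta v)$, $y=y(\cdot;u)$, $z=z(\cdot;u,v)$, $\sigma=\tau_m^u$ and $\tau=\tau_m^{u+\theta v}$. The regularization term of $J_m$ is quadratic, hence
\[
 \tfrac1{2\theta}\left[\mathbb E\int_0^T\|A^\beta(u(t)+\theta v(t))\|_H^2\,dt-\mathbb E\int_0^T\|A^\beta u(t)\|_H^2\,dt\right]
 =\mathbb E\int_0^T\langle A^\beta u(t),A^\beta v(t)\rangle_H\,dt+\tfrac\theta2\,\mathbb E\int_0^T\|A^\beta v(t)\|_H^2\,dt ,
\]
and the last summand vanishes as $\theta\to0$, producing the second term in the claim. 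Everything else concerns the tracking term, and the whole difficulty --- as always in this paper --- is that the upper limit $\tau_m^u$ of its time integral depends on the control.

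To cope with this I would split the integration domain at $\tau\wedge\sigma$, so that the difference of the two tracking integrals equals $\int_0^{\tau\wedge\sigma}\big[\|A^\gamma(y^\theta-y_d)\|_H^2-\|A^\gamma(y-y_d)\|_H^2\big]\,dt$ plus the two boundary remainders $\int_{\tau\wedge\sigma}^{\tau}\|A^\gamma(y^\theta-y_d)\|_H^2\,dt$ and $-\int_{\tau\wedge\sigma}^{\sigma}\|A^\gamma(y-y_d)\|_H^2\,dt$. On $[0,\tau\wedge\sigma)$ one has $y^\theta=y_m(\cdot;u+\theta v)$ and $y=y_m(\cdot;u)$, and expanding the square yields the pointwise identity
\[
 \|A^\gamma(y^\theta-y_d)\|_H^2-\|A^\gamma(y-y_d)\|_H^2=\|A^\gamma(y^\theta-y)\|_H^2+2\big\langle A^\gamma(y^\theta-y),A^\gamma(y-y_d)\big\rangle_H .
\]
Dividing by $2\theta$ and using $\|A^\gamma\cdot\|_H\le C\|A^\alpha\cdot\|_H$ (Lemma \ref{fractional}(v), valid since $\gamma\le\alpha\le1$), the first summand is dominated by $\tfrac{C^2\theta T}{2}\,\mathbb E\sup_{t\in[0,T]}\|\theta^{-1}(y_m(\cdot;u+\theta v)-y_m(\cdot;u))\|_{D(A^\alpha)}^2$, which tends to $0$ by Lemma \ref{mildcontinuity} with $k=2$. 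For the second summand, write $\theta^{-1}A^\gamma(y^\theta-y)=A^\gamma z+A^\gamma r^\theta$ on $[0,\tau\wedge\sigma)$, where $\mathbb E\sup_{t\in[0,\tau\wedge\sigma)}\|r^\theta(t)\|_{D(A^\alpha)}^2\to0$ is precisely the convergence established in the proof of Theorem \ref{gateauxstate}. Since $\|A^\gamma(y(t;u)-y_d(t))\|_H\le Cm+\|A^\gamma y_d(t)\|_H$ for $t<\sigma$ by the definition (\ref{stoppingtime}) of $\tau_m^u$, the $r^\theta$-contribution vanishes, and the remaining term converges to $\mathbb E\int_0^\sigma\langle A^\gamma z(t;u,v),A^\gamma(y(t;u)-y_d(t))\rangle_H\,dt$, using $\mathbb P(\tau\neq\sigma)\to0$ (Lemma \ref{stoppingtimelemma1}), Lemma \ref{linearmildbound} with $k=2$, and the Cauchy--Schwarz inequality to absorb the change of the integration bound from $\tau\wedge\sigma$ to $\sigma$.

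The two boundary remainders are supported on the event $\{\tau\neq\sigma\}$, and on $[0,\tau)$ respectively $[0,\sigma)$ their integrands are bounded by $2C^2m^2+2\|A^\gamma y_d(t)\|_H^2$, again by (\ref{stoppingtime}) and Lemma \ref{fractional}(v). Hence each of the two remainders is at most $\big(2C^2m^2T+2\|y_d\|_{L^2([0,T];D(A^\gamma))}^2\big)\,\mathds 1_{\{\tau_m^u\neq\tau_m^{u+\theta v}\}}$, so after dividing by $2\theta$ and taking expectations we are left with a fixed multiple of $\theta^{-1}\,\mathbb P(\tau_m^u\neq\tau_m^{u+\theta v})$, which tends to $0$ by Lemma \ref{stoppingtimelemma2} with $k=1$ (applicable because $u,v\in L^2_{\mathcal F}(\Omega;L^2([0,T];D(A^\beta)))$). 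Collecting the limits of the three contributions of the tracking term and of the regularization term gives the asserted formula for $d^G J_m(u)[v]$; linearity of $v\mapsto d^G J_m(u)[v]$ is evident from this formula.

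I expect the main obstacle to be the bookkeeping forced by the control-dependent stopping time: justifying the splitting at $\tau\wedge\sigma$, and --- crucially --- verifying that the boundary remainders decay like $o(\theta)$ rather than merely $O(\theta)$, which is exactly what Lemma \ref{stoppingtimelemma2} with $k=1$ delivers, while keeping every interchange of limit and expectation under control by means of the $\theta$-uniform bounds of Lemmas \ref{mildcontinuity} and \ref{linearmildbound}. No estimate beyond these should be needed.
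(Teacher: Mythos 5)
Your proposal is correct and follows essentially the same route as the paper's proof: the same splitting of the tracking integral at $\tau_m^u \wedge \tau_m^{u+\theta v}$ into a main contribution (handled by expanding the square, Lemma \ref{mildcontinuity}, Theorem \ref{gateauxstate} and Lemma \ref{linearmildbound}) plus boundary remainders that are $o(\theta)$ by Lemma \ref{stoppingtimelemma2} with $k=1$, and the same direct quadratic expansion of the regularization term. The only cosmetic difference is that you bound the boundary remainders by a global factor $\mathds 1_{\{\tau_m^u \neq \tau_m^{u+\theta v}\}}$ while the paper keeps the pointwise-in-$t$ probability $\mathbb P(\tau_m^u \wedge \tau_m^{u+\theta v} \leq t < \tau_m^{u+\theta v})$ and applies Fubini with dominated convergence, and that you do not spell out that linearity of $v \mapsto z(t;u,v)$ (Lemma \ref{linearmildlinearity}) and boundedness of the resulting functional are what make the limit a genuine Gâteaux derivative.
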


\begin{proof}
 We define the functionals $\Phi_1, \Phi_2 \colon L^2_{\mathcal F}(\Omega;L^2([0,T];D(A^\beta))) \rightarrow \mathbb R$ by 
 \begin{align*}
  &\Phi_1(u) = \frac{1}{2}\, \mathbb E \int\limits_0^{\tau_m^u} \left\|A^{\gamma}(y(t;u) -y_d(t))\right\|_H^2 dt, &\Phi_2(u) = \frac{1}{2}\, \mathbb E \int\limits_0^T \|A^\beta u(t)\|_H^2 dt.
 \end{align*}
 First, we derive the Gâteaux derivative of the functional $\Phi_1$ at $u \in L^2_{\mathcal F}(\Omega;L^2([0,T];D(A^\beta)))$ in direction $v \in L^2_{\mathcal F}(\Omega;L^2([0,T];D(A^\beta)))$.
 We get for all $\theta \in \mathbb R \backslash \{0\}$
 \begin{align}\label{eq1}
  &\left| \frac{1}{\theta}[\Phi_1(u+\theta v) - \Phi_1(u)] - \mathbb E \int\limits_0^{\tau_m^u} \left\langle A^{\gamma}(y(t;u) -y_d(t)), A^{\gamma} z(t;u,v) \right\rangle_H dt \right| \nonumber \\
  &\leq \mathcal I_1(\theta) + \mathcal I_2(\theta) + \mathcal I_3(\theta) + \mathcal I_4(\theta) + \mathcal I_5(\theta),
 \end{align}
 where
 \begin{gather*}
  \mathcal I_1(\theta) = \left| \frac{1}{2\theta} \, \mathbb E \int\limits_0^{\tau_m^u \land \tau_m^{u+\theta v}} \left\| A^{\gamma}(y(t;u+\theta v) -y(t;u)) \right\|_H^2 dt \right|, \\
  \mathcal I_2(\theta) = \left| \mathbb E \int\limits_0^{\tau_m^u \land \tau_m^{u+\theta v}} \left\langle A^{\gamma}(y(t;u)-y_d(t)), A^{\gamma} \left(\frac{1}{\theta} [y(t;u+\theta v) -y(t;u)] - z(t;u,v) \right) \right\rangle_H dt\right|, \\
  \mathcal I_3(\theta) =  \left| \frac{1}{2\theta} \, \mathbb E \int\limits_{\tau_m^u \land \tau_m^{u+\theta v}}^{\tau_m^{u+\theta v}} \left\|A^{\gamma}(y(t;u+\theta v) -y_d(t))\right\|_H^2 dt \right|, \qquad  \mathcal I_4(\theta) = \left| \frac{1}{2\theta} \, \mathbb E \int\limits_{\tau_m^u \land \tau_m^{u+\theta v}}^{\tau_m^{u}} \left\|A^{\gamma}(y(t;u) -y_d(t))\right\|_H^2 dt \right|, \\
  \mathcal I_5(\theta) = \left| \mathbb E \int\limits_{\tau_m^u \land \tau_m^{u+\theta v}}^{\tau_m^u} \left\langle A^{\gamma}(y(t;u) -y_d(t)), A^{\gamma} z(t;u,v) \right\rangle_H dt \right|.
 \end{gather*}
 Let the stochastic process $(y_m(t;u^*))_{t \in [0,T]}$ be the mild solutions of system (\ref{truncatedstochnse}) corresponding to the control $u^* \in L^2_{\mathcal F}(\Omega;L^2([0,T];D(A^\beta)))$.
 By definition, we have for every $u^* \in L^2_{\mathcal F}(\Omega;L^2([0,T];D(A^\beta)))$, all $t \in [0,\tau_m^{u^*})$ and $\mathbb P$-a.s. $y(t;u^*) = y_m(t;u^*)$ and $\| y(t;u^*)\|_{D(A^\alpha)} \leq m$.
 Using Lemma \ref{fractional} (v), we obtain for all $\theta \in \mathbb R \backslash \{0\}$
 \begin{equation*}
  \mathcal I_1(\theta) \leq \left| \frac{C T}{2 \theta} \; \mathbb E \sup_{t \in [0,T]} \left\| y_m(t;u+\theta v) -y_m(t;u) \right\|_{D(A^\alpha)}^2 \right|.
 \end{equation*}
 Due to Lemma \ref{mildcontinuity} with $k=2$, we can conclude
 \begin{equation} \label{eq2}
  \lim\limits_{\theta \rightarrow 0} \mathcal I_1(\theta) = 0.
 \end{equation}
 Using the Cauchy-Schwarz inequality and Lemma \ref{fractional} (v), there exists a constant $C^*>0$ such that for all $\theta \in \mathbb R \backslash \{0\}$
 \begin{equation*}
  \mathcal I_2(\theta) \leq C^* \left(\mathbb E \sup_{t \in [0,{\tau_m^u \land \tau_m^{u+\theta v}})}\left\|\frac{1}{\theta} [y(t;u+\theta v) -y(t;u)] - z(t;u,v) \right\|_{D(A^\alpha)}^2\right)^{1/2}.
 \end{equation*}
 Due to Theorem \ref{gateauxstate}, we can infer
 \begin{equation} \label{eq3}
  \lim\limits_{\theta \rightarrow 0} \mathcal I_2(\theta) = 0.
 \end{equation}
 Using Lemma \ref{fractional} (v) and Fubini's theorem, we get for all $\theta \in \mathbb R \backslash \{0\}$
 \begin{equation*}
  \mathcal I_3(\theta)
  \leq \left| \int\limits_0^T \frac{1}{2\theta}\, \mathbb P\left(\tau_m^u \land \tau_m^{u+\theta v}\leq t < \tau_m^{u+\theta v}\right) \left(2 C m^2 + 2\left\|y_d(t)\right\|_{D(A^\gamma)}^2\right) dt \right|.
 \end{equation*}
 Due to Lemma \ref{stoppingtimelemma2} with $k=1$, we have $\lim_{\theta \rightarrow 0} \; \frac{1}{\theta} \, \mathbb P\left(\tau_m^u \land \tau_m^{u+\theta v}\leq t < \tau_m^{u+\theta v}\right) = 0$ for all $t \in [0,T]$.
 By Lebesgue's dominated convergence theorem, we can infer
 \begin{equation} \label{eq4}
  \lim\limits_{\theta \rightarrow 0} \mathcal I_3(\theta) = 0.
 \end{equation}
 Similarly, we find 
 \begin{equation} \label{eq5}
  \lim\limits_{\theta \rightarrow 0} \mathcal I_4(\theta) + \lim\limits_{\theta \rightarrow 0} \mathcal I_5(\theta) = 0.
 \end{equation}
 Using inequality (\ref{eq1}) and equations (\ref{eq2}) -- (\ref{eq5}), we get
 \begin{equation*}
  \lim\limits_{\theta \rightarrow 0} \left | \frac{1}{\theta}[\Phi_1(u+\theta v) - \Phi_1(u)] - \mathbb E \int\limits_0^{\tau_m^u} \left\langle A^{\gamma}(y(t;u) -y_d(t)), A^{\gamma} z(t;u,v) \right\rangle_H dt \right| = 0.
 \end{equation*}
 Therefore, the Gâteaux derivative of $\Phi_1 \colon L^2_{\mathcal F}(\Omega;L^2([0,T];D(A^\beta))) \rightarrow \mathbb R$ at $u \in L^2_{\mathcal F}(\Omega;L^2([0,T];D(A^\beta)))$ in direction $v \in L^2_{\mathcal F}(\Omega;L^2([0,T];D(A^\beta)))$ is given by
 \begin{equation}\label{gateauxfunctional1}
  d^G \Phi_1(u)[v] = \mathbb E \int\limits_0^{\tau_m^u} \left\langle A^{\gamma}(y(t;u) -y_d(t)), A^{\gamma} z(t;u,v) \right\rangle_H dt.
 \end{equation}
 Let the stochastic process $(z_m(t;u,v))_{t \in [0,T]}$ be the mild solution of system (\ref{truncatedlinearstochnse}) corresponding to the controls $u,v \in L^2_{\mathcal F}(\Omega;L^2([0,T];D(A^\beta)))$.
 By definition, we have for all $t \in [0,\tau_m^u)$ and $\mathbb P$-a.s. $z(t;u,v)=z_m(t;u,v)$.
 Using Lemma \ref{linearmildlinearity}, the functional $d^G \Phi_1(u)$ is linear.
 Moreover, by Lemma \ref{fractional} (v), Lemma \ref{linearmildbound} with $k=2$ and the Cauchy-Schwarz inequality, there exists a constant $C^*>0$ such that
 \begin{equation*}
  \left| d^G \Phi_1(u)[v] \right|^2 \leq C^* \left\| v \right\|_{L^2_{\mathcal F}(\Omega;L^2([0,T];D(A^\beta)))}^2.
 \end{equation*}
 Hence, the functional $d^G \Phi_1(u)$ is bounded.
 
 Note that the functional $\Phi_2 \colon L^2_{\mathcal F}(\Omega;L^2([0,T];D(A^\beta))) \rightarrow \mathbb R$ is given by the squared norm in the Hilbert space $L_{\mathcal F}^2(\Omega;L^2([0,T];D(A^\beta)))$.
 Thus, the Gâteaux derivative of $\Phi_2$ at $u \in L^2_{\mathcal F}(\Omega;L^2([0,T];D(A^\beta)))$ in direction $v \in L^2_{\mathcal F}(\Omega;L^2([0,T];D(A^\beta)))$ is given by
 \begin{equation}\label{gateauxfunctional2}
  d^G \Phi_2(u)[v] = \mathbb E \int\limits_0^T \left\langle A^\beta u(t), A^\beta v(t)\right\rangle_H dt.
 \end{equation}
 Obviously, the functional $d^G \Phi_2(u)$ is linear and bounded.

 Using equation (\ref{gateauxfunctional1}) and equation (\ref{gateauxfunctional2}), the Gâteaux derivative of $J_m$ at $u \in L^2_{\mathcal F}(\Omega;L^2([0,T];D(A^\beta)))$ in direction $v \in L^2_{\mathcal F}(\Omega;L^2([0,T];D(A^\beta)))$ is given by
 \begin{align*}
  d^G J_m(u)[v] &= d^G \Phi_1(u)[v]+d^G \Phi_2(u)[v] \\
  &= \mathbb E \int\limits_0^{\tau_m^u} \left\langle A^{\gamma}(y(t;u) -y_d(t)), A^{\gamma} z(t;u,v) \right\rangle_H dt + \mathbb E \int\limits_0^T \left\langle A^\beta u(t), A^\beta v(t)\right\rangle_H dt.
 \end{align*}
 Since $d^G \Phi_1(u)$ and $d^G \Phi_2(u)$ are linear and bounded, the functional $d^G J_m(u)$ is linear and bounded as well.
\end{proof}

Recall that the set of admissible controls $U$ is a closed, bounded and convex subset of the Hilbert space $L^2_{\mathcal F}(\Omega;L^2([0,T];D(A^\beta)))$ such that $0 \in U$.
Hence, the optimal control $\overline u_m \in U$ satisfies the necessary optimality condition
\begin{equation}\label{necessarycondition}
 d^G J_m(\overline u_m)[u-\overline u_m] \geq 0
\end{equation}
for fixed $m \in \mathbb N$ and every $u \in U$.
Due to Theorem \ref{costfunctionalgateaux1}, we get the variational inequality
\begin{equation}\label{variationalinequlity}
 \mathbb E \int\limits_0^{\tau_m^{\overline u_m}} \left\langle A^{\gamma}(y(t;\overline u_m) -y_d(t)), A^{\gamma} z(t;\overline u_m,u-\overline u_m) \right\rangle_H dt + \mathbb E \int\limits_0^T \left\langle A^\beta \overline u_m(t), A^\beta (u(t)-\overline u_m(t)) \right\rangle_H dt \geq 0
\end{equation}
for fixed $m \in \mathbb N$ and every $u \in U$.
We will use this inequality to derive an explicit formula for the optimal control $\overline u_m \in U$.
For more details on necessary optimality conditions as variational inequalities, we refer to \cite{owpde,nfaa}.

Next, we state the second order Gâteaux derivative of the cost functional (\ref{costfunctional}).
Moreover, we show that the Gâteaux derivatives and the Fr\'echet derivatives coincide, which will enable us to obtain a sufficient optimality condition.

\begin{corollary}\label{costfunctionalgateaux2}
 Let the functional $J_m \colon L^2_{\mathcal F}(\Omega;L^2([0,T];D(A^\beta))) \rightarrow \mathbb R$ be defined by (\ref{costfunctional}).
 Then the Gâteaux derivative of order two at $u \in L^2_{\mathcal F}(\Omega;L^2([0,T];D(A^\beta)))$ in directions $v_1,v_2 \in L^2_{\mathcal F}(\Omega;L^2([0,T];D(A^\beta)))$ satisfies
 \begin{equation*}
  d^G (J_m(u))^2[v_1,v_2] = \mathbb E \int\limits_0^{\tau_m^u} \left\langle A^{\gamma}z(t;u,v_1), A^{\gamma} z(t;u,v_2) \right\rangle_H dt + \mathbb E \int\limits_0^T \left\langle A^\beta v_1(t), A^\beta v_2(t) \right\rangle_H dt,
 \end{equation*}
  where the processes $(z(t;u,v_i))_{t \in [0,\tau^u)}$ are the local mild solution of system (\ref{linearstochnse}) corresponding to the controls $u,v_i \in L^2_{\mathcal F}(\Omega;L^2([0,T];D(A^\beta)))$ for $i=1,2$.
\end{corollary}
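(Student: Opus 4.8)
The plan is to obtain the second-order Gâteaux derivative by differentiating once more the first-order formula of Theorem \ref{costfunctionalgateaux1}, that is, by computing
\[
 d^G(J_m(u))^2[v_1,v_2] = \lim_{\theta\to 0}\frac{1}{\theta}\bigl(d^G J_m(u+\theta v_2)[v_1] - d^G J_m(u)[v_1]\bigr).
\]
Write $J_m = \Phi_1 + \Phi_2$ with $\Phi_1,\Phi_2$ as in the proof of Theorem \ref{costfunctionalgateaux1}. Since $\Phi_2$ is the squared norm of the Hilbert space $L^2_{\mathcal F}(\Omega;L^2([0,T];D(A^\beta)))$, the map $\theta\mapsto d^G\Phi_2(u+\theta v_2)[v_1] = \mathbb E\int_0^T\langle A^\beta(u(t)+\theta v_2(t)),A^\beta v_1(t)\rangle_H\,dt$ is affine, so its derivative at $\theta=0$ equals $\mathbb E\int_0^T\langle A^\beta v_1(t),A^\beta v_2(t)\rangle_H\,dt$ with no further work; this yields the second term of the asserted formula. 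Everything therefore reduces to $\Phi_1$, for which Theorem \ref{costfunctionalgateaux1} provides $d^G\Phi_1(w)[v_1] = \mathbb E\int_0^{\tau_m^w}\langle A^\gamma(y(t;w)-y_d(t)),A^\gamma z(t;w,v_1)\rangle_H\,dt$.

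For $\Phi_1$ I would estimate the quantity
\[
 \frac{1}{\theta}\bigl(d^G\Phi_1(u+\theta v_2)[v_1]-d^G\Phi_1(u)[v_1]\bigr)-\mathbb E\int_0^{\tau_m^u}\langle A^\gamma z(t;u,v_1),A^\gamma z(t;u,v_2)\rangle_H\,dt
\]
by reproducing the stopping-time machinery of the proofs of Theorem \ref{gateauxstate} and Theorem \ref{costfunctionalgateaux1}: a partition $0=T_{0,m}<\dots<T_{l,m}=T$ of $[0,T]$, the indicator functions $\mathds 1_j=\mathds 1_{\tau_m^u\wedge\tau_m^{u+\theta v_2}\in(T_{j,m},T_{j+1,m}]}$, the argument carried out on $[0,T_{1,m}]$ first and then propagated interval by interval. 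On the common interval $[0,\tau_m^u\wedge\tau_m^{u+\theta v_2})$ I would split the difference of the two integrands by adding and subtracting $\langle A^\gamma(y(t;u)-y_d(t)),A^\gamma z(t;u+\theta v_2,v_1)\rangle_H$, which gives
\[
 \langle A^\gamma(y(t;u+\theta v_2)-y(t;u)),A^\gamma z(t;u+\theta v_2,v_1)\rangle_H+\langle A^\gamma(y(t;u)-y_d(t)),A^\gamma(z(t;u+\theta v_2,v_1)-z(t;u,v_1))\rangle_H.
\]
After division by $\theta$, the first summand converges to $\langle A^\gamma z(t;u,v_1),A^\gamma z(t;u,v_2)\rangle_H$: here one uses Theorem \ref{gateauxstate} (for $\frac1\theta(y(\cdot;u+\theta v_2)-y(\cdot;u))\to z(\cdot;u,v_2)$ in mean square up to the stopping time), Lemma \ref{linearmildcontinuity} (for $z(\cdot;u+\theta v_2,v_1)\to z(\cdot;u,v_1)$), the bound $\|y(t;u)\|_{D(A^\alpha)}\le m$ (and likewise for $u+\theta v_2$) valid before the respective stopping time, Lemma \ref{fractional} (v) (since $\gamma\le\alpha$) and the Cauchy-Schwarz inequality. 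The boundary contributions over $[\tau_m^u\wedge\tau_m^{u+\theta v_2},\tau_m^{u+\theta v_2})$ and over $[\tau_m^u\wedge\tau_m^{u+\theta v_2},\tau_m^u)$ are treated like the terms $\mathcal I_3$--$\mathcal I_5$ in Theorem \ref{costfunctionalgateaux1}, using Fubini's theorem, the bound $\|y\|_{D(A^\alpha)}\le m$, Lemma \ref{linearmildbound}, Lemma \ref{stoppingtimelemma2} with $k=2$ and Lebesgue's dominated convergence theorem; as there, the extra moments demanded by Lemma \ref{stoppingtimelemma2} are first obtained under the assumption $u,v_1,v_2\in L^4_{\mathcal F}(\Omega;L^2([0,T];D(A^\beta)))$, the general case following by density together with Lemma \ref{linearmildbound} with $k=2$, Lemma \ref{linearmildlinearity} and Lemma \ref{linearmildcontinuity}.

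The main obstacle is the second summand above,
\[
 \frac{1}{\theta}\,\mathbb E\int_0^{\tau_m^u\wedge\tau_m^{u+\theta v_2}}\langle A^\gamma(y(t;u)-y_d(t)),A^\gamma(z(t;u+\theta v_2,v_1)-z(t;u,v_1))\rangle_H\,dt,
\]
which records how the linearized solution itself varies with the control. The plain continuity bound of Lemma \ref{linearmildcontinuity} gives only $\mathbb E\sup_t\|z_m(t;u+\theta v_2,v_1)-z_m(t;u,v_1)\|_{D(A^\alpha)}^2=O(|\theta|)$, which is not enough after division by $\theta$, so one first has to sharpen it to $O(\theta^2)$. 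This is done by writing the mild equation for $\zeta_\theta:=z_m(\cdot;u+\theta v_2,v_1)-z_m(\cdot;u,v_1)$, observing that its inhomogeneity is the bilinear operator $\widetilde B$ (in the notation of the proof of Lemma \ref{linearmildcontinuity}) carrying the factor $\pi_m(y_m(\cdot;u+\theta v_2))-\pi_m(y_m(\cdot;u))$, which is of order $|\theta|$ in the relevant norm by (\ref{truncationineq2}) and Lemma \ref{mildcontinuity} with $k=4$, and running the contraction-and-propagation estimate of Lemma \ref{linearmildbound} with this source (on the basis of Lemma \ref{fractional}, Lemma \ref{ineqnonlinear}, Proposition \ref{ineqstochconv} and the Cauchy-Schwarz inequality). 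With the $O(\theta^2)$ bound in hand, $\frac1\theta\zeta_\theta$ converges to the mild solution $w$ of (\ref{linearstochnse}) re-linearized in the state along $z(\cdot;u,v_2)$, that is, with inhomogeneity $\widetilde B(z(\cdot;u,v_1),z(\cdot;u,v_2))$; the crux is then to show that this limit, paired with $A^\gamma(y(\cdot;u)-y_d(\cdot))$ and integrated over $[0,\tau_m^u)$, contributes nothing to the second-order derivative beyond the claimed expression --- the analogue of the vanishing of the term $\mathcal I_2$ in Theorem \ref{costfunctionalgateaux1}. Carrying this out, together with the interval-by-interval stopping-time bookkeeping inherited from Theorem \ref{costfunctionalgateaux1}, is where I expect most of the effort to lie.
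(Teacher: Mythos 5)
The paper's own ``proof'' of this corollary is the single sentence that the claim follows similarly to Theorem \ref{costfunctionalgateaux1}, so your proposal has to be measured against the method of that theorem. Your treatment of $\Phi_2$, the stopping-time bookkeeping with the partition and the indicators $\mathds 1_j$, the boundary integrals over $[\tau_m^u\wedge\tau_m^{u+\theta v_2},\tau_m^{u+\theta v_2})$ and $[\tau_m^u\wedge\tau_m^{u+\theta v_2},\tau_m^u)$, and the summand $\theta^{-1}\langle A^\gamma(y(t;u+\theta v_2)-y(t;u)),A^\gamma z(t;u+\theta v_2,v_1)\rangle_H$ are all consistent with that method and unobjectionable.

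The genuine gap sits exactly where you place it, namely in the term
\[
 \frac{1}{\theta}\,\mathbb E\int_0^{\tau_m^u\wedge\tau_m^{u+\theta v_2}}\left\langle A^\gamma(y(t;u)-y_d(t)),A^\gamma\bigl(z(t;u+\theta v_2,v_1)-z(t;u,v_1)\bigr)\right\rangle_H dt,
\]
and you do not close it: you defer its treatment entirely (``where I expect most of the effort to lie''), so the proposal does not establish the asserted identity. Moreover, the analogy you invoke with the vanishing of $\mathcal I_2$ in Theorem \ref{costfunctionalgateaux1} is not the right one. There, a difference quotient \emph{minus its limit} tends to zero, the limit having already been written into the formula. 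Here, after your (correct) observation that Lemma \ref{linearmildcontinuity} only gives $O(|\theta|)$ for the squared norm and must be sharpened to $O(\theta^2)$, the quotient $\theta^{-1}(z(\cdot;u+\theta v_2,v_1)-z(\cdot;u,v_1))$ converges not to zero but to the nonzero process $w$ solving the linearized equation with source $B(z(\cdot;u,v_1),z(\cdot;u,v_2))+B(z(\cdot;u,v_2),z(\cdot;u,v_1))$, and its pairing with the tracking error produces an honest additional contribution $\mathbb E\int_0^{\tau_m^u}\langle A^\gamma(y(t;u)-y_d(t)),A^\gamma w(t)\rangle_H\,dt$ to the second derivative. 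Nothing in your proposal (and nothing in the paper) shows that this quantity vanishes; it is the second variation of the control-to-state map paired with $A^\gamma(y-y_d)$, and it is absent from the corollary's right-hand side only if that map were affine, which it is not because of the nonlinearity $B$. Until you either prove this term is zero or account for it explicitly, the argument is incomplete at the one step that distinguishes the second derivative from a formal repetition of the first.
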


\begin{proof}
 The claim can be shown similarly to Theorem \ref{costfunctionalgateaux1}.
\end{proof}

\begin{corollary}\label{costfunctionalfrechet1}
 Let the functional $J_m \colon L^2_{\mathcal F}(\Omega;L^2([0,T];D(A^\beta))) \rightarrow \mathbb R$ be defined by (\ref{costfunctional}).
 Then the Fr\'echet derivative at $u \in L^2_{\mathcal F}(\Omega;L^2([0,T];D(A^\beta)))$ in direction $v \in L^2_{\mathcal F}(\Omega;L^2([0,T];D(A^\beta)))$ satisfies
 \begin{equation*}
  d^F J_m(u)[v] = \mathbb E \int\limits_0^{\tau_m^u} \left\langle A^{\gamma}(y(t;u) -y_d(t)), A^{\gamma} z(t;u,v) \right\rangle_H dt + \mathbb E \int\limits_0^T \left\langle A^\beta u(t), A^\beta v(t) \right\rangle_H dt,
 \end{equation*}
 where the process $(z(t;u,v))_{t \in [0,\tau^u)}$ is the local mild solution of system (\ref{linearstochnse}) corresponding to the controls $u,v \in L^2_{\mathcal F}(\Omega;L^2([0,T];D(A^\beta)))$.
 Moreover, the functional $d^F J_m(u)[v]$ is continuous with respect to $u$.
\end{corollary}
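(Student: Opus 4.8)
The plan is to deduce both assertions from the first and second order Gâteaux derivatives already at hand, rather than verifying operator-norm continuity of $u \mapsto d^G J_m(u)$ directly. Fix $u,v \in L^2_{\mathcal F}(\Omega;L^2([0,T];D(A^\beta)))$ and consider $\phi(s) = J_m(u+sv)$ for $s \in [0,1]$. By Theorem~\ref{costfunctionalgateaux1} the function $\phi$ is differentiable with $\phi'(s) = d^G J_m(u+sv)[v]$, and by Corollary~\ref{costfunctionalgateaux2} the derivative $\phi'$ is again differentiable with $\phi''(s) = d^G (J_m(u+sv))^2[v,v]$. Using Lemma~\ref{fractional}~(v) (recall $\gamma \in [0,\alpha]$) together with Lemma~\ref{linearmildbound} for $k=2$, whose constant can be taken independent of the underlying control for fixed $m$ because its proof only relies on $\|\pi_m(\cdot)\|_{D(A^\alpha)} \leq m$, one obtains a constant $\kappa = \kappa(m) > 0$, independent of $u$ and $v$, with $|\phi''(s)| \leq \kappa\, \|v\|_{L^2_{\mathcal F}(\Omega;L^2([0,T];D(A^\beta)))}^2$ for all $s \in [0,1]$. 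In particular $\phi'$ is Lipschitz on $[0,1]$, so Taylor's formula with integral remainder applies and gives
\begin{equation*}
 \left| J_m(u+v) - J_m(u) - d^G J_m(u)[v] \right| = \left| \int\limits_0^1 (1-s)\, \phi''(s)\, ds \right| \leq \frac{\kappa}{2}\, \|v\|_{L^2_{\mathcal F}(\Omega;L^2([0,T];D(A^\beta)))}^2.
\end{equation*}
The right-hand side is $o(\|v\|)$ as $\|v\| \to 0$, hence $J_m$ is Fréchet differentiable at every $u$ and $d^F J_m(u)[v] = d^G J_m(u)[v]$; the stated formula then follows from Theorem~\ref{costfunctionalgateaux1}.

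For the continuity in $u$ it suffices, since $d^F J_m = d^G J_m$, to show that each term in the formula depends continuously on $u$ for fixed $v$. The term $\mathbb E \int_0^T \langle A^\beta u(t), A^\beta v(t) \rangle_H\, dt$ is the inner product of $u$ and $v$ in $L^2_{\mathcal F}(\Omega;L^2([0,T];D(A^\beta)))$ and is Lipschitz in $u$. For the term $\mathbb E \int_0^{\tau_m^u} \langle A^\gamma(y(t;u)-y_d(t)), A^\gamma z(t;u,v) \rangle_H\, dt$ I would compare two controls $u_1, u_2$ and split the difference into three pieces: the integrals over the sets where $\tau_m^{u_1}$ and $\tau_m^{u_2}$ disagree; the integral over $[0, \tau_m^{u_1} \wedge \tau_m^{u_2})$ with $y(\cdot;u_1)$ replaced by $y(\cdot;u_1) - y(\cdot;u_2)$; and the integral over $[0, \tau_m^{u_1} \wedge \tau_m^{u_2})$ with $z(\cdot;u_1,v)$ replaced by $z(\cdot;u_1,v) - z(\cdot;u_2,v)$. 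On $[0,\tau_m^{u_i})$ we have $\|y(t;u_i)\|_{D(A^\alpha)} \leq m$, so the first piece is bounded by a constant times $\big(\mathbb P(\tau_m^{u_1} \neq \tau_m^{u_2})\big)^{1/2} \big(\mathbb E \sup_t \|z(t;u_i,v)\|_{D(A^\alpha)}^2\big)^{1/2}$, which tends to $0$ as $u_1 \to u_2$ by Lemma~\ref{stoppingtimelemma1} and Lemma~\ref{linearmildbound}; the second piece is handled by the Cauchy--Schwarz inequality, Lemma~\ref{fractional}~(v), Lemma~\ref{mildcontinuity} and Lemma~\ref{linearmildbound}; and the third piece is bounded by a constant times $\big(\mathbb E \sup_t \|z(t;u_1,v) - z(t;u_2,v)\|_{D(A^\alpha)}^2\big)^{1/2}$, which vanishes as $u_1 \to u_2$ by Lemma~\ref{linearmildcontinuity} when $v \in L^4_{\mathcal F}(\Omega;L^2([0,T];D(A^\beta)))$. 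For a general direction $v \in L^2_{\mathcal F}(\Omega;L^2([0,T];D(A^\beta)))$ I would approximate $v$ by elements $v_n \in L^4_{\mathcal F}(\Omega;L^2([0,T];D(A^\beta)))$ and use Lemma~\ref{linearmildlinearity} together with the uniform bound of Lemma~\ref{linearmildbound} to pass to the limit.

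The first part is routine once one notices that it is cleaner to integrate the second Gâteaux derivative than to try to establish operator-norm continuity of the first. The real work sits in the continuity statement, and there the main obstacle is that the domain of integration $[0,\tau_m^u)$ itself varies with $u$, which is precisely what forces the use of Lemma~\ref{stoppingtimelemma1}; a secondary point is that Lemma~\ref{linearmildcontinuity} only yields continuity of the linearized state along $L^4$-directions, so the extension to arbitrary $L^2$-directions requires the density argument indicated above.
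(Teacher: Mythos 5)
Your proposal is correct, and for the differentiability part it takes a genuinely different route from the paper. The paper first establishes that $u \mapsto d^G J_m(u)$ is continuous as a map into $\mathcal L(L^2_{\mathcal F}(\Omega;L^2([0,T];D(A^\beta)));\mathbb R)$ (via Lemma \ref{mildcontinuity}, Lemma \ref{linearmildcontinuity}, Lemma \ref{stoppingtimelemma1} and the $L^4$-density argument) and then invokes the mean value theorem, bounding the remainder by $\sup_{\theta}\|d^G J_m(u+\theta v)-d^G J_m(u)\|\,\|v\|$; Fréchet differentiability is thus a consequence of operator-norm continuity of the first Gâteaux derivative. You instead integrate the second Gâteaux derivative along the segment, using that the constant in Lemma \ref{linearmildbound} depends only on $m$ (through $\|\pi_m(\cdot)\|_{D(A^\alpha)}\leq m$) to get the uniform bound $|\phi''(s)|\leq \kappa\|v\|^2$, which yields the quantitative remainder estimate $|J_m(u+v)-J_m(u)-d^GJ_m(u)[v]|\leq \tfrac{\kappa}{2}\|v\|^2$. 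This buys two things: the remainder is $O(\|v\|^2)$ rather than merely $o(\|v\|)$, and the differentiability claim is decoupled from the continuity claim, so the somewhat delicate operator-norm continuity of $d^GJ_m$ is never needed. The price is that you must still prove the continuity of $d^FJ_m(u)[v]$ in $u$ separately; your splitting into the stopping-time discrepancy piece, the $y$-increment piece and the $z$-increment piece, handled by Lemma \ref{stoppingtimelemma1}, Lemma \ref{mildcontinuity}, Lemma \ref{linearmildcontinuity} and the $L^4$-to-$L^2$ density argument, is essentially the argument the paper compresses into one sentence, so the two proofs coincide there. Both are complete; yours is arguably the cleaner organization of the same ingredients plus Corollary \ref{costfunctionalgateaux2}.
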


\begin{proof}
 Using Theorem \ref{costfunctionalgateaux1}, we have that the Gâteaux derivative at $u \in L^2_{\mathcal F}(\Omega;L^2([0,T];D(A^\beta)))$ in direction $v \in L^2_{\mathcal F}(\Omega;L^2([0,T];D(A^\beta)))$ satisfies
 \begin{equation*}
  d^G J_m(u)[v] = \mathbb E \int\limits_0^{\tau_m^u} \left\langle A^{\gamma}(y(t;u) -y_d(t)), A^{\gamma} z(t;u,v) \right\rangle_H dt + \mathbb E \int\limits_0^T \left\langle A^\beta u(t), A^\beta v(t) \right\rangle_H dt.
 \end{equation*}
 Let $(y_m(t;u))_{t \in [0,T]}$ and $(z_m(t;u,v))_{t \in [0,T]}$ be the mild solutions of system (\ref{truncatedstochnse}) and  system (\ref{truncatedlinearstochnse}) corresponding to the controls $u,v \in L^2_{\mathcal F}(\Omega;L^2([0,T];D(A^\beta)))$, respectively.
 By definition, we have for every $u,v \in L^2_{\mathcal F}(\Omega;L^2([0,T];D(A^\beta)))$, all $t \in [0,\tau_m^u)$ and $\mathbb P$-a.s. $y(t;u) = y_m(t;u)$ and $z(t;u,v) = z_m(t;u,v)$.
 If we assume $v \in L^4_{\mathcal F}(\Omega;L^2([0,T];D(A^\beta)))$, then the process $(z(t;u,v))_{t \in [0,\tau_m^u)}$ is continuous with respect to the control $u \in L^2_{\mathcal F}(\Omega;L^2([0,T];D(A^\beta)))$ resulting from Lemma \ref{linearmildcontinuity}.
 By Lemma \ref{linearmildbound} with $k=2$, Lemma \ref{linearmildlinearity} and the fact that the space $L^4_{\mathcal F}(\Omega;L^2([0,T];D(A^\beta)))$ is dense in $L^2_{\mathcal F}(\Omega;L^2([0,T];D(A^\beta)))$, we can conclude that the process $(z(t;u,v))_{t \in [0,\tau_m^u)}$ is continuous with respect to $u \in L^2_{\mathcal F}(\Omega;L^2([0,T];D(A^\beta)))$ for $v \in L^2_{\mathcal F}(\Omega;L^2([0,T];D(A^\beta)))$.
 The fact that $(y(t;u))_{t \in [0,\tau_m^u)}$ is continuous with respect to the control $u \in L^2_{\mathcal F}(\Omega;L^2([0,T];D(A^\beta)))$ is an immediate consequence of Lemma \ref{mildcontinuity} with $k=2$.
 Using additionally Lemma \ref{stoppingtimelemma1}, one can show that $u \mapsto d^G J_m(u)$ is a continuous mapping from $L^2_{\mathcal F}(\Omega;L^2([0,T];D(A^\beta)))$ into $\mathcal L(L^2_{\mathcal F}(\Omega;L^2([0,T];D(A^\beta)));\mathbb R)$.
 Therefore, by the mean value theorem, see \cite[Theorem 4.1.2]{cfa}, we get
 \begin{align*}
  &\left | J_m(u+v) - J_m(u) - d^G J_m(u)[v] \right | \\
  &= \left | \int\limits_0^1 d^G J_m(u+\theta v)[v] d \theta - d^G J_m(u)[v] \right | \\
  &\leq  \sup_{\theta \in [0,1]} \left\|d^G J_m(u+\theta v) - d^G J_m(u) \right\|_{\mathcal L(L^2_{\mathcal F}(\Omega;L^2([0,T];D(A^\beta));\mathbb R)} \|v\|_{L_{\mathcal F}^2(\Omega;L^2([0,T];D(A^\beta)))}.
 \end{align*}
 Since $u \mapsto d^G J_m(u)$ is a continuous mapping, we can conclude
 \begin{equation*}
  \lim_{\|v\|_{L_{\mathcal F}^2(\Omega;L^2([0,T];D(A^\beta)))} \rightarrow 0} \frac{\left | J_m(u+v) - J_m(u) - d^G J_m(u)[v] \right |}{\|v\|_{L_{\mathcal F}^2(\Omega;L^2([0,T];D(A^\beta)))}} = 0.
 \end{equation*}
 Hence, the Fr\'echet derivative of $J_m$ at $u \in L^2_{\mathcal F}(\Omega;L^2([0,T];D(A^\beta)))$ in direction $v \in L^2_{\mathcal F}(\Omega;L^2([0,T];D(A^\beta)))$ is given by $d^F J_m(u)[v]=d^G J_m(u)[v]$ and by Theorem \ref{costfunctionalgateaux1}, the operator $d^F J_m(u)$ is linear and bounded.
 Since $d^G J_m(u)[v]$ is continuous with respect to $u \in L^2_{\mathcal F}(\Omega;L^2([0,T];D(A^\beta)))$, the functional $d^F J_m(u)[v]$ is continuous as well.
\end{proof}

Similarly to the previous corollary, we obtain that the cost functional is twice Fr\'echet differentiable.

\begin{corollary}\label{costfunctionalfrechet2}
 Let the functional $J_m \colon L^2_{\mathcal F}(\Omega;L^2([0,T];D(A^\beta))) \rightarrow \mathbb R$ be defined by (\ref{costfunctional}).
 Then the Fr\'echet derivative of order two at $u \in L^2_{\mathcal F}(\Omega;L^2([0,T];D(A^\beta)))$ in directions $v_1,v_2 \in L^2_{\mathcal F}(\Omega;L^2([0,T];D(A^\beta)))$ satisfies
 \begin{equation*}
  d^F (J_m(u))^2[v_1,v_2] = \mathbb E \int\limits_0^{\tau_m^u} \left\langle A^{\gamma}z(t;u,v_1), A^{\gamma} z(t;u,v_2) \right\rangle_H dt + \mathbb E \int\limits_0^T \left\langle A^\beta v_1(t), A^\beta v_2(t) \right\rangle_H dt,
 \end{equation*}
  where the processes $(z(t;u,v_i))_{t \in [0,\tau^u)}$ are the local mild solution of system (\ref{linearstochnse}) corresponding to the controls $u,v_i \in L^2_{\mathcal F}(\Omega;L^2([0,T];D(A^\beta)))$ for $i=1,2$.
 Moreover, the functional $d^F (J_m(u))^2[v_1,v_2]$ is continuous with respect to $u$.
\end{corollary}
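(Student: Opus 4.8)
The plan is to argue exactly as in the proof of Corollary \ref{costfunctionalfrechet1}, only one differentiation order higher. Write $\Lambda(u)[v_1,v_2]$ for the bilinear form on the right-hand side of the claimed identity, which by Corollary \ref{costfunctionalgateaux2} is the second order Gâteaux derivative $d^G(J_m(u))^2[v_1,v_2]$, that is, the Gâteaux derivative of the map $u \mapsto d^G J_m(u)$. First I would record that $\Lambda(u)$ is, for each $u$, a bounded bilinear form: by Lemma \ref{fractional} (v), Lemma \ref{linearmildbound} with $k=2$ and the Cauchy-Schwarz inequality one obtains $|\Lambda(u)[v_1,v_2]| \leq C \|v_1\|_{L^2_{\mathcal F}(\Omega;L^2([0,T];D(A^\beta)))} \|v_2\|_{L^2_{\mathcal F}(\Omega;L^2([0,T];D(A^\beta)))}$, exactly as the boundedness of $d^G\Phi_1(u)$ was obtained in Theorem \ref{costfunctionalgateaux1}. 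Hence $u \mapsto \Lambda(u)$ is a map into $\mathcal L\big(L^2_{\mathcal F}(\Omega;L^2([0,T];D(A^\beta)));\mathcal L(L^2_{\mathcal F}(\Omega;L^2([0,T];D(A^\beta)));\mathbb R)\big)$.

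The core step is the continuity of $u \mapsto \Lambda(u)$. Fixing $v_1,v_2$ first in $L^4_{\mathcal F}(\Omega;L^2([0,T];D(A^\beta)))$, for $u_1,u_2$ I would write $\Lambda(u_1)[v_1,v_2] - \Lambda(u_2)[v_1,v_2]$ as the sum of: (a) the difference of the running costs over the common interval $[0,\tau_m^{u_1}\wedge \tau_m^{u_2})$; (b) the two boundary terms $\pm\,\mathbb E \int_{\tau_m^{u_1}\wedge \tau_m^{u_2}}^{\tau_m^{u_i}} \langle A^\gamma z(t;u_i,v_1), A^\gamma z(t;u_i,v_2)\rangle_H\, dt$; and (c) the difference of the control terms $\mathbb E \int_0^T \langle A^\beta v_1(t), A^\beta v_2(t)\rangle_H\, dt$, which does not depend on $u$ and therefore vanishes. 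For (a), using that $z(t;u_i,v_j)=z_m(t;u_i,v_j)$ on $[0,\tau_m^{u_i})$, Lemma \ref{fractional} (v) and a telescoping of the inner product (adding and subtracting $z(t;u_2,v_j)$), the estimate reduces by Cauchy-Schwarz to Lemma \ref{linearmildbound} (bounding the unchanged factors) together with Lemma \ref{linearmildcontinuity}, which yields $\mathbb E \sup_t \|z(t;u_1,v_j) - z(t;u_2,v_j)\|_{D(A^\alpha)}^2 \to 0$ as $u_1 \to u_2$. For (b), I would apply Fubini and Hölder to bound each term by $C\,\big(\mathbb E \sup_t \|z(t;u_i,v_1)\|_{D(A^\alpha)}^4\big)^{1/4}\big(\mathbb E \sup_t \|z(t;u_i,v_2)\|_{D(A^\alpha)}^4\big)^{1/4}\,\mathbb P(\tau_m^{u_1} \neq \tau_m^{u_2})^{1/2}$, which tends to zero by Lemma \ref{linearmildbound} with $k=4$ and Lemma \ref{stoppingtimelemma1}. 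This proves continuity of $u \mapsto \Lambda(u)$ on the dense subspace $L^4_{\mathcal F}(\Omega;L^2([0,T];D(A^\beta)))$; passing to general $v_1,v_2 \in L^2_{\mathcal F}(\Omega;L^2([0,T];D(A^\beta)))$ uses the bilinearity (Lemma \ref{linearmildlinearity}) together with the uniform bound from Lemma \ref{linearmildbound} with $k=2$, just as at the end of Theorem \ref{gateauxstate} and of Corollary \ref{costfunctionalfrechet1}.

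Finally, since $u \mapsto d^G J_m(u)$ coincides with $u \mapsto d^F J_m(u)$ by Corollary \ref{costfunctionalfrechet1}, has Gâteaux derivative $\Lambda(u)$ by Corollary \ref{costfunctionalgateaux2}, and $\Lambda$ is continuous by the previous step, the mean value theorem (\cite[Theorem 4.1.2]{cfa}) gives, exactly as in Corollary \ref{costfunctionalfrechet1},
\begin{align*}
 &\left\| d^F J_m(u+v) - d^F J_m(u) - \Lambda(u)[v,\cdot] \right\|_{\mathcal L(L^2_{\mathcal F}(\Omega;L^2([0,T];D(A^\beta)));\mathbb R)} \\
 &\qquad \leq \sup_{\theta \in [0,1]} \left\| \Lambda(u+\theta v) - \Lambda(u) \right\| \, \|v\|_{L^2_{\mathcal F}(\Omega;L^2([0,T];D(A^\beta)))},
\end{align*}
so that dividing by $\|v\|_{L^2_{\mathcal F}(\Omega;L^2([0,T];D(A^\beta)))}$ and letting it tend to $0$ shows $d^F(J_m(u))^2 = \Lambda(u) = d^G(J_m(u))^2$; the asserted continuity in $u$ is then precisely the continuity of $\Lambda$ established above. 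I expect the main obstacle to be this continuity of $\Lambda$, specifically the boundary terms (b): because $z(t;u,v_j)$ — unlike $y(t;u)$ — is not pointwise dominated by a deterministic constant on $[0,\tau_m^u)$, one cannot control the effect of moving the random integration limit directly and must instead combine the higher integrability bound of Lemma \ref{linearmildbound} with $k=4$ (hence the preliminary restriction to $v_1,v_2 \in L^4_{\mathcal F}$) with the stopping-time convergence of Lemma \ref{stoppingtimelemma1}, after which the density argument recovers the general case.
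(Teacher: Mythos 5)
Your proposal is correct and matches the paper's intended argument: the paper proves this corollary only by the remark that it follows ``similarly to the previous corollary,'' and your elaboration uses exactly the ingredients of that previous proof one order higher — Corollary \ref{costfunctionalgateaux2} for the candidate bilinear form, Lemma \ref{linearmildbound}, Lemma \ref{linearmildcontinuity} and Lemma \ref{stoppingtimelemma1} for the continuity of $u \mapsto d^G(J_m(u))^2$ (first for $v_1,v_2 \in L^4_{\mathcal F}$, then by density), and the mean value theorem of \cite[Theorem 4.1.2]{cfa} to upgrade Gâteaux to Fr\'echet. The decomposition into the common-interval, boundary and control terms is the same one the paper uses in Theorem \ref{costfunctionalgateaux1}, so there is nothing genuinely different to report.
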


\section{The Adjoint Equation} \label{sec:adjoint}

We will use the necessary optimality condition (\ref{variationalinequlity}) to derive an explicit formula the optimal controls $\overline u_m \in U$ has to satisfy.
Therefor, we need a duality principle, which gives us a relation between the local mild solution to system (\ref{linearstochnse}) and the corresponding adjoint equation.
Since the control problem considered in this paper is constrained by a SPDE with linear multiplicative noise, the adjoint equation is specified by a backward SPDE.
For mild solutions of backward SPDEs, the existence and uniqueness result is mainly based on a martingale representation theorem, see \cite{asoa}.

We introduce the following backward SPDE in $D(A^\delta)$:
\begin{equation}\label{backwardstochnse}
 \left\{
 \begin{aligned}
  d z_m^*(t) &= -\mathds 1_{[0,\tau_m)} (t) [-A z_m^*(t) - A^{2\alpha} B_{\delta}^*\left(y(t), A^\delta z_m^*(t)\right) + G^*(A^{-2\alpha}\Phi_m(t)) + A^{2 \gamma} \left(y(t) - y_d(t)\right)] dt \\
  &\quad + \Phi_m(t) d W(t), \\
  z_m^*(T) &= 0,
 \end{aligned}
 \right.
\end{equation}
where $m \in \mathbb N$ and the process $(y(t))_{t \in [0,\tau)}$ is the local mild solution of system (\ref{stochnse}).
The stopping times $(\tau_m)_{m \in \mathbb N}$ are defined by equation (\ref{stoppingtime}) and $y_d \in L^2([0,T];D(A^\gamma))$ is the given desired velocity field.
The operator $A$ and its fractional powers are introduced in Section \ref{sec:functionalbackground}.
The process  $(W(t))_{t \in [0,T]}$ is a Q-Wiener process with values in $H$ and covariance operator $Q \in \mathcal L(H)$.
Moreover, the operators $B_{\delta}^*\left(y(t),\cdot\right) \colon H \rightarrow D(A^\alpha)$ for $t \in [0,\tau_m)$ and $G^* \colon \mathcal{L}_{(HS)}(Q^{1/2}(H);D(A^\alpha)) \rightarrow H$ are linear and bounded.
A precise meaning is given in the following remark.

\begin{remark}
 (i) By Lemma \ref{ineqnonlinear}, we obtain that the operator $A^{-\delta}[B(\cdot,y)+B(y,\cdot)] \colon D(A^\alpha) \rightarrow H$ is linear and bounded for every $y \in D(A^\alpha)$ such that $\|y\|_{D(A^\alpha)} \leq m$.
 Therefore, there exists a linear and bounded operator $B_{\delta}^*\left(y,\cdot\right) \colon H \rightarrow D(A^\alpha)$ satisfying for every $h \in H$ and every $z \in D(A^\alpha)$
 \begin{equation*}
  \langle A^{-\delta}[B(z,y)+B(y,z)],h \rangle_H = \langle z, B_{\delta}^*\left(y,h\right) \rangle_{D(A^\alpha)}.
 \end{equation*}
 We can rewrite this equivalently as
 \begin{equation}\label{adjoint1}
  \langle A^{-\delta}[B(z,y)+B(y,z)],h \rangle_H = \langle A^\alpha z, A^\alpha B_{\delta}^*\left(y,h\right) \rangle_H
 \end{equation}
 for every $h \in H$ and every $z \in D(A^\alpha)$.
 By the closed graph theorem, we get that the operator $A^\alpha B_{\delta}^*\left(y,\cdot\right) \colon H \rightarrow H$ is linear and bounded.\newline
 (ii) Recall that $\| y(t)\|_{D(A^\alpha)} \leq m$ for all $t \in [0,\tau_m)$ and $\mathbb P$-almost surely. \newline
 (iii) Since the operator $G \colon H \rightarrow \mathcal{L}_{(HS)}(Q^{1/2}(H);D(A^\alpha))$ is linear and bounded, there exists a linear and bounded operator $G^* \colon \mathcal{L}_{(HS)}(Q^{1/2}(H);D(A^\alpha)) \rightarrow H$ satisfying for every $h \in H$ and every $\Phi \in \mathcal{L}_{(HS)}(Q^{1/2}(H);D(A^\alpha))$
 \begin{equation*}
  \langle G(h), \Phi \rangle_{\mathcal{L}_{(HS)}(Q^{1/2}(H);D(A^\alpha))} = \langle h, G^*(\Phi) \rangle_H.
 \end{equation*}
 We can rewrite this equivalently as
 \begin{equation}\label{adjoint2}
  \langle A^\alpha G(h), A^\alpha \Phi \rangle_{\mathcal{L}_{(HS)}(Q^{1/2}(H);H)} = \langle h, G^*(\Phi) \rangle_H
 \end{equation}
 for every $h \in H$ and every $\Phi \in \mathcal{L}_{(HS)}(Q^{1/2}(H);D(A^\alpha))$.
\end{remark}

\begin{definition}
 A pair of predictable processes $(z_m^*(t),\Phi_m(t))_{t \in [0,T]}$ with values in $D(A^\delta) \times \mathcal{L}_{(HS)}(Q^{1/2}(H);H)$ is called a mild solution of system (\ref{backwardstochnse}) if
 \begin{align*}
  &\mathbb E \sup\limits_{t \in [0,T]} \|z_m^*(t)\|_{D(A^\delta)}^2 < \infty, &\mathbb E \int\limits_0^T \| \Phi_m(t)\|_{\mathcal{L}_{(HS)}(Q^{1/2}(H);H)}^2 dt < \infty
 \end{align*}
 and we have for all $t \in [0,T]$ and $\mathbb P$-a.s.
 \begin{align}\label{mildbackward}
  z_m^*(t) =& - \int\limits_t^T \mathds 1_{[0,\tau_m)} (s) A^\alpha e^{-A (s-t)} A^\alpha B_{\delta}^*\left(y(s \wedge \tau_m),A^\delta z_m^*(s)\right) ds + \int\limits_t^T \mathds 1_{[0,\tau_m)} (s) e^{-A (s-t)} G^*(A^{-2\alpha}\Phi_m(s)) ds \nonumber \\
  &+ \int\limits_t^T \mathds 1_{[0,\tau_m)} (s) A^\gamma e^{-A (s-t)} A^\gamma \left( y(s \wedge \tau_m) - y_d(s)\right) ds - \int\limits_t^T e^{-A (s-t)} \Phi_m(s) d W(s).
 \end{align}
\end{definition}

To prove the existence and uniqueness of the mild solution to system (\ref{backwardstochnse}), we need the following auxiliary results.

\begin{lemma}\label{backward1}
 Let $\delta,\varepsilon \in [0,\frac{1}{2})$ such that $\delta+\varepsilon < \frac{1}{2}$.
 Moreover, let $\zeta \in L^2(\Omega;D(A^\delta))$ be $\mathcal F_T$-measurable and let $(f(t))_{t \in [0,T]}$ be a predictable process with values in $H$ such that $\mathbb E \int_0^T \|f(t)\|_H^2 dt < \infty$.
 Then there exists a unique pair of predictable processes $(\varphi(t),\phi(t))_{t \in [0,T]}$ with values in $D(A^\delta) \times \mathcal{L}_{(HS)}(Q^{1/2}(H);D(A^\varepsilon))$ such that for all $t \in [0,T]$ and $\mathbb P$-a.s.
 \begin{equation*}
  \varphi(t) = e^{-A (T-t)} \zeta + \int\limits_t^T A^\varepsilon e^{-A (s-t)} f(s) ds - \int\limits_t^T e^{-A (s-t)} A^\varepsilon \phi(s) d W(s).
 \end{equation*}
 Furthermore, there exists a constant $c^*>0$ such that for all $t \in [0,T]$
 \begin{align}
  &\mathbb E \sup_{s \in [t,T]} \|\varphi(s)\|_{D(A^\delta)}^2 \leq c^* \left[ \mathbb E \, \| \zeta \|_{D(A^\delta)}^2 + (T-t)^{1-2\delta-2\varepsilon} \, \mathbb E \int\limits_t^T \| f(s) \|_H^2 ds \right], \label{backwardinequality1}\\
  &\mathbb E \int\limits_t^T \| \phi(s) \|_{\mathcal{L}_{(HS)}(Q^{1/2}(H);D(A^\varepsilon))}^2 ds \leq c^* \left[ \mathbb E \, \| \zeta \|_{D(A^\delta)}^2 + (T-t)^{1-2\varepsilon} \, \mathbb E \int\limits_t^T \left\|f(s) \right\|_H^2 ds \right]. \label{backwardinequality2}
 \end{align}
\end{lemma}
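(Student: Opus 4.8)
The plan is to construct $(\varphi,\phi)$ \emph{directly} from the martingale representation theorem (Proposition \ref{martingalerepresentation}), using that the terminal datum and the inhomogeneity both pick up enough smoothing from the analytic semigroup. Taking the $\mathcal F_t$-conditional expectation in the mild equation (the stochastic integral from $t$ to $T$ has zero $\mathcal F_t$-conditional mean) shows that any solution must satisfy $\varphi(t)=\mathbb E\!\left[\Lambda(t)\mid\mathcal F_t\right]$, where $\Lambda(t):=e^{-A(T-t)}\zeta+\int_t^T A^\varepsilon e^{-A(s-t)}f(s)\,ds$; I would therefore \emph{define} $\varphi$ by this expression. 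One checks $\Lambda(t)\in D(A^\delta)$ $\mathbb P$-a.s.\ using that $e^{-A(T-t)}$ commutes with $A^\delta$ (Lemma \ref{fractional}(iii)) and that $\|A^{\delta+\varepsilon}e^{-A(s-t)}\|_{\mathcal L(H)}\le M_{\delta+\varepsilon}(s-t)^{-(\delta+\varepsilon)}$ (Lemma \ref{fractional}(iv)) is integrable on $[t,T]$ because $\delta+\varepsilon<1$; a predictable version of $\varphi$ follows from Lemma \ref{predictable}.

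The heart of the existence part is to find $\phi$ with $\Lambda(t)-\mathbb E\!\left[\Lambda(t)\mid\mathcal F_t\right]=\int_t^T e^{-A(s-t)}A^\varepsilon\phi(s)\,dW(s)$, which is exactly what turns this $\varphi$ into a mild solution. I would treat the two terms of $\Lambda$ separately. For $e^{-A(T-t)}\zeta$: representing the $D(A^\delta)$-valued martingale $\mathbb E[\zeta\mid\mathcal F_\cdot]=\mathbb E\zeta+\int_0^{\cdot}\Psi_\zeta\,dW$ via Proposition \ref{martingalerepresentation} and pulling the bounded operator $e^{-A(T-t)}$ inside, one gets $e^{-A(T-t)}\zeta-\mathbb E[e^{-A(T-t)}\zeta\mid\mathcal F_t]=\int_t^T e^{-A(s-t)}A^\varepsilon\bigl(A^{-\varepsilon}e^{-A(T-s)}\Psi_\zeta(s)\bigr)dW(s)$, using $e^{-A(T-t)}=e^{-A(s-t)}e^{-A(T-s)}$. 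For $\int_t^T A^\varepsilon e^{-A(s-t)}f(s)\,ds$: for each $s$ I would apply Proposition \ref{martingalerepresentation} to $\mathbb E[f(s)\mid\mathcal F_\cdot]$, writing $f(s)-\mathbb E[f(s)\mid\mathcal F_t]=\int_t^s\psi_s(u)\,dW(u)$, and then a stochastic Fubini theorem together with $e^{-A(s-t)}=e^{-A(u-t)}e^{-A(s-u)}$ for $t\le u\le s$ (so that no unbounded operator enters) yields $\int_t^T A^\varepsilon e^{-A(s-t)}\!\int_t^s\psi_s(u)\,dW(u)\,ds=\int_t^T e^{-A(u-t)}A^\varepsilon\bigl(\int_u^T e^{-A(r-u)}\psi_r(u)\,dr\bigr)dW(u)$. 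This dictates the choice $\phi(s):=A^{-\varepsilon}e^{-A(T-s)}\Psi_\zeta(s)+\int_s^T e^{-A(r-s)}\psi_r(s)\,dr$, which has values in $D(A^\varepsilon)$ since applying $A^\varepsilon$ gives $e^{-A(T-s)}\Psi_\zeta(s)+\int_s^T A^\varepsilon e^{-A(r-s)}\psi_r(s)\,dr$, the integral converging by Lemma \ref{fractional}(iv) as $\varepsilon<1$. Plugging $\phi$ back confirms that $(\varphi,\phi)$ solves the mild equation.

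Given existence, the estimates (\ref{backwardinequality1})--(\ref{backwardinequality2}) follow by estimating the explicit formulas termwise. For (\ref{backwardinequality1}) I would use $\|A^\delta\varphi(s)\|_H\le\mathbb E\!\left[\|A^\delta\Lambda(s)\|_H\mid\mathcal F_s\right]$ with, for $s\in[t,T]$, $\|A^\delta\Lambda(s)\|_H\le\|A^\delta\zeta\|_H+M_{\delta+\varepsilon}\bigl(\tfrac{(T-t)^{1-2\delta-2\varepsilon}}{1-2\delta-2\varepsilon}\bigr)^{1/2}\bigl(\int_t^T\|f\|_H^2\,ds\bigr)^{1/2}$ --- here the hypothesis $\delta+\varepsilon<\tfrac12$ enters, making $(s-t)^{-2(\delta+\varepsilon)}$ integrable and producing the stated power of $T-t$ --- followed by Doob's maximal inequality on $[t,T]$. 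For (\ref{backwardinequality2}) I would estimate $\|A^\varepsilon\phi(s)\|$ via Lemma \ref{fractional}(iv), apply the Cauchy--Schwarz inequality in the $r$-integral (using $\varepsilon<\tfrac12$), integrate in $s$, interchange the order of integration, and use the Itô isometries $\mathbb E\int_t^T\|\Psi_\zeta\|^2\,ds\le\mathbb E\|\zeta\|_{D(A^\delta)}^2$ and $\mathbb E\int_0^s\|\psi_s(u)\|^2\,du=\mathbb E\|f(s)-\mathbb E f(s)\|_H^2\le\mathbb E\|f(s)\|_H^2$. For uniqueness: the difference $(\bar\varphi,\bar\phi)$ of two solutions satisfies $\bar\varphi(t)=-\int_t^T e^{-A(s-t)}A^\varepsilon\bar\phi(s)\,dW(s)$; evaluating at $t=0$, $\bar\varphi(0)$ is $\mathcal F_0$-measurable, hence a.s.\ constant and equal to its mean $0$, so $\int_0^T e^{-As}A^\varepsilon\bar\phi(s)\,dW(s)=0$, and the Itô isometry with the injectivity of $e^{-As}$ forces $\bar\phi\equiv0$ and then $\bar\varphi\equiv0$ (one may instead apply the product formula, Lemma \ref{productformula}).

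I expect the main obstacle to be the stochastic Fubini step and the measurability it requires: the martingale representation of $\mathbb E[f(s)\mid\mathcal F_\cdot]$ must be chosen jointly measurably in $s$ and predictably in $(t,\omega)$, and one must verify that $\mathbb E\int_0^T\|f(t)\|_H^2\,dt<\infty$ suffices to interchange the $ds$- and $dW$-integrations against the singular kernel $(s-t)^{-\varepsilon}$. If that hypothesis is too weak for the interchange, I would prove the lemma first for $f$ in a dense subclass (e.g.\ simple processes, or bounded $f$) and then pass to the limit using the bounds (\ref{backwardinequality1})--(\ref{backwardinequality2}), which depend linearly on $(\zeta,f)$.
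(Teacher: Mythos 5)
Your proposal is correct and takes essentially the same route as the paper, whose own proof of Lemma \ref{backward1} consists only of citing \cite[Lemma 2.1]{asoa} for the case $\delta=\varepsilon=0$ and remarking that the general case follows similarly via Lemma \ref{fractional}: the construction $\varphi(t)=\mathbb E[\Lambda(t)\mid\mathcal F_t]$, the martingale representation of $\mathbb E[\zeta\mid\mathcal F_\cdot]$ and $\mathbb E[f(s)\mid\mathcal F_\cdot]$ combined with the factorization $e^{-A(s-t)}=e^{-A(u-t)}e^{-A(s-u)}$ and a stochastic Fubini argument, and the termwise estimates using $\|A^{\delta+\varepsilon}e^{-At}\|_{\mathcal L(H)}\leq M_{\delta+\varepsilon}t^{-(\delta+\varepsilon)}$ with $\delta+\varepsilon<\tfrac12$ is exactly the standard argument that citation encapsulates. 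The only technical point, which you rightly flag, is the jointly measurable (in $s$) and predictable choice of the representations $\psi_s(\cdot)$ needed to justify the stochastic Fubini interchange; this is standard and your proposed fallback via a dense subclass of $f$ together with the linear bounds (\ref{backwardinequality1})--(\ref{backwardinequality2}) closes it.
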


\begin{proof}
 For $\delta = \varepsilon = 0$, a proof can be found in \cite[Lemma 2.1]{asoa}.
 For arbitrary $\varepsilon \in \left[0,\frac{1}{2}\right)$ and $\delta \in [0,\frac{1}{2}-\varepsilon)$, one can show the result similarly using the properties of fractional powers to the operator $A$ provided by Lemma \ref{fractional}.
\end{proof}

\begin{corollary}\label{backward2}
 Let $\delta \in [0,1)$ and $\varepsilon \in [0,\frac{1}{2})$ satisfy $\delta+\varepsilon < 1$.
 Furthermore, let $\zeta \in L^2(\Omega;D(A^\delta))$ be $\mathcal F_T$-measurable and let $(f(t))_{t \in [0,T]}$ be a predictable process with values in $H$ such that $\mathbb E \sup_{t \in [0,T]} \|f(t)\|_H^2 < \infty$.
 Then there exists a unique pair of predictable processes $(\varphi(t),\phi(t))_{t \in [0,T]}$ with values in $D(A^\delta) \times \mathcal{L}_{(HS)}(Q^{1/2}(H);D(A^\varepsilon))$ such that for all $t \in [0,T]$ and $\mathbb P$-a.s.
 \begin{equation*}
  \varphi(t) = e^{-A (T-t)} \zeta + \int\limits_t^T A^\varepsilon e^{-A (s-t)} f(s) ds - \int\limits_t^T e^{-A (s-t)} A^\varepsilon \phi(s) d W(s).
 \end{equation*}
 Moreover, there exists a constant $\hat c>0$ such that for all $t \in [0,T]$
 \begin{align}
  &\mathbb E \sup_{s \in [t,T]} \|\varphi(s)\|_{D(A^\delta)}^2 \leq \hat c \left[ \mathbb E \, \| \zeta \|_{D(A^\delta)}^2 + (T-t)^{2-2\delta-2\varepsilon} \, \mathbb E \sup_{s \in [t,T]} \| f(s) \|_H^2 \right], \label{backwardinequality3}\\
  &\mathbb E \int\limits_t^T \|\phi(s)\|_{\mathcal{L}_{(HS)}(Q^{1/2}(H);D(A^\varepsilon))}^2 ds \leq \hat c \left[ \mathbb E \, \| \zeta \|_{D(A^\delta)}^2 + (T-t)^{2-2\varepsilon} \, \mathbb E \sup_{s \in [t,T]}  \left\|f(s) \right\|_H^2 \right]. \label{backwardinequality4}
 \end{align}
\end{corollary}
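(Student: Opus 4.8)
The plan is to deduce the corollary from Lemma \ref{backward1} applied with its parameter $\delta$ set to $0$, together with the representation of the mild solution of the backward equation as a conditional expectation. Since $\varepsilon\in[0,\tfrac12)$, the pair $(0,\varepsilon)$ is admissible in Lemma \ref{backward1}; moreover $\zeta\in L^2(\Omega;D(A^\delta))\subset L^2(\Omega;H)$ and $\mathbb E\int_0^T\|f(t)\|_H^2\,dt\le T\,\mathbb E\sup_{t\in[0,T]}\|f(t)\|_H^2<\infty$, so Lemma \ref{backward1} yields a unique pair $(\varphi,\phi)$ of predictable processes with values in $H\times\mathcal{L}_{(HS)}(Q^{1/2}(H);D(A^\varepsilon))$ satisfying the mild equation, together with inequality (\ref{backwardinequality2}) in the form $\mathbb E\int_t^T\|\phi(s)\|_{\mathcal{L}_{(HS)}(Q^{1/2}(H);D(A^\varepsilon))}^2\,ds\le c^*\big[\mathbb E\|\zeta\|_H^2+(T-t)^{1-2\varepsilon}\,\mathbb E\int_t^T\|f(s)\|_H^2\,ds\big]$. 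Using $\|\zeta\|_H\le C\|A^\delta\zeta\|_H$ (Lemma \ref{fractional}(v) with $\beta=0$) and $\mathbb E\int_t^T\|f(s)\|_H^2\,ds\le(T-t)\,\mathbb E\sup_{s\in[t,T]}\|f(s)\|_H^2$ converts this into (\ref{backwardinequality4}). Because uniqueness in $H\times\mathcal{L}_{(HS)}(Q^{1/2}(H);D(A^\varepsilon))$ a fortiori gives uniqueness in $D(A^\delta)\times\mathcal{L}_{(HS)}(Q^{1/2}(H);D(A^\varepsilon))$, it remains only to prove $\varphi(t)\in D(A^\delta)$ for all $t$ and the bound (\ref{backwardinequality3}).

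To this end I would fix $t\in[0,T]$ and take $\mathbb E[\,\cdot\mid\mathcal F_t]$ in the mild equation. The term $e^{-A(T-t)}\zeta$ and the Bochner integral are $\mathcal F_t$-conditioned without change, $\varphi(t)$ is $\mathcal F_t$-measurable since $\varphi$ is predictable, and the stochastic integral $\int_t^T e^{-A(s-t)}A^\varepsilon\phi(s)\,dW(s)$ has zero conditional expectation given $\mathcal F_t$ — it is a bona fide stochastic integral because $\|e^{-A(s-t)}\|_{\mathcal L(H)}\le1$ forces $\mathbb E\int_t^T\|e^{-A(s-t)}A^\varepsilon\phi(s)\|_{\mathcal{L}_{(HS)}(Q^{1/2}(H);H)}^2\,ds\le\mathbb E\int_t^T\|\phi(s)\|_{\mathcal{L}_{(HS)}(Q^{1/2}(H);D(A^\varepsilon))}^2\,ds<\infty$. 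Hence, for all $t\in[0,T]$ and $\mathbb P$-a.s.,
\[
 \varphi(t)=\mathbb E\!\left[\,e^{-A(T-t)}\zeta+\int_t^T A^\varepsilon e^{-A(s-t)}f(s)\,ds\ \Big|\ \mathcal F_t\right].
\]
I would then verify that $X_t:=e^{-A(T-t)}\zeta+\int_t^T A^\varepsilon e^{-A(s-t)}f(s)\,ds$ lies in $D(A^\delta)$ $\mathbb P$-a.s. with $\mathbb E\|A^\delta X_t\|_H<\infty$: by Lemma \ref{fractional}(iii), $A^\delta e^{-A(T-t)}\zeta=e^{-A(T-t)}A^\delta\zeta$ has $H$-norm at most $\|A^\delta\zeta\|_H$, while Lemma \ref{fractional}(iv) together with $\delta+\varepsilon<1$ gives $\int_t^T\|A^{\delta+\varepsilon}e^{-A(s-t)}f(s)\|_H\,ds\le M_{\delta+\varepsilon}\big(\int_t^T(s-t)^{-(\delta+\varepsilon)}\,ds\big)\sup_{s\in[t,T]}\|f(s)\|_H=\tfrac{M_{\delta+\varepsilon}}{1-\delta-\varepsilon}(T-t)^{1-\delta-\varepsilon}\sup_{s\in[t,T]}\|f(s)\|_H$; the finiteness of the last integral legitimizes moving the closed operator $A^\delta$ through the Bochner integral. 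Since $A^{-\delta}$ is bounded and invertible on $H$, the operator $A^\delta$ also commutes with $\mathbb E[\,\cdot\mid\mathcal F_t]$ on such integrable $D(A^\delta)$-valued random variables, so $\varphi(t)\in D(A^\delta)$ and $A^\delta\varphi(t)=\mathbb E[A^\delta X_t\mid\mathcal F_t]$.

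For (\ref{backwardinequality3}) I would set $Y_t:=\|A^\delta\zeta\|_H+\tfrac{M_{\delta+\varepsilon}}{1-\delta-\varepsilon}(T-t)^{1-\delta-\varepsilon}\sup_{s\in[t,T]}\|f(s)\|_H$. For every $t'\in[t,T]$ the estimate above applied at $t'$, using $T-t'\le T-t$ and the positive exponent $1-\delta-\varepsilon$, gives $\|A^\delta X_{t'}\|_H\le Y_t$, whence by conditional Jensen $\|A^\delta\varphi(t')\|_H\le\mathbb E[Y_t\mid\mathcal F_{t'}]$. The process $t'\mapsto\mathbb E[Y_t\mid\mathcal F_{t'}]$ is a martingale on $[t,T]$, so Doob's $L^2$-inequality gives $\mathbb E\sup_{t'\in[t,T]}\|A^\delta\varphi(t')\|_H^2\le 4\,\mathbb E Y_t^2\le 8\,\mathbb E\|A^\delta\zeta\|_H^2+8\big(\tfrac{M_{\delta+\varepsilon}}{1-\delta-\varepsilon}\big)^2(T-t)^{2-2\delta-2\varepsilon}\,\mathbb E\sup_{s\in[t,T]}\|f(s)\|_H^2$, which is (\ref{backwardinequality3}) with a suitable $\hat c$; the case $t=0$ simultaneously yields $\mathbb E\sup_{t\in[0,T]}\|\varphi(t)\|_{D(A^\delta)}^2<\infty$.

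The main point of the argument is the conditional expectation representation, which bypasses any direct bound on the backward stochastic convolution in $D(A^\delta)$ — a naive estimate there would require $\phi$ to have regularity $\delta+\varepsilon$, which may exceed $\tfrac12$ and is not available. The remaining work is operator-theoretic bookkeeping that I expect to be routine: commuting the closed, unbounded $A^\delta$ past Bochner integrals and past conditional expectations (both justified via the bounded invertible $A^{-\delta}$ and the smoothing estimates of Lemma \ref{fractional}), checking that $(A^\delta\varphi(t))_{t\in[0,T]}$ is again predictable, and upgrading the pointwise-in-$t$ identity $\varphi(t)=\mathbb E[X_t\mid\mathcal F_t]$ to the uniform estimate by passing to a right-continuous version of the conditional-expectation martingale so that the supremum over $t'$ may be realized along a countable dense set.
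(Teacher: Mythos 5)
Your proof is correct, but note that the paper itself gives no proof of this corollary: it is stated immediately after Lemma \ref{backward1}, whose own proof is only sketched by reference to \cite[Lemma 2.1]{asoa}, and the reader is left to infer that one redoes that construction, using the stronger hypothesis $\mathbb E\sup_{t}\|f(t)\|_H^2<\infty$ to integrate the singularity $(s-t)^{-\delta-\varepsilon}$ for exponents up to $1$ rather than $\tfrac12$. Your argument instead makes the statement a genuine corollary: you invoke Lemma \ref{backward1} only at the base regularity level $\delta=0$ (always admissible since $\varepsilon<\tfrac12$), which settles existence, uniqueness and the estimate (\ref{backwardinequality4}) in one stroke, and then upgrade the spatial regularity of $\varphi$ a posteriori through the representation $\varphi(t)=\mathbb E\bigl[e^{-A(T-t)}\zeta+\int_t^TA^\varepsilon e^{-A(s-t)}f(s)\,ds\,\big|\,\mathcal F_t\bigr]$, conditional Jensen, and Doob's $L^2$-maximal inequality. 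The decisive point, which you identify explicitly, is that the backward stochastic convolution vanishes under conditioning on $\mathcal F_t$, so no $D(A^{\delta})$-estimate of $\int_t^T e^{-A(s-t)}A^\varepsilon\phi(s)\,dW(s)$ is ever needed; a direct estimate of that term would require regularity $\delta+\varepsilon$ of the integrand, which is unavailable when $\delta+\varepsilon\ge\tfrac12$ — a regime the corollary must cover, since it is applied in Theorem \ref{existencebackward} with $\delta+\alpha>\tfrac12$. The only loose ends are of the kind the paper itself glosses over: the phrase that the terminal and Bochner terms are ``conditioned without change'' should be read as ``retained inside the conditional expectation'' (your displayed formula is the correct one, since $\zeta$ and $f(s)$ for $s>t$ are not $\mathcal F_t$-measurable), and passing from the pointwise-in-$t'$ bound $\|A^\delta\varphi(t')\|_H\le\mathbb E[Y_t\mid\mathcal F_{t'}]$ to the supremum requires a right-continuous modification of both sides, a measurability convention already implicit in the expression $\mathbb E\sup_{s\in[t,T]}\|\varphi(s)\|_{D(A^\delta)}^2$ appearing in the statement.
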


Based on the above results, we are able to prove the existence and uniqueness of the mild solution to system (\ref{backwardstochnse}).
Note that by Theorem \ref{existencelocalmild}, we get the existence and uniqueness of the local mild solution $(y(t))_{t \in [0,\tau)}$ to system (\ref{truncatedstochnse}) for fixed control $u \in L^2_{\mathcal F}(\Omega;L^2([0,T];D(A^\beta)))$.

\begin{theorem}\label{existencebackward}
 Let the parameters $\alpha \in (0,\frac{1}{2})$ and $\delta \in [0,\frac{1}{2})$ satisfy $1 > \delta + \alpha > \frac{1}{2}$ and $\delta + 2\alpha \geq \frac{n}{4} +\frac{1}{2}$ and let $\gamma \in [0,\alpha]$ such that $\gamma +\delta < \frac{1}{2}$. 
 Then for fixed $m \in \mathbb N$ and fixed $u \in L^2_{\mathcal F}(\Omega;L^2([0,T];D(A^\beta)))$, there exists a unique mild solution $(z_m^*(t),\Phi_m(t))_{t \in [0,T]}$ of system (\ref{backwardstochnse}).
\end{theorem}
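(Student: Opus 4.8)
The plan is to prove the statement by a fixed point argument built on the auxiliary linear backward equations of Lemma \ref{backward1} and Corollary \ref{backward2}. Let $\mathcal K_T$ denote the Banach space of pairs $(z,\Phi)$ of predictable processes with values in $D(A^\delta)\times\mathcal{L}_{(HS)}(Q^{1/2}(H);H)$ such that $\mathbb E\sup_{t\in[0,T]}\|z(t)\|_{D(A^\delta)}^2<\infty$ and $\mathbb E\int_0^T\|\Phi(t)\|_{\mathcal{L}_{(HS)}(Q^{1/2}(H);H)}^2\,dt<\infty$. On $\mathcal K_T$ I would define a map $\Lambda(z,\Phi)=(z',\Phi')$ by splitting the right-hand side of (\ref{mildbackward}) into its three forcing contributions and freezing $(z_m^*,\Phi_m)=(z,\Phi)$ there: apply Corollary \ref{backward2} with terminal value $0$, exponent $\varepsilon=\alpha$ and forcing $f_1(s)=-\mathds 1_{[0,\tau_m)}(s)A^\alpha B_\delta^*(y(s\wedge\tau_m),A^\delta z(s))$ to obtain $(\varphi_1,\phi_1)$; apply Lemma \ref{backward1} with $\varepsilon=0$ and forcing $f_2(s)=\mathds 1_{[0,\tau_m)}(s)G^*(A^{-2\alpha}\Phi(s))$ to obtain $(\varphi_2,\phi_2)$; and apply Lemma \ref{backward1} with $\varepsilon=\gamma$ and forcing $f_3(s)=\mathds 1_{[0,\tau_m)}(s)A^\gamma(y(s\wedge\tau_m)-y_d(s))$ to obtain $(\varphi_3,\phi_3)$. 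Then set $z'=\varphi_1+\varphi_2+\varphi_3$ and $\Phi'=A^\alpha\phi_1+\phi_2+A^\gamma\phi_3$. Summing the three integral identities shows that $(z',\Phi')$ satisfies precisely (\ref{mildbackward}) with $(z,\Phi)$ in the forcing and $\Phi'$ in the stochastic integral, so a fixed point of $\Lambda$ is a mild solution of (\ref{backwardstochnse}), and uniqueness of the fixed point yields uniqueness of the mild solution.

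The admissibility of the three applications is exactly where the standing hypotheses are used. The $f_i$ are predictable because $\tau_m$ is a stopping time, $y(\cdot\wedge\tau_m)$ is a stopped continuous adapted process, and $A^\alpha B_\delta^*(y,\cdot)$, $G^*$ are linear and bounded; moreover $\|A^\alpha B_\delta^*(y(s\wedge\tau_m),\cdot)\|_{\mathcal L(H)}$ is bounded uniformly in $(s,\omega)$ since $\|y(s\wedge\tau_m)\|_{D(A^\alpha)}\le m$ by (\ref{stoppingtime}) together with Lemma \ref{ineqnonlinear}, and $G^*(A^{-2\alpha}\,\cdot\,)$ is bounded on $\mathcal{L}_{(HS)}(Q^{1/2}(H);H)$ because $A^{-\alpha}\in\mathcal L(H)$. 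The forcing $f_1$ is controlled in $\mathbb E\sup_s\|f_1(s)\|_H^2$ (through $z$ and the bound on $y$), so Corollary \ref{backward2} applies and uses $\delta+\alpha<1$; the forcing $f_2$ depends on $\Phi$ and is only square integrable on $\Omega\times[0,T]$, so here Lemma \ref{backward1} with $\varepsilon=0$ is needed, which requires $\delta<\tfrac12$; and the forcing $f_3$ is, because of $y_d\in L^2([0,T];D(A^\gamma))$, again only square integrable in time, so Lemma \ref{backward1} with $\varepsilon=\gamma$ is needed, whose hypothesis $\delta+\varepsilon<\tfrac12$ is precisely the assumption $\gamma+\delta<\tfrac12$. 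The resulting $z'$ lies in $D(A^\delta)$ and $\Phi'=A^\alpha\phi_1+\phi_2+A^\gamma\phi_3$ lies in $\mathcal{L}_{(HS)}(Q^{1/2}(H);H)$, so $\Lambda$ maps $\mathcal K_T$ into itself.

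For the contraction, subtracting the images of two inputs and using that $f_1$ is linear in $z$ and $f_2$ linear in $\Phi$ while $f_3$ cancels, the a priori bounds (\ref{backwardinequality3})--(\ref{backwardinequality4}) applied to the $\varphi_1$-part contribute a factor of order $\rho^{2-2\delta-2\alpha}$ (and $\rho^{2-2\alpha}$) times the $z$-distance, and the bounds (\ref{backwardinequality1})--(\ref{backwardinequality2}) applied to the $\varphi_2$-part contribute a factor of order $\rho^{1-2\delta}$ (and $\rho$) times the $\Phi$-distance, whenever the computation is carried out on a short interval $[T-\rho,T]$. Since $\delta+\alpha<1$ and $\delta<\tfrac12$, choosing $\rho$ small makes $\Lambda$ a strict contraction on $\mathcal K_{[T-\rho,T]}$ for the norm $\big(\mathbb E\sup_{[T-\rho,T]}\|z\|_{D(A^\delta)}^2+\mathbb E\int_{T-\rho}^T\|\Phi\|_{\mathcal{L}_{(HS)}(Q^{1/2}(H);H)}^2\,dt\big)^{1/2}$, giving a unique mild solution on $[T-\rho,T]$; one then extends to $[0,T]$ by iterating backward over a finite partition $T=T_l>T_{l-1}>\dots>T_0=0$ with terminal value $z_m^*(T_j)$ at each step, exactly as in the proof of Lemma \ref{linearmildbound}, and the estimates of Lemma \ref{backward1} and Corollary \ref{backward2} yield the required integrability. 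I expect the main obstacle to be the bookkeeping of this last part: the three forcing terms sit at three different smoothing exponents ($\alpha$, $0$, $\gamma$) and fall into two different integrability classes (a uniform bound for the $B_\delta^*$-term, only square integrability in time for the $G^*$- and tracking-terms), so one must keep track of which of Lemma \ref{backward1}/Corollary \ref{backward2} applies where, verify that the combined contraction constant can be made less than one, and check throughout that the truncation $\mathds 1_{[0,\tau_m)}$ and the stopped state $y(\cdot\wedge\tau_m)$ keep all forcings predictable and uniformly bounded.
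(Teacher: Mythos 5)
Your proposal is correct and follows essentially the same route as the paper: the paper also builds the solution from Lemma \ref{backward1} and Corollary \ref{backward2} applied to exactly these three forcing terms (the $B_\delta^*$-term via the sup-bound with $\varepsilon=\alpha$, the $G^*$-term with $\varepsilon=0$, the tracking term with $\varepsilon=\gamma$, which is where $\gamma+\delta<\tfrac12$ enters), obtains the same contraction factors $(T-T_{1,m})^{2-2\alpha-2\delta}$, $(T-T_{1,m})^{2-2\alpha}$, $(T-T_{1,m})^{1-2\delta}$, $(T-T_{1,m})$ on a short terminal interval, and extends backward over a finite partition. The only cosmetic difference is that you phrase the argument as a Banach fixed point on the product space while the paper writes out the Picard iteration explicitly.
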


\begin{proof}
 Let $\mathcal Z_T^1$ denote the space of all predictable processes $(z(t))_{t \in [0,T]}$ with values in the space $D(A^\delta)$ such that $\mathbb E \sup_{t \in [0,T]} \|z(t)\|_{D(A^\delta)}^2 < \infty$.
 The space $\mathcal Z_T^1$ equipped with the norm 
 \begin{equation*}
  \| z\|_{\mathcal Z_T^1}^2 = \mathbb E \sup_{t \in [0,T]} \| z(t)\|_{D(A^\delta)}^2
 \end{equation*}
 for every $z \in \mathcal Z_T^1$ becomes a Banach space.
 Similarly, let $\mathcal Z_T^2$ contain all predictable processes $(\Phi(t))_{t \in [0,T]}$ with values in $\mathcal{L}_{(HS)}(Q^{1/2}(H);H)$ such that $\mathbb E \int_0^T \|\Phi(t)\|_{\mathcal{L}_{(HS)}(Q^{1/2}(H);H)}^2 dt < \infty$.
 The space $\mathcal Z_T^2$ equipped with the inner product 
 \begin{equation*}
  \langle \Phi_1,\Phi_2 \rangle_{\mathcal Z_T^2}^2 = \mathbb E \int\limits_0^T \langle \Phi_1(t),\Phi_2(t) \rangle_{\mathcal{L}_{(HS)}(Q^{1/2}(H);H)}^2 dt
 \end{equation*}
 for every $\Phi_1,\Phi_2 \in \mathcal Z_T^2$ becomes a Hilbert space.
 We define a sequence $(z_m^k,\Phi_m^k)_{k \in \mathbb N} \subset \mathcal Z_T^1 \times \mathcal Z_T^2$ satisfying for each $k \in \mathbb N$, all $t \in [0,T]$ and $\mathbb P$-a.s. 
 \begin{align}
  z_m^k(t) =& -\int\limits_t^T \mathds 1_{[0,\tau_m)} (s) A^\alpha e^{-A (s-t)} A^\alpha B_{\delta}^*\left(y(s \wedge \tau_m),A^\delta z_m^{k-1}(s)\right) ds + \int\limits_t^T \mathds 1_{[0,\tau_m)} (s) e^{-A (s-t)} G^*(A^{-2\alpha}\Phi_m^{k-1}(s)) ds \nonumber \\
  &+ \int\limits_t^T \mathds 1_{[0,\tau_m)} (s) A^\gamma e^{-A (s-t)} A^\gamma \left( y(s \wedge \tau_m) - y_d(s)\right) ds - \int\limits_t^T e^{-A (s-t)} \Phi_m^k(s) d W(s), \label{approxbackwardstochnse1}
 \end{align}
 where $z_m^0(t)=0$ and $\Phi_m^0(t)=0$ for all $t \in [0,T]$.
 Recall that the operators $A^\alpha B_{\delta}^*(y(t),\cdot) \colon H \rightarrow H$ and $G^* \colon \mathcal{L}_{(HS)}(Q^{1/2}(H);D(A^\alpha)) \rightarrow H$ are linear and bounded.
 Note that due to Lemma \ref{backward1} and Corollary \ref{backward2}, one can easily verify that $(z_m^k,\Phi_m^k)_{k \in \mathbb N} \subset \mathcal Z_T^1 \times \mathcal Z_T^2$.
 Moreover, we obtain for each $k \in \mathbb N$, all $t \in [0,T]$ and $\mathbb P$-a.s.
 \begin{align}
  z_m^{k+1}(t) - z_m^k(t) =& - \int\limits_t^T \mathds 1_{[0,\tau_m)} (s) A^\alpha e^{-A (s-t)} A^\alpha B_{\delta}^*\left(y(s \wedge \tau_m),A^\delta \left[ z_m^k(s) - z_m^{k-1}(s)\right]\right) ds \nonumber \\
  &+ \int\limits_t^T \mathds 1_{[0,\tau_m)} (s) e^{-A (s-t)} G^*\left(A^{-2\alpha}\left[\Phi_m^k(s)-\Phi_m^{k-1}(s)\right] \right) ds \nonumber \\
  &- \int\limits_t^T e^{-A (s-t)} \left(\Phi_m^{k+1}(s)-\Phi_m^k(s)\right) d W(s). \label{approxbackwardstochnse2}
 \end{align}
 Moreover, there exist constants $C_1,C_2 > 0$ such that for each $k \in \mathbb N$
 \begin{align*}
  &\mathbb E \sup\limits_{t \in [0,T]} \left\|\mathds 1_{[0,\tau_m)} (t) A^\alpha B_{\delta}^*\left(y(t \wedge \tau_m),A^\delta \left[ z_m^k(t) - z_m^{k-1}(t)\right]\right) \right\|_H^2 \leq C_1 \, \mathbb E \sup\limits_{t \in [0,T]} \left\|z_m^k(t) - z_m^{k-1}(t) \right\|_{D(A^\delta)}^2, \\
  &\mathbb E \int\limits_0^T \left\| \mathds 1_{[0,\tau_m)} (t) G^*\left(A^{-2\alpha}\left[\Phi_m^k(t)-\Phi_m^{k-1}(t)\right] \right) \right\|_H^2 dt \leq C_2 \, \mathbb E \int\limits_0^T \left\|\Phi_m^k(t)-\Phi_m^{k-1}(t) \right\|_{\mathcal{L}_{(HS)}(Q^{1/2}(H);H)}^2 dt.
 \end{align*}
 Hence, equation (\ref{approxbackwardstochnse2}) satisfies the assumptions of Lemma \ref{backward1} and Corollary \ref{backward2}.
 Let $T_{1,m} \in [0,T)$.
 Due to inequality (\ref{backwardinequality1}) and inequality (\ref{backwardinequality3}), there exist constants $C_1^*,C_2^*>0$ such that for each $k \in \mathbb N$
 \begin{align*}
  \mathbb E \sup_{t \in [T_{1,m},T]} \|z_m^{k+1}(t) - z_m^k(t)\|_{D(A^\delta)}^2
  &\leq C_1^* (T-T_{1,m})^{2-2\alpha-2\delta} \; \mathbb E \sup_{t \in [T_{1,m},T]} \|z_m^k(t) - z_m^{k-1}(t)\|_{D(A^\delta)}^2 \\
  &\quad + C_2^* (T-T_{1,m})^{1-2\delta} \; \mathbb E \int\limits_{T_{1,m}}^T \left \| \Phi_m^k(t)-\Phi_m^{k-1}(t)\right\|_{\mathcal{L}_{(HS)}(Q^{1/2}(H);H)}^2 dt.
 \end{align*}
 Using inequality (\ref{backwardinequality2}) and inequality (\ref{backwardinequality4}), there exist constants $C_3^*,C_4^*>0$ such that for each $k \in \mathbb N$
 \begin{align*}
  \mathbb E \int\limits_{T_{1,m}}^T \left \| \Phi_m^{k+1}(t)-\Phi_m^k(t)\right\|_{\mathcal{L}_{(HS)}(Q^{1/2}(H);H)}^2 dt
  &\leq C_3^* (T-T_{1,m})^{2-2\alpha} \; \mathbb E \sup_{t \in [T_{1,m},T]} \left\| z_m^k(t) - z_m^{k-1}(t) \right\|_{D(A^\delta)}^2 \\
  &\quad + C_4^* (T-T_{1,m}) \; \mathbb E \int\limits_{T_{1,m}}^T \left\|\Phi_m^k(t)-\Phi_m^{k-1}(t) \right\|_{\mathcal{L}_{(HS)}(Q^{1/2}(H);H)}^2 dt.
 \end{align*}
 Hence, we obtain for each $k \in \mathbb N$
 \begin{align*}
  &\mathbb E \sup_{t \in [T_{1,m},T]} \|z_m^{k+1}(t) - z_m^k(t)\|_{D(A^\delta)}^2 + \mathbb E \int\limits_{T_{1,m}}^T \left \| \Phi_m^{k+1}(t)-\Phi_m^k(t)\right\|_{\mathcal{L}_{(HS)}(Q^{1/2}(H);H)}^2 dt \\
  &\leq K_m \left[ \mathbb E \sup_{t \in [T_{1,m},T]} \left\| z_m^k(t) - z_m^{k-1}(t) \right\|_{D(A^\delta)}^2 + \mathbb E \int_{T_{1,m}}^T \left\|\Phi_m^k(t)-\Phi_m^{k-1}(t) \right\|_{\mathcal{L}_{(HS)}(Q^{1/2}(H);H)}^2 dt \right],
 \end{align*}
 where
 \begin{equation*}
  K_m = \max \{C_1^* (T-T_{1,m})^{2-2\alpha-2\delta} + C_3^* (T-T_{1,m})^{2-2\alpha}, C_2^* (T-T_{1,m})^{1-2\delta} + C_4^* (T-T_{1,m})\}.
 \end{equation*}
 Therefore, we find for each $k \in \mathbb N$
 \begin{align*}
  &\mathbb E \sup_{t \in [T_{1,m},T]} \|z_m^{k+1}(t) - z_m^k(t)\|_{D(A^\delta)}^2 + \mathbb E \int\limits_{T_{1,m}}^T \left \| \Phi_m^{k+1}(t)-\Phi_m^k(t)\right\|_{\mathcal{L}_{(HS)}(Q^{1/2}(H);H)}^2 dt \\
  &\leq K_m^k \left[\mathbb E \sup_{t \in [T_{1,m},T]} \left\| z_m^1(t) \right\|_{D(A^\delta)}^2 + \mathbb E \int_{T_{1,m}}^T \left\|\Phi_m^1(t)\right\|_{\mathcal{L}_{(HS)}(Q^{1/2}(H);H)}^2 dt \right].
 \end{align*}
 We choose $T_{1,m} \in [0,T)$ such that $K_m < 1$.
 Thus, we can conclude that the sequence $(z_m^k,\Phi_m^k)_{k \in \mathbb N} \subset \mathcal Z_T^1 \times \mathcal Z_T^2$ is a Cauchy sequence on the interval $[T_{1,m},T]$.
 Using equation (\ref{approxbackwardstochnse2}), we have for each $k \in \mathbb N$, all $t \in [0,T_{1,m}]$ and $\mathbb P$-a.s.
 \begin{align*}
  z_m^{k+1}(t) - z_m^k(t)
  &= e^{-A (T_{1,m}-t)} [z_m^{k+1}(T_{1,m}) - z_m^k(T_{1,m})] \\
  &\quad - \int\limits_t^{T_{1,m}} \mathds 1_{[0,\tau_m)} (s) A^\alpha e^{-A (s-t)} A^\alpha B_{\delta}^*\left(y(s \wedge \tau_m),A^\delta \left[ z_m^k(s) - z_m^{k-1}(s)\right]\right) ds \\
  &\quad + \int\limits_t^{T_{1,m}} \mathds 1_{[0,\tau_m)} (s) e^{-A (s-t)} G^*\left(A^{-2\alpha}\left[\Phi_m^k(s)-\Phi_m^{k-1}(s)\right] \right) ds \\
  &\quad - \int\limits_t^{T_{1,m}} e^{-A (s-t)} \left(\Phi_m^{k+1}(s)-\Phi_m^k(s)\right) d W(s).
 \end{align*}
 Again, we find $T_{2,m} \in [0,T_{1,m}]$ such that the sequence $(z_m^k,\Phi_m^k)_{k \in \mathbb N} \subset \mathcal Z_T^1 \times \mathcal Z_T^2$ is a Cauchy sequence on the interval $[T_{2,m},T_{1,m}]$.
 By continuing the method, we can conclude that the sequence $(z_m^k,\Phi_m^k)_{k \in \mathbb N} \subset \mathcal Z_T^1 \times \mathcal Z_T^2$ is a Cauchy sequence on the interval $[0,T]$.
 Hence, there exist $z_m^* \in \mathcal Z_T^1$ and $\Phi_m \in \mathcal Z_T^2$ such that 
 \begin{equation*}
  z_m^* = \lim_{k \rightarrow \infty} z_m^k, \qquad \Phi_m = \lim_{k \rightarrow \infty} \Phi_m^k.
 \end{equation*}
 By equation (\ref{approxbackwardstochnse1}), one can easily verify that the pair of processes $(z_m^*(t),\Phi_m(t))_{t \in [0,T]}$ satisfy equation (\ref{mildbackward}).
\end{proof}

\begin{remark}
 If $y_d \in L^\infty([0,T];D(A^\gamma))$, then the restriction $\gamma +\delta < \frac{1}{2}$ vanishes in the previous theorem.
 Moreover, note that we have the additional restrictions $\alpha, \delta < \frac{1}{2}$.
 Since $\gamma \leq \alpha$ in equation (\ref{costfunctional}), we can not solve the control problem introduced in Section \ref{sec:costfunctional} for the special case $\gamma = \frac{1}{2}$.
 However, one can overcome this problem if system (\ref{stochnse}) is driven by an additive noise, i.e. the operator $G$ does not depend on the velocity field $y(t)$.
\end{remark}

\begin{corollary} \label{backwardvanishes1}
 Let $(z_m^*(t),\Phi_m(t))_{t \in [0,T]}$ be the mild solution of system (\ref{backwardstochnse}).
 Then we have for fixed $m \in \mathbb N$
 \begin{equation*}
  \mathbb E \sup_{t \in [\tau_m,T]} \|z_m^*(t)\|_{D(A^\delta)}^2 = 0
 \end{equation*}
 and
 \begin{equation*}
  \mathbb E \int\limits_{\tau_m}^T \left\| \Phi_m(t) \right\|_{\mathcal{L}_{(HS)}(Q^{1/2}(H);H)}^2 dt = 0.
 \end{equation*}
\end{corollary}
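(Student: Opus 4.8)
The key structural observation is that every Bochner integral in the mild formulation (\ref{mildbackward}) carries the factor $\mathds 1_{[0,\tau_m)}(s)$ and hence vanishes for $s\ge\tau_m$, while the stochastic term does not. Thus on the random interval $[\tau_m,T]$ the backward equation (\ref{backwardstochnse}) formally reduces to $dz_m^*=\Phi_m\,dW$ with $z_m^*(T)=0$, whose only $\mathcal F_t$-adapted solution is $(0,0)$. The difficulty is to make this precise for \emph{mild} solutions: one may neither differentiate $z_m^*$ nor simply ``restart'' (\ref{mildbackward}) at the random time $\tau_m$ and quote the uniqueness of Lemma \ref{backward1}, which is stated on a deterministic interval. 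The remedy I would use is to transform (\ref{mildbackward}) into an identity for a genuine martingale whose terminal value is $\mathcal F_{\tau_m}$-measurable.

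Concretely, I would apply the bounded operator $e^{-At}$ to (\ref{mildbackward}); by the semigroup property and Lemma \ref{fractional}(iii) one has $e^{-At}A^\alpha e^{-A(s-t)}A^\alpha=A^\alpha e^{-As}A^\alpha$ and likewise for the remaining kernels, so that each kernel becomes independent of $t$ and $e^{-At}$ may be pulled through the Bochner and stochastic integrals, giving
\begin{equation*}
 e^{-At}z_m^*(t)=\int_t^T \mathds 1_{[0,\tau_m)}(s)\,g(s)\,ds-\int_t^T e^{-As}\Phi_m(s)\,dW(s),
\end{equation*}
where $g$ is the $H$-valued adapted process obtained from the three drift kernels; Lemma \ref{fractional}(iv), the bound $\|y(s\wedge\tau_m)\|_{D(A^\alpha)}\le m$, the restrictions $\alpha,\gamma<\frac{1}{2}$ and $y_d\in L^2([0,T];D(A^\gamma))$ ensure $\mathbb E\sup_t\|\int_0^{t\wedge\tau_m}g(s)\,ds\|_H^2<\infty$. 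Setting $M(t):=e^{-At}z_m^*(t)+\int_0^{t\wedge\tau_m}g(s)\,ds$ and $\eta:=\int_0^{\tau_m}g(s)\,ds$, the display reads $M(t)=\eta-\int_t^T e^{-As}\Phi_m(s)\,dW(s)$; since $M(t)$ is $\mathcal F_t$-adapted and $\mathbb E[\int_t^T e^{-As}\Phi_m(s)\,dW(s)\,|\,\mathcal F_t]=0$, we obtain $M(t)=\mathbb E[\eta\,|\,\mathcal F_t]$. Hence $M$ is a continuous, square-integrable $H$-valued martingale with $M(T)=\eta$ (using $z_m^*(T)=0$), and $\eta$ is $\mathcal F_{\tau_m}$-measurable because $g$ is adapted and $\tau_m$ is a stopping time.

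It follows that $M(t)=\mathbb E[\eta\,|\,\mathcal F_t]=\eta$ on $\{\tau_m\le t\}$ for every $t$, so by continuity of $M$ we get $M\equiv\eta$ on $[\tau_m,T]$, $\mathbb P$-a.s. On $\{\tau_m\le t\}$ the display then yields $\int_t^T e^{-As}\Phi_m(s)\,dW(s)=0$; pulling the $\mathcal F_t$-measurable factor $\mathds 1_{\{\tau_m\le t\}}$ inside and using the Itô isometry gives $\mathbb E\int_t^T\mathds 1_{\{\tau_m\le t\}}\|e^{-As}\Phi_m(s)\|^2_{\mathcal L_{(HS)}(Q^{1/2}(H);H)}\,ds=0$, and since $e^{-As}$ is injective this forces $\Phi_m(s)=0$ for a.e. $s\in[t,T]$ on $\{\tau_m\le t\}$; letting $t$ decrease to $\tau_m$ through the rationals yields $\mathbb E\int_{\tau_m}^T\|\Phi_m(s)\|^2_{\mathcal L_{(HS)}(Q^{1/2}(H);H)}\,ds=0$, which is the second assertion. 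Finally, for $t\in[\tau_m,T]$ we have $\int_0^{t\wedge\tau_m}g(s)\,ds=\eta=M(t)$, so the display forces $e^{-At}z_m^*(t)=0$ there and, $e^{-At}$ being injective, $z_m^*(t)=0$ for all $t\in[\tau_m,T]$, $\mathbb P$-a.s.; using that $z_m^*$ admits a continuous modification (inherited from the Picard construction in Theorem \ref{existencebackward}), we conclude $\mathbb E\sup_{t\in[\tau_m,T]}\|z_m^*(t)\|^2_{D(A^\delta)}=0$. The only genuinely non-routine point is the passage from (\ref{mildbackward}) to the martingale $M$; the remaining steps are bookkeeping with the analytic semigroup and the Itô isometry.
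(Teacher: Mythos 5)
The paper states this corollary without any proof (as it does for its counterpart, Lemma \ref{backwardvanishes2}), treating it as immediate from the fact that every drift term in (\ref{mildbackward}) carries the factor $\mathds 1_{[0,\tau_m)}(s)$; your proposal supplies a correct and complete justification of exactly the mechanism the paper is implicitly relying on. Your reformulation via the martingale $M(t)=e^{-At}z_m^*(t)+\int_0^{t\wedge\tau_m}g(s)\,ds=\mathbb E[\eta\,|\,\mathcal F_t]$ is a clean way to make the heuristic ``$dz_m^*=\Phi_m\,dW$, $z_m^*(T)=0$, hence $(z_m^*,\Phi_m)=(0,0)$ beyond $\tau_m$'' rigorous at the level of mild solutions: the semigroup commutation turns the convolution kernels into $t$-independent ones so that the stochastic term becomes a genuine martingale increment, adaptedness of $M(t)$ plus $\mathcal F_{\tau_m}$-measurability of $\eta$ freezes $M$ on $[\tau_m,T]$, and the It\^o isometry localized to $\{\tau_m\le t\}$ together with injectivity of $e^{-As}$ (which holds since $A$ is positive self-adjoint) kills $\Phi_m$. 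A marginally shorter route to the same end is to condition (\ref{mildbackward}) directly on $\mathcal F_t$ on the event $\{\tau_m\le t\}$, where all Bochner integrals vanish outright, giving $\mathds 1_{\{\tau_m\le t\}}z_m^*(t)=-\mathds 1_{\{\tau_m\le t\}}\mathbb E[\int_t^Te^{-A(s-t)}\Phi_m(s)\,dW(s)\,|\,\mathcal F_t]=0$ without introducing $g$ or $M$; but your version has the advantage of also handling the $\Phi_m$ assertion in one stroke. Two points deserve the care you gave them: the integrability of $g$ near $s=0$ (the kernels $A^\alpha e^{-As}A^\alpha B_\delta^*$ and $A^\gamma e^{-As}A^\gamma$ cost $s^{-\alpha}$ and $s^{-\gamma}$, so the hypotheses $\alpha,\gamma<\tfrac12$ of Theorem \ref{existencebackward} are genuinely used), and the passage from $z_m^*(t)=0$ a.s.\ for each fixed $t\ge\tau_m$ to the almost-sure supremum, which does require the continuity (or at least separability) of a modification of $z_m^*$; the paper never states this explicitly, but it follows from the Picard construction in Theorem \ref{existencebackward} since the iterates converge in the $\mathbb E\sup_t\|\cdot\|^2$ norm. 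I see no gap.
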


\section{Approximation by a Strong Formulation} \label{sec:approximation}

In general, a duality principle of solutions to forward and backward SPDEs can be obtained by applying an Itô product formula.
This formula is not applicable to solutions in a mild sense.
Here, we approximate the mild solutions of system (\ref{truncatedlinearstochnse}) and system (\ref{backwardstochnse}) by strong formulations.
Recall that these mild solutions take values in the domain of fractional powers to the Stokes operator and hence, we need convergence results in the corresponding spaces. 
According to \cite{seiid}, one can use the Yosida approximation of the Stokes operator $A$.
For applications regarding a duality principle, see \cite{smpf,euas}.
This approximation holds only in the underlying Hilbert space $H$ and thus, we can not obtain suitable convergence results.
Here, we apply the approach introduced in \cite{yaos,soss}.
The basic idea is to formulate a mild solution with values in $D(A)$ using the resolvent operator $R(\lambda)$ introduced in Section \ref{sec:functionalbackground}.
Thus, we get convergence results in the domain of fractional power operators and the mild solutions coincide with strong solutions.
Although, the convergence is only available for forward SPDEs, we are also able to show the result for the backward equation.
In this section, we omit the dependence on the controls for the sake of simplicity.

\subsection{The Forward Equation}

Here, we give an approximation of the mild solution to system (\ref{truncatedlinearstochnse}).
We introduce the following SPDE in $D(A^{1+\alpha})$:
\begin{equation}\label{approxtruncatedlinearstochnse}
 \left\{
 \begin{aligned}
  d z_m(t,\lambda) &= - [A z_m(t,\lambda) + R(\lambda)B(R(\lambda)z_m(t,\lambda),\pi_m(y_m(t))) + R(\lambda)B(\pi_m(y_m(t)),R(\lambda)z_m(t,\lambda)) \\
  &\quad - R(\lambda) F v(t)] dt + R(\lambda)G(R(\lambda)z_m(t,\lambda)) d W(t), \\
  z_m(0,\lambda) &= 0,
 \end{aligned}
 \right.
\end{equation}
where $m \in \mathbb N$, $\lambda > 0$ and $v \in L^2_{\mathcal F}(\Omega;L^2([0,T];D(A^\beta)))$.
The operators $A,B,R(\lambda),F,G$ are introduced in Section \ref{sec:functionalbackground} and Section \ref{sec:snse}, respectively.
The mapping $\pi_m \colon D(A^\alpha) \rightarrow D(A^\alpha)$ is given by (\ref{truncation}) and the process $(y_m(t))_{t \in [0,T]}$ is the mild solution of system (\ref{truncatedstochnse}).
The process $(W(t))_{t \in [0,T]}$ is a Q-Wiener process with values in $H$ and covariance operator $Q \in \mathcal L(H)$.

\begin{definition}
 A predictable process $(z_m(t,\lambda))_{t \in [0,T]}$ with values in $D(A^{1+\alpha})$ is called a mild solution of system (\ref{approxtruncatedlinearstochnse}) if
 \begin{equation*}
  \mathbb E \sup\limits_{t \in [0,T]} \| z_m(t,\lambda)\|_{D(A^{1+\alpha})}^2 < \infty
 \end{equation*}
 and we have for all $t \in [0,T]$ and $\mathbb P$-a.s.
 \begin{align*}
 z_m(t,\lambda) =& - \int\limits_0^t A^\delta e^{-A (t-s)} R(\lambda) A^{-\delta} \left[ B(R(\lambda)z_m(s,\lambda),\pi_m(y_m(s))) + B(\pi_m(y_m(s)),R(\lambda)z_m(s,\lambda)) \right] ds \nonumber \\
 &+ \int\limits_0^t e^{-A (t-s)} R(\lambda) F v(s) ds + \int\limits_0^t e^{-A (t-s)} R(\lambda) G(R(\lambda)z_m(s,\lambda)) d W(s).
\end{align*}
\end{definition}

\begin{remark}
 Note that the approximation scheme provided in \cite{yaos,soss} differs to the approximation scheme introduced by system (\ref{approxtruncatedlinearstochnse}).
 Here, the additional operator $R(\lambda)$ will be necessary to obtain the duality principle. 
\end{remark}

Recall that the operators $R(\lambda)$ and $A R(\lambda)$ are linear and bounded on $H$.
Hence, an existence and uniqueness result of a mild solution $(z_m(t,\lambda))_{t \in [0,T]}$ to system (\ref{approxtruncatedlinearstochnse}) can be obtained similarly to Theorem \ref{existencelinearmild} for fixed $m \in \mathbb N$ and fixed $\lambda > 0$.
In the following lemma, we state a strong formulation of the mild solution to system (\ref{approxtruncatedlinearstochnse}), which is an immediate consequence of \cite[Proposition 2.3]{soss}.

\begin{lemma}\label{linearstrong}
 Let $(z_m(t,\lambda))_{t \in [0,T]}$ be the mild solution of system (\ref{approxtruncatedlinearstochnse}).
 Then we have for fixed $m \in \mathbb N$, fixed $\lambda > 0$, all $t \in [0,T]$ and $\mathbb P$-a.s.
 \begin{align*}
  z_m(t,\lambda) =& - \int\limits_0^t A z_m(s,\lambda) + A^\delta R(\lambda) A^{-\delta} \left[ B(R(\lambda)z_m(s,\lambda),\pi_m(y_m(s))) + B(\pi_m(y_m(s)),R(\lambda)z_m(s,\lambda)) \right] ds \\
  &+ \int\limits_0^t R(\lambda) F v(s) ds + \int\limits_0^t R(\lambda) G(R(\lambda)z_m(s,\lambda)) d W(s).
 \end{align*}
\end{lemma}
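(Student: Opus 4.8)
The strategy is to verify that the mild solution $(z_m(t,\lambda))_{t\in[0,T]}$ of system (\ref{approxtruncatedlinearstochnse}) fits the framework of \cite[Proposition 2.3]{soss}, after which the strong formulation follows at once. The first step is to recast the defining equation so that the $C_0$-semigroup $(e^{-At})_{t\geq 0}$ stands alone in front of each integrand. Since $R(\lambda)$ maps $H$ into $D(A) \subset D(A^\delta)$, the element $w = B(R(\lambda)z_m(s,\lambda),\pi_m(y_m(s))) + B(\pi_m(y_m(s)),R(\lambda)z_m(s,\lambda))$ satisfies $R(\lambda)A^{-\delta}w \in D(A)\subset D(A^\delta)$, so Lemma \ref{fractional}(iii) gives $A^\delta e^{-A(t-s)} R(\lambda) A^{-\delta} w = e^{-A(t-s)} A^\delta R(\lambda) A^{-\delta} w$. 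Hence the mild solution can be written as
\[
 z_m(t,\lambda) = \int_0^t e^{-A(t-s)} f(s)\, ds + \int_0^t e^{-A(t-s)}\Phi(s)\, dW(s),
\]
with drift $f(s) = -A^\delta R(\lambda) A^{-\delta}\bigl[B(R(\lambda)z_m(s,\lambda),\pi_m(y_m(s))) + B(\pi_m(y_m(s)),R(\lambda)z_m(s,\lambda))\bigr] + R(\lambda) F v(s)$ and diffusion $\Phi(s) = R(\lambda) G(R(\lambda)z_m(s,\lambda))$.

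The second step is to check the regularity and integrability the strong-solution theorem requires. Because $R(\lambda)$ maps $H$ into $D(A)$, the operator $AR(\lambda)$ is bounded on $H$, and $A^\delta R(\lambda) = A^{\delta-1}\,AR(\lambda)$ with $A^{\delta-1}$ bounded (as $\delta<1$), so $f(s)$ takes values in $D(A)$. Combining this with Lemma \ref{ineqnonlinear} (to bound $A^{-\delta}B$, with $\alpha_1=\alpha_2=\alpha$, which is admissible under the standing hypotheses $\delta+\alpha>\frac12$, $\delta+2\alpha\geq\frac n4+\frac12$), the boundedness of $F$, inequalities (\ref{resolventinequality}) and (\ref{truncationineq1}), the relation (\ref{resolventcommutes}), the bound $\mathbb E\sup_{t\in[0,T]}\|z_m(t,\lambda)\|_{D(A^{1+\alpha})}^2 < \infty$, and $v \in L^2_{\mathcal F}(\Omega;L^2([0,T];D(A^\beta)))$, one gets $\mathbb E\int_0^T\|A f(s)\|_H^2\,ds < \infty$. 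Likewise, since $G$ maps $H$ into $\mathcal L_{(HS)}(Q^{1/2}(H);D(A^\alpha))$ and $R(\lambda)$ maps $D(A^\alpha)$ into $D(A^{1+\alpha})$, the process $\Phi$ takes values in $\mathcal L_{(HS)}(Q^{1/2}(H);D(A))$ with $\mathbb E\int_0^T\|A\Phi(s)\|_{\mathcal L_{(HS)}(Q^{1/2}(H);H)}^2\,ds < \infty$; Proposition \ref{closedopstochint} then licenses interchanging $A$ with the stochastic integral, which is precisely the ingredient needed to convert the stochastic convolution into its strong form.

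With these verifications in hand, \cite[Proposition 2.3]{soss} (whose proof rests on the deterministic and stochastic Fubini theorems together with the identity $e^{-Ar}w - w = -\int_0^r A e^{-A\sigma}w\,d\sigma$, valid for $w\in D(A)$) applies and yields $z_m(t,\lambda) = -\int_0^t A z_m(s,\lambda)\,ds + \int_0^t f(s)\, ds + \int_0^t \Phi(s)\, dW(s)$. Substituting $f$ and $\Phi$ and collecting $A z_m(s,\lambda)$ and $A^\delta R(\lambda) A^{-\delta}[B(\cdot)+B(\cdot)]$ under a single integral gives exactly the claimed identity. The only step demanding genuine care is the second one: one must keep precise track of which fractional powers of $A$ act where and confirm that the a priori $D(A^{1+\alpha})$-bound on $z_m(\cdot,\lambda)$ is strong enough to make $Af$ and $A\Phi$ square-integrable in time; once the commutation relations of Lemma \ref{fractional}, equation (\ref{resolventcommutes}) and the boundedness of $R(\lambda)$ and $AR(\lambda)$ are invoked, the remainder is routine.
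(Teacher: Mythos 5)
Your overall strategy coincides with the paper's: Lemma \ref{linearstrong} is obtained by verifying that the mild solution of (\ref{approxtruncatedlinearstochnse}) falls under \cite[Proposition 2.3]{soss} and citing that result (the paper gives no further argument). Your first step, commuting $A^\delta$ past the semigroup via Lemma \ref{fractional}(iii) using $R(\lambda)A^{-\delta}w \in D(A)$, and your treatment of the diffusion term (values in $D(A^{1+\alpha})\subset D(A)$, square-integrability of $A\Phi$, Proposition \ref{closedopstochint}) are correct.

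There is, however, a genuine error in your verification of the drift. You claim that $f(s)$ takes values in $D(A)$ with $\mathbb E\int_0^T\|Af(s)\|_H^2\,ds<\infty$, deducing this from the boundedness of $A^\delta R(\lambda)=A^{\delta-1}AR(\lambda)$ on $H$. Boundedness on $H$ only yields that $A^\delta R(\lambda)A^{-\delta}w(s)$ lies in $H$ (in fact in $D(A^{1-\delta})$, since $A^{1-\delta}A^\delta R(\lambda)=AR(\lambda)$ is bounded); it does \emph{not} place it in $D(A)$. Membership in $D(A)$ would require $A^{-\delta}w(s)\in D(A^\delta)$, i.e.\ $B(R(\lambda)z_m(s,\lambda),\pi_m(y_m(s)))+B(\pi_m(y_m(s)),R(\lambda)z_m(s,\lambda))\in H$, and this fails in general: the second argument of the first bilinear term, $\pi_m(y_m(s))$, has only $D(A^\alpha)$ regularity with $\alpha<\tfrac12$ (forced, e.g., by Theorem \ref{existencebackward}), so Lemma \ref{ineqnonlinear} with $\delta=0$ is not applicable ($\delta+\alpha_2>\tfrac12$ is violated) and $(y\cdot\nabla)z$ is genuinely a distribution outside $H$. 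Consequently $R(\lambda)$ gains only one power of $A$, $A^{1+\delta}R(\lambda)$ is unbounded on $H$, and $\|Af(s)\|_H$ is not controlled. The lemma is nevertheless true: the mild-to-strong (Fubini plus $e^{-Ar}h-h=-\int_0^r Ae^{-A\sigma}h\,d\sigma$) argument only needs $f(s)\in D(A^{1-\delta})$ together with the bound $\|Ae^{-A(s-r)}f(r)\|_H\le M_\delta (s-r)^{-\delta}\|A^{1-\delta}f(r)\|_H$ from Lemma \ref{fractional}(iv), whose singularity is integrable since $\delta<1$. You should replace the $D(A)$ claim for the drift by this weaker, correct statement; as written, the hypotheses you verify are not the ones that hold.
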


We get the following convergence result.

\begin{lemma}\label{convergencelinear}
 Let $(z_m(t))_{t \in [0,T]}$ and $(z_m(t,\lambda))_{t \in [0,T]}$ be the mild solutions of system (\ref{truncatedlinearstochnse}) and system (\ref{approxtruncatedlinearstochnse}), respectively.
 Then we have for fixed $m \in \mathbb N$
 \begin{equation*}
  \lim_{\lambda \rightarrow \infty} \mathbb E \sup\limits_{t \in [0,T]} \|z_m(t) - z_m(t,\lambda)\|_{D(A^\alpha)}^2 = 0.
 \end{equation*}
\end{lemma}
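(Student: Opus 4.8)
The plan is to run the same short-time contraction-and-patching scheme already used for Lemma~\ref{linearmildbound}, Lemma~\ref{linearmildlinearity} and Lemma~\ref{linearmildcontinuity}; the only genuinely new ingredients are the three elementary properties of the resolvent operator $R(\lambda)$ recorded in Section~\ref{sec:functionalbackground}: the uniform bound $\|R(\lambda)\|_{\mathcal L(H)}\le 1$ of \eqref{resolventinequality}, the commutation relation $A^\alpha R(\lambda)y=R(\lambda)A^\alpha y$ of \eqref{resolventcommutes}, and the strong convergence $R(\lambda)y\to y$ in $H$ of \eqref{resolventconvergence}. Throughout one uses that, by Lemma~\ref{linearmildbound} with $k=2$, $\mathbb E\sup_{t\in[0,T]}\|z_m(t)\|_{D(A^\alpha)}^2<\infty$ and that, by the definition of a mild solution of system~\eqref{approxtruncatedlinearstochnse} together with $D(A^{1+\alpha})\hookrightarrow D(A^\alpha)$, also $\mathbb E\sup_{t\in[0,T]}\|z_m(t,\lambda)\|_{D(A^\alpha)}^2<\infty$ for each fixed $\lambda>0$.

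First I would subtract the mild formula for $z_m(t,\lambda)$ from that of $z_m(t)$ and, writing $\widetilde B(y,z)=B(z,y)+B(y,z)$ as in the proof of Lemma~\ref{linearmildcontinuity}, rearrange the nonlinear, control and stochastic terms by adding and subtracting. For the nonlinear term the identity
\begin{align*}
 &R(\lambda)A^{-\delta}\widetilde B(\pi_m(y_m(s)),R(\lambda)z_m(s,\lambda)) - A^{-\delta}\widetilde B(\pi_m(y_m(s)),z_m(s)) \\
 &= R(\lambda)A^{-\delta}\widetilde B\bigl(\pi_m(y_m(s)),R(\lambda)[z_m(s,\lambda)-z_m(s)]\bigr) + R(\lambda)A^{-\delta}\widetilde B\bigl(\pi_m(y_m(s)),[R(\lambda)-I]z_m(s)\bigr) \\
 &\quad + [R(\lambda)-I]A^{-\delta}\widetilde B(\pi_m(y_m(s)),z_m(s))
\end{align*}
splits it into a contraction part — estimated with Lemma~\ref{fractional}, Lemma~\ref{ineqnonlinear}, inequality~\eqref{truncationineq1} and $\|R(\lambda)\|_{\mathcal L(H)}\le 1$ — plus two remainders in which $R(\lambda)-I$ is applied either to $z_m(s)$ (in $D(A^\alpha)$) or to $A^{-\delta}\widetilde B(\pi_m(y_m(s)),z_m(s))$ (in $H$). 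Likewise, the control term produces $\int_0^t e^{-A(t-s)}[R(\lambda)-I]Fv(s)\,ds$, and, using linearity of $G$, the stochastic term produces a contraction part plus the remainders $R(\lambda)G([R(\lambda)-I]z_m(s))$ and $[R(\lambda)-I]G(z_m(s))$. Collecting everything and applying $A^\alpha$, which commutes with $R(\lambda)$ and with $e^{-A(t-s)}$, one obtains on any subinterval $[0,T_{1,m}]$, via Lemma~\ref{fractional}(iv), Proposition~\ref{ineqstochconv} with $k=2$ and the Cauchy--Schwarz inequality,
\begin{equation*}
 \mathbb E\sup_{t\in[0,T_{1,m}]}\|z_m(t)-z_m(t,\lambda)\|_{D(A^\alpha)}^2 \le \bigl(C_1T_{1,m}^{2-2\alpha-2\delta}+C_2T_{1,m}\bigr)\,\mathbb E\sup_{t\in[0,T_{1,m}]}\|z_m(t)-z_m(t,\lambda)\|_{D(A^\alpha)}^2 + \mathcal R(\lambda),
\end{equation*}
where $\mathcal R(\lambda)$ is a finite sum of the remainder terms just described.

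Next I would choose $T_{1,m}\in(0,T]$ so that $C_1T_{1,m}^{2-2\alpha-2\delta}+C_2T_{1,m}<1$ and absorb the first term on the right (legitimate since both sup-norms above are finite), giving $\mathbb E\sup_{t\in[0,T_{1,m}]}\|z_m(t)-z_m(t,\lambda)\|_{D(A^\alpha)}^2\le(\text{const})\,\mathcal R(\lambda)$. Each summand of $\mathcal R(\lambda)$ tends to $0$ as $\lambda\to\infty$ by Lebesgue's dominated convergence theorem: after bounding the semigroup factors by their operator norms (and, for the stochastic part, after Proposition~\ref{ineqstochconv}), every such summand is controlled by $\mathbb E\int_0^T$ of the squared $H$-norm (resp. $\mathcal{L}_{(HS)}(Q^{1/2}(H);H)$-norm) of $R(\lambda)-I$ applied to a fixed process — one of $A^\alpha z_m(s)$, $A^{-\delta}\widetilde B(\pi_m(y_m(s)),z_m(s))$, $A^\beta F v(s)$, or $A^\alpha G(z_m(s))$ — which converges to $0$ pointwise in $(s,\omega)$ by \eqref{resolventconvergence} (using \eqref{resolventcommutes} to move $R(\lambda)$ past $A^\alpha$ and the fact that $e^{-At}$ maps into every $D(A^\gamma)$), and which, thanks to $\|R(\lambda)\|_{\mathcal L(H)}\le1$, is dominated by a $\lambda$-independent integrable function built from Lemma~\ref{ineqnonlinear}, inequality~\eqref{truncationineq1}, the boundedness of $F,G$, and $\mathbb E\sup_{t\in[0,T]}\|z_m(t)\|_{D(A^\alpha)}^2<\infty$, $\mathbb E\int_0^T\|v(t)\|_{D(A^\beta)}^2\,dt<\infty$; the time singularities $(t-s)^{-(\alpha+\delta)}$ and $(t-s)^{-(\alpha-\beta)}$ are absorbed, uniformly in $t$, by Hölder's inequality in $s$ exactly as in the proof of Lemma~\ref{linearmildbound}, which is where $\alpha+\delta<1$ and $\alpha-\beta<\tfrac12$ enter. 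Hence $\mathbb E\sup_{t\in[0,T_{1,m}]}\|z_m(t)-z_m(t,\lambda)\|_{D(A^\alpha)}^2\to 0$. Iterating on $[T_{1,m},T_{2,m}]$, where the same estimate now carries the extra forward contribution $e^{-A(t-T_{1,m})}[z_m(T_{1,m})-z_m(T_{1,m},\lambda)]$ whose $\mathbb E\sup(\cdot)^2$ already vanishes by the previous step, and continuing through finitely many steps covering $[0,T]$, yields the claim.

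The main obstacle is not the contraction bookkeeping, which is routine and has appeared several times above, but the systematic decomposition that guarantees that, in each term, either $z_m-z_m(\lambda)$ does not occur or it occurs only with the small coefficient, together with the verification that every surviving $R(\lambda)-I$ remainder really tends to zero in the $\mathbb E\sup_{t}(\cdot)^2$ sense. The crucial interplay is that $\|R(\lambda)\|_{\mathcal L(H)}\le 1$ supplies the $\lambda$-uniform dominating functions while the strong convergence $R(\lambda)\to I$ supplies the pointwise convergence, with Hölder's inequality handling the time singularity of the analytic semigroup.
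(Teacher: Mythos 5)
Your proposal is correct and follows essentially the same route as the paper's proof: the same telescoping decomposition of the nonlinear, control and stochastic terms into a contraction part plus $[I-R(\lambda)]$-remainders, the same short-time absorption via $C_1T_{1,m}^{2-2\alpha-2\delta}+C_2T_{1,m}<1$, the same use of (\ref{resolventinequality})--(\ref{resolventconvergence}) with dominated convergence to kill the remainders, and the same finite iteration over subintervals. The only (immaterial) difference is the ordering of the middle telescoping term, and you additionally make explicit the finiteness of $\mathbb E\sup_t\|z_m(t,\lambda)\|_{D(A^\alpha)}^2$ needed to justify the absorption, which the paper leaves implicit.
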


\begin{proof}
 Let $I$ be the identity operator on $H$.
 We define the operator $\widetilde B(y,z) = B(z,y) + B(y,z)$ for every $y,z \in D(A^\alpha)$.
 Since $B$ is bilinear on $D(A^{\alpha}) \times D(A^{\alpha})$, the operator $\widetilde B$ is bilinear as well and using Lemma \ref{ineqnonlinear}, we get for every $y,z \in D(A^\alpha)$
 \begin{equation}\label{ineqnonlinear4}
  \left\| A^{-\delta}\widetilde B(y,z) \right\|_{H} \leq 2 \widetilde M \|y\|_{D(A^\alpha)} \|z\|_{D(A^\alpha)}.
 \end{equation}
 Recall that the operator $G \colon  H \rightarrow \mathcal{L}_{(HS)}(Q^{1/2}(H);D(A^\alpha))$ is linear and bounded.
 By definition, we find for all $\lambda > 0$, all $t \in [0,T]$ and $\mathbb P$-a.s.
 \begin{align*}
  &z_m(t) - z_m(t,\lambda) \\
  &= - \int\limits_0^t A^\delta e^{-A (t-s)} A^{-\delta} \widetilde B(\pi_m(y_m(s)),[I-R(\lambda)]z_m(s)) ds \\
  &\quad - \int\limits_0^t A^\delta e^{-A (t-s)} [I-R(\lambda)]A^{-\delta} \widetilde B(\pi_m(y_m(s)),R(\lambda)z_m(s)) ds \\
  &\quad - \int\limits_0^t A^\delta e^{-A (t-s)} R(\lambda) A^{-\delta} \widetilde B(\pi_m(y_m(s)),R(\lambda)\left[z_m(s)-z_m(s,\lambda)\right]) ds \\
  &\quad + \int\limits_0^t e^{-A (t-s)} [I-R(\lambda)] F v(s) ds + \int\limits_0^t e^{-A (t-s)}  G([I-R(\lambda)]z_m(s)) d W(s) \\
  &\quad + \int\limits_0^t e^{-A (t-s)}  [I-R(\lambda)] G(R(\lambda)z_m(s)) d W(s) + \int\limits_0^t e^{-A (t-s)} R(\lambda) G(R(\lambda)\left[z_m(s)-z_m(s,\lambda)\right]) d W(s).
 \end{align*}
 Let $T_{1,m} \in (0,T]$.
 Then we get for all $\lambda > 0$
 \begin{equation}\label{ineq8}
  \mathbb E \sup_{t \in [0,T_{1,m}]} \left\| z_m(t)-z_m(t,\lambda) \right\|_{D(A^\alpha)}^2 \leq 7 \; \mathcal I_1(\lambda) + 7 \; \mathcal I_2(\lambda) + 7 \; \mathcal I_3(\lambda),
 \end{equation}
 where
 \begin{align*}
  \mathcal I_1(\lambda) &= \mathbb E \sup_{t \in [0,T_{1,m}]} \left\| \int\limits_0^t A^\delta e^{-A (t-s)} R(\lambda) A^{-\delta} \widetilde B(\pi_m(y_m(s)),R(\lambda)\left[z_m(s)-z_m(s,\lambda)\right]) ds \right\|_{D(A^\alpha)}^2 \\
  &\quad + \mathbb E \sup_{t \in [0,T_{1,m}]} \left\| \int\limits_0^t e^{-A (t-s)} R(\lambda) G(R(\lambda)\left[z_m(s)-z_m(s,\lambda)\right]) d W(s) \right\|_{D(A^\alpha)}^2, \\
  \mathcal I_2(\lambda) &= \mathbb E \sup_{t \in [0,T_{1,m}]} \left\| \int\limits_0^t A^\delta e^{-A (t-s)} A^{-\delta} \widetilde B(\pi_m(y_m(s)),[I-R(\lambda)]z_m(s)) ds \right\|_{D(A^\alpha)}^2 \\
  &\quad + \mathbb E \sup_{t \in [0,T_{1,m}]} \left\| \int\limits_0^t A^\delta e^{-A (t-s)} [I-R(\lambda)]A^{-\delta} \widetilde B(\pi_m(y_m(s)),R(\lambda)z_m(s)) ds \right\|_{D(A^\alpha)}^2 \\
  &\quad + \mathbb E \sup_{t \in [0,T_{1,m}]} \left\| \int\limits_0^t e^{-A (t-s)} [I-R(\lambda)] F v(s) ds \right\|_{D(A^\alpha)}^2,
 \end{align*}
 \begin{align*}
  \mathcal I_3(\lambda) &= \mathbb E \sup_{t \in [0,T_{1,m}]} \left\| \int\limits_0^t e^{-A (t-s)} G([I-R(\lambda)]z_m(s)) d W(s) \right\|_{D(A^\alpha)}^2 \\
  &\quad + \mathbb E \sup_{t \in [0,T_{1,m}]} \left\| \int\limits_0^t e^{-A (t-s)} [I-R(\lambda)] G(R(\lambda)z_m(s)) d W(s) \right\|_{D(A^\alpha)}^2.
 \end{align*}
 By Lemma \ref{fractional}, equation (\ref{resolventcommutes}), Proposition \ref{ineqstochconv} with $k=2$ and inequalities (\ref{resolventinequality}), (\ref{truncationineq1}) and (\ref{ineqnonlinear4}), there exist constants $C_1,C_2>0$ such that for all $\lambda > 0$
 \begin{align}\label{ineq9}
  \mathcal I_1(\lambda) &\leq \left( C_1 T_{1,m}^{2-2\alpha-2\delta} + C_2 T_{1,m} \right) \mathbb E \sup_{t \in [0,T_{1,m}]} \left\| z_m(t)-z_m(t,\lambda) \right\|_{D(A^\alpha)}^2.
 \end{align}
 Similarly, there exists a constant $C^*>0$ such that for all $\lambda > 0$
 \begin{align*}
  \mathcal I_2(\lambda) &\leq C^* \; \mathbb E \sup_{t \in [0,T_{1,m}]} \left\| [I-R(\lambda)] A^\alpha z_m(t) \right\|_H^2 + C^* \; \mathbb E \sup_{t \in [0,T_{1,m}]} \left\|[I-R(\lambda)]A^{-\delta} \widetilde B(\pi_m(y_m(t)),R(\lambda)z_m(t)) \right\|_H^2 \\
  &\quad + C^*\; \mathbb E \int\limits_0^{T_{1,m}} \left\| [I-R(\lambda)] A^\beta F v(t) \right\|_H^2 dt, \\
  \mathcal I_3(\lambda) &\leq C^* \; \mathbb E \int\limits_0^{T_{1,m}} \left\| [I-R(\lambda)] z_m(t) \right\|_H^2 dt + C^* \; \mathbb E \int\limits_0^{T_{1,m}} \left\| [I-R(\lambda)] A^\alpha G(R(\lambda)z_m(t)) \right\|_{\mathcal L_{(HS)}(Q^{1/2}(H);H)}^2 dt.
 \end{align*}
 Using equation (\ref{resolventconvergence}) and Lebesgue's dominated convergence theorem, we can conclude
 \begin{equation}\label{integralconvergence}
  \lim\limits_{\lambda \rightarrow \infty} \mathcal I_2(\lambda) + \lim\limits_{\lambda \rightarrow \infty} \mathcal I_3(\lambda) = 0.
 \end{equation}
 Due to inequality (\ref{ineq8}) and inequality (\ref{ineq9}), we find for all $\lambda > 0$
 \begin{align*}
  \mathbb E \sup_{t \in [0,T_{1,m}]} \left\| z_m(t) - z_m(t,\lambda) \right\|_{D(A^\alpha)}^2 
  &\leq K_{1,m} \; \mathbb E \sup_{t \in [0,T_{1,m}]} \left\| z_m(t) - z_m(t,\lambda) \right\|_{D(A^\alpha)}^2  + 7 \; \mathcal I_2(\lambda) + 7 \; \mathcal I_3(\lambda),
 \end{align*}
 where $K_{1,m} = 7 C_1 T_{1,m}^{2-2\alpha-2\delta} + 7 C_2 T_{1,m}$.
 We chose $T_{1,m} \in (0,T]$ such that $K_{1,m}<1$.
 Then we obtain for all $\lambda > 0$
 \begin{equation*}
  \mathbb E \sup_{t \in [0,T_{1,m}]} \left\| z_m(t) - z_m(t,\lambda) \right\|_{D(A^\alpha)}^2 \leq \frac{7 \; \mathcal I_2(\lambda) + 7 \; \mathcal I_3(\lambda)}{1-K_{1,m}}.
 \end{equation*}
 By equation (\ref{integralconvergence}), we can conclude
 \begin{equation*}
  \lim\limits_{\lambda \rightarrow \infty} \mathbb E \sup_{t \in [0,T_{1,m}]} \left\| z_m(t) - z_m(t,\lambda) \right\|_{D(A^\alpha)}^2 = 0.
 \end{equation*}
 Similarly to Lemma \ref{linearmildbound}, we can conclude that the result holds for the whole time interval $[0,T]$. 
\end{proof}

\subsection{The Backward Equation}

Here we give an approximation of the mild solution to system (\ref{backwardstochnse}).
We introduce the following backward SPDE in $D(A^{1+\delta})$:
\begin{equation}\label{approxbackwardstochnse}
 \left\{
 \begin{aligned}
  d z_m^*(t,\lambda) &= - \mathds 1_{[0,\tau_m)} (t) \left[-A z_m^*(t,\lambda) - A^\alpha R(\lambda) A^\alpha B_{\delta}^*\left(y(t), R(\lambda) A^\delta z_m^*(t,\lambda)\right) \right. \\
  &\quad + R(\lambda) G^*(A^{-2\alpha}R(\lambda)\Phi_m(t,\lambda)) + A^\gamma R(\lambda) A^\gamma \left( y(t) - y_d(t)\right)\big] dt + \Phi_m(t,\lambda) d W(t), \\
  z_m^*(T,\lambda) &= 0,
 \end{aligned}
 \right.
\end{equation}
where $m \in \mathbb N$ and $\lambda > 0$.
The operators $A,R(\lambda), B_{\delta}^*,G^*$ are introduced in Section \ref{sec:functionalbackground} and Section \ref{sec:adjoint}, respectively.
The process $(y(t))_{t \in [0,\tau)}$ is the local mild solution of system (\ref{stochnse}) with stopping times $(\tau_m)_{m \in \mathbb N}$ defined by (\ref{stoppingtime}) and $y_d \in L^2([0,T];D(A^\gamma))$ is the given desired velocity field.
The process $(W(t))_{t \in [0,T]}$ is a Q-Wiener process with values in $H$ and covariance operator $Q \in \mathcal L(H)$.

\begin{definition}
 A pair of predictable processes $(z_m^*(t,\lambda),\Phi_m(t,\lambda))_{t \in [0,T]}$ with values in the product space $D(A^{1+\delta}) \times \mathcal{L}_{(HS)}(Q^{1/2}(H);H)$ is called a mild solution of system (\ref{approxbackwardstochnse}) if
 \begin{align*}
  &\mathbb E \sup\limits_{t \in [0,T]} \|z_m^*(t,\lambda)\|_{D(A^{1+\delta})}^2 < \infty, &\mathbb E \int\limits_0^T \| \Phi_m(t,\lambda)\|_{\mathcal{L}_{(HS)}(Q^{1/2}(H);H)}^2 dt < \infty
 \end{align*}
 and we have for all $t \in [0,T]$ and $\mathbb P$-a.s.
 \begin{align*}
 z_m^*(t,\lambda) =& - \int\limits_t^T \mathds 1_{[0,\tau_m)} (s) A^\alpha e^{-A (s-t)} R(\lambda) A^\alpha B_{\delta}^*\left(y(s \wedge \tau_m),R(\lambda) A^\delta z_m^*(s,\lambda)\right) ds \\
 &+ \int\limits_t^T \mathds 1_{[0,\tau_m)} (s) e^{-A (s-t)} R(\lambda) G^*(A^{-2\alpha}R(\lambda)\Phi_m(s,\lambda)) ds \\
 &+ \int\limits_t^T \mathds 1_{[0,\tau_m)} (s) A^\gamma e^{-A (s-t)} R(\lambda) A^\gamma \left( y(s \wedge \tau_m) - y_d(s)\right) ds - \int\limits_t^T e^{-A (s-t)} \Phi_m(s,\lambda) d W(s).
\end{align*}
\end{definition}

Recall that the operators $R(\lambda)$ and $A R(\lambda)$ are linear and bounded on $H$.
Hence, an existence and uniqueness result of a mild solution $(z_m^*(t,\lambda),\Phi_m(t,\lambda))_{t \in [0,T]}$ to system (\ref{approxbackwardstochnse}) can be obtained similarly to Theorem \ref{existencebackward} for fixed $m \in \mathbb N$ and fixed $\lambda > 0$.
Moreover, we get the following result.

\begin{lemma}\label{backwardvanishes2}
 Let the pair of stochastic processes $(z_m^*(t,\lambda),\Phi_m(t,\lambda))_{t \in [0,T]}$  be the mild solution of system (\ref{approxbackwardstochnse}).
 Then we have for fixed $m \in \mathbb N$ and fixed $\lambda > 0$
 \begin{equation*}
  \mathbb E \sup_{t \in [\tau_m,T]} \|z_m^*(t,\lambda) \|_{D(A^{1+\delta})}^2 = 0 \quad \text{and} \quad \mathbb E \int\limits_{\tau_m}^T \left\| \Phi_m(t,\lambda) \right\|_{\mathcal{L}_{(HS)}(Q^{1/2}(H);H)}^2 dt = 0.
 \end{equation*}
\end{lemma}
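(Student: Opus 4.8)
The plan is to exploit the factor $\mathds 1_{[0,\tau_m)}$ in front of the drift of system (\ref{approxbackwardstochnse}): on the set where $t\ge\tau_m$ this factor kills all the drift contributions, so the mild representation degenerates into a backward equation with zero drift and zero terminal datum, which is forced to vanish. Fix $t\in[0,T]$. Since on $\{\tau_m\le t\}$ we have $\mathds 1_{[0,\tau_m)}(s)=0$ for every $s\in[t,T]$, the mild representation of $z_m^*(\cdot,\lambda)$ reduces, on that event, to
\begin{equation*}
 \mathds 1_{\{\tau_m\le t\}}\,z_m^*(t,\lambda)=-\,\mathds 1_{\{\tau_m\le t\}}\int\limits_t^T e^{-A(s-t)}\Phi_m(s,\lambda)\,dW(s).
\end{equation*}

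First I would show $z_m^*(t,\lambda)=0$ $\mathbb P$-a.s. on $\{\tau_m\le t\}$ for every fixed $t$. Indeed, $\{\tau_m\le t\}\in\mathcal F_t$ and $z_m^*(t,\lambda)$ is $\mathcal F_t$-measurable by predictability, while for fixed $t$ the process $r\mapsto\int_t^r e^{-A(s-t)}\Phi_m(s,\lambda)\,dW(s)$ is a square-integrable $(\mathcal F_r)_{r\in[t,T]}$-martingale vanishing at $r=t$ (the integrand being predictable and square integrable since $\|e^{-A(s-t)}\|_{\mathcal L(H)}\le 1$ and $\Phi_m(\cdot,\lambda)$ meets the integrability requirement of a mild solution); hence taking $\mathbb E[\,\cdot\mid\mathcal F_t]$ in the displayed identity and pulling out the $\mathcal F_t$-measurable indicator yields $\mathds 1_{\{\tau_m\le t\}}z_m^*(t,\lambda)=0$. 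Next, applying the It\^o isometry to the same identity — after moving the $\mathcal F_t$-measurable indicator into the integrand — gives $\mathbb E\int_t^T\mathds 1_{\{\tau_m\le t\}}\|e^{-A(s-t)}\Phi_m(s,\lambda)\|_{\mathcal{L}_{(HS)}(Q^{1/2}(H);H)}^2\,ds=0$; since $e^{-A(s-t)}$ is injective (being generated by the positive self-adjoint operator $A$), this forces $\mathds 1_{\{\tau_m\le t\}}\Phi_m(s,\lambda)=0$ for a.e. $s\in[t,T]$, $\mathbb P$-a.s. Letting $t$ run through a countable dense subset of $[0,T)$ and approximating $\tau_m(\omega)$ from above, one concludes that $\Phi_m(s,\lambda)=0$ for a.e. $s\in(\tau_m,T]$, $\mathbb P$-a.s., i.e. $\mathbb E\int_{\tau_m}^T\|\Phi_m(t,\lambda)\|_{\mathcal{L}_{(HS)}(Q^{1/2}(H);H)}^2\,dt=0$, which is the second assertion.

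Substituting this back, the drift and the stochastic integral in the mild representation of $z_m^*(\cdot,\lambda)$ both vanish on $\{\tau_m\le t\}$, so $z_m^*(t,\lambda)=0$ $\mathbb P$-a.s. for every fixed $t\ge\tau_m$. To upgrade this to $\mathbb E\sup_{t\in[\tau_m,T]}\|z_m^*(t,\lambda)\|_{D(A^{1+\delta})}^2=0$ I would use that $z_m^*(\cdot,\lambda)$ has a modification with continuous paths in $D(A^{1+\delta})$ — the drift part of the mild formula is continuous in $t$ because $\|A^\alpha e^{-A\,\cdot}\|_{\mathcal L(H)}$ and $\|A^\gamma e^{-A\,\cdot}\|_{\mathcal L(H)}$ are integrable near $0$, and the backward stochastic convolution admits a continuous version by the factorization argument, cf. \cite{seiid} — and then evaluate the pointwise identity $z_m^*(t,\lambda)=0$ along a dense set of times, passing to the limit. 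Alternatively, the same conclusion follows from the uniqueness part of Theorem \ref{existencebackward}, since on $[\tau_m,T]$ the pair $(z_m^*(\cdot,\lambda),\Phi_m(\cdot,\lambda))$ solves a backward SPDE with vanishing data, for which $(0,0)$ is a mild solution. The extra resolvents $R(\lambda)$ in the coefficients of (\ref{approxbackwardstochnse}) play no role in any of these steps; the whole argument is the exact analogue of the proof of Corollary \ref{backwardvanishes1}.

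The main obstacle is not conceptual but bookkeeping: the endpoint $\tau_m$ of the interval on which everything should vanish is random, which forces the use of $\mathcal F_t$-measurable indicators together with a density-in-$t$ argument, and one needs both the injectivity of the semigroup (to pass from the vanishing of $e^{-A(s-t)}\Phi_m(s,\lambda)$ to that of $\Phi_m$) and a path-continuity (or uniqueness) input to turn the pointwise-in-$t$ vanishing of $z_m^*(\cdot,\lambda)$ into the uniform statement.
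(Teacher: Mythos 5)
Your proof is correct; the paper in fact states this lemma (like its counterpart, Corollary \ref{backwardvanishes1}) without any proof, so there is nothing to compare against, but your argument is clearly the intended one. The chain you describe — the indicator $\mathds 1_{[0,\tau_m)}$ annihilates the drift on $\{\tau_m\le t\}$, conditioning on $\mathcal F_t$ kills the martingale part and gives $\mathds 1_{\{\tau_m\le t\}}z_m^*(t,\lambda)=0$, the It\^o isometry together with injectivity of $e^{-A(s-t)}$ (spectral theorem for the positive self-adjoint $A$) forces $\Phi_m(\cdot,\lambda)$ to vanish beyond $\tau_m$, and a dense-set-of-times plus path-continuity argument upgrades the pointwise statement to the supremum — is sound, with the only implicit input being the path regularity of $z_m^*(\cdot,\lambda)$ that the paper already presupposes by writing $\mathbb E\sup_{t\in[0,T]}\|z_m^*(t,\lambda)\|_{D(A^{1+\delta})}^2<\infty$ in the definition of a mild solution.
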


The following lemma provides a strong formulation of the mild solution to system (\ref{approxbackwardstochnse}), which is an immediate consequence of \cite[Theorem 3.4 and Theorem 4.1]{smaw}.

\begin{lemma}\label{backwardstrong}
 Let the pair of stochastic processes $(z_m^*(t,\lambda),\Phi_m(t,\lambda))_{t \in [0,T]}$ be the mild solution of system (\ref{approxbackwardstochnse}).
 Then we have for fixed $m \in \mathbb N$, fixed $\lambda > 0$, all $t \in [0,T]$ and $\mathbb P$-a.s.
 \begin{align*}
  z_m^*(t,\lambda) =& - \int\limits_t^T \mathds 1_{[0,\tau_m)} (s) \left[ A z_m^*(s,\lambda) + A^\alpha R(\lambda) A^\alpha B_{\delta}^*\left(y(s \wedge \tau_m),R(\lambda) A^\delta z_m^*(s,\lambda)\right) \right] ds \\
  &+ \int\limits_t^T \mathds 1_{[0,\tau_m)} (s) R(\lambda) G^*(A^{-2\alpha}R(\lambda)\Phi_m(s,\lambda)) ds + \int\limits_t^T \mathds 1_{[0,\tau_m)} (s) A^\gamma R(\lambda) A^\gamma \left( y(s \wedge \tau_m) - y_d(s)\right) ds \\
  &- \int\limits_t^T \Phi_m(s,\lambda) d W(s).
 \end{align*}
\end{lemma}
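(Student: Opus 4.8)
The plan is to deduce the claim from the general equivalence between mild and strict (strong) solutions of backward stochastic evolution equations established in \cite[Theorem 3.4 and Theorem 4.1]{smaw}, so that the only thing to do is to verify that the mild solution of system (\ref{approxbackwardstochnse}) meets the regularity hypotheses of those results. First I would record the regularity that is built into the definition of the mild solution and that follows from the existence result (obtained as in Theorem \ref{existencebackward}): the pair $(z_m^*(t,\lambda),\Phi_m(t,\lambda))_{t\in[0,T]}$ satisfies $\mathbb E \sup_{t\in[0,T]} \|z_m^*(t,\lambda)\|_{D(A^{1+\delta})}^2 < \infty$ and $\mathbb E \int_0^T \|\Phi_m(t,\lambda)\|_{\mathcal{L}_{(HS)}(Q^{1/2}(H);H)}^2 dt < \infty$. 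Since $D(A^{1+\delta})\subset D(A)$, the process $A z_m^*(\cdot,\lambda)$ is well defined, predictable, takes values in $D(A^\delta)$, and satisfies $\mathbb E \sup_{t\in[0,T]} \|A z_m^*(t,\lambda)\|_{D(A^\delta)}^2 < \infty$. This gain of one derivative is exactly what the regularization through the resolvent operator $R(\lambda)$ was designed to produce, and it is what gives a meaning to the term $A z_m^*(s,\lambda)$ appearing in the asserted strong formulation.

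Next I would verify that the drift of system (\ref{approxbackwardstochnse}) is a predictable, square-integrable coefficient to which the generator $-A$ may be applied once the semigroup has been removed. Using that $R(\lambda)$ maps $H$ into $D(A)$ with $A R(\lambda)$ bounded and $\|R(\lambda)\|_{\mathcal L(H)}\le 1$ (see Section \ref{sec:functionalbackground} and inequality (\ref{resolventinequality})), that $R(\lambda)$ commutes with fractional powers of $A$ by (\ref{resolventcommutes}), the boundedness of $A^\alpha B_{\delta}^*(y,\cdot)\colon H\to H$ and of $G^*$ (see the remark following (\ref{adjoint1}) and (\ref{adjoint2})), the bound $\|y(s\wedge\tau_m)\|_{D(A^\alpha)}\le m$ coming from (\ref{stoppingtime}), and $y_d\in L^2([0,T];D(A^\gamma))$, one checks that each of
\begin{equation*}
 \mathds 1_{[0,\tau_m)}(s)\,A^\alpha R(\lambda)A^\alpha B_{\delta}^*\!\left(y(s\wedge\tau_m),R(\lambda)A^\delta z_m^*(s,\lambda)\right),\qquad \mathds 1_{[0,\tau_m)}(s)\,R(\lambda)G^*\!\left(A^{-2\alpha}R(\lambda)\Phi_m(s,\lambda)\right),
\end{equation*}
\begin{equation*}
 \mathds 1_{[0,\tau_m)}(s)\,A^\gamma R(\lambda)A^\gamma\!\left(y(s\wedge\tau_m)-y_d(s)\right)
\end{equation*}
is a predictable process with values in $D(A^\delta)$ and with finite expected square-integral over $[0,T]$, while $\Phi_m(\cdot,\lambda)$ already satisfies the required Hilbert-Schmidt square-integrability. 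Since the solution vanishes on $[\tau_m,T]$ by Lemma \ref{backwardvanishes2}, one may equivalently work on $[0,\tau_m)$, so that the indicator $\mathds 1_{[0,\tau_m)}$ is absorbed into the data and the equation has exactly the form treated in \cite{smaw}.

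With these facts at hand, \cite[Theorem 3.4 and Theorem 4.1]{smaw}, applied to the (stopped) backward equation (\ref{approxbackwardstochnse}) with generator $-A$ and the drift and diffusion coefficients just identified, yields that the mild solution of system (\ref{approxbackwardstochnse}) is in fact a strict solution: the identity in the definition of the mild solution stays valid after replacing each semigroup factor $e^{-A(s-t)}$ in the three drift integrals and in the stochastic integral by the identity, provided the extra term $-\int_t^T \mathds 1_{[0,\tau_m)}(s)\,A z_m^*(s,\lambda)\,ds$ is added. The mechanism is the elementary identity $e^{-A(s-t)}x = x - \int_t^s A e^{-A(r-t)}x\,dr$ valid for $x\in D(A)$, together with a stochastic Fubini argument interchanging the $dr$-, $ds$- and $dW$-integrations, and with the commutation relations of Lemma \ref{fractional} used to regroup the fractional powers exactly as they appear in system (\ref{approxbackwardstochnse}); all of this is carried out in \cite{smaw}. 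I expect the only genuine point — and hence the main, though minor, obstacle — to be the verification of the $D(A)$-regularity of $z_m^*(\cdot,\lambda)$ with integrable $A z_m^*(\cdot,\lambda)$ and of the membership of the coefficients in the correct domains of fractional powers, everything else being routine bookkeeping. Collecting the resulting terms gives precisely the formula asserted in Lemma \ref{backwardstrong}.
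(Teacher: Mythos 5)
Your proposal takes essentially the same route as the paper: the paper obtains Lemma \ref{backwardstrong} directly as an immediate consequence of \cite[Theorem 3.4 and Theorem 4.1]{smaw}, and your argument merely makes explicit the (routine) verification of the $D(A)$-regularity of $z_m^*(\cdot,\lambda)$ and the predictability and square-integrability of the coefficients that the paper leaves implicit. The approach and conclusion are correct.
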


We get the following convergence results.

\begin{lemma}\label{convergencebackward}
 Let $(z_m^*(t),\Phi_m(t))_{t \in [0,T]}$ and $(z_m^*(t,\lambda),\Phi_m(t,\lambda))_{t \in [0,T]}$ be the mild solutions of system (\ref{backwardstochnse}) and system (\ref{approxbackwardstochnse}), respectively.
 Then we have for fixed $m \in \mathbb N$
 \begin{align*}
  &\lim_{\lambda \rightarrow \infty} \mathbb E \sup\limits_{t \in [0,T]} \|z_m^*(t) - z_m^*(t,\lambda)\|_{D(A^\delta)}^2 = 0,
  &\lim_{\lambda \rightarrow \infty} \mathbb E \int\limits_0^T \|\Phi_m(t) - \Phi_m(t,\lambda)\|_{\mathcal{L}_{(HS)}(Q^{1/2}(H);H)}^2 dt = 0.
 \end{align*}
\end{lemma}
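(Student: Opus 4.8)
The plan is to mimic the argument of Lemma~\ref{convergencelinear}, but working backward in time and replacing the maximal inequality of Proposition~\ref{ineqstochconv} by the a priori estimates of Lemma~\ref{backward1} and Corollary~\ref{backward2}. Set $e_m(t) = z_m^*(t) - z_m^*(t,\lambda)$ and $\psi_m(t) = \Phi_m(t) - \Phi_m(t,\lambda)$. Subtracting the mild formulation of system~(\ref{approxbackwardstochnse}) from equation~(\ref{mildbackward}) and inserting and subtracting the operator $R(\lambda)$ in each drift term, one splits the drift of $e_m$ into two groups. The first group collects the \emph{contraction terms}, namely
\begin{equation*}
 \mathds 1_{[0,\tau_m)}(s)\, A^\alpha e^{-A(s-t)} R(\lambda) A^\alpha B_{\delta}^*\!\left(y(s\wedge\tau_m), R(\lambda) A^\delta e_m(s)\right), \quad \mathds 1_{[0,\tau_m)}(s)\, e^{-A(s-t)} R(\lambda) G^*\!\left(A^{-2\alpha} R(\lambda)\psi_m(s)\right),
\end{equation*}
together with the stochastic convolution $-\int_t^T e^{-A(s-t)}\psi_m(s)\,dW(s)$. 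The second group collects the \emph{discretization errors}, i.e.\ those terms in which $I-R(\lambda)$ acts on one of the \emph{fixed} processes $A^\alpha B_{\delta}^*(y(s\wedge\tau_m), A^\delta z_m^*(s))$, $G^*(A^{-2\alpha}\Phi_m(s))$ and $A^\gamma(y(s\wedge\tau_m) - y_d(s))$; here one also uses the linearity of $B_{\delta}^*(y,\cdot)$ and $G^*$ together with equation~(\ref{resolventcommutes}) to move $I-R(\lambda)$ onto $A^\delta z_m^*(s)$ and $A^{-2\alpha}\Phi_m(s)$. Thus the pair $(e_m,\psi_m)$ solves a backward equation of precisely the form treated in Lemma~\ref{backward1} and Corollary~\ref{backward2}, whose source term is the sum of the contraction terms and a remainder not depending on $e_m$ or $\psi_m$.

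Next I would fix a terminal subinterval $[T_{1,m},T]$ and apply the inequalities~(\ref{backwardinequality1})--(\ref{backwardinequality4}) term by term, exactly as in the proof of Theorem~\ref{existencebackward}. Using $\|R(\lambda)\|_{\mathcal{L}(H)}\le 1$ and the boundedness of the operators $A^\alpha B_{\delta}^*(y(s\wedge\tau_m),\cdot)\colon H\to H$ and $G^*$, the contraction terms produce a factor
\begin{equation*}
 K_m = \max\!\left\{C_1^*(T-T_{1,m})^{2-2\alpha-2\delta} + C_3^*(T-T_{1,m})^{2-2\alpha},\ C_2^*(T-T_{1,m})^{1-2\delta} + C_4^*(T-T_{1,m})\right\}
\end{equation*}
in front of $\mathbb E\sup_{t\in[T_{1,m},T]}\|e_m(t)\|_{D(A^\delta)}^2 + \mathbb E\int_{T_{1,m}}^T\|\psi_m(t)\|_{\mathcal{L}_{(HS)}(Q^{1/2}(H);H)}^2\,dt$, which is the same constant appearing in Theorem~\ref{existencebackward}, while the discretization errors contribute a quantity $\mathcal E_m(\lambda)$ independent of $e_m$ and $\psi_m$. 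Choosing $T-T_{1,m}$ so small that $K_m<1$ and absorbing the contraction part yields
\begin{equation*}
 \mathbb E\sup_{t\in[T_{1,m},T]}\|e_m(t)\|_{D(A^\delta)}^2 + \mathbb E\int\limits_{T_{1,m}}^T\|\psi_m(t)\|_{\mathcal{L}_{(HS)}(Q^{1/2}(H);H)}^2\,dt \le \frac{c\,\mathcal E_m(\lambda)}{1-K_m}.
\end{equation*}

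It then remains to show $\lim_{\lambda\to\infty}\mathcal E_m(\lambda)=0$. Every summand of $\mathcal E_m(\lambda)$ is, up to bounded operators commuting with $R(\lambda)$, of the form $\mathbb E\int_{T_{1,m}}^T\|(I-R(\lambda))g(s)\|^2\,ds$, where $g$ ranges over the $H$-valued processes $A^\delta z_m^*(\cdot)$, $A^\gamma(y(\cdot\wedge\tau_m)-y_d(\cdot))$, $A^\alpha B_{\delta}^*(y(\cdot\wedge\tau_m),A^\delta z_m^*(\cdot))$, $G^*(A^{-2\alpha}\Phi_m(\cdot))$, and the Hilbert-Schmidt-valued process $A^{-2\alpha}\Phi_m(\cdot)$. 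For the $H$-valued integrands this follows from equation~(\ref{resolventconvergence}), the bounds $\mathbb E\sup_{t\in[0,T]}\|z_m^*(t)\|_{D(A^\delta)}^2<\infty$, $\mathbb E\int_0^T\|\Phi_m(t)\|_{\mathcal{L}_{(HS)}(Q^{1/2}(H);H)}^2\,dt<\infty$, $\|y(s\wedge\tau_m)\|_{D(A^\alpha)}\le m$ and $y_d\in L^2([0,T];D(A^\gamma))$, the parameter restrictions of Theorem~\ref{existencebackward}, and Lebesgue's dominated convergence theorem; for the Hilbert-Schmidt-valued integrand one additionally uses the standard fact that $(I-R(\lambda))\Psi\to 0$ in $\mathcal{L}_{(HS)}(Q^{1/2}(H);H)$-norm for every fixed Hilbert-Schmidt operator $\Psi$, which reduces to~(\ref{resolventconvergence}) applied to the images of an orthonormal system plus dominated convergence for the resulting series. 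Finally, transporting the estimate to the left exactly as in Theorem~\ref{existencebackward} --- using the already controlled value $e_m(T_{1,m})$ as terminal datum, rerunning the contraction on $[T_{2,m},T_{1,m}]$, and so on --- yields the claimed convergence on all of $[0,T]$ after finitely many steps. The main obstacle is the coupled bookkeeping: the $D(A^\delta)$-estimate for $z_m^*$ and the $\mathcal{L}_{(HS)}$-estimate for $\Phi_m$ feed into one another through the stochastic convolution and the $G^*$-term, so the contraction has to be run on the sum of the two quantities and both powers of $T-T_{1,m}$ occurring in $K_m$ must be kept below one simultaneously; verifying that the discretization error genuinely vanishes in the Hilbert-Schmidt norm is the second delicate point.
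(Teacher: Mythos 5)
Your proposal follows essentially the same route as the paper's proof: the same insertion of $R(\lambda)$ to split the drift into a contraction part acting on the differences and a discretization error where $I-R(\lambda)$ hits the fixed processes, the same application of Lemma \ref{backward1} and Corollary \ref{backward2} on a terminal subinterval with the coupled absorption via $K_m<1$, the same use of (\ref{resolventconvergence}) with dominated convergence for the error terms, and the same leftward iteration. Your explicit remark on verifying $(I-R(\lambda))\Phi_m \to 0$ in the Hilbert--Schmidt norm via an orthonormal expansion is a point the paper passes over silently, but it does not change the argument.
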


\begin{proof}
 Let $I$ be the identity operator on $H$.
 By definition, we have for all $\lambda > 0$, all $t \in [0,T]$ and $\mathbb P$-a.s.
 \begin{align*}
  &z_m^*(t) - z_m^*(t,\lambda) \\
  &= - \int\limits_t^T \mathds 1_{[0,\tau_m)} (s) A^\alpha e^{-A (s-t)} [A^\alpha B_{\delta}^*\left(y(s \wedge \tau_m),A^\delta z_m^*(s)\right)-R(\lambda) A^\alpha B_{\delta}^*\left(y(s \wedge \tau_m),R(\lambda) A^\delta z_m^*(s,\lambda)\right)] ds \\
  &\quad + \int\limits_t^T \mathds 1_{[0,\tau_m)} (s) e^{-A (s-t)} [G^*(A^{-2\alpha}\Phi_m(s))-R(\lambda) G^*(A^{-2\alpha}R(\lambda)\Phi_m(s,\lambda))] ds \\
  &\quad + \int\limits_t^T \mathds 1_{[0,\tau_m)} (s) A^\gamma e^{-A (s-t)} [I-R(\lambda)] A^\gamma \left( y(s \wedge \tau_m) - y_d(s)\right) ds - \int\limits_t^T e^{-A (s-t)} [\Phi_m(s)-\Phi_m(s,\lambda)] d W(s).
 \end{align*}
 Recall that the operators $A^\alpha B_{\delta}^*\left(y(t),\cdot\right) \colon H \rightarrow H$ for $t \in [0,\tau_m)$ and $G^* \colon \mathcal{L}_{(HS)}(Q^{1/2}(H);D(A^\alpha)) \rightarrow H$ are linear and bounded.
 Hence, we find for all $\lambda > 0$, all $t \in [0,T]$ and $\mathbb P$-a.s.
 \begin{align*}
  &z_m^*(t) - z_m^*(t,\lambda) \\
  &= - \int\limits_t^T \mathds 1_{[0,\tau_m)} (s) A^\alpha e^{-A (s-t)} A^\alpha B_{\delta}^*\left(y(s \wedge \tau_m),[I-R(\lambda)] A^\delta z_m^*(s)\right) ds \\
  &\quad - \int\limits_t^T \mathds 1_{[0,\tau_m)} (s) A^\alpha e^{-A (s-t)} [I-R(\lambda)] A^\alpha B_{\delta}^*\left(y(s \wedge \tau_m),R(\lambda) A^\delta z_m^*(s)\right) ds \\
  &\quad - \int\limits_t^T \mathds 1_{[0,\tau_m)} (s) A^\alpha e^{-A (s-t)} R(\lambda) A^\alpha B_{\delta}^*\left(y(s \wedge \tau_m),R(\lambda) A^\delta [z_m^*(s)-z_m^*(s,\lambda)]\right) ds \\
  &\quad + \int\limits_t^T \mathds 1_{[0,\tau_m)} (s) e^{-A (s-t)} G^*(A^{-2\alpha}[I-R(\lambda)]\Phi_m(s)) ds \\
  &\quad + \int\limits_t^T \mathds 1_{[0,\tau_m)} (s) e^{-A (s-t)} [I-R(\lambda)] G^*(A^{-2\alpha}R(\lambda)\Phi_m(s)) ds \\
  &\quad + \int\limits_t^T \mathds 1_{[0,\tau_m)} (s) e^{-A (s-t)} R(\lambda) G^*(A^{-2\alpha}R(\lambda)[\Phi_m(s)-\Phi_m(s,\lambda)]) ds \\
  &\quad + \int\limits_t^T \mathds 1_{[0,\tau_m)} (s) A^\gamma e^{-A (s-t)} [I-R(\lambda)] A^\gamma \left( y(s \wedge \tau_m) - y_d(s)\right) ds - \int\limits_t^T e^{-A (s-t)} [\Phi_m(s)-\Phi_m(s,\lambda)] d W(s).
 \end{align*}
 Note that each integrand of the Bochner integrals on the right hand side satisfies the assumptions of Lemma \ref{backward1} and Corollary \ref{backward2}, respectively.
 Let $T_{1,m} \in [0,T)$.
 Using inequality (\ref{backwardinequality1}) and inequality (\ref{backwardinequality3}), we get for all $\lambda > 0$
 \begin{equation}\label{ineq3}
  \mathbb E \sup_{t \in [T_{1,m},T]} \left\| z_m^*(t)-z_m^*(t,\lambda) \right\|_{D(A^\delta)}^2 \leq 7 \; \mathcal I_1(\lambda) + 7 \; \mathcal I_2(\lambda) + 7 \; \mathcal I_3(\lambda),
 \end{equation}
 where
 \begin{align*}
  \mathcal I_1(\lambda) &= \hat c (T-T_{1,m})^{2-2\alpha-2 \delta}\, \mathbb E \sup_{t \in [T_{1,m},T]} \left[\mathds 1_{[0,\tau_m)} (t)\left\|R(\lambda) A^\alpha B_{\delta}^*\left(y(t \wedge \tau_m),R(\lambda) A^\delta [z_m^*(t)-z_m^*(t,\lambda)]\right) \right\|_H^2 \right] \\
  &\quad + c^* (T-T_{1,m})^{1-2 \delta}\, \mathbb E \int\limits_{T{1,m}}^T \mathds 1_{[0,\tau_m)} (t) \left\| R(\lambda) G^*(A^{-2\alpha}R(\lambda)[\Phi_m(t)-\Phi_m(t,\lambda)]) \right\|_H^2 dt, \\
  \mathcal I_2(\lambda) &= \hat c (T-T_{1,m})^{2-2\alpha-2 \delta}\, \mathbb E \sup_{t \in [T_{1,m},T]} \left[\mathds 1_{[0,\tau_m)} (t)\left\| A^\alpha B_{\delta}^*\left(y(t \wedge \tau_m),[I-R(\lambda)] A^\delta z_m^*(t)\right) \right\|_H^2 \right] \\
  &\quad + \hat c (T-T_{1,m})^{2-2\alpha-2 \delta}\, \mathbb E \sup_{t \in [T_{1,m},T]} \left[\mathds 1_{[0,\tau_m)} (t)\left\| [I-R(\lambda)] A^\alpha B_{\delta}^*\left(y(t \wedge \tau_m),R(\lambda) A^\delta z_m^*(t)\right) \right\|_H^2 \right], \\
  \mathcal I_3(\lambda) &= c^* (T-T_{1,m})^{1-2 \delta}\, \mathbb E \int\limits_{T{1,m}}^T \mathds 1_{[0,\tau_m)} (t) \left\| G^*(A^{-2\alpha}[I-R(\lambda)]\Phi_m(t)) \right\|_H^2 dt \\
  &\quad + c^* (T-T_{1,m})^{1-2 \delta}\, \mathbb E \int\limits_{T{1,m}}^T \mathds 1_{[0,\tau_m)} (t) \left\| [I-R(\lambda)] G^*(A^{-2\alpha}R(\lambda)\Phi_m(t)) \right\|_H^2 dt \\
  &\quad + c^* (T-T_{1,m})^{1-2\gamma-2\delta}\, \mathbb E \int\limits_{T{1,m}}^T \mathds 1_{[0,\tau_m)} (t) \left\| [I-R(\lambda)] A^\gamma \left( y(t \wedge \tau_m) - y_d(t)\right) \right\|_H^2 dt.
 \end{align*}
 By inequality (\ref{resolventinequality}), there exist constants $C_1,C_2>0$ such that for all $\lambda > 0$
 \begin{align} \label{ineq4}
  \mathcal I_1(\lambda) 
  &\leq C_1 (T-T_{1,m})^{2-2\alpha-2\delta} \; \mathbb E \sup_{t \in [T_{1,m},T]} \left\| z_m^*(t)-z_m^*(t,\lambda) \right\|_{D(A^\delta)}^2 \nonumber \\
  &\quad + C_2 (T-T_{1,m})^{1-2\delta} \; \mathbb E \int\limits_{T_{1,m}}^T \left\| \Phi_m(t)-\Phi_m(t,\lambda) \right\|_{\mathcal{L}_{(HS)}(Q^{1/2}(H);H)}^2 dt.
 \end{align}
 Moreover, there exists a constant $C^*>0$ such that for all $\lambda > 0$
 \begin{align*}
  \mathcal I_2(\lambda) &\leq C^*\, \mathbb E \sup_{t \in [T_{1,m},T]} \left\| [I-R(\lambda)] A^\delta z_m^*(t) \right\|_H^2 \\
  &\quad + C^*\, \mathbb E \sup_{t \in [T_{1,m},T]} \left[\mathds 1_{[0,\tau_m)} (t)\left\| [I-R(\lambda)] A^\alpha B_{\delta}^*\left(y(t \wedge \tau_m),R(\lambda) A^\delta z_m^*(t)\right) \right\|_H^2 \right], \\
  \mathcal I_3(\lambda) &\leq C^*\, \mathbb E \int\limits_{T{1,m}}^T \left\| [I-R(\lambda)]\Phi_m(t)) \right\|_H^2 dt + C^*\, \mathbb E \int\limits_{T{1,m}}^T \left\| [I-R(\lambda)] G^*(A^{-2\alpha}R(\lambda)\Phi_m(t)) \right\|_H^2 dt \\
  &\quad + C^*\, \mathbb E \int\limits_{T{1,m}}^T \mathds 1_{[0,\tau_m)} (t) \left\| [I-R(\lambda)] A^\gamma \left( y(t \wedge \tau_m) - y_d(t)\right) \right\|_H^2 dt.
 \end{align*}
 Using equation (\ref{resolventconvergence}) and Lebesgue's dominated convergence theorem, we can conclude
 \begin{equation}\label{conv1}
  \lim\limits_{\lambda \rightarrow \infty} \mathcal I_2(\lambda) + \lim\limits_{\lambda \rightarrow \infty} \mathcal I_3(\lambda) = 0.
 \end{equation}
 Due to inequality (\ref{backwardinequality2}) and inequality (\ref{backwardinequality4}), we get for all $\lambda > 0$
 \begin{equation} \label{ineq5}
  \mathbb E \int\limits_{T_{1,m}}^T \left\| \Phi_m(t)-\Phi_m(t,\lambda) \right\|_{\mathcal{L}_{(HS)}(Q^{1/2}(H);H)}^2 dt \leq 7 \; \mathcal I_4(\lambda) + 7 \; \mathcal I_5(\lambda) + 7 \; \mathcal I_6(\lambda),
 \end{equation}
 where
 \begin{align*}
  \mathcal I_4(\lambda) &= \hat c (T-T_{1,m})^{2-2\alpha} \; \mathbb E \sup_{t \in [T_{1,m},T]} \left[ \mathds 1_{[0,\tau_m)} (t) \left \| R(\lambda) A^\alpha B_{\delta}^*\left(y(t \wedge \tau_m),R(\lambda) A^\delta [z_m^*(t)-z_m^*(t,\lambda)]\right) \right\|_H^2 \right] \\
  &\quad + c^* (T-T_{1,m})\, \mathbb E \int\limits_{T_{1,m}}^T \mathds 1_{[0,\tau_m)} (t) \| R(\lambda) G^*(A^{-2\alpha}R(\lambda)[\Phi_m(t)-\Phi_m(t,\lambda)]) \|_H^2 dt, \\
  \mathcal I_5(\lambda) &= \hat c (T-T_{1,m})^{2-2\alpha}\, \mathbb E \sup_{t \in [T_{1,m},T]} \left[\mathds 1_{[0,\tau_m)} (t)\left\| A^\alpha B_{\delta}^*\left(y(t \wedge \tau_m),[I-R(\lambda)] A^\delta z_m^*(t)\right) \right\|_H^2 \right] \\
  &\quad + \hat c (T-T_{1,m})^{2-2\alpha}\, \mathbb E \sup_{t \in [T_{1,m},T]} \left[\mathds 1_{[0,\tau_m)} (t)\left\| [I-R(\lambda)] A^\alpha B_{\delta}^*\left(y(t \wedge \tau_m),R(\lambda) A^\delta z_m^*(t)\right) \right\|_H^2 \right], \\
  \mathcal I_6(\lambda) &= c^* (T-T_{1,m})\, \mathbb E \int\limits_{T{1,m}}^T \mathds 1_{[0,\tau_m)} (t) \left\| G^*(A^{-2\alpha}[I-R(\lambda)]\Phi_m(t)) \right\|_H^2 dt \\
  &\quad + c^* (T-T_{1,m}) \, \mathbb E \int\limits_{T{1,m}}^T \mathds 1_{[0,\tau_m)} (t) \left\| [I-R(\lambda)] G^*(A^{-2\alpha}R(\lambda)\Phi_m(t)) \right\|_H^2 dt \\
  &\quad + c^* (T-T_{1,m})^{1-2\gamma}\, \mathbb E \int\limits_{T{1,m}}^T \mathds 1_{[0,\tau_m)} (t) \left\| [I-R(\lambda)] A^\gamma \left( y(t \wedge \tau_m) - y_d(t)\right) \right\|_H^2 dt.
 \end{align*}
 Again, there exist constants $C_1,C_2>0$ such that for all $\lambda > 0$
 \begin{align}\label{ineq6}
  \mathcal I_4(\lambda) &\leq C_1 (T-T_{1,m})^{2-2\alpha} \; \mathbb E \sup_{t \in [T_{1,m},T]} \left\| z_m^*(t)-z_m^*(t,\lambda) \right\|_{D(A^\delta)}^2 \nonumber \\
  &\quad + C_2 (T-T_{1,m}) \; \mathbb E \int\limits_{T_{1,m}}^T \left\| \Phi_m(t)-\Phi_m(t,\lambda) \right\|_{\mathcal{L}_{(HS)}(Q^{1/2}(H);H)}^2 dt.
 \end{align}
 Similarly to equation (\ref{conv1}), we get
 \begin{equation}\label{conv2}
  \lim\limits_{\lambda \rightarrow \infty} \mathcal I_5(\lambda) + \lim\limits_{\lambda \rightarrow \infty} \mathcal I_6(\lambda) = 0.
 \end{equation}
 By inequalities (\ref{ineq3}), (\ref{ineq4}), (\ref{ineq5}) and (\ref{ineq6}), we have for all $\lambda > 0$
 \begin{align*}
  &\mathbb E \sup_{t \in [T_{1,m},T]} \left\| z_m^*(t)-z_m^*(t,\lambda) \right\|_{D(A^\delta)}^2 + \mathbb E \int\limits_{T_{1,m}}^T \left\| \Phi_m(t)-\Phi_m(t,\lambda) \right\|_{\mathcal{L}_{(HS)}(Q^{1/2}(H);H)}^2 dt \\
  &\leq K_{1,m} \left( \mathbb E \sup_{t \in [T_{1,m},T]} \left\| z_m^*(t)-z_m^*(t,\lambda) \right\|_{D(A^\delta)}^2 + \mathbb E \int\limits_{T_{1,m}}^T \left\| \Phi_m(t)-\Phi_m(t,\lambda) \right\|_{\mathcal{L}_{(HS)}(Q^{1/2}(H);H)}^2 dt \right) \\
  &\quad + 7 \, \mathcal I_2(\lambda) + 7 \, \mathcal I_3(\lambda) + 7 \, \mathcal I_5(\lambda) + 7 \, \mathcal I_6(\lambda),
 \end{align*}
 where $K_{1,m} = \max \left\{ C_1 (T-T_{1,m})^{2-2\alpha-2\delta} + C_1 (T-T_{1,m})^{2-2\alpha}, C_2 (T-T_{1,m})^{1-2\delta} + C_2 (T-T_{1,m}) \right\}$.
 We chose the point of time $T_{1,m} \in [0,T)$ such that $K_{1,m}<1$.
 Thus, we get for all $\lambda > 0$
 \begin{align*}
  &\mathbb E \sup_{t \in [T_{1,m},T]} \left\| z_m^*(t)-z_m^*(t,\lambda) \right\|_{D(A^\delta)}^2 + \mathbb E \int\limits_{T_{1,m}}^T \left\| \Phi_m(t)-\Phi_m(t,\lambda) \right\|_{\mathcal{L}_{(HS)}(Q^{1/2}(H);H)}^2 dt \\
  &\leq \frac{7 \, \mathcal I_2(\lambda) + 7 \, \mathcal I_3(\lambda) + 7 \, \mathcal I_5(\lambda) + 7 \, \mathcal I_6(\lambda)}{1-K_{1,m}}.
 \end{align*}
 Due to equation (\ref{conv1}) and equation (\ref{conv2}), we can conclude
 \begin{align*}
  &\lim\limits_{\lambda \rightarrow \infty} \mathbb E \sup_{t \in [T_{1,m},T]} \left\| z_m^*(t)-z_m^*(t,\lambda) \right\|_{D(A^\delta)}^2 = 0, &\lim\limits_{\lambda \rightarrow \infty} \mathbb E \int\limits_{T_{1,m}}^T \left\| \Phi_m(t)-\Phi_m(t,\lambda) \right\|_{\mathcal{L}_{(HS)}(Q^{1/2}(H);H)}^2 dt = 0.
 \end{align*}
 Similarly to Lemma \ref{linearmildbound}, we can conclude that the result holds for the whole time interval $[0,T]$. 
\end{proof}

\section{Design of the Optimal Controls}

Based on the results provided in the previous sections, we are able to show a duality principle, which gives us a relation between the local mild solution of system (\ref{linearstochnse}) and the mild solution of system (\ref{backwardstochnse}).
Note that the local mild solution of system (\ref{truncatedstochnse}) depends on the control $u \in L^2_{\mathcal F}(\Omega;L^2([0,T];D(A^\beta)))$.
Hence, the mild solution of system (\ref{backwardstochnse}) depends on the control $u \in L^2_{\mathcal F}(\Omega;L^2([0,T];D(A^\beta)))$ as well.
Let us denote this mild solution by $(z_m^*(t;u),\Phi_m(t;u))_{t \in [0,T]}$.

\begin{theorem}\label{duality}
 Assume that the processes $(y(t;u))_{t \in [0,\tau^u)}$ and $(z(t;u,v))_{t \in [0,\tau^u)}$ are the local mild solutions of system (\ref{stochnse}) and system (\ref{linearstochnse}) corresponding to the controls $u,v \in L^2_{\mathcal F}(\Omega;L^2([0,T];D(A^\beta)))$, respectively.
 Moreover, let the pair $(z_m^*(t;u),\Phi_m(t;u))_{t \in [0,T]}$ be the mild solution of system (\ref{backwardstochnse}) corresponding to the control $u \in L^2_{\mathcal F}(\Omega;L^2([0,T];D(A^\beta)))$.
 Then we have for fixed $m \in \mathbb N$
 \begin{equation}\label{dualityequation}
  \mathbb E \int \limits_0^{\tau_m^u} \left\langle A^\gamma (y(t;u)-y_d(t)), A^\gamma z(t;u,v) \right\rangle_H dt = \mathbb E \int \limits_0^{\tau_m^u} \left\langle z_m^*(t;u), F v(t) \right\rangle_H dt.
 \end{equation}
\end{theorem}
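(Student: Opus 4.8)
The plan is to prove the identity first for the strong (variational) reformulations of the approximating systems (\ref{approxtruncatedlinearstochnse}) and (\ref{approxbackwardstochnse}), where an Itô product formula is available, and then to recover (\ref{dualityequation}) by letting the resolvent parameter $\lambda \to \infty$. Throughout, $m \in \mathbb N$ and $u,v$ are fixed, and I write $(z_m(t))_{t \in [0,T]}$ and $(z_m^*(t),\Phi_m(t))_{t \in [0,T]}$ for the mild solutions of (\ref{truncatedlinearstochnse}) and (\ref{backwardstochnse}), recalling that $z(t;u,v) = z_m(t)$ and $y(t;u) = y_m(t)$ with $\pi_m(y_m(t)) = y_m(t)$ for $t \in [0,\tau_m^u)$.

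First I would fix $\lambda > 0$ and apply the product formula, Lemma \ref{productformula}, in $\mathcal H = H$ to the processes $(z_m(t,\lambda))_{t \in [0,T]}$ and $(z_m^*(t,\lambda))_{t \in [0,T]}$, written in their strong forms from Lemma \ref{linearstrong} and Lemma \ref{backwardstrong}. The required integrability holds because these processes take values in $D(A^{1+\alpha})$ and $D(A^{1+\delta})$ with finite second moments of the suprema (so the drift terms, including $A z_m(\cdot,\lambda)$ and $A z_m^*(\cdot,\lambda)$, are integrable) and $\Phi_m(\cdot,\lambda) \in \mathcal{L}_{(HS)}(Q^{1/2}(H);H)$ is square integrable. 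Evaluating at $t = T$, using $z_m(0,\lambda) = 0$ and $z_m^*(T,\lambda) = 0$, and taking expectations so the stochastic integrals drop out, one is left with
\begin{equation*}
 0 = \mathbb E \int_0^T \Big[ \langle z_m(s,\lambda), f_2(s)\rangle_H + \langle z_m^*(s,\lambda), f_1(s)\rangle_H + \langle \Phi_1(s),\Phi_2(s)\rangle_{\mathcal{L}_{(HS)}(Q^{1/2}(H);H)} \Big]\, ds,
\end{equation*}
where $f_1,\Phi_1$ and $f_2,\Phi_2$ are the drift and diffusion coefficients of $z_m(\cdot,\lambda)$ and $z_m^*(\cdot,\lambda)$, respectively.

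The heart of the argument is to show that this identity collapses term by term. Since Corollary \ref{backwardvanishes1} and Lemma \ref{backwardvanishes2} give $z_m^*(t,\lambda) = 0$ and $\Phi_m(t,\lambda) = 0$ for $t \ge \tau_m^u$, every inner product containing $z_m^*(\cdot,\lambda)$ or $\Phi_m(\cdot,\lambda)$ automatically carries the factor $\mathds 1_{[0,\tau_m^u)}$, and on $[0,\tau_m^u)$ one has $\pi_m(y_m(s)) = y(s;u) = y(s \wedge \tau_m^u)$. Then: the two Stokes-operator contributions $-\langle z_m^*(s,\lambda), A z_m(s,\lambda)\rangle_H$ and $+\langle z_m(s,\lambda), A z_m^*(s,\lambda)\rangle_H$ cancel by self-adjointness of $A$ (Lemma \ref{fractionalselfadjoint}); moving the self-adjoint, commuting operators $A^\delta R(\lambda)$ and $A^\alpha R(\lambda)$ across the inner products (using (\ref{resolventcommutes})) and invoking the defining relation (\ref{adjoint1}) of $B_\delta^*$ makes the two nonlinear contributions cancel; and moving $R(\lambda)$ across and applying the defining relation (\ref{adjoint2}) of $G^*$ together with $A^\alpha A^{-2\alpha} = A^{-\alpha}$ makes the $G^*$-drift term of $z_m^*(\cdot,\lambda)$ cancel against $\langle \Phi_1,\Phi_2\rangle$. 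What remains, after also moving $R(\lambda)$ and $A^\gamma R(\lambda)$ back, is
\begin{equation*}
 \mathbb E \int_0^{\tau_m^u} \big\langle A^\gamma\big(y(s;u)-y_d(s)\big), A^\gamma R(\lambda) z_m(s,\lambda)\big\rangle_H\, ds = \mathbb E \int_0^{\tau_m^u} \big\langle R(\lambda) z_m^*(s,\lambda), F v(s)\big\rangle_H\, ds.
\end{equation*}

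Finally I would let $\lambda \to \infty$. On $[0,\tau_m^u)$ write $R(\lambda) z_m(s,\lambda) - z(s;u,v) = R(\lambda)[z_m(s,\lambda) - z_m(s)] + [R(\lambda) - I] z_m(s)$: the first term tends to $0$ in $L^2(\Omega;L^2([0,T];D(A^\gamma)))$ by Lemma \ref{convergencelinear} (together with $D(A^\alpha) \subset D(A^\gamma)$, Lemma \ref{fractional}) and $\|R(\lambda)\|_{\mathcal L(D(A^\gamma))} \le 1$ from (\ref{resolventinequality}) and (\ref{resolventcommutes}), the second by (\ref{resolventconvergence}) and dominated convergence; likewise $R(\lambda) z_m^*(s,\lambda) \to z_m^*(s;u)$ in $L^2(\Omega;L^2([0,T];H))$ by Lemma \ref{convergencebackward}. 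Since $\mathbb E \int_0^T \|A^\gamma(y(s \wedge \tau_m^u) - y_d(s))\|_H^2\, ds < \infty$ (as $\|y(s \wedge \tau_m^u)\|_{D(A^\alpha)} \le m$ and $y_d \in L^2([0,T];D(A^\gamma))$) and $F$ is bounded, the Cauchy--Schwarz inequality lets me pass to the limit in both integrals, which yields (\ref{dualityequation}). The step I expect to be the main obstacle is the bookkeeping of the operator manipulations: moving the self-adjoint resolvent factors $A^\delta R(\lambda)$, $A^\alpha R(\lambda)$, $R(\lambda)$ past the inner products, verifying that the arguments of $B_\delta^*$ and $G^*$ lie in the domains $H$ and $\mathcal{L}_{(HS)}(Q^{1/2}(H);D(A^\alpha))$ required by (\ref{adjoint1}) and (\ref{adjoint2}), and checking that the nonlinear and stochastic terms cancel exactly --- it is precisely the presence of the two resolvents in the approximating equations (rather than one) that makes these cancellations possible.
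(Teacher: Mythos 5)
Your proposal follows essentially the same route as the paper: pass to the resolvent-regularized strong formulations of Lemma \ref{linearstrong} and Lemma \ref{backwardstrong}, apply the It\^o product formula of Lemma \ref{productformula}, cancel the drift and diffusion pairings via self-adjointness of $A$, the commutation relation (\ref{resolventcommutes}) and the adjoint identities (\ref{adjoint1})--(\ref{adjoint2}), and then let $\lambda \rightarrow \infty$ using Lemma \ref{convergencelinear}, Lemma \ref{convergencebackward} and (\ref{resolventconvergence}); the only cosmetic difference is that you evaluate the product at $t=T$ while the paper evaluates at $t=\tau_m$, both of which work because of Lemma \ref{backwardvanishes2}.

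The one step you gloss over is that Lemma \ref{productformula} requires the processes in the forward It\^o form $X(t)=X^0+\int_0^t f\,ds+\int_0^t \Phi\,dW$ with $X^0$ being $\mathcal F_0$-measurable, whereas Lemma \ref{backwardstrong} gives $z_m^*(t,\lambda)$ as integrals over $[t,T]$; the paper converts this by conditioning on $\mathcal F_t$ and invoking the martingale representation theorem (Proposition \ref{martingalerepresentation}), identifying the resulting integrand with $\Phi_m(\cdot,\lambda)$ by uniqueness. This conversion is routine (one may equivalently subtract the $t=0$ expression from the $t$ expression), so it is a presentational omission rather than a flaw, but it should be stated before the product formula is applied.
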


\begin{proof}
 For the sake of simplicity, we omit the dependence on the controls.  
 First, we prove the result for the approximations derived in Section \ref{sec:approximation}.
 Let $(z_m(t,\lambda))_{t \in [0,T]}$ be the mild solution of system (\ref{approxtruncatedlinearstochnse}).
 Using Lemma \ref{linearstrong}, we have for all $\lambda > 0$, all $t \in [0,T]$ and $\mathbb P$-a.s.
 \begin{align}\label{strongrep1}
  z_m(t,\lambda) =& - \int\limits_0^t A z_m(s,\lambda) + A^\delta R(\lambda) A^{-\delta} \left[ B(R(\lambda)z_m(s,\lambda),\pi_m(y_m(s))) + B(\pi_m(y_m(s)),R(\lambda)z_m(s,\lambda)) \right] ds \nonumber \\
  &+ \int\limits_0^t R(\lambda) F v(s) ds + \int\limits_0^t R(\lambda) G(R(\lambda)z_m(s,\lambda)) d W(s).
 \end{align}
 Next, let the pair of stochastic processes $(z_m^*(t,\lambda),\Phi_m(t,\lambda))_{t \in [0,T]}$ be the mild solution of system (\ref{approxbackwardstochnse}).
 By Lemma \ref{backwardstrong}, we get for all $\lambda > 0$, all $t \in [0,T]$ and $\mathbb P$-a.s.
 \begin{align}\label{strongrep2}
  z_m^*(t,\lambda) =& - \int\limits_t^T \mathds 1_{[0,\tau_m)} (s) \left[ A z_m^*(s,\lambda) + A^\alpha R(\lambda) A^\alpha B_{\delta}^*\left(y(s \wedge \tau_m),R(\lambda) A^\delta z_m^*(s,\lambda)\right) \right] ds \nonumber \\
  &+ \int\limits_t^T \mathds 1_{[0,\tau_m)} (s) R(\lambda) G^*(A^{-2\alpha}R(\lambda)\Phi_m(s,\lambda)) ds + \int\limits_t^T \mathds 1_{[0,\tau_m)} (s) A^\gamma R(\lambda) A^\gamma \left( y(s \wedge \tau_m) - y_d(s)\right) ds \nonumber \\
  &- \int\limits_t^T \Phi_m(s,\lambda) d W(s).
 \end{align}
 Since the process $(z_m^*(t,\lambda))_{t \in [0,T]}$ is predictable, we find for all $\lambda > 0$, all $t \in [0,T]$ and $\mathbb P$-a.s.
 \begin{align*}
  z_m^*(t,\lambda) &= - \mathbb E \left[ \int\limits_0^T \mathds 1_{[0,\tau_m)} (s) \left[ A z_m^*(s,\lambda) + A^\alpha R(\lambda) A^\alpha B_{\delta}^*\left(y(s \wedge \tau_m),R(\lambda) A^\delta z_m^*(s,\lambda)\right) \right] ds \bigg| \mathcal F_t \right] \\
  &\quad + \mathbb E \left[ \int\limits_0^T \mathds 1_{[0,\tau_m)} (s) R(\lambda) G^*(A^{-2\alpha}R(\lambda)\Phi_m(s,\lambda)) ds + \int\limits_0^T \mathds 1_{[0,\tau_m)} (s) A^\gamma R(\lambda) A^\gamma \left( y(s \wedge \tau_m) - y_d(s)\right) ds \bigg| \mathcal F_t \right] \\
  &\quad + \int\limits_0^t \mathds 1_{[0,\tau_m)} (s) \left[ A z_m^*(s,\lambda) + A^\alpha R(\lambda) A^\alpha B_{\delta}^*\left(y(s \wedge \tau_m),R(\lambda) A^\delta z_m^*(s,\lambda)\right) \right] ds \\
  &\quad - \int\limits_0^t \mathds 1_{[0,\tau_m)} (s) R(\lambda) G^*(A^{-2\alpha}R(\lambda)\Phi_m(s,\lambda)) ds - \int\limits_0^t \mathds 1_{[0,\tau_m)} (s) A^\gamma R(\lambda) A^\gamma \left( y(s \wedge \tau_m) - y_d(s)\right) ds.
 \end{align*}
 By Proposition \ref{martingalerepresentation} with $(M(t))_{t \in [0,T]}$ satisfying for all $t \in [0,T]$ and $\mathbb P$-a.s. 
 \begin{align*}
  M(t) &= - \mathbb E \left[ \int\limits_0^T \mathds 1_{[0,\tau_m)} (s) \left[ A z_m^*(s,\lambda) + A^\alpha R(\lambda) A^\alpha B_{\delta}^*\left(y(s \wedge \tau_m),R(\lambda) A^\delta z_m^*(s,\lambda)\right) \right] ds \bigg| \mathcal F_t \right] \\
  &\quad + \mathbb E \left[ \int\limits_0^T \mathds 1_{[0,\tau_m)} (s) R(\lambda) G^*(A^{-2\alpha}R(\lambda)\Phi_m(s,\lambda)) ds + \int\limits_0^T \mathds 1_{[0,\tau_m)} (s) A^\gamma R(\lambda) A^\gamma \left( y(s \wedge \tau_m) - y_d(s)\right) ds \bigg| \mathcal F_t \right],
 \end{align*}
 there exists a unique predictable process $(\Psi_m(t,\lambda))_{t \in [0,T]}$ with values in $\mathcal{L}_{(HS)}(Q^{1/2}(H);H)$ such that for all $\lambda > 0$, all $t \in [0,T]$ and $\mathbb P$-a.s.
 \begin{align}\label{strongrep3}
  z_m^*(t,\lambda)
  &= - \mathbb E \left[ \int\limits_0^T \mathds 1_{[0,\tau_m)} (s) \left[ A z_m^*(s,\lambda) + A^\alpha R(\lambda) A^\alpha B_{\delta}^*\left(y(s \wedge \tau_m),R(\lambda) A^\delta z_m^*(s,\lambda)\right) \right] ds \right] \nonumber \\
  &\quad + \mathbb E \left[ \int\limits_0^T \mathds 1_{[0,\tau_m)} (s) R(\lambda) G^*(A^{-2\alpha}R(\lambda)\Phi_m(s,\lambda)) ds + \int\limits_0^T \mathds 1_{[0,\tau_m)} (s) A^\gamma R(\lambda) A^\gamma \left( y(s \wedge \tau_m) - y_d(s)\right) ds \right] \nonumber \\
  &\quad + \int\limits_0^t \mathds 1_{[0,\tau_m)} (s) \left[ A z_m^*(s,\lambda) + A^\alpha R(\lambda) A^\alpha B_{\delta}^*\left(y(s \wedge \tau_m),R(\lambda) A^\delta z_m^*(s,\lambda)\right) \right] ds \nonumber \\
  &\quad - \int\limits_0^t \mathds 1_{[0,\tau_m)} (s) R(\lambda) G^*(A^{-2\alpha}R(\lambda)\Phi_m(s,\lambda)) ds - \int\limits_0^t \mathds 1_{[0,\tau_m)} (s) A^\gamma R(\lambda) A^\gamma \left( y(s \wedge \tau_m) - y_d(s)\right) ds \nonumber \\
  &\quad + \int\limits_0^t \Psi_m(s,\lambda) dW(s).
 \end{align}
 Since the pair $(z_m^*(t;u),\Phi_m(t;u))_{t \in [0,T]}$ satisfies equation (\ref{strongrep2}) uniquely, we can conclude $\Psi_m(t,\lambda) = \Phi_m(t,\lambda)$ for all $\lambda > 0$, almost all $t \in [0,T]$ and $\mathbb P$-almost surely.
 Applying Lemma \ref{productformula} to equation (\ref{strongrep1}) and equation (\ref{strongrep3}), we get for all $\lambda > 0$, all $t \in [0,T]$ and $\mathbb P$-a.s.
 \begin{equation*}
  \left\langle z_m(t,\lambda),z_m^*(t,\lambda) \right\rangle_H = \mathcal I_1(t,\lambda) + \mathcal I_2(t,\lambda) + \mathcal I_3(t,\lambda) + \mathcal I_4(t,\lambda) + \mathcal I_5(t,\lambda),
 \end{equation*}
 where
 \begin{align*}
  \mathcal I_1(t,\lambda) &= \int\limits_0^t \mathds 1_{[0,\tau_m)} (s) \left \langle z_m(s,\lambda), A z_m^*(s,\lambda) \right\rangle_H ds - \int\limits_0^t \left \langle z_m^*(s,\lambda) , A z_m(s,\lambda) \right\rangle_H ds, \\
  \mathcal I_2(t,\lambda) &= \int\limits_0^t \mathds 1_{[0,\tau_m)} (s) \left \langle z_m(s,\lambda), A^\alpha R(\lambda) A^\alpha B_{\delta}^*\left(y(s \wedge \tau_m),R(\lambda) A^\delta z_m^*(s,\lambda)\right) \right\rangle_H ds \\
  &\quad - \int\limits_0^t \left \langle z_m^*(s,\lambda) , A^\delta R(\lambda) A^{-\delta} \left[ B(R(\lambda)z_m(s,\lambda),\pi_m(y_m(s))) + B(\pi_m(y_m(s)),R(\lambda)z_m(s,\lambda)) \right] \right\rangle_H ds, \\
  \mathcal I_3(t,\lambda) &= \int\limits_0^t \left\langle R(\lambda) G(R(\lambda)z_m(s,\lambda)), \Phi_m(s,\lambda) \right\rangle_{\mathcal{L}_{(HS)}(Q^{1/2}(H),H)} ds \\
  &\quad - \int\limits_0^t \mathds 1_{[0,\tau_m)} (s) \left \langle z_m(s,\lambda), R(\lambda) G^*(A^{-2\alpha}R(\lambda)\Phi_m(s,\lambda)) \right\rangle_H ds, \\
  \mathcal I_4(t,\lambda) &= \int\limits_0^t \left \langle z_m^*(s,\lambda) , R(\lambda) F v(s) \right\rangle_H ds - \int\limits_0^t \mathds 1_{[0,\tau_m)} (s) \left \langle z_m(s,\lambda), A^\gamma R(\lambda) A^\gamma \left( y(s \wedge \tau_m) - y_d(s)\right) \right\rangle_H ds, \\
  \mathcal I_5(t,\lambda) &= \int\limits_0^t \left \langle z_m(s,\lambda) ,\Phi_m(s,\lambda) d W(s) \right\rangle_H + \int\limits_0^t \left \langle z_m^*(s,\lambda) , R(\lambda) G(R(\lambda)z_m(s,\lambda)) d W(s) \right\rangle_H.
 \end{align*}
 By Lemma \ref{backwardvanishes2}, we obtain for all $\lambda > 0$ and $\mathbb P$-a.s.
 \begin{align}\label{eq8}
  0 &= \mathcal I_1(\tau_m,\lambda) + \mathcal I_2(\tau_m,\lambda) + \mathcal I_3(\tau_m,\lambda) + \mathcal I_4(\tau_m,\lambda) + \mathcal I_5(\tau_m,\lambda).
 \end{align}
 Since the operator $A$ is self adjoint, we have for all $\lambda > 0$ and $\mathbb P$-a.s.
 \begin{equation}\label{eq9}
  \mathcal I_1(\tau_m,\lambda) = 0.
 \end{equation}
 Recall that $R(\lambda)$ is self adjoint on $H$ and $y(t) = \pi_m(y_m(t))$ for all $t \in [0,\tau_m)$ and $\mathbb P$-almost surely.
 Using Lemma \ref{fractionalselfadjoint}, equation (\ref{resolventcommutes}) and equation (\ref{adjoint1}), we find for all $\lambda > 0$ and $\mathbb P$-a.s.
 \begin{equation}\label{eq10}
  \mathcal I_2(\tau_m,\lambda) = 0.
 \end{equation}
 Due to Lemma \ref{fractional} (i), we get $A^{2\alpha}A^{-2\alpha} = I$, where $I$ is the identity operator on $H$.
 Using Lemma \ref{fractionalselfadjoint} and equation (\ref{adjoint2}), we obtain for all $\lambda > 0$ and $\mathbb P$-a.s.
 \begin{equation}\label{eq11}
  \mathcal I_3(\tau_m,\lambda) = 0.
 \end{equation}
 By equations (\ref{eq8}) -- (\ref{eq11}) and the fact that $\mathbb E \, \mathcal I_5(\tau_m,\lambda) = 0$, we get for all $\lambda > 0$
 \begin{equation*}
  0 = \mathbb E \, \mathcal I_4(\tau_m,\lambda).
 \end{equation*}
 Hence, we have for all $\lambda > 0$
 \begin{equation}\label{eq12}
  \mathbb E  \int\limits_0^{\tau_m} \left \langle R(\lambda) A^\gamma z_m(t,\lambda), A^\gamma \left( y(t) - y_d(t)\right) \right\rangle_H dt = \mathbb E \int\limits_0^{\tau_m} \left \langle R(\lambda) z_m^*(t,\lambda) , F v(t) \right\rangle_H dt.
 \end{equation}
 Next, we show that the right and the left hand side of equation (\ref{eq12}) converges as $\lambda \rightarrow \infty$.
 Let $(y_m(t))_{t \in [0,T]}$ and $(z_m(t))_{t \in [0,T]}$ be the mild solutions of system (\ref{truncatedstochnse}) and system (\ref{truncatedlinearstochnse}), respectively.
 By definition, we have for all $t \in [0,\tau_m)$ and $\mathbb P$-a.s. $y(t) = y_m(t)$, $\|y_m(t)\|_{D(A^\alpha)} \leq m$ and $z(t) = z_m(t)$.
 Using Lemma \ref{convergencelinear}, we obtain
 \begin{equation}\label{convergencelocallinearmild}
  \lim_{\lambda \rightarrow \infty} \mathbb E \sup\limits_{t \in [0,\tau_m)} \| z(t) - z_m(t,\lambda) \|_{D(A^\alpha)}^2 = 0.
 \end{equation}
 By the Cauchy-Schwarz inequality, inequality (\ref{resolventinequality}) and Lemma \ref{fractional} (v), there exists a constant $C^*>0$ such that for all $\lambda > 0$
 \begin{align*}
  &\left| \mathbb E \int\limits_0^{\tau_m} \left \langle  A^\gamma z(t), A^\gamma \left( y(t) - y_d(t)\right) \right\rangle_H dt - \mathbb E \int\limits_0^{\tau_m} \left \langle R(\lambda) A^\gamma z_m(t,\lambda), A^\gamma \left( y(t) - y_d(t)\right) \right\rangle_H dt \right|^2 \\
  &\leq 2 \left| \mathbb E \int\limits_0^{\tau_m} \left \langle [I - R(\lambda)] A^\gamma z(t), A^\gamma \left( y(t) - y_d(t)\right) \right\rangle_H dt \right|^2 +  2 \left| \mathbb E \int\limits_0^{\tau_m} \left \langle R(\lambda) A^\gamma (z(t)-z_m(t,\lambda)), A^\gamma \left( y(t) - y_d(t)\right) \right\rangle_H dt \right|^2 \\
  &\leq C^* \left( \mathbb E \int\limits_0^{\tau_m} \left\| [I - R(\lambda)] A^\gamma z(t) \right\|_H^2 dt + \mathbb E \sup_{t \in [0,\tau_m)} \left\| z(t)-z_m(t,\lambda) \right\|_{D(A^\alpha)}^2 \right).
 \end{align*}
 Using equation (\ref{resolventconvergence}), equation (\ref{convergencelocallinearmild}) and Lebesgue's dominated convergence theorem, we can conclude
 \begin{equation*}
  \lim\limits_{\lambda \rightarrow \infty} \mathbb E \int\limits_0^{\tau_m} \left \langle R(\lambda) A^\gamma z_m(t,\lambda), A^\gamma \left( y(t) - y_d(t)\right) \right\rangle_H dt = \mathbb E \int\limits_0^{\tau_m} \left \langle  A^\gamma z(t), A^\gamma \left( y(t) - y_d(t)\right) \right\rangle_H dt.
 \end{equation*}
 Recall that the operator $F \colon D(A^\beta) \rightarrow D(A^\beta)$ is bounded.
 Similarly as above, there exists a constant $C^*>0$ such that for all $\lambda > 0$
 \begin{align*}
  &\left| \mathbb E \int\limits_0^{\tau_m} \left \langle z_m^*(t) , F v(t) \right\rangle_H dt - \mathbb E \int\limits_0^{\tau_m} \left \langle R(\lambda) z_m^*(t,\lambda) , F v(t) \right\rangle_H dt\right|^2 \\
  &\leq 2 \left| \mathbb E \int\limits_0^{\tau_m} \left \langle [I-R(\lambda)] z_m^*(t) , F v(t) \right\rangle_H dt \right|^2 + 2 \left| \mathbb E \int\limits_0^{\tau_m} \left \langle R(\lambda) (z_m^*(t)- z_m^*(t,\lambda)) , F v(t) \right\rangle_H dt \right|^2 \\
  &\leq C^* \left( \mathbb E \int\limits_0^T \left\| [I - R(\lambda)] z_m^*(t) \right\|_H^2 dt + \mathbb E \sup_{t \in [0,T]} \left\| z_m^*(t)-z_m^*(t,\lambda) \right\|_{D(A^\delta)}^2 \right).
 \end{align*}
 By equation (\ref{resolventconvergence}), Lebesgue's dominated convergence theorem and Lemma \ref{convergencebackward}, we can infer
 \begin{equation*}
  \lim\limits_{\lambda \rightarrow \infty} \mathbb E \int\limits_0^{\tau_m} \left \langle R(\lambda) z_m^*(t,\lambda) , F v(t) \right\rangle_H dt = \mathbb E \int\limits_0^{\tau_m} \left \langle z_m^*(t) , F v(t) \right\rangle_H dt.
 \end{equation*}
 We conclude that the right and the left hand side of equation (\ref{eq12}) converges as $\lambda \rightarrow \infty$ and equation (\ref{dualityequation}) holds.
\end{proof}

Based on the necessary optimality condition formulated as the variational inequality (\ref{variationalinequlity}) and the duality principle derived in the previous theorem, we are able to deduce a formula the optimal control has to satisfy.
First, we introduce a projection operator.
Note that the set of admissible controls $U$ is a closed subset of the Hilbert space $L^2_{\mathcal F}(\Omega;L^2([0,T];D(A^\beta)))$.
We denote by $P_U \colon L^2_{\mathcal F}(\Omega;L^2([0,T];D(A^\beta))) \rightarrow U$ the projection onto $U$, i.e.
\begin{equation*}
 \| P_U(v) - v \|_{L^2_{\mathcal F}(\Omega;L^2([0,T];D(A^\beta)))} = \min_{u \in U} \| u-v \|_{L^2_{\mathcal F}(\Omega;L^2([0,T];D(A^\beta)))}
\end{equation*}
for every $v \in L^2_{\mathcal F}(\Omega;L^2([0,T];D(A^\beta)))$.
It is well known that
\begin{equation*}
 u = P_U(v)
\end{equation*} 
for $v \in L^2_{\mathcal F}(\Omega;L^2([0,T];D(A^\beta)))$ if and only if
\begin{equation}\label{variationalprojection}
 \left\langle v-u, \tilde u - u \right\rangle_{L^2_{\mathcal F}(\Omega;L^2([0,T];D(A^\beta)))} \leq 0
\end{equation}
for every $\tilde u \in U$, see \cite[Lemma 1.10 (b)]{owpde}.
We get the following result.

\begin{theorem}
 Let $(z_m^*(t;u),\Phi_m(t;u))_{t \in [0,T]}$ be the mild solution of system (\ref{backwardstochnse}) corresponding to the control $u \in L^2_{\mathcal F}(\Omega;L^2([0,T];D(A^\beta)))$.
 Then for fixed $m \in \mathbb N$, the optimal control $\overline u_m \in U$ satisfies for almost all $t \in [0,T]$ and $\mathbb P$-a.s.
 \begin{equation}\label{optimalcontrol}
  \overline u_m(t) = - P_U \left( F^* A^{-2 \beta} z_m^*(t;\overline u_m) \right),
 \end{equation}
 where $P_U \colon L^2_{\mathcal F}(\Omega;L^2([0,T];D(A^\beta))) \rightarrow U$ is the projection onto $U$ and $F^* \in \mathcal L(D(A^\beta))$ is the adjoint operator of $F \in \mathcal L(D(A^\beta))$.
\end{theorem}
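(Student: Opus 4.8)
The plan is to combine the necessary optimality condition (\ref{variationalinequlity}) with the duality principle of Theorem \ref{duality}. Because the admissible set $U$ is closed and convex with $0\in U$, the optimal control $\overline u_m\in U$ whose existence is guaranteed by Theorem \ref{existenceoptimalcontrol} satisfies (\ref{variationalinequlity}) for every $u\in U$. The idea is to eliminate the linearized state $z(t;\overline u_m,u-\overline u_m)$ from (\ref{variationalinequlity}), replacing it by the adjoint state $z_m^*(t;\overline u_m)$, and then to identify the resulting inequality with the variational characterization (\ref{variationalprojection}) of the metric projection onto $U$, which will force (\ref{optimalcontrol}).

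First I would insert the duality identity (\ref{dualityequation}) with $v=u-\overline u_m\in L^2_{\mathcal F}(\Omega;L^2([0,T];D(A^\beta)))$ into (\ref{variationalinequlity}), so that the first summand becomes $\mathbb E\int_0^{\tau_m^{\overline u_m}}\langle z_m^*(t;\overline u_m),F(u(t)-\overline u_m(t))\rangle_H\,dt$. Since, by Corollary \ref{backwardvanishes1}, the adjoint state vanishes on $[\tau_m^{\overline u_m},T]$, this integral coincides with $\mathbb E\int_0^{T}\langle z_m^*(t;\overline u_m),F(u(t)-\overline u_m(t))\rangle_H\,dt$. Next I would pass from the inner product on $H$ to the inner product on $D(A^\beta)$. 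Using Lemma \ref{fractional}(i), the self-adjointness of the fractional powers $A^{\pm\beta}$ from Lemma \ref{fractionalselfadjoint}, and the observation that $A^{-2\beta}z_m^*(t;\overline u_m)=A^{-\beta}\big(A^{-\beta}z_m^*(t;\overline u_m)\big)\in R(A^{-\beta})=D(A^\beta)$, one obtains for every $w\in D(A^\beta)$
\[
 \langle z_m^*(t;\overline u_m),Fw\rangle_H=\langle A^{-2\beta}z_m^*(t;\overline u_m),Fw\rangle_{D(A^\beta)}=\langle F^*A^{-2\beta}z_m^*(t;\overline u_m),w\rangle_{D(A^\beta)},
\]
where $F^*\in\mathcal L(D(A^\beta))$ is the adjoint of $F$. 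With $\mathbb E\sup_{t\in[0,T]}\|z_m^*(t;\overline u_m)\|_{D(A^\delta)}^2<\infty$ and Lemma \ref{fractional}(v) one checks that $t\mapsto F^*A^{-2\beta}z_m^*(t;\overline u_m)$ belongs to $L^2_{\mathcal F}(\Omega;L^2([0,T];D(A^\beta)))$, so that the projection $P_U$ is meaningful when applied to it.

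Rewriting also the regularization term $\mathbb E\int_0^T\langle A^\beta\overline u_m(t),A^\beta(u(t)-\overline u_m(t))\rangle_H\,dt$ as the inner product of $\overline u_m$ and $u-\overline u_m$ in $L^2_{\mathcal F}(\Omega;L^2([0,T];D(A^\beta)))$, the inequality (\ref{variationalinequlity}) reduces to a statement of precisely the type appearing in (\ref{variationalprojection}), with $\overline u_m$ in the role of the projection and $-F^*A^{-2\beta}z_m^*(\cdot;\overline u_m)$ in the role of the projected point. By the characterization (\ref{variationalprojection}) of $P_U$ this is exactly the pointwise identity (\ref{optimalcontrol}); since its right-hand side is an $\mathcal F_t$-adapted process, the equality holds for almost all $t\in[0,T]$ and $\mathbb P$-a.s. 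The genuinely hard step is not in this argument but in Theorem \ref{duality}, whose proof rests on the regularized strong formulations of Section \ref{sec:approximation}; here the only care needed is bookkeeping — tracking the domains of fractional powers, the self-adjointness identities that let $A^{-2\beta}$ and $F^*$ be moved across the inner products, and the vanishing of $z_m^*(\cdot;\overline u_m)$ beyond $\tau_m^{\overline u_m}$ that allows every time integral to be taken over the whole interval $[0,T]$.
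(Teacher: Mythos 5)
Your proposal is correct and follows essentially the same route as the paper's proof: insert the duality principle of Theorem \ref{duality} into the variational inequality (\ref{variationalinequlity}), extend the time integral to $[0,T]$ via Corollary \ref{backwardvanishes1}, move $A^{-2\beta}$ and $F^*$ across the inner products using Lemma \ref{fractional} and Lemma \ref{fractionalselfadjoint}, and identify the result with the projection characterization (\ref{variationalprojection}). The only remark worth making is that your derivation literally yields $\overline u_m = P_U\bigl(-F^*A^{-2\beta}z_m^*(\cdot;\overline u_m)\bigr)$ rather than $-P_U\bigl(F^*A^{-2\beta}z_m^*(\cdot;\overline u_m)\bigr)$, but this is exactly the form the paper's own argument produces as well, so the two proofs agree.
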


\begin{proof}
 Using inequality (\ref{variationalinequlity}) and Theorem \ref{duality}, the optimal control $\overline u_m \in U$ satisfies for every $u \in U$
 \begin{equation*}
  \mathbb E \int \limits_0^{\tau_m^{\overline u_m}} \left\langle z_m^*(t;\overline u_m), F (u(t)-\overline u_m(t)) \right\rangle_H dt + \mathbb E \int\limits_0^T \left\langle A^\beta \overline u_m(t), A^\beta (u(t)-\overline u_m(t)) \right\rangle_H dt \geq 0.
 \end{equation*}
 By Corollary \ref{backwardvanishes1}, we have 
 \begin{equation*}
  \mathds 1_{[0,\tau_m^{\overline u_m})}(t) z_m^*(t;\overline u_m) = z_m^*(t;\overline u_m)
 \end{equation*}
 for all $t \in [0,T]$ and $\mathbb P$-almost surely.
 Due to Lemma \ref{fractional} (i), we get $A^{2\beta}A^{-2\beta} = I$, where $I$ is the identity operator on $H$.
 Using Lemma \ref{fractionalselfadjoint}, we obtain for every $u \in U$
 \begin{align*}
  \mathbb E \int \limits_0^{\tau_m^{\overline u_m}} \left\langle z_m^*(t;\overline u_m), F (u(t)-\overline u_m(t)) \right\rangle_H dt
  &= \mathbb E \int \limits_0^T \left\langle \mathds 1_{[0,\tau_m^{\overline u_m})}(t) z_m^*(t;\overline u_m), F (u(t)-\overline u_m(t)) \right\rangle_H dt \\
  &= \mathbb E \int \limits_0^T \left\langle A^\beta A^{-2\beta} z_m^*(t;\overline u_m), A^\beta F (u(t)-\overline u_m(t)) \right\rangle_H dt \\
  &= \mathbb E \int \limits_0^T \left\langle A^\beta F^* A^{-2\beta} z_m^*(t;\overline u_m), A^\beta (u(t)-\overline u_m(t)) \right\rangle_H dt.
 \end{align*}
 Hence, we find for every $u \in U$
 \begin{equation*}
  \mathbb E \int\limits_0^T \left\langle - F^* A^{-2\beta} z_m^*(t;\overline u_m)-\overline u_m(t), u(t)-\overline u_m(t) \right\rangle_{D(A^\beta)} dt \leq 0.
 \end{equation*}
 We obtain inequality (\ref{variationalprojection}) and thus, the solution is given by equation (\ref{optimalcontrol}).
 Since the mild solution of system (\ref{backwardstochnse}) is a pair of predictable processes $(z_m^*(t;u),\Phi_m(t;u))_{t \in [0,T]}$ such that especially $\mathbb E \sup_{t \in [0,T]} \|z_m^*(t;u)\|_{D(A^\delta)}^2 < \infty$ holds for every $u \in L^2_{\mathcal F}(\Omega;L^2([0,T];D(A^\beta)))$, we get $F^* A^{-2 \beta} z_m^*(\cdot;\overline u_m) \in L^2_{\mathcal F}(\Omega;L^2([0,T];D(A^\beta)))$.
 This justifies the application of the projection operator $P_U$.
\end{proof}

\begin{remark}
 Let us denote by $(\overline y(t))_{t \in [0,\overline \tau)}$ and $(\overline z_m^*(t),\overline \Phi_m(t))_{t \in [0,T]}$ the local mild solutions of system (\ref{stochnse}) and the mild solution of system (\ref{backwardstochnse}), respectively, corresponding to the optimal control $\overline u_m \in U$.
 As a consequence of the previous theorem, the velocity field $(\overline y(t))_{t \in [0,\overline \tau)}$ can be computed by solving the following system of coupled forward-backward SPDEs:
 \begin{equation*}
  \left\{
  \begin{aligned}
   d \overline y(t) &= - [A \overline y(t) + B(\overline y(t)) + F P_U \left( F^* A^{-2 \beta} \overline z_m^*(t) \right)] dt + G(\overline y(t)) d W(t), \\
   d \overline z_m^*(t) &=  -\mathds 1_{[0,\tau_m)} (t) [-A \overline z_m^*(t) - A^{2\alpha} B_{\delta}^*\left(\overline y(t), A^\delta \overline z_m^*(t)\right) + G^*(A^{-2\alpha}\overline \Phi_m(t)) + A^{2 \gamma} \left(\overline y(t) - y_d(t)\right)] dt \\
   &\quad + \overline \Phi_m(t) d W(t), \\
   \overline y(0) &= \xi, \quad \overline z_m^*(T) =0.
  \end{aligned}
  \right.
 \end{equation*}
\end{remark}

\begin{corollary}
 Let the control $\overline u_m \in U$ satisfy equation (\ref{optimalcontrol}). Then we have for fixed $m \in \mathbb N$
 \begin{equation*}
  \mathbb E \int \limits_{\tau_m^{\overline u_m}}^T \| \overline u_m(t)\|_{D(A^\beta)}^2 dt = 0.
 \end{equation*}
\end{corollary}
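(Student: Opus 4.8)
The plan is to read off the conclusion from the explicit formula (\ref{optimalcontrol}) together with the vanishing property of the adjoint state recorded in Corollary \ref{backwardvanishes1} and the variational characterisation (\ref{variationalprojection}) of the metric projection $P_U$. I would write $w(t):=F^*A^{-2\beta}z_m^*(t;\overline u_m)$, which (by the final line of the proof of the preceding theorem) defines an element of $L^2_{\mathcal F}(\Omega;L^2([0,T];D(A^\beta)))$, and recall that $\tau_m^{\overline u_m}$ is a stopping time with values in $(0,T]$.

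First I would invoke Corollary \ref{backwardvanishes1} for the mild solution $(z_m^*(\cdot;\overline u_m),\Phi_m(\cdot;\overline u_m))$: since $\mathbb E\sup_{t\in[\tau_m^{\overline u_m},T]}\|z_m^*(t;\overline u_m)\|_{D(A^\delta)}^2=0$, we have $z_m^*(t;\overline u_m)=0$, hence also $w(t)=0$ by linearity of $F^*$ and of $A^{-2\beta}$, for all $t\in[\tau_m^{\overline u_m},T]$ and $\mathbb P$-almost surely. Second, by (\ref{optimalcontrol}) and (\ref{variationalprojection}) the control $-\overline u_m$ is the $L^2_{\mathcal F}(\Omega;L^2([0,T];D(A^\beta)))$-projection of $w$ onto $U$, so $-\overline u_m\in U$ and
\[
 \left\langle w+\overline u_m,\ \tilde u+\overline u_m\right\rangle_{L^2_{\mathcal F}(\Omega;L^2([0,T];D(A^\beta)))}\le 0\qquad\text{for every }\tilde u\in U.
\]
Third, I would test this inequality with $\tilde u=-\overline u_m\,\mathds 1_{[0,\tau_m^{\overline u_m})}$, which is predictable because $\tau_m^{\overline u_m}$ is a stopping time, and which lies in $U$ (admissibility of this choice is the one point requiring care, see below). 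Then $\tilde u+\overline u_m=\overline u_m\,\mathds 1_{[\tau_m^{\overline u_m},T]}$, and since $w=0$ on $[\tau_m^{\overline u_m},T]$ the left-hand side collapses to $\mathbb E\int_{\tau_m^{\overline u_m}}^T\|A^\beta\overline u_m(t)\|_H^2\,dt$. As $\|A^\beta\overline u_m(t)\|_H=\|\overline u_m(t)\|_{D(A^\beta)}$ and the integral is manifestly nonnegative, it must vanish, which is precisely the assertion.

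The step I expect to require genuine care is the admissibility of the truncated test control $-\overline u_m\,\mathds 1_{[0,\tau_m^{\overline u_m})}$ — equivalently, that the projection $P_U$ ``localises'' along the stopping time $\tau_m^{\overline u_m}$. For an admissible set of the usual pointwise form $U=\{u:u(t,\omega)\in U_{\mathrm{ad}}\text{ for a.e.\ }(t,\omega)\}$ with $0\in U_{\mathrm{ad}}$ this is immediate, and more generally it follows from decomposability of $U$ together with $0\in U$; in any case it should be stated explicitly. Granting it, one may even shortcut the computation: $w$ agrees with the zero process on $[\tau_m^{\overline u_m},T]$ and $P_U(0)=0$ because $0\in U$, so (\ref{optimalcontrol}) forces $\overline u_m(t)=0$ for almost every $t\in[\tau_m^{\overline u_m},T]$ and $\mathbb P$-almost surely, and the claimed identity follows by integration. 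Everything else — the vanishing of $z_m^*(\cdot;\overline u_m)$ past $\tau_m^{\overline u_m}$ and the bookkeeping in the variational inequality — is routine.
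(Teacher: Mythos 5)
Your argument is correct, and your closing ``shortcut'' is in substance the paper's own proof: the paper simply writes
\begin{equation*}
 \mathbb E \int\limits_{\tau_m^{\overline u_m}}^T \bigl\| P_U \bigl( F^* A^{-2 \beta} z_m^*(t;\overline u_m) \bigr) \bigr\|_{D(A^\beta)}^2 dt \;\leq\; C^* \, \mathbb E \sup_{t \in [\tau_m^{\overline u_m},T]} \|z_m^*(t;\overline u_m)\|_{D(A^\delta)}^2 \;=\; 0,
\end{equation*}
invoking Corollary \ref{backwardvanishes1}, the boundedness of $F^*A^{-2\beta}$ and Lemma \ref{fractional} (v) --- that is, it treats $P_U$ as a map that vanishes where its argument vanishes and is controlled pointwise in $t$ by its argument. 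Your main route, which goes back to the variational characterisation (\ref{variationalprojection}) of $-\overline u_m = P_U(w)$ and tests it against the truncated control $-\overline u_m\,\mathds 1_{[0,\tau_m^{\overline u_m})}$, is a genuinely different and more explicit path to the same conclusion; it buys you a derivation that uses only the defining property of the projection rather than an unstated pointwise Lipschitz estimate. The caveat you single out is exactly the right one, and it applies equally to both arguments: with $U$ only assumed nonempty, closed, bounded and convex with $0 \in U$, neither the admissibility of the truncated test control nor the pointwise bound $\|P_U(v)(t)\|_{D(A^\beta)} \leq C \|v(t)\|_{D(A^\beta)}$ implicitly used by the paper follows; non-expansiveness of the metric projection together with $P_U(0)=0$ only yields the global estimate $\|P_U(v)\|_{L^2_{\mathcal F}(\Omega;L^2([0,T];D(A^\beta)))} \leq \|v\|_{L^2_{\mathcal F}(\Omega;L^2([0,T];D(A^\beta)))}$, which does not localise to $[\tau_m^{\overline u_m},T]$. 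Some decomposability of $U$ (e.g.\ $U$ given by pointwise constraints in $(t,\omega)$) is needed to localise along the stopping time; you are right that this hypothesis should be made explicit, and the paper's own proof elides the same point.
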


\begin{proof}
 Let $(z_m^*(t;\overline u_m),\Phi_m(t;\overline u_m))_{t \in [0,T]}$ be the mild solution of system (\ref{backwardstochnse}) corresponding to the optimal control $\overline u_m \in U$.
 By Corollary \ref{backwardvanishes1}, we have $\mathbb E \sup_{t \in [\tau_m^{\overline u_m},T]} \|z_m^*(t;\overline u_m)\|_{D(A^\delta)}^2 = 0$.
 Moreover, note that the operators in equation (\ref{optimalcontrol}) are linear and bounded.
 Using Lemma \ref{fractional} (v), there exists a constant $C^* > 0$ such that
 \begin{equation*}
  \mathbb E \int \limits_{\tau_m^{\overline u_m}}^T \| \overline u_m(t)\|_{D(A^\beta)}^2 dt 
  = \mathbb E \int \limits_{\tau_m^{\overline u_m}}^T \left\| P_U \left( F^* A^{-2 \beta} z_m^*(t;\overline u_m) \right) \right\|_{D(A^\beta)}^2 dt
  \leq C^* \, \mathbb E \sup_{t \in [\tau_m^{\overline u_m},T]} \|z_m^*(t;\overline u_m)\|_{D(A^\delta)}^2 = 0.
 \end{equation*}
\end{proof}

Finally, we show that the optimal control $\overline u_m \in U$ given by equation (\ref{optimalcontrol}) satisfies the following sufficient optimality condition.

\begin{proposition}[Theorem 4.23, {\cite{ocop}}]\label{sufficientcondition}
 Let $K$ be a convex subset of a Banach space $\mathcal B$.
 Moreover, let the functional $f \colon \mathcal B \rightarrow \mathbb R$ be twice continuous Fr\'echet differentiable in a neighborhood of $\overline x \in K$.
 If $\overline x$ satisfies
 \begin{equation*}
  d^F f(\overline x)[x-\overline x] \geq 0
 \end{equation*}
 for every $x \in K$ and there exists a constant $\kappa > 0$ such that
 \begin{equation*}
  d^F (f(\overline x))^2[h,h] \geq \kappa \|h\|_{\mathcal B}^2
 \end{equation*}
 for every $h \in \mathcal B$, then there exist constants $\varepsilon_1,\varepsilon_2 > 0$ such that
 \begin{equation*}
  f(x) \geq f(\overline x) + \varepsilon_1 \|x - \overline x\|_{\mathcal B}^2
 \end{equation*}
 for every $x \in K$ with $\|x - \overline x\|_{\mathcal B} \leq \varepsilon_2$.
\end{proposition}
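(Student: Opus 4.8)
The plan is to obtain the quadratic growth estimate from a second-order Taylor expansion of $f$ around $\overline x$, using the first-order optimality condition to eliminate the linear term and the coercivity of the second derivative together with continuity of $d^F(f(\cdot))^2$ to absorb the remainder. First I would fix a neighborhood $N$ of $\overline x$ on which $f$ is twice continuously Fr\'echet differentiable and choose $\varepsilon_2 > 0$ so small that the ball $\{x \in \mathcal B \colon \|x - \overline x\|_{\mathcal B} \le \varepsilon_2\}$ is contained in $N$; then for every such $x$ the whole segment $[\overline x, x]$ lies in $N$, so the Taylor formula for twice Fr\'echet differentiable functionals (in the spirit of \cite{cfa}) applies and yields some $\vartheta = \vartheta(x) \in (0,1)$ with
\begin{equation*}
 f(x) = f(\overline x) + d^F f(\overline x)[x - \overline x] + \tfrac{1}{2}\, d^F(f(\xi))^2[x - \overline x, x - \overline x], \qquad \xi := \overline x + \vartheta(x - \overline x).
\end{equation*}
The integral form of the remainder would serve equally well.

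Next I would use $x \in K$ together with the hypothesis $d^F f(\overline x)[x - \overline x] \ge 0$ to discard the linear term, so that
\begin{equation*}
 f(x) \ge f(\overline x) + \tfrac{1}{2}\, d^F(f(\xi))^2[x - \overline x, x - \overline x].
\end{equation*}
Writing $h = x - \overline x$ and splitting $d^F(f(\xi))^2[h,h] = d^F(f(\overline x))^2[h,h] + \bigl(d^F(f(\xi))^2 - d^F(f(\overline x))^2\bigr)[h,h]$, the first summand is at least $\kappa\|h\|_{\mathcal B}^2$ by assumption. For the second, I would invoke the continuity of $x \mapsto d^F(f(x))^2$ at $\overline x$, viewed as a map into the bounded bilinear forms on $\mathcal B$: after possibly shrinking $\varepsilon_2$, the bound $\|\xi - \overline x\|_{\mathcal B} \le \vartheta\|h\|_{\mathcal B} \le \varepsilon_2$ forces $\|d^F(f(\xi))^2 - d^F(f(\overline x))^2\| \le \kappa/2$, whence the second summand is bounded in absolute value by $\tfrac{\kappa}{2}\|h\|_{\mathcal B}^2$.

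Combining these gives $d^F(f(\xi))^2[h,h] \ge \tfrac{\kappa}{2}\|h\|_{\mathcal B}^2$ and therefore $f(x) \ge f(\overline x) + \tfrac{\kappa}{4}\|x - \overline x\|_{\mathcal B}^2$ for all $x \in K$ with $\|x - \overline x\|_{\mathcal B} \le \varepsilon_2$, which is the assertion with $\varepsilon_1 = \kappa/4$. The only points demanding care are the precise form of the second-order Taylor remainder on a Banach space and the guarantee that the intermediate point $\xi$ stays inside the region where the second Fr\'echet derivative is close to $d^F(f(\overline x))^2$; both are handled by taking $\varepsilon_2$ small and exploiting the continuity of $d^F(f(\cdot))^2$, so I do not anticipate a serious obstacle — this is the standard argument underpinning second-order sufficient optimality conditions, and in the present context it applies verbatim with $\mathcal B = L^2_{\mathcal F}(\Omega;L^2([0,T];D(A^\beta)))$, $K = U$ and $f = J_m$ by Corollary \ref{costfunctionalfrechet2}.
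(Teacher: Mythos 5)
Your argument is correct: the second-order Taylor expansion with an intermediate point, the use of the variational inequality to drop the linear term, and the continuity of $d^F(f(\cdot))^2$ to absorb the perturbation of the quadratic form all work exactly as you describe (the Lagrange form of the remainder is legitimate here because $f$ is real-valued). Note, however, that the paper does not prove this proposition at all --- it is imported verbatim as Theorem 4.23 of \cite{ocop} and only its hypotheses are verified for $J_m$; your proof is the standard textbook argument that underlies that cited result, so there is nothing in the paper to diverge from.
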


Note that the set of admissible controls $U$ is a convex subset of the Hilbert space $L^2_{\mathcal F}(\Omega;L^2([0,T];D(A^\beta)))$.
By Corollary \ref{costfunctionalfrechet2}, the cost functional $J_m$ given by equation (\ref{costfunctional}) is twice continuous Fr\'echet differentiable in a neighborhood of the optimal control $\overline u_m \in U$.
Recall that $\overline u_m \in U$ satisfies the necessary optimality condition (\ref{necessarycondition}), which are also valid for the Fr\'echet derivative due to Corollary \ref{costfunctionalfrechet1}.
Moreover, we have for every $v \in L^2(\Omega; L^2([0,T];D(A^\beta)))$
\begin{align*}
 d^F (J_m(\overline u_m))^2[v,v] = \mathbb E \int\limits_0^{\tau_m^{\overline u_m}} \left\| A^{\gamma}z(t;\overline u_m,v)\right\|_H^2 dt + \mathbb E \int\limits_0^T \| A^\beta v(t)\|_H^2 dt \geq \mathbb E \int\limits_0^T \| v(t)\|_{D(A^\beta)}^2 dt.
\end{align*}
Hence, the assumptions of Proposition \ref{sufficientcondition} are fulfilled and the optimal control $\overline u_m \in U$ given by equation (\ref{optimalcontrol}) is a local minimum of the cost functional $J_m$.
Due to Theorem \ref{existenceoptimalcontrol}, we can conclude that this minimum is also global.

\section{Conclusion}

In this paper, we considered a control problem constrained by the stochastic Navier-Stokes equations on multidimensional domains with linear multiplicative noise introduced in \cite{ocpc}.
Here, we treated the special case of a Wiener noise.

We calculated the Gâteaux derivatives of the cost functional up to order two, which coincide with the Fr\'echet derivatives.
Using the Gâteaux derivative, we stated the necessary optimality condition as a variational inequality.
Introducing the adjoint equation given by a backward SPDE, a duality principle was derived such that we deduced explicit formulas for the optimal controls.
As a consequence, the optimal velocity field can be obtained by solving a system of coupled forward and backward SPDEs.
Moreover, we showed that the optimal control satisfies a sufficient optimality condition.

In future work, we will include nonhomogeneous boundary conditions such that control problems with boundary controls might be considered.

\section*{Acknowledgement}

This research is supported by a research grant of the 'International Max Planck Research School (IMPRS) for Advanced Methods in Process and System Engineering', Magdeburg.
The authors would like to thank Prof. Wilfried Grecksch of the Martin Luther University Halle-Wittenberg for his helpful advice on various technical issues.

\bibliography{references_nse}

\begin{thebibliography}{10}

\bibitem{smaw}
A.~Al-Hussein.
\newblock Strong, mild and weak solutions of backward stochastic evolution
  equations.
\newblock {\em Random Oper. Stoch. Equ.}, 13(2):129--138, 2005.

\bibitem{ocpc}
P.~Benner and C.~Trautwein.
\newblock Optimal {C}ontrol {P}roblems {C}onstrained by the {S}tochastic
  {N}avier-{S}tokes {E}quations with {M}ultiplicative {L}\'evy {N}oise.
\newblock submitted, 2018.

\bibitem{lsfs}
A.~Bensoussan and J.~Frehse.
\newblock Local {S}olutions for {S}tochastic {N}avier {S}tokes {E}quations.
\newblock {\em M2AN Math. Model. Numer. Anal.}, 34(2):241--273, 2000.

\bibitem{gaat}
H.~Breckner.
\newblock Galerkin approximation and the strong solution of the
  {N}avier--{S}tokes equation.
\newblock {\em J. Appl. Math. Stoch. Anal.}, 13(3):239--259, 2000.

\bibitem{2dns}
Z.~Brzeźniak, E.~Hausenblas, and J.~Zhu.
\newblock 2{D} stochastic {N}avier--{S}tokes equations driven by jump noise.
\newblock {\em Nonlinear Anal.}, 79:122--139, 2013.

\bibitem{snbe}
Z.~Brzeźniak, B.~Maslowski, and J.~Seidler.
\newblock Stochastic nonlinear beam equations.
\newblock {\em Probab. Theory Related Fields}, 132:119--149, 2005.

\bibitem{seih}
M.~Capinski and D.~Gaterek.
\newblock Stochastic equations in {H}ilbert space with application to
  {N}avier--{S}tokes equations in any dimension.
\newblock {\em J. Funct. Anal.}, 126:26--35, 1994.

\bibitem{ocf3}
N.~J. Cutland and K.~Grzesiak.
\newblock Optimal control for 3{D} stochastic {N}avier--{S}tokes equations.
\newblock {\em Stochastics}, 77(5):437--454, 2005.

\bibitem{dpft}
G.~{Da Prato} and A.~Debussche.
\newblock Dynamic {P}rogramming for the stochastic {N}avier--{S}tokes
  equations.
\newblock {\em M2AN Math. Model. Numer. Anal.}, 34(2):459--475, 2000.

\bibitem{efid}
G.~{Da Prato} and J.~Zabczyk.
\newblock {\em Ergodicity for {I}nfinite {D}imensional {S}ystems}.
\newblock Cambridge University Press, 1996.

\bibitem{seiid}
G.~{Da Prato} and J.~Zabczyk.
\newblock {\em Stochastic {E}quations in {I}nfinite {D}imensions}.
\newblock Cambridge University Press, 2014.

\bibitem{wpot}
L.~Debbi.
\newblock Well-{P}osedness of the {M}ultidimensional {F}ractional {S}tochastic
  {N}avier--{S}tokes {E}quations on the {T}orus and on {B}ounded {D}omains.
\newblock {\em J. Math. Fluid Mech.}, 18:25--69, 2016.

\bibitem{soci}
G.~Fabbri, F.~Gozzi, and A.~\'Swi\c ech.
\newblock {\em Stochastic {O}ptimal {C}ontrol in {I}nfinite {D}imension:
  {D}ynamic {P}rogramming and {HJB} {E}quations}.
\newblock Springer, Berlin, 2017.

\bibitem{msos}
B.~P.~W. Fernando, B.~Rüdiger, and S.~S. Sritharan.
\newblock Mild solutions of stochastic {N}avier--{S}tokes equation with jump
  noise in $\mathbb{L}^p$--spaces.
\newblock {\em Math. Nachr.}, 288:1615--1621, 2015.

\bibitem{nfos}
B.~P.~W. Fernando and S.~S. Sritharan.
\newblock Nonlinear filtering of stochastic {N}avier--{S}tokes equation with
  {I}t\^o--{L}\'evy noise.
\newblock {\em Stoch. Anal. Appl.}, 31(3):381--426, 2013.

\bibitem{ait3}
F.~Flandoli.
\newblock {\em An {I}ntroduction to 3{D} {S}tochastic {F}luid {D}ynamics},
  pages 51--150.
\newblock Proceedings of the {CIME} {C}ourse on {SPDE} in {H}ydrodynamics:
  {R}ecent {P}rogress and {P}rospects. {L}ecture {N}otes in {M}athematics.
  Springer, Berlin, 2008.

\bibitem{mass}
F.~Flandoli and D.~Gatarek.
\newblock Martingale and stationary solutions for stochastic {N}avier--{S}tokes
  equations.
\newblock {\em Probab. Theory Related Fields}, 102:367--391, 1995.

\bibitem{smpf}
M.~Fuhrman and C.~Orrieri.
\newblock Stochastic maximum principle for optimal control of a class of
  nonlinear {SPDE}s with dissipative drift.
\newblock {\em SIAM J. Control Optim.}, 54(1):341--371, 2016.

\bibitem{ofpo}
H.~Fujita and H.~Morimoto.
\newblock On fractional powers of the {S}tokes operator.
\newblock {\em Proc. Japan Acad.}, 46:1141--1143, 1970.

\bibitem{alto}
D.~Fujiwara and H.~Morimoto.
\newblock An ${L}_r$--theorem of the {H}elmholtz decomposition of vector
  fields.
\newblock {\em J. Fac. Sci. Univ. Tokyo Sec. 1 A}, 24(3):685--700, 1977.

\bibitem{sdei}
L.~Gawarecki and V.~Mandrekar.
\newblock {\em Stochastic {D}ifferential {E}quations in {I}nfinite
  {D}imensions: with Applications to Stochastic Partial Differential
  Equations}.
\newblock Springer, Berlin, 2011.

\bibitem{aots}
Y.~Giga.
\newblock Analyticity of the {S}emigroup {G}enerated by the {S}tokes {O}perator
  in ${L}_r$ {S}pace.
\newblock {\em Math. Z.}, 178:297--329, 1981.

\bibitem{silo}
Y.~Giga and T.~Miyakawa.
\newblock Solutions in ${L}_r$ of the {N}avier--{S}tokes initial value problem.
\newblock {\em Arch. Ration. Mech. Anal.}, 89(3):267--281, 1985.

\bibitem{spso}
N.~Glatt-Holtz and M.~Ziane.
\newblock Strong pathwise solutions of the stochastic {N}avier--{S}tokes
  system.
\newblock {\em Adv. Differential Equations}, 14(5-6):567--600, 2009.

\bibitem{yaos}
T.~E. Govindan.
\newblock {\em Yosida {A}pproximations of {S}tochastic {D}ifferential
  {E}quations in {I}nfinite {D}imensions and {A}pplications}.
\newblock Springer, New York, 2016.

\bibitem{beat}
F.~Gozzi, S.~S. Sritharan, and A.~\'{S}wi\c{e}ch.
\newblock Bellman {E}quations {A}ssociated to the {O}ptimal {F}eedback
  {C}ontrol of {S}tochastic {N}avier--{S}tokes {E}quations.
\newblock {\em Comm. Pure Appl. Math.}, 58:671--700, 2005.

\bibitem{scdb}
E.~Hausenblas and J.~Seidler.
\newblock Stochastic {C}onvolutions {D}riven by {M}artingales: {M}aximal
  {I}nequalities and {E}xponential {I}ntegrability.
\newblock {\em Stoch. Anal. Appl.}, 26:98--119, 2007.

\bibitem{assn}
M.~Hintermüller and M.~Hinze.
\newblock A {SQP}-{S}emismooth {N}ewton--type {A}lgorithm applied to {C}ontrol
  of the instationary {N}avier--{S}tokes {S}ystem {S}ubject to {C}ontrol
  {C}onstraints.
\newblock {\em SIAM J. Optim.}, 16(4):1177--1200, 2006.

\bibitem{owpde}
M.~Hinze, R.~Pinnau, M.~Ulbrich, and S.~Ulbrich.
\newblock {\em Optimization with {PDE} {C}onstraints}.
\newblock Springer, New York, 2009.

\bibitem{asoa}
Y.~Hu and S.~Peng.
\newblock Adapted solution of a backward semilinear stochastic evolution
  equation.
\newblock {\em Stoch. Anal. Appl.}, 9(4):445--459, 1991.

\bibitem{soss}
A.~Ichikawa.
\newblock Stability of {S}emilinear {S}tochastic {E}volution {E}quations.
\newblock {\em J. Math. Anal. Appl.}, 90:12--44, 1982.

\bibitem{cfa}
A.~J. Kurdila and M.~Zabarankin.
\newblock {\em Convex {F}unctional {A}nalysis}.
\newblock Birkhäuser, 2005.

\bibitem{tvtp}
H.~Liang, L.~Hou, and J.~Ming.
\newblock The velocity tracking problem for {W}ick-stochastic
  {N}avier--{S}tokes flows using {W}iener chaos expansion.
\newblock {\em J. Comput. Appl. Math.}, 307:25--36, 2016.

\bibitem{otdp}
L.~Manca.
\newblock On the dynamic programming approach for the 3{D} {N}avier--{S}tokes
  equations.
\newblock {\em Appl. Math. Optim.}, 57(3):329--348, 2008.

\bibitem{s2dn}
J.~L. Menaldi and S.~S. Sritharan.
\newblock Stochastic 2--{D} {N}avier--{S}tokes {E}quation.
\newblock {\em Appl. Math. Optim.}, 46:31--53, 2002.

\bibitem{lpso}
M.~T. Mohan and S.~S. Sritharan.
\newblock $\mathbb{L}^p$--solutions of the stochastic {N}avier--{S}tokes
  equations subject to {L}évy noise with $\mathbb{L}^m(\mathbb{R}^m)$ initial
  data.
\newblock {\em Evol. Equ. Control Theory}, 6(3):409--425, 2017.

\bibitem{snse}
E.~Motyl.
\newblock Stochastic {N}avier--{S}tokes equations driven by {L}\'evy noise in
  unbounded 3{D} domains.
\newblock {\em Potential Anal.}, 38:863--912, 2013.

\bibitem{capr}
D.~Nualart and W.~Schoutens.
\newblock Chaotic and predictable representations for {L}\'evy processes.
\newblock {\em Stochastic Process. Appl.}, 90:109--122, 2000.

\bibitem{dofc}
Y.~R. Ou.
\newblock {\em Design of {F}eedback {C}ompensators for {V}iscous {F}low}, pages
  151--180.
\newblock Optimal {C}ontrol of {V}iscous {F}low. SIAM, Philadelphia, 1998.

\bibitem{solo}
A.~Pazy.
\newblock {\em Semigroups of {L}inear {O}perators and {A}pplications to
  {P}artial {D}ifferential {E}quations}.
\newblock Springer, New York, 1983.

\bibitem{mcac}
J.-F. Renaud and B.~R\'emillard.
\newblock Malliavin calculus and {C}lark--{O}cone formula for functionals of a
  square-integrable {L}\'evy process.
\newblock Technical report, G-2009-67, 2009.

\bibitem{aitd}
S.~S. Sritharan.
\newblock {\em An {I}ntroduction to {D}eterministic and {S}tochastic {C}ontrol
  of {V}iscous {F}low}, pages 1--42.
\newblock Optimal {C}ontrol of {V}iscous {F}low. SIAM, Philadelphia, 1998.

\bibitem{ldft}
S.~S. Sritharan and P.~Sundar.
\newblock Large deviations for the two-dimensional {N}avier--{S}tokes equations
  with multiplicative noise.
\newblock {\em Stochastic Process. Appl.}, 116:1636--1659, 2006.

\bibitem{euas}
G.~Tessitore.
\newblock Existence, uniqueness and space regularity of the adapted solutions
  of a backward spde.
\newblock {\em Stochastic Anal. Appl.}, 14(4):461--486, 1996.

\bibitem{ocop}
F.~Tröltzsch.
\newblock {\em Optimal {C}ontrol of {P}artial {D}ifferential {E}quations:
  {T}heory, {M}ethods and {A}pplications}.
\newblock American Mathematical Society, 2010.

\bibitem{coco}
M.~Ulbrich.
\newblock Constrained optimal control of {N}avier--{S}tokes flow by semismooth
  {N}ewton methods.
\newblock {\em Systems Control Lett.}, 48:297--311, 2003.

\bibitem{teon}
W.~{von Wahl}.
\newblock {\em The {E}quations of {N}avier--{S}tokes and {A}bstract {P}arabolic
  {E}quations}.
\newblock Vieweg + Teubner Verlag, 1985.

\bibitem{c0sa}
I.~Vrabie.
\newblock {\em ${C}_0$--{S}emigroups and {A}pplications}.
\newblock Elsevier, Amsterdam, 2003.

\bibitem{raso}
D.~Wachsmuth.
\newblock Regularity and stability of optimal controls of nonstationary
  {N}avier--{S}tokes equations.
\newblock {\em Control Cybernet.}, 34(2):387--409, 2005.

\bibitem{nfaa}
E.~Zeidler.
\newblock {\em Nonlinear {F}unctional {A}nalysis and its {A}pplications {III}:
  {V}ariational {M}ethods and {O}ptimization}.
\newblock Springer, New York, 1985.

\end{thebibliography}
\bibliographystyle{plain}

\end{document}